\documentclass[11pt]{amsart}  

\usepackage{amsmath}
\usepackage{amsthm}
\usepackage{amsfonts}
\usepackage{amssymb}
\usepackage{apalike}
\usepackage{enumitem}
\usepackage{eucal}
\usepackage[all]{xy}
\usepackage{amsxtra}
\usepackage{appendix} 
\usepackage{tensor} 
\usepackage{url}
\usepackage{upgreek}
\usepackage{etoolbox}
\apptocmd{\sloppy}{\hbadness 10000\relax}{}{}
\usepackage{comment}
\usepackage{mathtools}
\usepackage{hyperref}
\allowdisplaybreaks

\RequirePackage{color}
\definecolor{bwgreen}{rgb}{0.183,1,0.5}
\definecolor{bwmagenta}{rgb}{0.7,0.0,0.1}
\definecolor{bwblue}{rgb}{0.317,0.161,1}

\hypersetup{
pdfauthor = {\textcopyright\ Christopher Davis},
pdfcreator = {\LaTeX\ with package \flqq hyperref\frqq},
colorlinks = {true},
linkcolor={bwmagenta},	 %color red Color for normal internal links.
anchorcolor={},	 %color	black Color for anchor text.
citecolor={bwblue},	% color	green Color for bibliographical citations in text.
filecolor={},	% color cyan Color for URLs which open local files.
menucolor={},	% color	red	Color for Acrobat menu items.
runcolor={},	% color	filecolor Color for run links (launch annotations).
urlcolor={bwgreen}, %	 color	magenta	Color for linked URLs.
}

   \topmargin=0in
   \oddsidemargin=-0.1in
   \evensidemargin=-0.1in
   \textwidth=6.7in
   \textheight=8.2in

%\xymatrixcolsep{2.8pc}
%\CompileMatrices
%\entrymodifiers={+!!<0pt,\fontdimen22\textfont2>}
%\def\objectstyle{\displaystyle}
%%%%%%%%%%%%%%%%%%%%%%%%%%%%%%%%%%%%%%%
% Introduce a nice script font for sheaves

\DeclareFontFamily{OT1}{rsfs}{}
\DeclareFontShape{OT1}{rsfs}{n}{it}{<-> rsfs10}{}
\DeclareMathAlphabet{\mathscr}{OT1}{rsfs}{n}{it}

\DeclareFontFamily{OT1}{pzc}{}
\DeclareFontShape{OT1}{pzc}{n}{it}{<->s*[2.2]pzc}{}
\DeclareMathAlphabet{\mathpzc}{OT1}{pzc}{b}{sl}

%\DeclareFontFamily{OT1}{pzc}{}
%\DeclareFontShape{OT1}{pzc}{m}{it}{<-> s * [0.900] pzcmi7t}{}
%\DeclareMathAlphabet{\mathpzc}{OT1}{pzc}{m}{it}

%%%%%%%%%%%%%%%%%%%%%%%%%%%%%%%%%%%%%%%%
% These commands allow one to make roman numerals
\makeatletter

\newcommand{\Rmnum}[1]{\expandafter\@slowromancap\romannumeral #1@}
\makeatother

%%%%%%%%%%%%%%%%%%%%%%%%%%%%%%%%%%%%%%%
% Math Operators

%\DeclareMathOperator{\can}{can}
\DeclareMathOperator{\colim}{colim}

\DeclareMathOperator{\Frac}{Frac}

\DeclareMathOperator{\Hom}{Hom}

\DeclareMathOperator{\Ext}{Ext}

\DeclareMathOperator{\Spec}{Spec}

\DeclareMathOperator{\dlog}{\emph{d}\log}

\DeclareMathOperator{\im}{im}

\DeclareMathOperator{\Tor}{Tor}

\DeclareMathOperator{\THH}{THH}
\DeclareMathOperator{\trace}{trace}
\DeclareMathOperator{\TC}{TC}
\DeclareMathOperator{\TP}{TP}
\DeclareMathOperator{\TR}{TR}
\DeclareMathOperator{\cycl}{cycl}
\DeclareMathOperator{\cann}{can}
\DeclareMathOperator{\TF}{TF}
\DeclareMathOperator{\holim}{holim}
\DeclareMathOperator{\HH}{HH}

%\DeclareMathOperator{\Fr}{Fr}
%%%%%%%%%%%%%%%%%%%%%%%%%%%%%%%%%%%%%%%%%%%%%%%%%%%%%%%%%%%%%%%%%%%%%%%%%%%%%%%
%New Commands		

%%% The usual rings %%%
\newcommand*{\FF}{\ensuremath{\mathbf{F}}}   
   
\newcommand*{\CC}{\ensuremath{\mathbf{C}}}              
\newcommand*{\ZZ}{\ensuremath{\mathbf{Z}}}               
\newcommand*{\QQ}{\ensuremath{\mathbf{Q}}}

\newcommand{\Cp}{\CC_p}
\newcommand{\OCp}{\mathcal{O}_{\Cp}} 
         
%%% The usual schemes %%%

%%% Gothic letters (ideals) %%%

%%% Capital scripts (sheaves etc)

%%% script capitals %%%

%%% Various sheaves %%%

%%% Misc %%%

            % end of proof
     % definition

%\newcommand*{\R}{\ensuremath{\mathscr{R}}}              

%\renewcommand*{\int}{\ensuremath{\mathrm{int}}}

%\renewcommand*{\o}[1]{\overline{#1}}

%\newcommand*{\Win}{\ensuremath{\mathscr{W}\!\mathit{in}}}
%\newcommand*{\Win}{\textswab{Win}}
%\newcommand*{\BT}{\textswab{BT}}

%%%%%%%%%%%%%%%%%%%%%%%%%%%%%%%%%%%%%%%%%%%%%%%%%%%%%%%%%%%%%%%%%%%%%%%%%%%%%%%
% Define environments

\numberwithin{equation}{section}
\theoremstyle{plain}
  \newtheorem{theorem}[equation]{Theorem}
    
  \newtheorem{proposition}[equation]{Proposition}
  \newtheorem{lemma}[equation]{Lemma}
  \newtheorem{corollary}[equation]{Corollary}

\newenvironment{customtheorem}[1]
  {\innercustomtheorem}
  {\endinnercustomtheorem}

\theoremstyle{definition}
  \newtheorem{definition}[equation]{Definition}
  \newtheorem{notation}[equation]{Notation}

  \newtheorem{question}[equation]{Question}

\theoremstyle{remark}
  \newtheorem{example}[equation]{Example}
  \newtheorem{remark}[equation]{Remark}

%%%%%%%%%%%%%%%%%%%%%%%%%%%%%%%%%%%%%%%%%%%%%%%%%%%%%%%%%%%%%%%%%%%%%%%%%%%%%%%%

% This command creates colored and named commentary for use with collaborators

%\usepackage[usenames,dvipsnames]{color}

%%%%%%%%%%%%%%%%%%%%%%%%%%%%%%%%%%%%%%%%%%%%%%%%%%%%%%%%%%%%%%%%%%%%%%%%%%%%%%%

\raggedbottom

\begin{document}

%\dedicatory{}

%\subjclass[2010]{Primary:  13F35, Secondary: 14F30, 13N05}
%\keywords{Witt vectors, de\thinspace Rham-Witt complex, perfectoid rings}
%\date{\today}

\title{On the de\thinspace Rham-Witt complex over perfectoid rings}

\author{Christopher Davis}
\address{University of California, Irvine}
\email{daviscj@uci.edu}

\author{Irakli Patchkoria}
\address{University of Aberdeen}
\email{irakli.patchkoria@abdn.ac.uk}

\maketitle

\begin{abstract}
Fix an odd prime $p$.  The results in this paper are modeled after work of Hesselholt and Hesselholt-Madsen on the $p$-typical absolute de\thinspace Rham-Witt complex in mixed characteristic.  We have two primary results.  The first is an exact sequence which describes the kernel of the restriction map on the de\thinspace Rham-Witt complex over $A$, where $A$ is the ring of integers in an algebraic extension of $\QQ_p$, or where $A$ is a $p$-torsion-free perfectoid ring. The second result is a description of the $p$-power torsion (and related objects) in the de\thinspace Rham-Witt complex over $A$, where $A$ is a $p$-torsion-free perfectoid ring containing a compatible system of $p$-power roots of unity.  Both of these results are analogous to results of Hesselholt and Madsen.  Our main contribution is the extension of their results to certain perfectoid rings. We also provide algebraic proofs of these results, whereas the proofs of Hesselholt and Madsen used techniques from topology.
\end{abstract}

\section{Introduction}

Let $p$ denote an odd prime and let $A$ denote a $\ZZ_{(p)}$-algebra.  The (absolute, $p$-typical) de\thinspace Rham-Witt complex over $A$ is defined by Hesselholt and Madsen in \cite[Introduction]{HM04} as the initial object in the category of Witt complexes over $A$. It is closely related to trace invariants such as topological Hochschild and cyclic homologies which can be used to compute algebraic K-theory. The main goal of the present paper is to prove certain algebraic properties of the de\thinspace Rham-Witt complex.  These results extend earlier work of Hesselholt and Madsen to a certain class of perfectoid rings.  (See Definition~\ref{perfectoid definition} for the definition of a perfectoid ring.) We also note that our proofs are purely algebraic, whereas the original proofs by Hesselholt and Madsen used topology.

Every element in the de\thinspace Rham-Witt complex over $A$ can be expressed using Witt vectors and the differential map $d: W_n \Omega^i_A \rightarrow W_n \Omega^{i+1}_A$.  Given two such expressions, however, it is difficult to determine if they are equal.  This phenomenon is already present in the module of K\"ahler differentials: every element in $\Omega^i_{A/R}$ can be expressed using $A$ and the differential map, but in general it is difficult to determine if two such expressions are equal.  The most challenging steps in our proofs involve showing that certain elements in $W_n\Omega^1_A$ are non-zero.  (This includes the case $n = 1$, i.e., a difficult step involves showing that certain elements in $\Omega^1_A = \Omega^1_{A/\ZZ}$ are non-zero.) 

Our main result is Theorem~\ref{Tate theorem} below.  Theorem~\ref{Tate theorem} concerns $p$-torsion in the level~$n$ and degree~$1$ component of the de\thinspace Rham-Witt complex, $W_n\Omega^1_A$, for certain perfectoid rings $A$ and for all positive integers $n \geq 1$.  Our proof uses induction on $n$.  The base case uses Theorem~\ref{intro relative Omega} below and the induction step uses Theorem~\ref{sequence theorem}.  Together, these three theorems are the main results of this paper, and we believe that Theorem~\ref{intro relative Omega} and Theorem~\ref{sequence theorem} are of interest independent of their use in the proof of Theorem~\ref{Tate theorem}.  We mention that both Theorem~\ref{intro relative Omega} and Theorem~\ref{sequence theorem} hold for all $p$-torsion-free perfectoid rings (unlike Theorem~\ref{Tate theorem}).  Also, we note that Theorem~\ref{intro relative Omega} and its cotangent complex counterpart, Theorem~\ref{intro cotangent}, do not involve the de\thinspace Rham-Witt complex.

We now briefly describe these three main results, going through them in the same order in which they appear later in the paper.  Our first result concerns $p$-torsion in the module of K\"ahler differentials.  

\begin{customtheorem}{A} \label{intro relative Omega}
Let $A$ denote a $p$-torsion-free perfectoid ring. 
The $p$-adic Tate module of $\Omega^1_{A/\ZZ_p}$, denoted $T_p(\Omega^1_{A/\ZZ_p})$, is a free $A$-module of rank~one.
\end{customtheorem}

See Theorem~\ref{main relative Omega theorem} below for the proof of Theorem~\ref{intro relative Omega}.  A similar result appears in \cite[Section~1]{Fon81} for the case that $A$ is the ring of integers in a separable closure of a local field.

We give an example of the sort of considerations that arise when trying to prove Theorem~\ref{intro relative Omega}.  

\begin{example} \label{intro example Kahler}
Let $R = \ZZ_p[\zeta_p]$; this ring is not perfectoid because the $p$-power map is not surjective modulo~$p$.   Because $d1 = 0 \in \Omega^1_{R/\ZZ_p}$, it follows directly from the Leibniz rule that $d\zeta_p$ is $p$-torsion.  Proving that $d\zeta_p \neq 0 \in \Omega^1_{R/\ZZ_p}$ is more subtle.  One approach is to use the isomorphism $\ZZ_p[\zeta_p] \cong \ZZ_p[x]/(x^{p-1} + x^{p-2} + \cdots + x + 1)$ together with an exact sequence involving K\"ahler differentials of a quotient ring (see \cite[Theorem~25.2]{Mat89} for the precise result, or see Remark~\ref{intro remark Kahler} below for a brief summary).  Similar considerations can be used for $\Omega^1_{S/\ZZ_p}$ when $S = \ZZ_p[\zeta_{p^n}]$ for any $n \geq 1$ or for their union $S = \ZZ_p[\zeta_{p^{\infty}}]$.  It seems significantly more difficult to extend this style of argument to the case of the $p$-adic completion $\ZZ_p[\zeta_{p^{\infty}}]^{\wedge}$ (which is a perfectoid ring, unlike the other rings mentioned in this example).  Different techniques seem necessary to deal with rings like $\ZZ_p[\zeta_{p^{\infty}}]^{\wedge}$, and our approach is to use the cotangent complex.  (For clarification, we briefly point out that $d\zeta_p$ is not an $S$-module generator of $p$-torsion in $\Omega^1_{S/\ZZ_p}$ for any of $S = \ZZ_p[\zeta_{p^n}]$, $n \geq 2$, or $S = \ZZ_p[\zeta_{p^{\infty}}]$ or $S = \ZZ_p[\zeta_{p^{\infty}}]^{\wedge}$.  Instead, exactly as in \cite{Hes06}, there exists a $p$-torsion element $\alpha$ such that $(\zeta_p - 1)\alpha = d\zeta_p$, and such an $\alpha$ is a generator of $p$-torsion.)
\end{example}

\begin{remark} \label{intro remark Kahler}
In Example~\ref{intro example Kahler} we referenced \cite[Theorem~25.2]{Mat89}; we briefly recall the context of that theorem.  Let $R_0 \rightarrow R$ denote a ring homomorphism and let $I \subseteq R$ denote an ideal.  (What we describe here works in complete generality for commutative rings with unity; the notation need not refer to any specific rings from Example~\ref{intro example Kahler}.) Then \cite[Theorem~25.2]{Mat89} describes an exact sequence of $R/I$-modules
\[
I/I^2 \rightarrow \Omega^1_{R/R_0} \otimes_R (R/I) \rightarrow \Omega^1_{(R/I)/R_0} \rightarrow 0.
\]
We point out two aspects of this sequence.  First, notice that the left-most map is not guaranteed to be injective.  This is again a reflection of the fact that it is in general difficult to prove elements are non-zero in K\"ahler differentials (and in the de\thinspace Rham-Witt complex).  Second, notice that if $I$ is a principal ideal generated by a non-zero-divisor, then $I/I^2$ is isomorphic to $R/I$.  The authors consider this one of the benefits of the requirement in the definition of perfectoid that $\ker \theta$ be a principal ideal; see for example the proof of \cite[Proposition~4.19(2)]{BMS18}, which is an essential result in our arguments involving K\"ahler differentials. 
\end{remark}

The strategy described in Example~\ref{intro example Kahler} relies heavily on having an explicit description of $\ZZ_p[\zeta_{p}]$.  For rings of the generality considered in this paper, we cannot expect such an explicit description.  To prove Theorem~\ref{intro relative Omega} for an arbitrary $p$-torsion-free perfectoid ring, we rely on properties of the cotangent complex, and the authors thank Bhargav Bhatt for suggesting this approach to Theorem~\ref{intro relative Omega}.  In particular, our proof of Theorem~\ref{intro relative Omega} is intertwined with the proof of the following.
Again, see Theorem~\ref{main relative Omega theorem} below for the proof. 

\begin{customtheorem}{A.2} \label{intro cotangent}
Let $A$ denote a $p$-torsion-free perfectoid ring.  The multiplication-by-$p$ map is surjective on $H^{-1}(L_{A/\ZZ_p})$, where $L_{A/\ZZ_p}$ denotes the cotangent complex of $\ZZ_p \rightarrow A$.
\end{customtheorem}
 
Many of the algebraic arguments in this paper are elementary. With the exception of the final section relating our results to algebraic $K$-theory, our arguments involving the cotangent complex are the most technically advanced part of this paper.  On the other hand, the cotangent complex arguments are used only to prove results concerning K\"ahler differentials, so the reader who is willing to accept these results can understand the paper without the cotangent complex.  The authors had originally hoped to bypass the cotangent complex at the expense of considering only $p$-torsion-free perfectoid rings which are rings of integers in a valued field.  Even in this case, technical difficulties arose.  To continue the scenario described in Example~\ref{intro example Kahler}, again following Fontaine, we were able to describe $p$-torsion in $\Omega^1_{\ZZ_p[\zeta_{p^{\infty}}]/\ZZ_p}$, but without using the cotangent complex, we were unable to describe $p$-torsion in $\Omega^1_{\ZZ_p[\zeta_{p^{\infty}}]^{\wedge}/\ZZ_p}$ (where $\ZZ_p[\zeta_{p^{\infty}}]^{\wedge}$ denotes the $p$-adic completion of $\ZZ_p[\zeta_{p^{\infty}}]$; perfectoid rings are by definition required to be $p$-adically complete).

We have already mentioned the difficulty of proving that elements in the module of K\"ahler differentials are non-zero.  We next discuss a result, Theorem~\ref{sequence theorem}, which helps to prove certain elements in the de\thinspace Rham-Witt complex are non-zero.  This result is based on a corresponding result proved by Hesselholt and Madsen; we give a reference to their results after stating the theorem.

The idea of Theorem~\ref{sequence theorem} is as follows.  We take as given that we have a satisfactory understanding of the K\"ahler differentials (e.g., we have an explicit description of all $p$-power torsion elements in the module of K\"ahler differentials), and inductively, we assume we also have a satisfactory understanding of the level~$n$ and degree~$1$ component of the de\thinspace Rham-Witt complex.  
When $A$ is a $p$-torsion-free perfectoid ring (or when $A = \mathcal{O}_K$ with $K/\QQ_p$ algebraic), Theorem~\ref{sequence theorem} below precisely describes the kernel of the surjective restriction map, $R: W_{n+1}\Omega^1_A \rightarrow W_n\Omega^1_A$, and thus enables us to describe the level~$n+1$ and degree~1 component of the de\thinspace Rham-Witt complex in terms of three other components: $A$, $\Omega^1_A$ and $W_n\Omega^1_A$.  

In Theorem~\ref{sequence theorem}, the kernel of restriction is described as a $W_{n+1}(A)$-module.  (The most complicated part of stating this theorem is describing the $W_{n+1}(A)$-module structure of the terms in the sequence.)  The short exact sequence~(\ref{sequence}) below is used in a variety of inductive arguments involving the de\thinspace Rham-Witt complex.  We give some basic examples of applying Theorem~\ref{sequence theorem} later in this introduction; for us, the most important application is in the proof of Theorem~\ref{Tate theorem}.

\begin{customtheorem}{B} \label{sequence theorem}
Let $p$ denote an odd prime.  Assume $A$ is a ring satisfying one of the following two conditions:
\begin{enumerate}
\item \label{perf A} We have that $A$ is a $p$-torsion-free perfectoid ring, in the sense of \cite[Definition~3.5]{BMS16}; or
\item \label{alg A} We have that $A = \mathcal{O}_K$, where $K/\QQ_p$ is an algebraic extension.
\end{enumerate}
Then for every integer $n \geq 1$, the following is an exact sequence of $W_{n+1}(A)$-modules:
\begin{equation} \label{sequence}
0 \rightarrow A \xrightarrow{(-d, p^n)}  \Omega^1_{A} \oplus A \xrightarrow{V^n \oplus dV^n} W_{n+1} \Omega^1_{A} \xrightarrow{R} W_n \Omega^1_{A} \rightarrow 0,
\end{equation}
where the $W_{n+1}(A)$-module structure is defined as follows.  The $W_{n+1}(A)$-module structure on the left-most $A$ in the sequence is induced by $F^n$, where $F$ denotes the Witt vector Frobenius.  The $W_{n+1}(A)$-module structure on $\Omega^1_A \oplus A$ is defined by 
\[
y \cdot (\alpha, a) = \big(F^n(y) \alpha - a F^n\left( dy \right), F^n(y) a\big) ,\text{ where } y \in W_{n+1}(A),\; \alpha \in \Omega^1_A,\; a \in A.
\] 
The $W_{n+1}(A)$-module structure on $W_{n+1}\Omega^1_A$ is the natural one, and the $W_{n+1}(A)$-module structure on $W_n \Omega^1_A$ is induced by restriction.  
\end{customtheorem}

See Section~\ref{proof of sequence theorem} below for the proof of Theorem~\ref{sequence theorem}.  Most of the claimed exactness was proved in complete generality (i.e., with no restrictions on the $\ZZ_{(p)}$-algebra~$A$) by Hesselholt and Madsen; see \cite[Proposition~3.2.6]{HM03}, and note that the de\thinspace Rham-Witt complex is a special case of the logarithmic de\thinspace Rham-Witt complex, attained by taking the multiplicative monoid $M$ (as in Hesselholt-Madsen's notation) to be trivial, $M = \{1\}$.  Applying Hesselholt and Madsen's results, our only remaining task is proving that, if $V^n(\omega) + dV^n(a) = 0 \in W_{n+1}\Omega^1_A$, then there exists $a_0 \in A$ such that $\omega = -da_0$ and $a = p^n a_0$.  For proofs by Hesselholt and Madsen related to this remaining task, see \cite[Proof of Theorem~3.3.8]{HM03} and  \cite[Proposition~2.2.1]{Hes06}.  Most of Hesselholt and Madsen's proof of exactness of (\ref{sequence}) does not involve any notions from topology, and we simply quote their result.  But the argument following from \cite[Proof of Theorem~3.3.8]{HM03} does use topology.  Our two main contributions with respect to Theorem~\ref{sequence theorem} are proving exactness in the case of $p$-torsion-free perfectoid rings, and providing an algebraic proof of the most difficult step, which we re-iterate for emphasis: if $V^n(\omega) + dV^n(a) = 0 \in W_{n+1}\Omega^1_A$, then there exists $a_0 \in A$ such that $\omega = -da_0$ and $a = p^n a_0$.

The benefit of having an exact sequence as guaranteed in Theorem~\ref{sequence theorem} is clear from the analogous situation involving Witt vectors.  Recall that, for every ring $A$ and every integer $n \geq 1$, we have a short exact sequence of $W_{n+1}(A)$-modules 
\begin{equation} \label{Witt sequence}
0 \rightarrow A \stackrel{V^n}{\longrightarrow} W_{n+1}(A) \stackrel{R}{\rightarrow} W_n(A) \rightarrow 0,
\end{equation}
where the $W_{n+1}(A)$-module structures are defined as follows.  Let $F$ denote the Witt vector Frobenius and let $R$ denote the restriction map.  The module structure on $A$ is induced by $F^n: W_{n+1}(A) \rightarrow W_1(A) \cong A$.  The module structure on $W_{n+1}(A)$ is the natural one, and the module structure on $W_n(A)$ is induced by restriction $R: W_{n+1}(A) \rightarrow W_n(A)$.  The sequence~(\ref{Witt sequence}) is always an exact sequence of $W_{n+1}(A)$-modules, for every ring~$A$, and this fact is very useful when making induction arguments.  For example, using induction on $n$ and exactness of the sequence in Equation~(\ref{Witt sequence}), one proves that if $A$ is a $\ZZ_{(p)}$-algebra, then for every positive integer $n$, the ring $W_n(A)$ is also a $\ZZ_{(p)}$-algebra; see for example \cite[Lemma~1.9]{Hes15}.

The sequence~(\ref{sequence}) from Theorem~\ref{sequence theorem} is analogous to the sequence~(\ref{Witt sequence}), but for general rings $A$, the sequence~(\ref{sequence}) may not be exact.  For example, the sequence~(\ref{sequence}) is never exact if the ring $A$ is an $\FF_p$-algebra.  Even when the sequence~(\ref{sequence}) is exact, it is typically difficult to prove exactness.   (In contrast, the proof of exactness of (\ref{Witt sequence}) is trivial.) The ring $A = \ZZ_p$ is the easiest case of Theorem~\ref{sequence theorem}, but even in this case we are not aware of a simple proof of exactness.

The $W_{n+1}(A)$-module structures described in Theorem~\ref{sequence theorem} can be imposing at first, but they are very natural.  For example, consider the degree~zero case considered in the sequence~(\ref{Witt sequence}).  Why do we use $F^n: W_{n+1}(A) \rightarrow A$ to equip $A$ with a $W_{n+1}(A)$-module structure, instead of using, for example, the restriction map, $R^n: W_{n+1}(A) \rightarrow A$?  The reason of course is the Witt vector formula $x V^n(y) = V^n(F^n(x)y)$; this formula is saying precisely that $A \stackrel{V^n}{\rightarrow} W_{n+1}(A)$ is a $W_{n+1}(A)$-module homomorphism for the proposed module structure using $F^n$.  The case is the same for our module structures described in Theorem~\ref{sequence theorem}.  Let $x \in W_{n+1}(A)$, $y \in \Omega^1_A$, and $z \in A$.  Using the same formula $x V^n(y) = V^n(F^n(x)y)$ and the Leibniz rule, we compute 
\begin{align*}
x \cdot (V^n(y) + dV^n(z) ) &= V^n (F^n(x) y) + x dV^n(z) \\
&=  V^n (F^n(x) y) + d (xV^n(z) ) - V^n(z) d x\\
&=  V^n (F^n(x) y) + d V^n(F^n(x)z) - V^n(z F^n(d x))\\
&= V^n \bigg(F^n(x) y - z F^n(dx) \bigg) + dV^n\bigg(F^n(x) z\bigg).
\end{align*}
As above, this formula is saying precisely that $\Omega^1_A \oplus A \xrightarrow{V^n + dV^n} W_{n+1}\Omega^1_A$ is a $W_{n+1}(A)$-module homomorphism for the proposed module structure.

We will use Theorem~\ref{sequence theorem} during our proof by induction of Theorem~\ref{Tate theorem}.  That is our most important application and our original motivation for considering Theorem~\ref{sequence theorem}.  Here we first record several other applications.

\begin{proposition} \label{dV1}
Let $A_0$ denote any $\ZZ_{(p)}$-algebra for which there exists a $p$-torsion-free perfectoid ring~$A$ and a ring homomorphism $A_0 \rightarrow A$.  (For example, any subring of $\OCp$ containing $\ZZ_{(p)}$ is a suitable choice of $A_0$.)
Then $dV^n(1) \in W_{n+1}\Omega^1_{A_0}$ is non-zero for every integer $n \geq 1$.  
\end{proposition}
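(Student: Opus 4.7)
The plan is to use functoriality of the de\thinspace Rham-Witt complex to reduce to the perfectoid case, and then extract a contradiction from exactness of the sequence in Theorem~\ref{sequence theorem}.

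First I would invoke functoriality. The ring homomorphism $A_0 \to A$ induces a morphism $W_{n+1}\Omega^\bullet_{A_0} \to W_{n+1}\Omega^\bullet_A$ of absolute $p$-typical de\thinspace Rham-Witt complexes, and this morphism commutes with the Verschiebung $V$ and the differential $d$ (and sends $1$ to $1$). Consequently it sends $dV^n(1) \in W_{n+1}\Omega^1_{A_0}$ to $dV^n(1) \in W_{n+1}\Omega^1_A$, so it is enough to prove that $dV^n(1)$ is non-zero in $W_{n+1}\Omega^1_A$ for the $p$-torsion-free perfectoid ring~$A$.

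Next I would apply Theorem~\ref{sequence theorem} to~$A$ and focus on exactness of the sequence~\eqref{sequence} at the middle term $\Omega^1_A \oplus A$. Note that $(V^n \oplus dV^n)(0, 1) = dV^n(1)$. If $dV^n(1)$ were zero in $W_{n+1}\Omega^1_A$, then exactness would force $(0,1) \in \Omega^1_A \oplus A$ to lie in the image of $a \mapsto (-da, p^n a)$; that would give some $a_0 \in A$ with $p^n a_0 = 1$, so $p$ would be a unit in $A$. But $A$ is a $p$-adically complete ring which is non-zero (perfectoid rings are non-zero by definition), and in any such ring $p$ is a non-unit. This contradiction shows $dV^n(1) \neq 0$ in $W_{n+1}\Omega^1_A$, hence also in $W_{n+1}\Omega^1_{A_0}$.

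The argument is essentially immediate once Theorem~\ref{sequence theorem} is available; the only non-trivial ingredient is the exactness statement itself, whose proof is carried out elsewhere in the paper. No step of the present deduction presents a genuine obstacle beyond the elementary observation that $p$ is never a unit in a non-zero $p$-adically complete ring.
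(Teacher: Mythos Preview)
Your proposal is correct and follows exactly the paper's two-step argument: reduce to $A$ via functoriality of the de\thinspace Rham-Witt complex, then invoke exactness of the sequence~\eqref{sequence} from Theorem~\ref{sequence theorem} to see that $dV^n(1)\neq 0$ in $W_{n+1}\Omega^1_A$. You merely spell out the last step (that $(0,1)$ in the image of $(-d,p^n)$ would make $p$ a unit) which the paper leaves implicit.
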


\begin{proof}
The sequence (\ref{sequence}) is exact for the $p$-torsion-free perfectoid ring~$A$ by Theorem~\ref{sequence theorem}.  This shows that $dV^n(1) \neq 0 \in W_{n+1}\Omega^1_{A}$.  By considering the map $W_{n+1}\Omega^1_{A_0} \rightarrow W_{n+1}\Omega^1_{A}$ induced by functoriality, we see that we must also have $dV^n(1) \neq 0 \in W_{n+1}\Omega^1_{A_0}$.
\end{proof}

The fact that Proposition~\ref{dV1} is not obvious, even in the case $A = \ZZ_{(p)}$, underscores the difficulty of proving that elements in the de\thinspace Rham-Witt complex are non-zero.

We briefly point out in the following question that we are unsure how restrictive is the hypothesis from Proposition~\ref{dV1}.

\begin{question}
Does the hypothesis of Proposition~\ref{dV1} hold for every $p$-torsion-free, $p$-adically separated ring?
\end{question}

The following provides another basic application of Theorem~\ref{sequence theorem}.  Proposition~\ref{dV1} is stated mostly as a curiosity.  On the other hand, the following, Proposition~\ref{V is injective}, is more important and will be used at several different points later in this paper.

\begin{proposition} \label{V is injective}
Let $A$ denote any $p$-torsion-free $\ZZ_{(p)}$-algebra for which the sequence (\ref{sequence}) is exact for all integers $n \geq 1$.  Then for all integers $m,n \geq 1$, the map $V^m: W_n \Omega^1_A \rightarrow W_{n+m} \Omega^1_A$ is injective.
\end{proposition}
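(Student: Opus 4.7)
The plan is to first establish the $m=1$ case, namely that $V\colon W_n\Omega^1_A \to W_{n+1}\Omega^1_A$ is injective for every $n\geq 1$, by induction on $n$. Granted this, the full statement follows by induction on $m$: if $V^{m+1}\omega = V(V^m\omega) = 0$, then $V^m\omega = 0$ by the $m=1$ case applied at level $n+m$, and $\omega = 0$ by the inductive hypothesis on $m$.

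For the base case $n=1$: if $\omega \in \Omega^1_A$ satisfies $V\omega=0$, then $(\omega,0)$ lies in the kernel of $V\oplus dV$, so by exactness of (\ref{sequence}) with $n=1$ there exists $a_0\in A$ with $\omega=-da_0$ and $pa_0=0$. The $p$-torsion-freeness of $A$ forces $a_0=0$, hence $\omega=0$.

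For the inductive step, suppose $V\colon W_n\Omega^1_A \to W_{n+1}\Omega^1_A$ is injective, and take $\eta\in W_{n+1}\Omega^1_A$ with $V\eta=0$. Applying $R$ and using $RV=VR$ gives $V(R\eta)=0$ in $W_{n+1}\Omega^1_A$, so $R\eta=0$ by the inductive hypothesis; exactness of (\ref{sequence}) at level $n$ then lets me write $\eta=V^n\alpha+dV^n a$ for some $\alpha\in\Omega^1_A$ and $a\in A$. The main technical step is to prove the identity $V(dV^n(a))=dV^{n+1}(pa)$ using the standard Witt-complex relations: $FdV=d$ gives $dV^n(a)=F(dV^{n+1}(a))$; the projection formula $V(\omega\cdot F\eta)=V\omega\cdot\eta$ then yields $V(dV^n(a))=V(1)\cdot dV^{n+1}(a)$; Leibniz reduces this to $d(V(1)\cdot V^{n+1}(a))-V^{n+1}(a)\cdot dV(1)$, where the first term equals $d(pV^{n+1}(a))=dV^{n+1}(pa)$ via $V(1)\cdot V^{n+1}(a)=V(FV(V^n(a)))=pV^{n+1}(a)$, and the second term vanishes by another application of the projection formula together with $F(dV(1))=d(1)=0$.

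With this identity in hand, $V\eta = V^{n+1}\alpha + dV^{n+1}(pa) = 0$ in $W_{n+2}\Omega^1_A$, so exactness of (\ref{sequence}) at level $n+1$ produces $a_0\in A$ with $\alpha=-da_0$ and $pa=p^{n+1}a_0$. The $p$-torsion-free hypothesis upgrades this to $a=p^n a_0$, so $(\alpha,a)=(-d,p^n)(a_0)$ lies in the kernel of $V^n\oplus dV^n$ by exactness, forcing $\eta=V^n\alpha+dV^n a=0$. The main obstacle I anticipate is organizing the Witt-complex identities cleanly to derive $V(dV^n(a))=dV^{n+1}(pa)$; the remainder of the argument is a routine chase through the exact sequence combined with the $p$-torsion-freeness of $A$.
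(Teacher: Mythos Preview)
Your proof is correct and follows essentially the same approach as the paper: induction on $n$ for the $m=1$ case, using $RV=VR$ and the inductive hypothesis to write $\eta$ in the form $V^n\alpha + dV^n a$, then applying exactness at level $n+1$ together with $p$-torsion-freeness. The only difference is that you spell out the derivation of $V(dV^n(a)) = dV^{n+1}(pa)$ in detail, whereas the paper invokes it directly as the standard Witt-complex identity $Vd = p\,dV$.
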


\begin{proof}
Because $V^m = V \circ \cdots \circ V$ ($m$ total iterations), it suffices to prove the map $V$ is injective.  We prove that $V$ is injective using induction on $n$.  When $n = 1$, $W_n \Omega^1_A = \Omega^1_A$, and the result follows from the exactness of the sequence~(\ref{sequence}), using the fact that $A$ is $p$-torsion free.  

Now assume we know that $V: W_n \Omega^1_A \rightarrow W_{n+1} \Omega^1_A$ is injective.   Consider an element $x \in W_{n+1} \Omega^1_A$ such that $V(x) = 0 \in W_{n+2} \Omega^1_A$.  Let $R$ denote the restriction map $R: W_{m+1}\Omega^1_A \rightarrow W_m\Omega^1_A$ (for some $m$). Because $R \circ V = V \circ R$ and $V: W_n \Omega^1_A \rightarrow W_{n+1} \Omega^1_A$ is injective by our induction hypothesis, we deduce that $R(x) = 0 \in W_n\Omega^1_A$.  Hence by exactness of the sequence~(\ref{sequence}), there exist $\alpha \in \Omega^1_A$ and  $a \in A$ such that 
\[
 x = V^n(\alpha) + dV^n(a)\in W_{n+1} \Omega^1_A.
\]
Applying $V$ to this element, we get
\[
0 = V(x) = V^{n+1}(\alpha) + VdV^n(a) = V^{n+1}(\alpha) + dV^{n+1}(pa)  \in W_{n+2} \Omega^1_A.  
\]
Using (the most difficult part of) exactness of the sequence~(\ref{sequence}), we have that $pa = p^{n+1} a_0$ and $\alpha = -da_0$ for some $a_0 \in A$.  Because $A$ is $p$-torsion free, we have that $a = p^n a_0$.  Thus 
\[
x = V^n(-da_0) + dV^n(p^n a_0) = 0 \in W_{n+1} \Omega^1_A,
\] 
which completes the proof.  
\end{proof}

\begin{remark}
The analogue of Proposition~\ref{V is injective} is not true in the classical case that $A$ is an $\FF_p$-algebra.  For example, if $A = \FF_p[x]$ and we consider $dx \in \Omega^1_{A} \cong W_1 \Omega^1_A$, then $dx \neq 0 \in W_1 \Omega^1_A$, but $V(dx) = dV(px) = dV(0) = 0 \in W_2 \Omega^1_A$.  (Nor does the analogue of Proposition~\ref{dV1} hold for the case that $A$ is an $\FF_p$-algebra.  Indeed, $V(1) = p = 0 \in W(A)$ when $A$ is an $\FF_p$-algebra, so $dV^n(1) = 0$ for all integers $n \geq 1$.) 
\end{remark}

We next describe the main result of this paper, Theorem~\ref{Tate theorem}.  It is completely modeled after the work of Hesselholt in \cite{Hes06}, where the case of the ring of integers in an algebraic closure of a local field is considered.  

\begin{customtheorem}{C} \label{Tate theorem}
Let $A$ denote a $p$-torsion-free perfectoid ring containing a compatible system of $p$-power roots of unity (see Notation~\ref{has roots of unity}).  Then the following hold:
\begin{enumerate}
\item \label{intro p^r torsion} For all integers $n \geq 1$ and $r \geq 1$, the $p^r$-torsion $W_n\Omega^1_A[p^r]$ is a free $W_n(A)/p^r W_n(A)$-module of rank~one.
\item \label{intro Tate dRW} For every integer $n \geq 1$, the $p$-adic Tate module $T_p \left( W_n \Omega^1_A \right)$ is a free $W_n(A)$-module of rank~one.
\item \label{intro Ainf} The inverse limit $\displaystyle \varprojlim_F T_p \left(W_n \Omega^1_A \right)$ is a free  $\displaystyle \varprojlim_F \left(W_n(A)\right) \cong W\left( \varprojlim_{x \mapsto x^p} A/pA \right)$-module of rank~one.
\end{enumerate}
\end{customtheorem}

The proofs of these claims are located as follows.  Claim~(\ref{intro Tate dRW}) follows from Theorem~\ref{generator of Tate}.  Once we know Claim~(\ref{intro Tate dRW}), then Claim~(\ref{intro p^r torsion}) is a consequence of Lemma~\ref{p-torsion vs Tate module}. Lastly, Claim~(\ref{intro Ainf}) follows from Corollary~\ref{inverse F}.  See \cite[Theorem~B, Proposition~2.3.2, and Proposition~2.4.2]{Hes06} for closely related results.

\begin{remark}
The following are possible choices of $A$ satisfying the conditions of Theorem~\ref{Tate theorem}: $\OCp$,  $\ZZ_p[\zeta_{p^{\infty}}]^{\wedge}$, $\ZZ_p[\zeta_{p^{\infty}}$, $p^{1/p^{\infty}}]^{\wedge}$,  $\OCp\langle T^{1/p^{\infty}} \rangle$.   An example of a $p$-torsion-free perfectoid ring which does not contain the $p$-power roots of unity is $\ZZ_p[p^{1/p^{\infty}}]^{\wedge}$.  Thus our Theorems~\ref{intro relative Omega} and \ref{sequence theorem} hold for $\ZZ_p[p^{1/p^{\infty}}]^{\wedge}$, but we are unsure if Theorem~\ref{Tate theorem} holds for this ring. 
\end{remark}

\begin{remark}
Our proofs are closely modeled on the work of Hesselholt \cite{Hes06} which concerned  the case of the ring of integers in an algebraic closure of a  local field,  such as $A = \mathcal{O}_{\overline{\QQ_p}}$. Notice that $\mathcal{O}_{\overline{\QQ_p}}$ is not covered by our Theorem~\ref{Tate theorem}, because $\mathcal{O}_{\overline{\QQ_p}}$ is not $p$-adically complete and hence not perfectoid.  If $K/\QQ_p$ is algebraic and $(\mathcal{O}_K)^{\wedge}$ is a perfectoid ring, then we expect analogues of most of our results (and the referenced results of Hesselholt) to hold for $A = \mathcal{O}_K$, and it should be straightforward to adjust our techniques to this situation.  For example, it should follow that the $p$-adic Tate module $T_p \left( W_n \Omega^1_{\mathcal{O}_K} \right)$ is a free $W_n(\mathcal{O}_K)^{\wedge}$-module of rank~one.   We do not consider non-$p$-adically complete rings in this paper for two reasons.  The first reason is that, by restricting our attention to the $p$-adically complete case, we do not have to reprove results which are essentially automatic in the perfectoid case (such as the kernel of $F: W_{n+1}(A) \rightarrow W_n(A)$ being a principal ideal).  The second reason is that we are unsure what the correct generality to consider is in the non-$p$-adically complete setting.   We thank Kiran Kedlaya for calling this question to our attention. 
\end{remark}

One of the main motivations for studying the $p$-adic Tate modules $T_p \left( W_n \Omega^1_A \right)$ and $\displaystyle \varprojlim_F T_p \left(W_n \Omega^1_A \right)$ comes from topology. The $p$-adic algebraic K-theory $K(A, \ZZ_p)$ maps via the trace maps to $\TR^n(A, \ZZ_p)$ and $\TF(A, \ZZ_p)$ which are spectra obtained from the cyclotomic structure on the topological Hochschild homology $\THH(A)$. Using results of Hesselholt (\cite{Hes04} and \cite{Hes06}) and their generalizations \cite[Remark 6.6]{BMS18}, one can conclude that the homotopy groups $\pi_2 \TR^n(A, \ZZ_p)$ and $\pi_2\TF(A, \ZZ_p)$ are isomorphic to the $p$-adic Tate modules  $T_p \left( W_n \Omega^1_A \right)$ and $\displaystyle \varprojlim_F T_p \left(W_n \Omega^1_A \right)$, respectively. Moreover, one can give an explicit description of the trace map $K_2(A, \ZZ_p) \to \pi_2 \TF(A, \ZZ_p)$ using the generator from the proof of Theorem \ref{Tate theorem}. For more details see the final Section \ref{tc section}, where we summarize some results on algebraic K-theory and topological cyclic homology and their connections with the main results of this paper. Most of the material in Section \ref{tc section} is well-known to the experts but we still find it important to give an account since it puts the algebraic objects of this paper in a broader topological context. The readers only interested in algebraic aspects of the de\thinspace Rham-Witt can safely skip the last section. 
 
There are two key hypotheses on the ring $A$ in Theorem~\ref{Tate theorem}.  The first is that $A$ contain a compatible sequence of $p$-power roots of unity.  The second  is that $A$ be a $p$-torsion-free perfectoid ring.   It will be repeatedly evident why the $p$-power root of unity hypothesis is important for our proof of Theorem~\ref{Tate theorem}: our constructions make constant use of the elements $\zeta_{p^n}$ for varying $n$.

The other key hypothesis on the ring $A$ in Theorem~\ref{Tate theorem} is that it be perfectoid. We offer three explanations for why this assumption is convenient.  
\begin{enumerate} 
\item A first reason is mentioned below in Remark~\ref{discrete Tate}.  That example suggests it is essential that the ring~$A$ be infinitely ramified over $\ZZ_p$, at least in the case of subrings of $\OCp$.
\item A second reason is implicit in Proposition~\ref{mult by p is surjective} below.  The condition that $A$ be perfectoid is closely related to the condition that, for all integers $n \geq 1$, the map $F: W_{n+1}(A) \rightarrow W_n(A)$ is surjective.  For example, the condition that $F: W_2(A) \rightarrow W_1(A)$ be surjective is the same as the condition that the $p$-power Frobenius map is surjective on $A/pA$.
\item A third reason was already mentioned above in Remark~\ref{intro remark Kahler}, where we commented that, in the usual notation regarding perfectoid rings, it is useful for us that $\ker \theta$ be a principal ideal.  
\end{enumerate}

\begin{remark} \label{discrete Tate}
Let $K/\QQ_p$ be an algebraic extension such that the ramification index of $K/\QQ_p$ is \emph{finite}.  Let $\mathcal{O}_K$ be its ring of integers.  By \cite[\S 2]{Fon81}, there exists an integer $N \geq 1$ such that $p^N \Omega^1_{\mathcal{O}_K/\ZZ_p} = 0$, and hence the $p$-adic Tate module $T_p(W_n\Omega^1_{\mathcal{O}_K})$ is trivial.  Thus, the analogue of the level $n = 1$ case of Theorem~\ref{Tate theorem} is false for $A = \mathcal{O}_K$.  (Using induction and an exact sequence analogous to the sequence in Proposition~\ref{zeta cor}, it should also follow that $T_p(W_n\Omega^1_{\mathcal{O}_K}) \cong 0$ for all integers $n \geq 1$, and thus the analogue of Theorem~\ref{Tate theorem} is false for all $n \geq 1$.)
\end{remark}

We say a ring $A$ is \emph{Witt-perfect} (at $p$) if for every integer $n \geq 1$, the Witt vector Frobenius $F: W_{n+1}(A) \rightarrow W_n(A)$ is surjective.  See \cite[Section~5]{DK14}. It turns out this is a much less restrictive condition than requiring that $F: W(A) \rightarrow W(A)$ be surjective.  All perfectoid rings are Witt-perfect, but for example, even $\OCp$ does not satisfy the condition of $F: W(\OCp) \rightarrow W(\OCp)$ being surjective; again see \cite{DK14}.

\begin{proposition} \label{mult by p is surjective}
Let $A$ denote a Witt-perfect ring (as defined in the preceding paragraph).  Then for all integers $n \geq 1$ and $d \geq 1$, the multiplication-by-$p$ map
\[
p: W_n\Omega^d_A \rightarrow W_n\Omega^d_A
\]
is surjective.
\end{proposition}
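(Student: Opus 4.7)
The plan is to show that every element of $dW_n(A) \subseteq W_n\Omega^1_A$ is already divisible by $p$, and then use the multiplicative structure of the de\thinspace Rham-Witt complex to propagate this to higher degrees.

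The first step is to establish the identity $dF = p \cdot Fd$ as maps $W_{n+1}\Omega^q_A \to W_n\Omega^{q+1}_A$. I would verify this on the generators of the de\thinspace Rham-Witt complex and extend by the algebra structure. On a Teichm\"uller representative $[a]$, one has $dF[a] = d[a]^p = p[a]^{p-1}d[a] = p \cdot Fd[a]$, using the axiom $Fd[a] = [a]^{p-1}d[a]$ and the Leibniz rule. On a Verschiebung $V(y)$, one has $dF(V(y)) = d(py) = p\, dy$, while $p\cdot Fd(V(y)) = p \cdot FdV(y) = p\, dy$ by the axiom $FdV = d$. Since $F$ is a ring map and $d$ is a graded derivation, both $dF$ and $pFd$ are $F$-twisted derivations out of the graded algebra $W_{n+1}\Omega^*_A$, so agreement on generators (Witt vectors) propagates to all elements by a standard Leibniz calculation.

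The second step is the key reduction. Given any $b \in W_n(A)$, the Witt-perfect hypothesis provides $\tilde b \in W_{n+1}(A)$ with $F(\tilde b) = b$, and then
\[
db = dF(\tilde b) = p \cdot Fd(\tilde b) \in p \cdot W_n\Omega^1_A.
\]
Thus every element of the subgroup $dW_n(A) \subseteq W_n\Omega^1_A$ is divisible by $p$.

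For the final step, I would use the fact that $W_n\Omega^*_A$ is a graded-commutative differential graded algebra generated in degree $0$ by $W_n(A)$; consequently $W_n\Omega^d_A$ is generated as a $W_n(A)$-module by products of the form $db_1 \cdots db_d$ with $b_i \in W_n(A)$. By the previous step, each factor $db_i$ lies in $p\cdot W_n\Omega^1_A$, so such a product lies in $p^d \cdot W_n\Omega^d_A \subseteq p \cdot W_n\Omega^d_A$ (since $d \geq 1$). Therefore every element of $W_n\Omega^d_A$ is divisible by $p$, proving surjectivity. The main obstacle is really the verification of the identity $dF = pFd$; once that is in hand, the rest is a direct application of the universal/algebraic generation of $W_n\Omega^*_A$ as a dga over $W_n(A)$.
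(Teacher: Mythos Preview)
Your proof is correct and follows essentially the same route as the paper's: both arguments use that $W_n\Omega^d_A$ is spanned by products $x\,dy_1\cdots dy_d$ with $x,y_i\in W_n(A)$, lift a $y_i$ along $F$ using Witt-perfectness, and apply the identity $dF=pFd$ to produce a factor of $p$. The only differences are cosmetic: the paper treats $dF=pFd$ as a known Witt-complex identity and applies it to just one factor $y_1$, whereas you prove the identity from the axioms and apply it to all $d$ factors (giving divisibility by $p^d$, which is more than needed).
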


\begin{proof}
Note that any element in $W_n\Omega^d_A$ can be written as a sum of terms $x dy_1 \cdots dy_d$, where $x, y_i \in W_n(A)$.  Because $A$ is Witt-perfect, we can find $y_1' \in W_{n+1}(A)$ such that $F(y_1') = y_1$.  But then we have
\[
x dy_1 dy_2\cdots dy_d = x dF(y_1') dy_2 \cdots dy_d = px F(dy_1') dy_2 \cdots dy_d.
\]
This shows that every element in $W_n\Omega^d_A$ is a multiple of $p$.
\end{proof}

The most difficult part of the proof of Theorem~\ref{Tate theorem} is constructing suitably compatible $W_n(A)/pW_n(A)$-module generators for the $p$-torsion $W_n\Omega^1_A[p]$.  We construct them using induction on $n$, and our induction makes repeated use of Theorem~\ref{sequence theorem}.  
Once one has free $W_n(A)/pW_n(A)$-module generators for the $p$-torsion in $W_n\Omega^1_A$, it is relatively easy to produce free $W_n(A)/p^rW_n(A)$-module generators for the $p^r$-torsion, and by choosing everything compatibly, we are able to produce free $W_n(A)$-module generators for the $p$-adic Tate module of $W_n\Omega^1_A$ and free $\varprojlim_F \left(W_n(A)\right)$-module generators for $\varprojlim_F T_p(W_n\Omega^1_A)$.

\begin{remark}
Throughout this paper we consider only $p$-torsion-free perfectoid rings.  In the case of characteristic~$p$ perfectoid rings, the de\thinspace Rham-Witt complex is not interesting.  Namely, let $A$ denote a perfectoid ring of characteristic~$p$; this condition is equivalent to $A$ being a perfect ring of characteristic~$p$ (see, for example, \cite[Example~3.15]{BMS16}). Let $n \geq 1$ denote an integer.  Then $W_n\Omega^d_A$ a $W_n(A)$-module, and hence is $p^n$-torsion, on one hand, but on the other hand, multiplication by $p$ is surjective on $W_n\Omega^d_A$ for all degrees $d \geq 1$.  Thus $W_n\Omega^d_A \cong 0$ for all $d \geq 1$.   
We have not considered the case of perfectoid rings which are not characteristic~$p$ but which do have $p$-torsion.
\end{remark}

\begin{question}
Our construction of Frobenius-compatible generators of the $p$-torsion in $W_n\Omega^1_A$ is rather indirect for the levels $n \geq 2$.  The proof of Theorem~\ref{Tate theorem} would be simplified considerably if we could find a more direct construction.  We now describe one possible approach.  For every integer $n \geq 1$, one can check that there is a unique Witt vector $\frac{p^n}{[\zeta_{p^n}] - 1}\in W_n(A)$ such that $\frac{p^n}{[\zeta_{p^n}] - 1} \cdot ([\zeta_{p^n}] - 1) = p^n \in W_n(A)$.  The elements $\frac{p^n}{[\zeta_{p^n}] - 1} \dlog [\zeta_{p^{n+1}}]$ seemingly have all the compatibility properties needed to make our proofs work, although we are only able to prove they are $p$-torsion in the case $n = 1$.  Are these elements $p$-torsion?  If not, can they be modified  to produce generators for the $p$-torsion which are more explicit than what we work with in our proof of Theorem~\ref{Tate theorem}?
\end{question}

\begin{remark}
Our proofs require $p \neq 2$.  Attaining similar results for $p = 2$ would necessitate substantial changes.  For example, even the definition of the de\thinspace Rham-Witt complex we use is incorrect for $p = 2$ (see \cite{Cos08} for the correct definition).  The requirement $p \neq 2$ is also used in the first author's paper \cite{D17}, so the results from \cite{D17} which are used in the present paper would also have to be adjusted to allow for $p = 2$.
\end{remark}

\begin{notation}
Rings in this paper are commutative and have unity, and ring homomorphisms must send unity to unity.  Throughout this paper, $p \geq 3$ is a fixed \emph{odd} prime.  When we refer to Witt vectors, we mean $p$-typical Witt vectors with respect to this prime, and when we refer to a ring being \emph{perfectoid}, it is in the sense of \cite[Definition~3.5]{BMS16}, and it is with respect to this same prime~$p$.    When we say that $\zeta_p$ is a primitive $p$-th root of unity, we mean that $1 + \zeta_p + \cdots + \zeta_p^{p-1} = 0$.   

Here is some more specialized notation.  For a perfectoid ring  $A$ and an integer $n \geq 2$, we let $z_n \in W_n(A)$ denote a generator of $\ker F^{n-1}: W_n(A) \rightarrow W_1(A);$ in the case that we have identified a compatible system of roots of unity of $A$, then we will usually choose the generator $1 + [\zeta_{p^n}] + \cdots + [\zeta_{p^n}]^{p-1}$.  If $u$ is a unit in $A$ (or in $W_n(A)$), we write $\dlog u$ for $\frac{du}{u}$ in $\Omega^1_A$ (or in $W_n\Omega^1_A$).  
\end{notation}

\subsection*{Acknowledgments} The first author is very grateful to Lars Hesselholt, who introduced and explained many aspects of this project to him.  (The project began around 2014 when the first author was a postdoc of Lars Hesselholt at the University of Copenhagen.)  The first author would also like to especially thank Bhargav Bhatt for assistance at many different points, especially during a visit to the University of Michigan.  Furthermore, both authors thank Johannes Ansch\"utz, Bryden Cais, Dustin Clausen, Elden Elmanto, Kiran Kedlaya, Arthur-C\'esar Le Bras, Thomas Nikolaus, Peter Scholze, and David Zureick-Brown for useful conversations regarding this paper. Both authors thank the Department of Mathematical Sciences of the University of Copenhagen for its hospitality and pleasant working environment.

\section{Algebraic preliminaries}

In this section we gather miscellaneous algebraic results that will be used later.   This section is organized so that all the results concerning Witt vectors come at the end. The de\thinspace Rham-Witt complex does not appear in this section.  The reader is advised to skip this entire section and return to it as needed.

We begin with a few standard properties related to the cotangent complex.   We use the cotangent complex extensively in Section~\ref{Kahler section}, with regards to analyzing $p$-power torsion in modules of K\"ahler differentials.  (The cotangent complex is not used in the rest of the paper, so the reader who is willing to accept these results concerning K\"ahler differentials can skip the arguments involving the cotangent complex.)  There are two results concerning the cotangent complex that we will use repeatedly, the Jacobi-Zariski sequence and the Universal coefficient theorem.

\begin{proposition}[{The Jacobi-Zariski sequence \cite[Tag~08QX]{stacks-project}}]
Let $A \rightarrow B \rightarrow C$ be ring homomorphisms.  Then there is an exact triangle in the derived category of $C$-modules
\begin{equation} \label{Jacobi-Zariski}
L_{B/A} \otimes_B^L C \rightarrow L_{C/A} \rightarrow L_{C/B},
\end{equation}
where $\otimes^L_B$ denotes the derived tensor product.
\end{proposition}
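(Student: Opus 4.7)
The plan is to reduce to the case of polynomial extensions, where the analogue of the triangle is a split short exact sequence of Kähler differentials, and then pass to the derived category by taking simplicial resolutions.

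First I would choose a free simplicial resolution $P_\bullet \twoheadrightarrow B$ of $B$ as an $A$-algebra (each $P_n$ a polynomial $A$-algebra, augmented to $B$ by a quasi-isomorphism of simplicial rings). Then I would choose a free simplicial resolution $Q_\bullet \twoheadrightarrow C$ of $C$ as a $P_\bullet$-algebra (each $Q_n$ a polynomial $P_n$-algebra). Composing, one sees that $Q_\bullet$ is also a free simplicial resolution of $C$ as an $A$-algebra. For each $n$, the extensions $A \to P_n \to Q_n$ are polynomial, so the standard short exact sequence
\[
0 \to \Omega^1_{P_n/A} \otimes_{P_n} Q_n \to \Omega^1_{Q_n/A} \to \Omega^1_{Q_n/P_n} \to 0
\]
is split exact (the splitting coming from a choice of polynomial generators). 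Assembling these levelwise split short exact sequences over the simplicial ring $Q_\bullet$ gives a short exact sequence of simplicial $Q_\bullet$-modules.

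Next I would base change everything along $Q_\bullet \to C$ and identify the three terms in the derived category of $C$-modules. By definition of the cotangent complex, $\Omega^1_{Q_\bullet/A} \otimes_{Q_\bullet} C \simeq L_{C/A}$ and $\Omega^1_{Q_\bullet/P_\bullet} \otimes_{Q_\bullet} C \simeq L_{C/B}$, the latter using that $P_\bullet \to Q_\bullet$ is a resolution over $B \to C$ and that the cotangent complex is independent of the choice of resolution. For the remaining term, one uses that $\Omega^1_{P_n/A}$ is a free $P_n$-module, so $\Omega^1_{P_\bullet/A} \otimes_{P_\bullet} B$ is a termwise flat simplicial $B$-module, and hence
\[
\Omega^1_{P_\bullet/A} \otimes_{P_\bullet} Q_\bullet \otimes_{Q_\bullet} C \;\simeq\; \bigl(\Omega^1_{P_\bullet/A} \otimes_{P_\bullet} B\bigr) \otimes^L_B C \;\simeq\; L_{B/A} \otimes^L_B C.
\]
Once these identifications are in hand, the short exact sequence of simplicial modules yields the desired distinguished triangle.

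The main obstacle will be the identification of the first term with $L_{B/A} \otimes^L_B C$: one must check that the base change from $P_\bullet$ to $Q_\bullet$ can be interchanged with the computation of $L_{B/A}$, which relies on the flatness of $Q_n$ over $P_n$ (so that $\Omega^1_{P_\bullet/A} \otimes_{P_\bullet} Q_\bullet$ is termwise free over $Q_\bullet$) and on the invariance of the cotangent complex under change of simplicial resolution. Since the proposition is a standard result recorded in \cite[Tag~08QX]{stacks-project}, one can simply cite it; the above is how one would reconstruct a proof from scratch.
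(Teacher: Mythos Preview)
Your sketch is correct and follows the standard argument via simplicial resolutions, which is essentially the proof given in the cited reference \cite[Tag~08QX]{stacks-project}. The paper itself does not supply a proof of this proposition at all; it simply records the statement with the citation, so there is nothing further to compare.
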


In the following, note that the stated condition in Part~(\ref{tor part}) holds in particular when $R$ is a PID.

\begin{proposition}[The universal coefficient theorem] \label{universal coefficient theorem} \ 
\begin{enumerate}
\item \label{tor part} Let $R$ be a ring, let $C$ denote a chain complex of $R$-modules, and let $M$ denote an $R$-module.  Assume that, for every $R$-module $N$ and every $j \geq 2$, we have
\[
\Tor^R_j (N, M) \cong 0.
\]
Then for every $i \in \ZZ$ we have a short exact sequence of $R$-modules
\[
0 \rightarrow H^i(C) \otimes_R M \rightarrow H^i(C\otimes_R^L M) \rightarrow \Tor_1^R(H^{i+1}(C), M) \rightarrow 0,
\]
where $\otimes_R^L$ denotes the derived tensor product.
\item \label{ext part} Let $C$ denote a chain complex of abelian groups, and let $C^{\wedge}$ denote its derived $p$-completion, which by definition is equivalent to the derived Hom-complex 
\[
\mathbf{R}\Hom(\QQ_p/\ZZ_p[1], C).
\]
Then for every $i \in \ZZ$ we have a short exact sequence of abelian groups 
\[
0 \to \Ext^1(\QQ_p/\ZZ_p, H^i(C)) \to H^i(C^{\wedge}) \to \Hom(\QQ_p/\ZZ_p, H^{i+1}(C))   \to 0.
\]
\item \label{new ext part}  Let $C$ denote a chain complex of $R$-modules and let $G$ denote an abelian group.  For every $i \in \ZZ$, we have a short exact sequence of $R$-modules 
\[
0 \to \Ext^1(G, H^i(C)) \to H^i(\mathbf{R}\Hom(G[1], C)) \to \Hom(G, H^{i+1}(C))   \to 0.
\]
In particular, if we let $C^{\wedge}$ denote the derived $p$-completion of $C$, which by definition is equivalent to 
\[
\mathbf{R}\Hom(\QQ_p/\ZZ_p[1], C),
\]
then for every $i \in \ZZ$, we have a short exact sequence of $R$-modules 
\[
0 \to \Ext^1(\QQ_p/\ZZ_p, H^i(C)) \to H^i(C^{\wedge}) \to \Hom(\QQ_p/\ZZ_p, H^{i+1}(C))   \to 0.
\]
Also, for every integer $n \geq 1$ and every $i \in \ZZ$, we have a short exact sequence of $R$-modules
\[
0 \rightarrow H^{i}(C)/p^nH^i(C) \rightarrow H^i(C\otimes_{\ZZ}^L (\ZZ/p^n\ZZ)) \rightarrow H^{i+1}(C)[p^n] \rightarrow 0.
\]
\end{enumerate}
\end{proposition}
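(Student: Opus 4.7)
The plan is to deduce all three parts from the standard hypercohomology spectral sequences for $\Tor$ and $\Ext$, each of which collapses under the respective homological-dimension hypothesis.

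For Part~(\ref{tor part}), the condition $\Tor_j^R(-, M) = 0$ for $j \geq 2$ says $\projdim_R M \leq 1$, so we may choose a two-term projective resolution $0 \to P_1 \to P_0 \to M \to 0$. Since $P_0, P_1$ are flat, the distinguished triangle $C \otimes_R P_1 \to C \otimes_R P_0 \to C \otimes_R^L M$ produces a long exact sequence of $R$-modules in which $H^j(C \otimes_R P_\ell) = H^j(C) \otimes_R P_\ell$. The cokernel of $H^i(C) \otimes P_1 \to H^i(C) \otimes P_0$ is $H^i(C) \otimes_R M$, and the kernel of $H^{i+1}(C) \otimes P_1 \to H^{i+1}(C) \otimes P_0$ is $\Tor_1^R(H^{i+1}(C), M)$; extracting the corresponding short exact sequence from the long exact sequence gives the claim.

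For Part~(\ref{new ext part}), the analogous argument uses the hypercohomology spectral sequence
\[
E_2^{p,q} = \Ext^p_{\ZZ}(G, H^q(C)) \Rightarrow H^{p+q}(\mathbf{R}\Hom_{\ZZ}(G, C)).
\]
Since $\ZZ$ has global dimension $1$, the columns $p \geq 2$ vanish, so $E_2 = E_\infty$ and one obtains
\[
0 \to \Ext^1_{\ZZ}(G, H^{i-1}(C)) \to H^i(\mathbf{R}\Hom_{\ZZ}(G, C)) \to \Hom_{\ZZ}(G, H^i(C)) \to 0.
\]
Substituting $i+1$ for $i$ and using the shift identity $H^i(\mathbf{R}\Hom(G[1], C)) = H^{i+1}(\mathbf{R}\Hom(G, C))$ produces the first displayed sequence of Part~(\ref{new ext part}); specializing to $G = \QQ_p/\ZZ_p$ recovers both Part~(\ref{ext part}) and the second displayed sequence. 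For the final $\ZZ/p^n$-coefficient statement, I would simply invoke Part~(\ref{tor part}) over $R = \ZZ$ with $M = \ZZ/p^n\ZZ$, whose resolution $0 \to \ZZ \xrightarrow{p^n} \ZZ \to \ZZ/p^n\ZZ \to 0$ makes the hypothesis automatic, and then identify $H^i(C) \otimes_{\ZZ} \ZZ/p^n\ZZ = H^i(C)/p^n H^i(C)$ and $\Tor^{\ZZ}_1(H^{i+1}(C), \ZZ/p^n\ZZ) = H^{i+1}(C)[p^n]$.

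Because both statements are formal consequences of standard spectral sequences that degenerate for trivial dimension reasons, no genuine obstacle arises; the only step requiring care is bookkeeping the cohomological shift convention for $\mathbf{R}\Hom(G[1], -)$ and keeping track of which column of the $\Tor$ or $\Ext$ spectral sequence contributes the sub- versus the quotient-module in the resulting extension.
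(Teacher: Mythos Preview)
Your argument is correct and, in fact, more detailed than the paper's own treatment: the paper simply cites \cite[Section~7.4 and Section~10.10]{Rot} and moves on, whereas you spell out the two-column spectral sequence (equivalently, the long exact sequence coming from a length-one resolution) that underlies the textbook proof.

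One small slip: in Part~(\ref{tor part}) the hypothesis $\Tor_j^R(-,M)=0$ for $j\ge 2$ gives $M$ \emph{flat} dimension $\le 1$, not a priori projective dimension $\le 1$. Your argument goes through verbatim if you replace ``projective resolution $0\to P_1\to P_0\to M\to 0$'' by a two-term flat resolution $0\to F_1\to F_0\to M\to 0$; flatness is exactly what you use when you write $H^j(C\otimes_R F_\ell)=H^j(C)\otimes_R F_\ell$ and when you identify $C\otimes_R^L M$ with the cone of $C\otimes F_1\to C\otimes F_0$.
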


\begin{proof} This follows immediately from \cite[Section 7.4]{Rot} and \cite[Section 10.10]{Rot}. 

\end{proof}

We have the following consequence.  We will use this result to analyze $p$-power torsion in the module of K\"ahler differentials $\Omega^1_{A/\ZZ_p} \cong H^0(L_{A/\ZZ_p})$.

\begin{corollary} \label{surj to Tate}
Let $A$ denote a $\ZZ_p$-algebra.  Let $(L_{A/\ZZ_p})^{\wedge}$ denote the derived $p$-completion of $L_{A/\ZZ_p}$, and let $T_p$ denote the $p$-adic Tate module.  There is a surjective $A$-module map
\[
H^{-1}\left((L_{A/\ZZ_p})^{\wedge}\right) \rightarrow T_p(H^0(L_{A/\ZZ_p})).
\]
It is natural in the sense that, if $A \rightarrow B$ is a $\ZZ_p$-algebra map, then the following diagram commutes:
\[
\xymatrix{
H^{-1}\left((L_{B/\ZZ_p})^{\wedge}\right) \ar[r] & T_p(H^0(L_{B/\ZZ_p}))\\
H^{-1}\left((L_{A/\ZZ_p})^{\wedge}\right) \ar[r] \ar[u] & T_p(H^0(L_{A/\ZZ_p})). \ar[u]
}
\]
\end{corollary}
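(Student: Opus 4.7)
The plan is to deduce the corollary almost directly from part (\ref{new ext part}) of the universal coefficient theorem in Proposition~\ref{universal coefficient theorem}, applied to $C = L_{A/\ZZ_p}$ and $G = \QQ_p/\ZZ_p$. Since the derived $p$-completion $(L_{A/\ZZ_p})^{\wedge}$ is by definition $\mathbf{R}\Hom(\QQ_p/\ZZ_p[1], L_{A/\ZZ_p})$, that proposition yields a short exact sequence of $A$-modules
\[
0 \to \Ext^1\bigl(\QQ_p/\ZZ_p,\, H^{-1}(L_{A/\ZZ_p})\bigr) \to H^{-1}\bigl((L_{A/\ZZ_p})^{\wedge}\bigr) \to \Hom\bigl(\QQ_p/\ZZ_p,\, H^0(L_{A/\ZZ_p})\bigr) \to 0.
\]
The right-hand map is the one we want; it is surjective by construction, so the only thing to verify is that its target is naturally isomorphic to $T_p(H^0(L_{A/\ZZ_p}))$ as an $A$-module.

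Next, I would recall the standard identification $\Hom(\QQ_p/\ZZ_p, M) \cong T_p(M)$ for any abelian group $M$. Writing $\QQ_p/\ZZ_p = \varinjlim_n \tfrac{1}{p^n}\ZZ/\ZZ$ as the colimit along the inclusions, one has
\[
\Hom(\QQ_p/\ZZ_p, M) \;=\; \varprojlim_n \Hom(\ZZ/p^n\ZZ, M) \;=\; \varprojlim_n M[p^n],
\]
with transition maps given by multiplication by $p$; this is exactly $T_p(M)$. Applied with $M = H^0(L_{A/\ZZ_p}) = \Omega^1_{A/\ZZ_p}$, we obtain the desired surjection onto $T_p(\Omega^1_{A/\ZZ_p})$. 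The identification is $A$-linear because both sides inherit the $A$-action from $M$, and the $\Hom$ construction is functorial in $M$.

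For naturality, a $\ZZ_p$-algebra map $A \to B$ induces a morphism of cotangent complexes $L_{A/\ZZ_p} \to L_{B/\ZZ_p}$ (in the derived category of $A$-modules, with $L_{B/\ZZ_p}$ regarded as an $A$-complex), and hence a commutative diagram of derived $p$-completions. The universal coefficient sequence of Proposition~\ref{universal coefficient theorem}(\ref{new ext part}) is natural in the complex $C$, since it is obtained by applying the functor $\mathbf{R}\Hom(\QQ_p/\ZZ_p[1], -)$. Likewise the identification $\Hom(\QQ_p/\ZZ_p, -) = T_p(-)$ is natural in $M$. Chaining these two naturalities yields the required commutative square.

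I expect no real obstacle here: the content of the corollary is already packaged inside Proposition~\ref{universal coefficient theorem}(\ref{new ext part}), and the only mildly non-trivial step is checking that the two natural $A$-module structures on $\Hom(\QQ_p/\ZZ_p, M)$ and on $T_p(M)$ coincide, which amounts to unwinding that both are induced from the $A$-action on $M$. Consequently the proof should be short and essentially bookkeeping once the universal coefficient sequence is in hand.
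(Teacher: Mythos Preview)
Your proposal is correct and is essentially the same argument as the paper's: apply the universal coefficient sequence from Proposition~\ref{universal coefficient theorem} to $C = L_{A/\ZZ_p}$ with $G = \QQ_p/\ZZ_p$, and identify $\Hom(\QQ_p/\ZZ_p,-)$ with $T_p(-)$. The only cosmetic difference is that you invoke Part~(\ref{new ext part}), which directly yields a sequence of $A$-modules, whereas the paper cites Part~(\ref{ext part}) and then separately remarks that the map is $A$-linear because $L_{A/\ZZ_p}$ is a complex of $A$-modules; your choice is arguably cleaner.
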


\begin{proof}
Note that for any chain complex $C$ and any $i \in \ZZ$, we have that $\Hom(\QQ_p/\ZZ_p, H^{i+1}(C))$ is naturally isomorphic to the p-adic Tate module $T_p(H^{i+1}(C))$.  Now if we take $C=L_{A/\ZZ_p}$, then by Proposition~\ref{universal coefficient theorem}, Part~(\ref{ext part}), we in particular get a surjective map 
\[
H^{-1}\left((L_{A/\ZZ_p})^{\wedge}\right) \rightarrow T_p(H^0(L_{A/\ZZ_p}))
\]
satisfying the stated naturality.  Moreover, this map is in fact an $A$-module map. This also follows by naturality, since the cotangent complex is a chain complex of $A$-modules. 
\end{proof}

When $R \rightarrow S$ is a smooth ring map, the cotangent complex is quasi-isomorphic to the module of K\"ahler differentials, concentrated in degree~zero \cite[Tag~08R5]{stacks-project}.  Proposition~\ref{Bhatt ex} below identifies two specific instances of ring maps $R \rightarrow S$ in which the cotangent complex is, again, quasi-isomorphic to the module of K\"ahler differentials, concentrated in degree~zero.

\begin{remark}
Bhargav Bhatt has recently communicated to us that, in fact, a significantly stronger version of Proposition~\ref{Bhatt ex}(\ref{Bhatt 12}) also holds.  Namely, if $V$ is a valuation ring and a flat (equivalently, $p$-torsion-free) $\ZZ_p$-algebra, then there is a quasi-isomorphism $L_{V/\ZZ_p} \cong \Omega^1_{V/\ZZ_p}$.  The proof of this stronger result relies on resolution of singularities in characteristic~zero and a result of Gabber and Ramero, \cite[Theorem~6.5.12]{GR03}.  Our proof of Proposition~\ref{Bhatt ex}(\ref{Bhatt 12}) follows the original strategy suggested by Bhatt in his \emph{Arizona winter school} lectures, \cite[Exercise~12]{Bha17}.  Closely related to our proof is an observation of Elmanto and Hoyois, relying on deep results of Temkin, that appears in the recent paper \cite[Proposition~4.2.1]{BD20}; we thank Elden Elmanto for calling this result to our attention.
\end{remark} 

\begin{proposition}[{\cite[Exercise~8 and Exercise~12]{Bha17}}] \label{Bhatt ex} \ 
\begin{enumerate}
\item \label{Bhatt 8} Let $L \supset K \supset \QQ_p$ denote a tower of algebraic extensions.  Then there is a quasi-isomorphism
\[
L_{\mathcal{O}_L/\mathcal{O}_K} \cong \Omega^1_{\mathcal{O}_L/\mathcal{O}_K}.
\]
\item \label{Bhatt 12} Let $\overline{V}$ denote a $p$-torsion-free $\ZZ_p$-algebra which is $p$-adically separated.  Assume that $\overline{V}$ is a valuation ring and that $\Frac \overline{V}$ is algebraically closed.  Then there is a quasi-isomorphism
\[
L_{\overline{V}/\ZZ_p} \cong \Omega^1_{\overline{V}/\ZZ_p}.
\]
\end{enumerate}
\end{proposition}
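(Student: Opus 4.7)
The plan for Part~(\ref{Bhatt 8}) is to reduce to finite subextensions using that the cotangent complex commutes with filtered colimits, then to compute directly. Write $\mathcal{O}_K = \colim_{K'} \mathcal{O}_{K'}$ and $\mathcal{O}_L = \colim_{L'} \mathcal{O}_{L'}$ as filtered colimits over finite subextensions $K' \subseteq K$ of $\QQ_p$ and $L' \subseteq L$ of $K'$. This reduces the problem to showing $L_{\mathcal{O}_{L'}/\mathcal{O}_{K'}} \cong \Omega^1_{\mathcal{O}_{L'}/\mathcal{O}_{K'}}$ for a finite separable extension $L'/K'$ of finite extensions of $\QQ_p$. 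Since the residue fields are finite and hence perfect, $\mathcal{O}_{L'}$ is monogenic over $\mathcal{O}_{K'}$, so we may present it as $\mathcal{O}_{L'} \cong \mathcal{O}_{K'}[x]/(f(x))$ for some monic polynomial $f$ with $f(\alpha) = 0$. Because $(f)$ is generated by a non-zero-divisor, the cotangent complex is quasi-isomorphic to the two-term complex $[\mathcal{O}_{L'} \xrightarrow{f'(\alpha)} \mathcal{O}_{L'}]$, with the first term placed in degree~$-1$. Separability of $L'/K'$ forces $f'(\alpha) \neq 0$, and because $\mathcal{O}_{L'}$ is a domain, multiplication by $f'(\alpha)$ is injective; hence the complex is concentrated in degree~$0$, where it recovers $\Omega^1_{\mathcal{O}_{L'}/\mathcal{O}_{K'}}$. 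Passing back to the colimit yields Part~(\ref{Bhatt 8}).

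For Part~(\ref{Bhatt 12}), I would first observe that since $\Frac \overline{V}$ is algebraically closed and contains $\QQ_p$, it contains $\overline{\QQ_p}$; and since $\overline{V}$ is a valuation ring containing $\ZZ_p$, it must contain the integral closure $\mathcal{O}_{\overline{\QQ_p}}$ of $\ZZ_p$ in $\overline{\QQ_p}$. The moreover, $\overline{V}$ is torsion-free, hence flat, as an $\mathcal{O}_{\overline{\QQ_p}}$-module. I would then apply the Jacobi-Zariski triangle~(\ref{Jacobi-Zariski}) to the composite $\ZZ_p \to \mathcal{O}_{\overline{\QQ_p}} \to \overline{V}$. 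By Part~(\ref{Bhatt 8}) and the flatness just noted, the first term of the triangle is concentrated in degree~$0$. The associated long exact sequence then reduces the problem to showing $L_{\overline{V}/\mathcal{O}_{\overline{\QQ_p}}}$ is concentrated in degree~$0$.

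For this final step, following the hint in Bhatt's exercise, the plan is to exhibit $\overline{V}$ as a filtered colimit of smooth $\mathcal{O}_{\overline{\QQ_p}}$-subalgebras. Given a finitely generated $\mathcal{O}_{\overline{\QQ_p}}$-subalgebra $A \subset \overline{V}$, apply Hironaka's resolution of singularities to $\Spec A[1/p]$ (a variety over the characteristic-zero field $\overline{\QQ_p}$) to obtain a proper birational map $X \to \Spec A[1/p]$ with $X$ smooth. Because $\overline{V}$ is a valuation ring with algebraically closed fraction field, the valuative criterion of properness lifts the generic point $\Spec \Frac \overline{V} \to \Spec A[1/p]$ to a map $\Spec \overline{V} \to X$. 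An affine open neighborhood of the image of the closed point, after suitable enlargement so as to be smooth over all of $\mathcal{O}_{\overline{\QQ_p}}$ and still contained in $\overline{V}$, yields a smooth $\mathcal{O}_{\overline{\QQ_p}}$-subalgebra $B \subset \overline{V}$ containing $A$. Taking the colimit over such $B$ and using that $L_{B/\mathcal{O}_{\overline{\QQ_p}}} \cong \Omega^1_{B/\mathcal{O}_{\overline{\QQ_p}}}$ for each smooth $B$ gives the desired conclusion.

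The main obstacle is the construction of smooth $\mathcal{O}_{\overline{\QQ_p}}$-subalgebras $B$ in the last paragraph: Hironaka's theorem provides smoothness only after inverting $p$, and one must do additional work to arrange smoothness over all of $\mathcal{O}_{\overline{\QQ_p}}$ while still controlling the inclusion into $\overline{V}$. This is what motivates the stronger alternative mentioned in the remark preceding the proposition, namely the approach via Gabber-Ramero, which sidesteps the issue with a powerful general result about cotangent complexes of valuation rings.
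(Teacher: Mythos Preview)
Your argument for Part~(\ref{Bhatt 8}) is correct and in fact slightly cleaner than the paper's: you identify the cotangent complex of a hypersurface directly as $[\mathcal{O}_{L'}\xrightarrow{f'(\alpha)}\mathcal{O}_{L'}]$ and use separability to see the differential is injective. The paper instead shows $H^{-1}(L_{\mathcal{O}_{L'}/\mathcal{O}_{K'}})$ is simultaneously $p$-torsion (by inverting $p$ and using smoothness of $L'/K'$) and $p$-torsion-free (by embedding it into $\mathcal{O}_{L'}$ via Jacobi--Zariski). Both routes are fine; yours is shorter, theirs generalizes more readily to the situation in Part~(\ref{Bhatt 12}).

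For Part~(\ref{Bhatt 12}) you have correctly located the obstruction but not removed it, and the paper's resolution is different from what you sketch. Your plan to pass through $\mathcal{O}_{\overline{\QQ_p}}$ and then resolve $\Spec A[1/p]$ by Hironaka produces smooth $\overline{\QQ_p}$-algebras, and these cannot be subrings of $\overline{V}$ at all since $p$ is invertible in them; the phrase ``after suitable enlargement so as to be smooth over all of $\mathcal{O}_{\overline{\QQ_p}}$'' hides exactly the mixed-characteristic difficulty you acknowledge. The paper avoids the intermediate step through $\mathcal{O}_{\overline{\QQ_p}}$ entirely and works directly over $\ZZ_p$: for each finitely generated $\ZZ_p$-subalgebra $R\subset\overline{V}$, it applies de~Jong's alterations theorem \cite{dJ96} (which is valid in mixed characteristic) to obtain a \emph{regular} $\ZZ_p$-scheme $X_1$ together with a proper dominant generically finite map $X_1\to\Spec R$. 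The valuative criterion and algebraic closedness of $\Frac\overline{V}$ then produce a regular $\ZZ_p$-subalgebra $R_1\subset\overline{V}$ containing $R$. Regularity is weaker than smoothness, but since $\ZZ_p$ is regular, each such $R_1$ is a local complete intersection over $\ZZ_p$ \cite[Tag~0E9J]{stacks-project}, so $L_{R_1/\ZZ_p}$ has amplitude $[-1,0]$; the vanishing of $H^{-1}$ then follows by the same torsion/torsion-free argument as in the paper's proof of Part~(\ref{Bhatt 8}). Thus the key substitutions relative to your proposal are: de~Jong instead of Hironaka, the base $\ZZ_p$ instead of $\mathcal{O}_{\overline{\QQ_p}}$, and regular (hence LCI) subalgebras instead of smooth ones.
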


\begin{proof}
These both appear as exercises in the notes accompanying Bhatt's \emph{Arizona winter school} lectures, \cite[Exercise~8 and Exercise~12]{Bha17}.  We provide additional details for the steps suggested in those exercises.

Proof of (\ref{Bhatt 8}).  The cotangent complex commutes with filtered colimits by \cite[Tag~08S9]{stacks-project}, so $\colim L_{B_i/A_i} \cong L_{\colim(B_i)/\colim(A_i)}$, so it suffices to assume that $L/K/\QQ_p$ are finite extensions.  In this finite case, there exists a polynomial $f(x) \in \mathcal{O}_K[x]$ such that $\mathcal{O}_L$ is isomorphic to the quotient ring $\mathcal{O}_K[x]/(f(x))$ (see for example \cite[Chapter~II, Lemma~10.4]{Neu99}).  Notice that $L_{\mathcal{O}_K[x]/\mathcal{O}_K} \otimes_{\mathcal{O}_K[x]}^L \mathcal{O}_L$ has cohomology concentrated in degree zero; this follows from the fact that $L_{\mathcal{O}_K[x]/\mathcal{O}_K}$ has cohomology concentrated in degree zero, and that $H^0(L_{\mathcal{O}_K[x]/\mathcal{O}_K})$ is a flat $\mathcal{O}_K[x]$-module.
Then considering the Jacobi-Zariski sequence (\ref{Jacobi-Zariski}) associated to $\mathcal{O}_K \rightarrow \mathcal{O}_K[x] \rightarrow \mathcal{O}_L$, 
we find immediately that $H^{j}(L_{\mathcal{O}_L/\mathcal{O}_K}) \cong 0$ for $j \neq -1,0$.  It remains to show that $H^{-1}(L_{\mathcal{O}_L/\mathcal{O}_K}) \cong 0$.  To prove this, we will show that this $\mathcal{O}_L$-module is simultaneously $p$-torsion and $p$-torsion-free.

Let $S$ be the multiplicative set $\{1,p,p^2,\ldots\}$.  By \cite[Tag~08SF]{stacks-project}, we have a quasi-isomorphism
\[
L_{L/K} = L_{S^{-1} \mathcal{O}_L/S^{-1} \mathcal{O}_K} \cong L_{\mathcal{O}_L/ \mathcal{O}_K} \otimes^L_{\mathcal{O}_L} S^{-1} \mathcal{O}_L
\] 
in $D(L)$, the derived category of $L$-modules.  The extension $L/K$ is smooth (because it is a finite extension of characteristic zero fields, see for example \cite[Tag~07ND]{stacks-project}) and hence $H^{i}(L_{L/K}) \cong 0$ for all $i \neq 0$ (see for example \cite[Tag~08R5]{stacks-project}). In particular, $H^{-1}(L_{L/K}) \cong 0$, so $H^{-1}(L_{\mathcal{O}_L/ \mathcal{O}_K} \otimes^L_{\mathcal{O}_L} L) \cong 0$ by the above quasi-isomorphism, and hence $H^{-1}(L_{\mathcal{O}_L/ \mathcal{O}_K}) \otimes_{\mathcal{O}_L} L \cong 0$ (by the universal coefficient theorem Proposition~\ref{universal coefficient theorem}, using that $L$ is a flat $\mathcal{O}_L$-module).  We deduce that $H^{-1}(L_{\mathcal{O}_L/ \mathcal{O}_K})$ is $p$-torsion.

Using again the Jacobi-Zariski sequence (\ref{Jacobi-Zariski}) associated to $\mathcal{O}_K \rightarrow \mathcal{O}_K[x] \rightarrow \mathcal{O}_L$ and the fact that $H^{-1}(L_{\mathcal{O}_K[x]/\mathcal{O}_K} \otimes_{\mathcal{O}_K[x]}^L \mathcal{O}_L) \cong 0$ as explained above, we find an exact sequence 
\[
0 \rightarrow H^{-1}(L_{\mathcal{O}_L/\mathcal{O}_K}) \rightarrow H^{-1}(L_{\mathcal{O}_L/\mathcal{O}_K[x]}) \cong \mathcal{O}_L,
\]
which shows that $H^{-1}(L_{\mathcal{O}_L/\mathcal{O}_K})$ is $p$-torsion-free.  This completes the proof that there is a quasi-isomorphism $L_{\mathcal{O}_L/\mathcal{O}_K} \cong \Omega^1_{\mathcal{O}_L/\mathcal{O}_K}.$

Proof of (\ref{Bhatt 12}).  The strategy of this proof is as follows.  We use de~Jong's alterations theorem to show that every finitely generated $\ZZ_p$-subalgebra $R$ of $\overline{V}$ is contained in a \emph{regular} $\ZZ_p$-subalgebra $R_1$ of $\overline{V}$.  Thus $\overline{V}$ is a filtered colimit of such $\ZZ_p$-subalgebras $R_1$, and the desired result will follow from quasi-isomorphisms $L_{R_1/\ZZ_p} \cong \Omega^1_{R_1/\ZZ_p}$.

Let $R$ denote a finitely generated $\ZZ_p$-subalgebra of $\overline{V}$.  By de~Jong's alterations theorem \cite[Theorem~6.5]{dJ96}, there exists a regular $\ZZ_p$-scheme $X_1$ and a dominant, proper map $X_1 \rightarrow \Spec R$ such that the function field of $X_1$ is a finite extension of $\Frac R$.    Because $\Frac R \subseteq \Frac \overline{V}$ and $\Frac \overline{V}$ is algebraically closed, we deduce that the function field of $X_1$ embeds into $\Frac \overline{V}$.  These maps fit into a solid commutative diagram
\[
\xymatrix{
X_1 \ar[d] & \Spec \left( \Frac \overline{V}\right) \ar[l] \ar[d]\\
\Spec R  & \Spec \overline{V} \ar[l] \ar@{-->}[ul]
}
\]
By the valuative criterion for properness, there exists a map $\Spec \overline{V} \rightarrow X_1$ as in the dashed arrow in the diagram, for which the diagram remains commutative.  This map $\Spec \overline{V} \rightarrow X_1$ factors through some open affine subscheme $\Spec R_1 \subseteq X_1$ with $R_1$ regular and with the composite $R \rightarrow R_1 \rightarrow \overline{V}$ injective.

The rings $R_1$ as produced above (for various rings $R$) fit into a filtered diagram, with arrows corresponding to commutative diagrams
\[
\xymatrix{
R_2 \ar[r] & \overline{V} \\
R_1 \ar[u] \ar[ur]
}
\]
Because the ring $R$ above could be any finitely generated $\ZZ_p$-subalgebra of $\overline{V}$, we find that $\varinjlim R_\alpha$ is a filtered colimit, and that the corresponding map $\varinjlim R_\alpha \rightarrow \overline{V}$ is a ring isomorphism (where $R_{\alpha}$ ranges over the rings $R_1$ produced above).

The conclusion of the proof is very similar to the proof of part (\ref{Bhatt 8}).  Namely, by \cite[Tag~0E9J]{stacks-project}, we may assume each map $\ZZ_p \rightarrow R_{\alpha}$ can be factored as $\ZZ_p \rightarrow \ZZ_p[x_1, \ldots, x_n] \rightarrow \ZZ_p[x_1, \ldots, x_n]/I \cong R_{\alpha}$, where $I$ is generated by a regular sequence.  Thus $L_{R_{\alpha}/\ZZ_p[x_1, \ldots, x_n]}$ has cohomology concentrated in degree~-1, and $H^{-1}(L_{R_{\alpha}/\ZZ_p[x_1, \ldots, x_n]}) \cong I/I^2$, by \cite[Tag~08SJ]{stacks-project}.   Proceeding as in the proof of (\ref{Bhatt 8}), we find that $L_{R_{\alpha}/\ZZ_p} \cong \Omega^1_{R_{\alpha}/\ZZ_p}$ and that $L_{\overline{V}/\ZZ_p} \cong \Omega^1_{\overline{V}/\ZZ_p}.$
%s an lci map of rings (as defined in \cite[Exercise~8]{Bha17}).  By \cite[Tag~00TX]{stacks-project}, we have that $R_{\alpha}[1/p]/\QQ_p$ is smooth for each of the above regular rings $R_{\alpha}$. 
%Then by \cite[Exercise~8b]{Bha17}, we have $L_{R_{\alpha}/\ZZ_p} \cong \Omega^1_{R_{\alpha}/\ZZ_p}$.  Finally, because the cotangent complex commutes with filtered colimits by \cite[Tag~08S9]{stacks-project}, we have that $L_{\overline{V}/\ZZ_p} \cong \varinjlim L_{R_{\alpha}/\ZZ_p} \cong \Omega^1_{\overline{V}/\ZZ_p}$. In particular, $H^{-1}(L_{R_{\alpha}/\ZZ_p[x_1, \ldots, x_n]})$ is $p$-torsion-free, and as in the proof of part (\ref{Bhatt 8}), we deduce that $H^{-1}(L_{R_{\alpha}/\ZZ_p})$ is $p$-torsion-free.  On the other hand,
\end{proof}

The previous result had a hypothesis requiring a certain ring to be a valuation ring, and in fact, valuation rings play a special role at several points in this paper.  One reason is because, if $A$ is a $p$-torsion-free perfectoid ring containing a compatible sequence of $p$-power roots of unity, then $A$ contains an isomorphic copy of the valuation ring $\ZZ_p[\zeta_{p^{\infty}}]^{\wedge}$.  A second reason is because every $p$-torsion-free perfectoid ring embeds into a product of perfectoid valuation rings (see Lemma~\ref{embed into product} and its proof).  

We use \cite[Chapter~VI]{ZS60} as our basic reference on valuation rings, although we write our valuations multiplicatively.  (See \cite[Remark~2.3]{Sch12} for a remark on why Scholze writes his valuations multiplicatively.)  We emphasize that our valuation rings are not assumed to be rank~one.  (For an alternative to Zariski-Samuel, see \cite[Chapter~VI]{Bou06}.)

\begin{theorem} \label{valuations extend}
Let $K_1 \supseteq K_0$ denote two fields and assume there is a valuation on $K_0$.  Then there is a valuation on $K_1$ extending the valuation on $K_0$. 
\end{theorem}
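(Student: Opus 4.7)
The plan is to reduce the statement to the language of valuation rings and then apply Zorn's lemma together with the classical characterization of valuation rings as maximal local subrings under the domination order. Recall that a valuation on $K_0$ corresponds to a valuation ring $\mathcal{O}_0 \subseteq K_0$ with $\Frac \mathcal{O}_0 = K_0$, and extending the valuation to $K_1$ amounts to producing a valuation ring $\mathcal{O}_1 \subseteq K_1$ with $\Frac \mathcal{O}_1 = K_1$ satisfying $\mathcal{O}_1 \cap K_0 = \mathcal{O}_0$ (equivalently, $\mathcal{O}_0 \subseteq \mathcal{O}_1$ and $\mathfrak{m}_1 \cap \mathcal{O}_0 = \mathfrak{m}_0$, where $\mathfrak{m}_i$ denotes the maximal ideal of $\mathcal{O}_i$).

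First I would recall (or cite Zariski-Samuel) the key structural fact: for a field $K$, a local subring $(R, \mathfrak{m})$ of $K$ is a valuation ring of $K$ if and only if it is maximal with respect to the domination partial order on local subrings of $K$, where $(R, \mathfrak{m}_R)$ is dominated by $(S, \mathfrak{m}_S)$ iff $R \subseteq S$ and $\mathfrak{m}_S \cap R = \mathfrak{m}_R$. Using this, I would introduce the poset
\[
\Sigma = \bigl\{(R, \mathfrak{m}_R) : R \text{ is a local subring of } K_1,\ \mathcal{O}_0 \subseteq R,\ \mathfrak{m}_R \cap \mathcal{O}_0 = \mathfrak{m}_0\bigr\},
\]
ordered by domination. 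The set $\Sigma$ is non-empty since $(\mathcal{O}_0, \mathfrak{m}_0) \in \Sigma$, and any chain in $\Sigma$ admits an upper bound given by the union (which remains a local subring of $K_1$ dominating $\mathcal{O}_0$, with maximal ideal equal to the union of the maximal ideals).

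By Zorn's lemma, $\Sigma$ has a maximal element $(\mathcal{O}_1, \mathfrak{m}_1)$. I would then argue that $\mathcal{O}_1$ is in fact a valuation ring of $K_1$: any local subring of $K_1$ dominating $\mathcal{O}_1$ automatically dominates $\mathcal{O}_0$ and lies in $\Sigma$, so by maximality it equals $\mathcal{O}_1$; hence $\mathcal{O}_1$ is maximal for domination among all local subrings of $K_1$, which by the characterization above means $\mathcal{O}_1$ is a valuation ring of $K_1$ (with fraction field $K_1$ automatically, since otherwise one could dominate further by localizing $\mathcal{O}_1[x]$ at a suitable prime for $x \notin K_1$, contradicting maximality; in any case valuation rings of $K_1$ necessarily have fraction field $K_1$). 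The resulting valuation $v_1$ on $K_1$ then satisfies $v_1|_{K_0} = v_0$ because $\mathcal{O}_1 \cap K_0$ is a valuation ring of $K_0$ dominating $\mathcal{O}_0$, hence equals $\mathcal{O}_0$ (valuation rings are themselves maximal among their dominators in their own fraction fields).

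The main obstacle is simply invoking the characterization of valuation rings as maximal local subrings; given this, the argument is a routine Zorn's lemma application. I would either cite \cite[Chapter~VI]{ZS60} for this characterization or sketch the one-line proof that if $R$ is maximal and $x \in K_1 \setminus R$, then comparing the local rings obtained from $R[x]$ and $R[x^{-1}]$ forces $x^{-1} \in R$.
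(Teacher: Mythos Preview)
Your argument is correct and is the standard Zorn's-lemma proof via the domination order on local subrings. The paper does not actually prove this theorem: it simply cites \cite[Chapter~VI, Theorem~$5'$]{ZS60} for the extension of places and \cite[Chapter~VI, Section~9]{ZS60} for the dictionary between places and valuations. What you have written is essentially the content of those references (phrased in terms of valuation rings rather than places), so there is no genuine mathematical divergence---you are spelling out what the paper delegates to Zariski--Samuel.
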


\begin{proof}
This fact is well-known.  For example, see \cite[Chapter~VI, Theorem~$5'$]{ZS60} for the fact that places can be extended, and \cite[Chapter~VI, Section~9]{ZS60} for the correspondence between places and valuations.  
\end{proof}

\begin{proposition} \label{alg faithfully flat}
Let $K_0$ denote a field equipped with a valuation, and let $\mathcal{O}_{K_0}$ denote the corresponding valuation ring.  Let $K_1$ denote an algebraic extension of $K_0$.  By Theorem~\ref{valuations extend}, the field $K_1$ can be equipped with a valuation extending the valuation on $K_0$.  The corresponding map on valuation rings $\mathcal{O}_{K_0} \rightarrow \mathcal{O}_{K_1}$ is faithfully flat.
\end{proposition}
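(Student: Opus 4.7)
The plan is to split faithful flatness into its two constituent pieces: flatness, plus the requirement that no maximal ideal of $\mathcal{O}_{K_0}$ extends to the unit ideal in $\mathcal{O}_{K_1}$. Since $\mathcal{O}_{K_0}$ is a valuation ring, it is in particular local with a unique maximal ideal $\mathfrak{m}_0$, so the second condition reduces to showing $\mathfrak{m}_0 \mathcal{O}_{K_1} \neq \mathcal{O}_{K_1}$.

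For flatness, I would invoke the standard characterization that a module $M$ over a valuation ring $V$ is flat if and only if $M$ is torsion-free (see, e.g., \cite[Chapter~VI]{Bou06}). Here, $\mathcal{O}_{K_1}$ embeds in the field $K_1$, and the structure map $\mathcal{O}_{K_0} \to \mathcal{O}_{K_1}$ is injective (its composition with $\mathcal{O}_{K_1} \hookrightarrow K_1$ is injective because $K_1 \supseteq K_0 \supseteq \mathcal{O}_{K_0}$). Therefore any nonzero element of $\mathcal{O}_{K_0}$ acts as a nonzero element of a domain, and $\mathcal{O}_{K_1}$ is torsion-free as an $\mathcal{O}_{K_0}$-module. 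So it is flat.

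For the remaining condition, I would write both valuation rings as $\mathcal{O}_{K_i} = \{x \in K_i : |x|_i \leq 1\}$ with maximal ideal $\mathfrak{m}_i = \{x \in K_i : |x|_i < 1\}$, where $|\cdot|_1$ extends $|\cdot|_0$. Given $x \in \mathfrak{m}_0$, we have $|x|_1 = |x|_0 < 1$, so $x \in \mathfrak{m}_1$. This gives the inclusion $\mathfrak{m}_0 \mathcal{O}_{K_1} \subseteq \mathfrak{m}_1 \subsetneq \mathcal{O}_{K_1}$, and hence $\mathfrak{m}_0 \mathcal{O}_{K_1}$ is a proper ideal. Combined with flatness, this yields faithful flatness.

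There is no real obstacle here: everything is immediate from standard facts about valuation rings. It is perhaps worth noting that the algebraicity hypothesis on $K_1/K_0$ plays no role in the argument; Theorem~\ref{valuations extend} provides an extension of valuations in the general case, and the above argument only uses the compatibility of the two valuations on $K_0$. The algebraicity appears to be recorded here only because that is the context in which the result will be applied later.
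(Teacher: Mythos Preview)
Your argument is correct and in fact cleaner than the paper's. Both proofs handle flatness the same way (torsion-free over a valuation ring implies flat), but diverge on the faithful part. You use the standard criterion that a flat map from a local ring is faithfully flat as soon as the maximal ideal does not extend to the unit ideal, and you check this immediately from $\mathfrak{m}_0 \subseteq \mathfrak{m}_1$. The paper instead proves that $\Spec \mathcal{O}_{K_1} \to \Spec \mathcal{O}_{K_0}$ is surjective by explicitly constructing, for an arbitrary prime $\mathfrak{p}_0 \subseteq \mathcal{O}_{K_0}$, a prime $\mathfrak{p}_1 \subseteq \mathcal{O}_{K_1}$ lying over it; this construction relies on the fact that $v(K_1^\times)/v(K_0^\times)$ is torsion, which is where the algebraicity hypothesis enters.

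Your observation that algebraicity is not actually needed is correct: the maximal-ideal criterion you invoke works for any extension of valued fields, so the proposition holds more generally than stated. What the paper's approach buys is an explicit description of a prime lying over a given prime, but for the purposes of the paper only faithful flatness is used downstream, so nothing is lost by your shortcut.
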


\begin{proof}
Every valuation ring is in particular a Pr\"ufer domain, and so $\mathcal{O}_{K_1}$ is flat over $\mathcal{O}_{K_0}$ because it is torsion-free.  We will now show that the induced map $\Spec \mathcal{O}_{K_1} \rightarrow \Spec \mathcal{O}_{K_0}$ is surjective, which will complete the proof that $\mathcal{O}_{K_0} \rightarrow \mathcal{O}_{K_1}$ is faithfully flat.  Let $v$ denote the valuation on $K_1$ extending the given valuation on $K_0$.  The most important preliminary result we will need is the fact that the quotient group $v(K_1)/v(K_0)$ is torsion or, equivalently, for every element $x \in K_1$, there exists some integer $n > 0$ such that $v(x^n) \in v(K_0)$.  Let us name this condition (*).  We do not prove it; it is stated explicitly as \cite[Chapter~VI, Section~11, Lemma~1]{ZS60}.

Consider any prime ideal $\mathfrak{p}_0 \subseteq \mathcal{O}_{K_0}$.  We are trying to show that there exists some prime ideal $\mathfrak{p}_1 \subseteq \mathcal{O}_{K_1}$ such that $\mathfrak{p}_1 \cap \mathcal{O}_{K_0} = \mathfrak{p}_0$.  We may assume $\mathfrak{p}_0$ is not the zero ideal.  Our construction is based on \cite[Chapter~VI, Theorem~15]{ZS60}.  Define 
\[
\mathfrak{p}_1 := \{y \in \mathcal{O}_{K_1} \colon v(y)^n = v(x) \text{ for some } n \in \ZZ_{>0}, x \in \mathfrak{p}_0\}.
\]
Using condition~(*) above, we see that $\mathfrak{p}_1$ is an ideal in $\mathcal{O}_{K_1}$.  It is a proper ideal.  Lastly, assume $ab \in \mathfrak{p}_1$.  Say $v(a^n b^n) = v(x)$, where $x \in \mathfrak{p}_0$.  After possibly raising to a higher power, using again condition~(*), we may assume that $a^n, b^n \in \mathcal{O}_{K_0}$.  Then $a^n b^n/x \in \mathcal{O}_{K_0}$, so $a^n b^n \in \mathfrak{p}_0$.  Then because $\mathfrak{p}_0$ is a prime ideal, we have that $a^n$ or $b^n$ is in $\mathfrak{p}_0$.  Thus $a$ or $b$ is in $\mathfrak{p}_1$.  This completes the proof that $\mathfrak{p}_1$ is a prime ideal.

It remains to check that $\mathfrak{p}_1 \cap \mathcal{O}_{K_0} = \mathfrak{p}_0$.  Clearly $\mathfrak{p}_1 \cap \mathcal{O}_{K_0} \supseteq \mathfrak{p}_0$.  Conversely $y \in \mathcal{O}_{K_0}$ and $y \in \mathfrak{p}_1$, then $v(y^n) = v(x)$ for some $x \in \mathfrak{p}_0$ and some $n \in \ZZ_{> 0}$, but then $y^n/x \in \mathcal{O}_{K_0}$, so $y^n \in \mathfrak{p}_0$, so $y \in \mathfrak{p}_0$.  This proves the reverse inclusion, $\mathfrak{p}_1 \cap \mathcal{O}_{K_0} \subseteq \mathfrak{p}_0$.  This completes the proof that $\mathcal{O}_{K_0} \rightarrow \mathcal{O}_{K_1}$ is faithfully flat.
\end{proof}

Later in this section we will need to embed a $p$-torsion-free perfectoid valuation ring into a larger perfectoid valuation ring for which its fraction field is algebraically closed.  In order to accomplish this, we will need the next few preliminary results.

\begin{lemma} \label{separated cofinal}
Let $K$ be a characteristic~zero field equipped with a valuation $v$ for which $\mathcal{O}_K$ is $p$-adically separated.  The values $v(p), v(p^2), \ldots, v(p^n), \ldots$ are cofinal in the set of all values of non-zero elements in $K$, in the sense that for every non-zero element $x \in K$, there exists an integer $n \geq 1$ such that $v(p^n) < v(x)$.
\end{lemma}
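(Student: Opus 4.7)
The plan is to reduce to the case $x \in \mathcal{O}_K$ and then extract a contradiction from the hypothesis that $\bigcap_{n \geq 1} p^n \mathcal{O}_K = 0$. Throughout, I will freely use the defining property of a valuation ring, namely that $y \in \mathcal{O}_K$ if and only if $v(y) \leq 1$.

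First, I would establish the preliminary fact that $v(p) < 1$. Since $K$ has characteristic zero, $\mathcal{O}_K$ contains $\ZZ$ and in particular is a nonzero ring. If $p$ were a unit in $\mathcal{O}_K$, then $1/p \in \mathcal{O}_K$ and hence $1 = p^n(1/p)^n \in p^n \mathcal{O}_K$ for every $n \geq 1$; this would put $1 \in \bigcap_n p^n\mathcal{O}_K$, violating $p$-adic separatedness. Hence $p$ lies in the maximal ideal of $\mathcal{O}_K$, which in multiplicative notation means $v(p) < 1$, and so also $v(p^n) = v(p)^n \leq v(p) < 1$ for all $n \geq 1$.

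Second, I would dispense quickly with the case $x \notin \mathcal{O}_K$: here $v(x) > 1 > v(p) \geq v(p^n)$ for any $n \geq 1$, so the conclusion holds with $n = 1$. For the main case $x \in \mathcal{O}_K \setminus \{0\}$, I would argue by contradiction. Assume $v(p^n) \geq v(x)$ for every $n \geq 1$. Then $v(x/p^n) = v(x)/v(p^n) \leq 1$, so $x/p^n \in \mathcal{O}_K$, which is to say $x \in p^n \mathcal{O}_K$. Since this holds for every $n$, we find $x \in \bigcap_{n \geq 1} p^n \mathcal{O}_K$, which is $0$ by the $p$-adic separatedness hypothesis, contradicting $x \neq 0$. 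Hence some $n$ must satisfy $v(p^n) < v(x)$.

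There is no real obstacle here; the argument is pure bookkeeping once the valuation-ring/ideal dictionary is in place. The one subtlety worth flagging is that the value group of $v$ is not assumed to be Archimedean, so one cannot argue that $v(p)^n$ tends to $0$ in any metric sense; the $p$-adic separatedness of $\mathcal{O}_K$ is precisely the hypothesis that compensates for the possible failure of the Archimedean property and forces the sequence $v(p^n)$ to be cofinal from below in the value monoid of $\mathcal{O}_K \setminus \{0\}$.
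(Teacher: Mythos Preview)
Your proof is correct and follows essentially the same approach as the paper's: split into the cases $x \notin \mathcal{O}_K$ (trivial) and $x \in \mathcal{O}_K$ (use $p$-adic separatedness to find $n$ with $x \notin p^n\mathcal{O}_K$, which is exactly $v(p^n) < v(x)$). Your preliminary step establishing $v(p) < 1$ is correct but not strictly needed, since $p \in \mathcal{O}_K$ already gives $v(p^n) \leq 1 < v(x)$ in the first case; otherwise your argument and the paper's are the same up to phrasing the second case as a contrapositive.
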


\begin{proof}
If $x \in K \setminus \mathcal{O}_K$, then any value of $n$ will work.  Otherwise, because $\mathcal{O}_K$ is $p$-adically separated, we may choose $n$ such that $x \not\in p^n \mathcal{O}_K$.
\end{proof}

We next refer to the \emph{completion} of a field $K$ with respect to a valuation.  There seems to be some subtlety involved in defining the notion of completion for general valued fields (see \cite[Section~2.4]{EP05}), but in our case it is easier, because Lemma~\ref{separated cofinal} shows that there exists a countable sequence of elements which are cofinal in the value group, so the elements in the completion will correspond to Cauchy sequences $(a_1, a_2, \ldots)$, where the index set is $\mathbb{N}$.

\begin{lemma} \label{Jan20a}
Let $K$ be a characteristic~zero field equipped with a valuation for which $\mathcal{O}_K$ is $p$-adically separated.  Let $L$ denote the valued field which is the completion of $K$, as in \cite[Theorem~2.4.3]{EP05}.  The corresponding valuation ring $\mathcal{O}_L$ is $p$-adically complete and separated.
\end{lemma}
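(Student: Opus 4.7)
The plan is to show that, on $\OL$, the $p$-adic topology coincides with the subspace topology coming from the valuation topology on $L$; then both $p$-adic completeness and separatedness will follow immediately from the fact that $\OL$ is a closed subset of the complete valued field $L$.

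First, I would note that the completion construction of \cite[Theorem~2.4.3]{EP05} preserves the value group, so $v(L^{\times}) = v(K^{\times})$. Applying Lemma~\ref{separated cofinal} to $K$ (whose hypotheses hold by assumption) then shows that the values $v(p^n)$ remain cofinal among positive values of $v(L^{\times})$: for every $x \in L^{\times}$ there exists $n \geq 1$ with $v(p^n) < v(x)$. Importantly, this extended cofinality uses Lemma~\ref{separated cofinal} only over $K$, so we do not need to assume anything about $\OL$ a priori.

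Next, I would identify the two topologies on $\OL$. Since $p^n \OL = \{x \in L : v(x) \leq v(p^n)\}$, the above cofinality statement shows that the collection $\{p^n \OL\}_{n \geq 1}$ and the collection of valuation balls $\{x \in L : v(x) < \epsilon\}$ (with $\epsilon$ ranging over positive values of $v(L^{\times})$) form mutually cofinal neighborhood bases of $0$. Hence the $p$-adic topology on $\OL$ equals the restriction of the valuation topology on $L$ to $\OL$.

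Given this, the two desired conclusions follow essentially for free. For separatedness, if $x \in \bigcap_n p^n \OL$, then $v(x) \leq v(p^n)$ for every $n$, forcing $x = 0$ by cofinality. For completeness, $\OL = \{x \in L : v(x) \leq 1\}$ is closed in $L$ (the complement is open by the ultrametric inequality), so $\OL$ inherits completeness in the valuation topology from $L$; since we have identified that topology with the $p$-adic one on $\OL$, every $p$-adic Cauchy sequence converges in $\OL$. There is no serious obstacle here beyond carefully invoking the cofinality of $v(p^n)$ and the standard properties of completion; the only point requiring a small comment is the preservation of the value group under completion, which allows us to avoid circular reasoning.
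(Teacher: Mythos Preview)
Your proof is correct and follows essentially the same route as the paper: both arguments rest on the cofinality of $v(p^n)$ in the value group (Lemma~\ref{separated cofinal}) together with the fact that completion preserves the value group, and both use that $\mathcal{O}_L$ is closed in the complete field $L$. The only cosmetic difference is that the paper works directly with Cauchy sequences---showing a limit exists in $L$ and then checking its value lies in the range $\leq 1$---whereas you first package the cofinality as an identification of the $p$-adic topology with the subspace valuation topology on $\mathcal{O}_L$ and then invoke closedness; the underlying content is identical.
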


\begin{proof}
We must show that every $p$-adic Cauchy sequence of elements in $\mathcal{O}_L$ has a unique limit in $\mathcal{O}_L$.  The existence of such a limit in $L$ follows immediately from Lemma~\ref{separated cofinal} and the construction of $L$ as equivalence classes of Cauchy sequences.  If the elements in the Cauchy sequence are all in $\mathcal{O}_L$, then the values of the elements in the Cauchy sequence are all at most $1 = v(1)$.  The value of the Cauchy sequence, viewed as an element of $L$, is either $0$ (in which case it is in $\mathcal{O}_L$), or is equal to the value of an entry in the sequence (in which case it is also in $\mathcal{O}_L$).  In either case, we see that every $p$-adic Cauchy sequence of elements in $\mathcal{O}_L$ converges in $\mathcal{O}_L$.

We next show that $\mathcal{O}_L$ is $p$-adically separated.  If the Cauchy sequence corresponds to $0$ in $L$, then there is nothing to check.  Otherwise, the value of the Cauchy sequence is equal to the value of one of its non-zero entries; call that entry $x$.  Because $\mathcal{O}_K$ is $p$-adically separated, we have that $v(p^n) < v(x)$ for some integer $n \geq 1$.  Thus the element corresponding to the Cauchy sequence is not in $p^n \mathcal{O}_L$.  This shows that $\mathcal{O}_L$ is $p$-adically separated.
\end{proof}

\begin{lemma} \label{Jan20b}
Let $L$ denote a characteristic~zero field equipped with a valuation for which $\mathcal{O}_L$ is $p$-adically separated.  Let $L'$ denote an algebraic extension of $L$.  Then $\mathcal{O}_{L'}$ is also $p$-adically separated.
\end{lemma}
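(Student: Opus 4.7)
The plan is to translate the $p$-adic separatedness condition into a statement about the valuation and then reduce to the corresponding property on $\mathcal{O}_L$ by passing between the value groups $v(L^\times)$ and $v(L'^\times)$.

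First, I would extend the given valuation $v$ on $L$ to a valuation on $L'$ using Theorem~\ref{valuations extend}; denote the extension again by $v$, and let $\mathcal{O}_{L'}$ be the associated valuation ring. Because the valuation is written multiplicatively, for any non-zero $x \in \mathcal{O}_{L'}$ one has $v(x) \le 1$, and the membership $x \in p^n\mathcal{O}_{L'}$ is equivalent to $v(x) \le v(p^n)$. So the claim amounts to showing that for every non-zero $x \in \mathcal{O}_{L'}$ there is some $n \ge 1$ with $v(p^n) < v(x)$.

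The key ingredient is the standard fact (already invoked in the proof of Proposition~\ref{alg faithfully flat}, citing \cite[Chapter~VI, Section~11, Lemma~1]{ZS60}) that, since $L'/L$ is algebraic, the quotient $v(L'^\times)/v(L^\times)$ is torsion: for every $x \in L'^\times$ there is an integer $m \ge 1$ with $v(x)^m \in v(L^\times)$. So, given $0 \ne x \in \mathcal{O}_{L'}$, I would choose such an $m$ and pick $y \in L^\times$ with $v(y) = v(x)^m$. Since $v(x) \le 1$ and the value group is totally ordered, $v(y) = v(x)^m \le 1$ as well, so $y \in \mathcal{O}_L \setminus \{0\}$.

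Now the $p$-adic separatedness of $\mathcal{O}_L$, combined with Lemma~\ref{separated cofinal}, gives an integer $n \ge 1$ with $v(p^n) < v(y) = v(x)^m$. Because $v(x) \le 1$, we have $v(x)^m \le v(x)$ (an immediate consequence of $a \le 1 \Rightarrow a^k \le a$ in a totally ordered abelian group), and so $v(p^n) < v(x)$, i.e.\ $x \notin p^n \mathcal{O}_{L'}$. This establishes $\bigcap_n p^n \mathcal{O}_{L'} = 0$. The only conceptual step is the use of the torsion property of $v(L'^\times)/v(L^\times)$; everything else is straightforward manipulation in the totally ordered value group, and there is no rank-one hypothesis needed on $v$.
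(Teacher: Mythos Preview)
Your proof is correct and follows exactly the approach the paper intends: the paper's proof consists of the single sentence ``This follows directly from condition~(*) that was named in the proof of Proposition~\ref{alg faithfully flat},'' and you have simply spelled out the straightforward details of how condition~(*) (the torsion property of $v(L'^\times)/v(L^\times)$) together with Lemma~\ref{separated cofinal} yields the result.
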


\begin{proof}
This follows directly from condition~(*) that was named in the proof of Proposition~\ref{alg faithfully flat}.
\end{proof}

\begin{lemma} \label{still algebraically closed}
Let $K$ be a characteristic zero field equipped with a valuation for which $\mathcal{O}_K$ is $p$-adically separated, and let $L$ denote the completion of $K$, as in \cite[Theorem~2.4.3]{EP05}.  If $K$ is algebraically closed, then so is $L$.
\end{lemma}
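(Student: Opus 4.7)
The plan is to show every monic polynomial $f(x) \in L[x]$ of degree $n \geq 1$ has a root in $L$, by approximating $f$ by polynomials with coefficients in $K$ and extracting a Cauchy sequence of their roots converging to a root of $f$.

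First, fix an algebraic closure $\bar{L}$ of $L$ and extend the valuation on $L$ to $\bar{L}$ using Theorem~\ref{valuations extend}; pick any root $\alpha \in \bar{L}$ of $f$. Since $K$ is dense in $L$, for each integer $m \geq 1$ I can choose a monic $f_m(x) \in K[x]$ of degree $n$ whose coefficients are within $v(p^m)$ of those of $f$. Because $K$ is algebraically closed, $f_m$ splits as $\prod_{i=1}^n (x - \alpha_i^{(m)})$ with $\alpha_i^{(m)} \in K$.

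The standard monic-polynomial root bound gives a constant $C \geq 1$ such that $v(\alpha), v(\alpha_i^{(m)}) \leq C$ for all $i$ and all $m$. Since $f$ and $f_m$ have the same leading term, evaluating at $\alpha$ yields the uniform estimate
\[
v(f_m(\alpha)) = v(f_m(\alpha) - f(\alpha)) \leq v(p^m) \cdot C^{n-1}.
\]
Writing $v(f_m(\alpha)) = \prod_{i=1}^n v(\alpha - \alpha_i^{(m)})$, some factor must be small: there is an index $i(m)$ with $v(\alpha - \alpha_{i(m)}^{(m)})^n \leq v(p^m) \cdot C^{n-1}$. By Lemma~\ref{Jan20b} applied to $\bar{L}/L$, the ring $\mathcal{O}_{\bar{L}}$ is $p$-adically separated, so Lemma~\ref{separated cofinal} guarantees that the values $v(p^m)$ are cofinal in the value group of $\bar{L}$. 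Hence for any $\gamma < 1$ in that group there exists $N$ with $v(p^N) \cdot C^{n-1} < \gamma^n$, and since $v(\alpha - \beta_m) \geq \gamma$ would force $v(\alpha - \beta_m)^n \geq \gamma^n$, setting $\beta_m := \alpha_{i(m)}^{(m)}$ gives $v(\alpha - \beta_m) < \gamma$ for every $m \geq N$.

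Therefore $(\beta_m)$ is a Cauchy sequence in $K$, and since $L$ is the completion of $K$ it converges to some $\beta \in L$. The same sequence also converges to $\alpha$ in $\bar{L}$, so by uniqueness of limits in the Hausdorff valuation topology, $\alpha = \beta \in L$. Thus $f$ has a root in $L$, which shows that $L$ is algebraically closed. The main obstacle is that the valuation on $L$ is not assumed to be rank-one, so one cannot literally take "$n$-th roots" in the value group; this is circumvented by arguing contrapositively on the ordering and using the cofinality of $\{v(p^m)\}$ provided jointly by Lemmas~\ref{Jan20a} and~\ref{Jan20b}.
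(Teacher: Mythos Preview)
Your proof is correct and follows the same Krasner-type strategy as the paper's: approximate $f$ by polynomials over $K$, use continuity of roots to produce a Cauchy sequence in $K$, and identify its limit in $L$ with a root of $f$. The one genuine difference is that you swap the roles of $f$ and $f_m$: you fix a root $\alpha$ of $f$ in $\bar{L}$ and evaluate the approximants $f_m$ at $\alpha$, whereas the paper picks a root $r_j$ of each $f_j$ and evaluates $f$ at $r_j$. Your choice is slightly more efficient. Because $\alpha$ is fixed, you do not need a uniform root bound for all the $\alpha_i^{(m)}$ (only the bound $v(\alpha)\le C$ is ever used), and you avoid the pigeonhole step the paper uses to pin down which root $\lambda_{i_0}$ of $f$ the sequence $(r_j)$ is approaching. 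The paper, in turn, is more explicit about the root bound and about taking $n$-th roots in the value group (justified there because $K$ is algebraically closed); your contrapositive argument with the cofinality of $\{v(p^m)\}$ sidesteps divisibility of the value group entirely. One minor clean-up: when you invoke Lemma~\ref{Jan20b} for $\bar{L}/L$, you should first note (via Lemma~\ref{Jan20a}) that $\mathcal{O}_L$ is $p$-adically separated, which you do acknowledge at the end but might state where it is first used.
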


\begin{proof}
Much of this proof is taken verbatim from notes written by Brian Conrad \cite{ConradNotes}.  (Those notes are phrased in terms of absolute values rather than valuations.  We have attempted to translate Conrad's proof into the setting of possibly higher rank valued fields.)  We first outline the proof strategy.  We must show that every non-constant polynomial $f(x)$ in $L[x]$ has a zero in $L$.  We can approximate $f(x)$ by polynomials $f_j(x) \in K[x]$, each of which has a zero $r_j \in K$, because $K$ is algebraically closed.  We will show that some subsequence of $(r_j)$ converges to a zero of $f(x)$.  Because $L$ is the completion of $K$, this will imply that some zero of $f(x)$ lies in $L$, as desired.

More precisely, the following are the key steps.  Let $\Gamma$ denote the value group of $K$.
\begin{enumerate}
\item There exists $\gamma \in \Gamma$ such that $v(r_j) \leq \gamma$ for all $j$.
\item The sequence $f(r_j) \in L$ converges to zero.
\item Let $\lambda_1, \ldots, \lambda_n$ denote the zeros of $f(x)$ in some field $L'$ which is an algebraic extension of $L$.  We have that the sequence $\big(\min_i v(r_j - \lambda_i)\big)$ approaches zero as $j$ approaches infinity, and hence, because there are only finitely many values of $\lambda_i$, some subsequence of $(r_j)$ is a Cauchy sequence converging to $\lambda_k$ for some $1 \leq k \leq n$.  Thus $\lambda_k \in L$, as required.
\end{enumerate}

We now carry out these steps.  The proof of \cite[Theorem~2.4.3]{EP05} describes the valuation on $L$, and by Theorem~\ref{valuations extend}, we can further extend the valuation on $L$ to a valuation on any field extension $L'$.  If $L'/L$ is algebraic, then $\mathcal{O}_{L'}$ is $p$-adically separated by Lemma~\ref{Jan20a} and Lemma~\ref{Jan20b}.

  We may assume our polynomial is separable.  Let $f(x) = x^n + a_{n-1}  x^{n-1} + \cdots + a_0$ denote an arbitrary separable, non-constant polynomial in $L[x]$.  It is also convenient for the proof below to assume that $a_0 \neq 0$ and $n \geq 2$.  Write $\lambda_1, \ldots, \lambda_n$ for the zeros of $f(x)$ in some field $L'$ which is algebraic over $L$.   We can approximate $f(x)$ by polynomials $f_j(x) = x^n + a_{n-1, j} x^{n-1} + \cdots + a_{0, j} \in K[x]$ satisfying the following two conditions:
\begin{enumerate}
\item If $a_i = 0$, then $a_{i,j} = 0$ for all $j$.
\item If $a_i \neq 0$, then $v(a_{i,j} - a_i) < \min(v(a_i), v(p^j))$ for all $j$.
\end{enumerate}
These conditions imply that $v(a_i) = v(a_{i,j})$ for all $i$ and all $j$.

For each $j$, let $r_j \in K$ denote a root of $f_j(x)$.  Because $f_j(r_j) = 0$, we have that
\[
v(r_j^n) = v\left( \sum_{i = 0}^{n-1} a_{i,j} r_j^i \right) \leq \max_i v(a_{i,j}) v(r_j)^i = \max_i v(a_i) v(r_j)^i.
\]
Hence, for each $j$, there exists some $i(j) \in \ZZ$ in the range $0 \leq i(j) \leq n-1$ such that
\[
v(r_j)^n \leq v(a_{i(j)}) v(r_j)^{i(j)}.
\]
Thus
\[
v(r_j) \leq \gamma := \max_{i} v(a_i)^{1/(n-i)}.
\]
(Note that, because $K$ is algebraically closed, the group element $v(a_i)^{1/(n-i)}$ makes sense.)

Because $f, f_j$ are monic polynomials of the same degree and because $f_j(r_j) = 0$ by our assumption, we have
\begin{align*}
v(f(r_j)) &= v(f(r_j) - f_j(r_j)) \\
&= v\left( \sum_{i = 0}^{n-1} (a_i - a_{i,j}) r_j^i \right) \\
&\leq \max_{0 \leq i \leq n-1} v(a_i - a_{i,j}) v(r_j)^i.\\
\intertext{We know $v(r_j) \leq \gamma$, so $v(r_j)^i \leq \gamma^i$, and in particular if $\gamma < 1$, then $v(r_j)^i \leq 1$.  If on the other hand $\gamma \geq 1$, then $v(r_j)^i \leq \gamma^{n-1}$.   In total, we deduce}
v(f(r_j)) &\leq \max_i v(a_i - a_{i,j}) \cdot \max \{ 1, \gamma^{n-1} \}.\\
\intertext{By our choice of the coefficients $a_{i,j}$, we have}
v(f(r_j)) &\leq v(p^j) \cdot \max \{ 1, \gamma^{n-1} \}.
\end{align*}
By Lemma~\ref{separated cofinal}, there exists some integer $m \geq 1$ such that $v(p^m) < \frac{1}{\max \{ 1, \gamma^{n-1} \}}$, and hence $\max \{ 1, \gamma^{n-1} \} < v(p^{-m})$ and therefore $v(f(r_j)) < v(p^{j-m})$ for all $j$.  Thus the sequence $f(r_j) \in L$ converges to 0.  

Recall that we denoted the roots of $f(x)$ by $\lambda_i \in L'$, so $f(x) = \prod_{i=1}^n (x - \lambda_i) \in L'[x]$, where $L'$ is some fixed algebraic extension of $L$.  Thus for all $j$, we have that
\begin{align*}
\prod_{i=1}^n v(r_j - \lambda_i) &\leq v(p^j) \cdot \max \{ 1, \gamma^{n-1} \},\\
\intertext{and so for all $j$, we have that}
\min_i v(r_j - \lambda_i) &\leq v(p^j)^{1/{n}} \cdot \max \{ 1, \gamma^{(n-1)/n}\}.\\
\intertext{By the pigeonhole principle, there is some $0 \leq i_0 \leq n-1$ for which the inequality}
v(r_j - \lambda_{i_0}) &\leq v(p^j)^{1/{n}} \cdot \max \{ 1, \gamma^{(n-1)/n}\}
\end{align*}
holds for infinitely many values of $j$.  As in the previous paragraph, some subsequence of $(r_j - \lambda_{i_0})$ converges to $0$, and hence some subsequence of $(r_j)$ converges to $\lambda_{i_0}$.  Because the elements $r_j$ are all in $K$, and because $L$ is the completion of $K$, it follows that some subsequence of $r_j$ converges to some element $\lambda \in L$.   Because $f(x) \in L[x]$ is a polynomial and $f(r_j)$ converges to $0$ in $L$, we have that $\lambda \in L$ satisfies $f(\lambda) = 0$, as desired.  (Alternatively, it could be shown that in fact $\lambda = \lambda_{i_0}$, using the fact that $\mathcal{O}_{L_1}$ is $p$-adically separated, but that argument requires consideration of the case $\lambda, \lambda_{i_0} \in L_1 \setminus \mathcal{O}_{L_1}$.)  This completes the proof that the completion of $K$ is algebraically closed.  
\end{proof}

We next discuss perfectoid rings.  We follow the presentation and notation of Bhatt-Morrow-Scholze's \cite[Section~3]{BMS16}.  Many of these next results also correspond to related results in Hesselholt's \cite[Section~1.2]{Hes06}.  For example, Lemma~\ref{inverse limit iso} is closely related to \cite[Proposition~1.2.3 and Addendum~1.2.4]{Hes06}.

\begin{lemma}[{\cite[Lemma~3.2(i)]{BMS16}}] \label{tilt}
Let $A$ denote a ring which is $p$-adically complete and separated.  Define the \emph{tilt} of $A$, denoted $A^{\flat}$, to be the ring $\displaystyle A^{\flat} := \varprojlim_{x \mapsto x^p} A/pA$.  The  map which reduces each element modulo~$p$, 
\[
\varprojlim_{x \mapsto x^p} A \rightarrow \varprojlim_{x \mapsto x^p} A/pA = A^{\flat},
\]
is an isomorphism of multiplicative monoids.
\end{lemma}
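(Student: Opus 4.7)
The map is manifestly a homomorphism of multiplicative monoids, so the content is the bijectivity. The key elementary fact I will use throughout is the congruence
\[
x \equiv y \pmod{p^k A} \;\Longrightarrow\; x^p \equiv y^p \pmod{p^{k+1} A} \qquad (k \geq 1),
\]
which follows from factoring $x^p - y^p = (x-y)(x^{p-1} + x^{p-2} y + \cdots + y^{p-1})$ and noting that $x \equiv y \pmod{p}$ forces the second factor to lie in $pA$.

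For injectivity, suppose $(a_i), (a_i') \in \varprojlim_{x \mapsto x^p} A$ reduce to the same sequence modulo $p$. Fix $i$. For every $n \geq 0$ we have $a_{i+n} \equiv a_{i+n}' \pmod{p}$, and iterating the key congruence $n$ times yields
\[
a_i \;=\; a_{i+n}^{p^n} \;\equiv\; (a_{i+n}')^{p^n} \;=\; a_i' \pmod{p^{n+1} A}.
\]
Since $A$ is $p$-adically separated, this forces $a_i = a_i'$.

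For surjectivity, start with $(\bar b_i) \in A^{\flat}$ and lift each coordinate arbitrarily to some $b_i \in A$, so that $b_{i+1}^p \equiv b_i \pmod{p}$. Applying the key congruence inductively shows that the sequence $\big(b_{i+n}^{p^n}\big)_{n \geq 0}$ is $p$-adically Cauchy in $A$; because $A$ is $p$-adically complete, the limit
\[
a_i \;:=\; \lim_{n \to \infty} b_{i+n}^{p^n}
\]
exists. Continuity of the $p$-th power map in the $p$-adic topology, together with a reindexing, gives $a_{i+1}^p = a_i$, and $a_i \equiv b_i \pmod{p}$ by construction. Hence $(a_i)$ is a preimage of $(\bar b_i)$.

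The only step that requires any real care is checking that the Cauchy sequence argument in the surjectivity step is independent of the choice of lifts $b_i$ and that the resulting $(a_i)$ is actually compatible under $p$-th powers; both follow formally from the same key congruence, so I do not anticipate any substantive obstacle beyond recording these verifications cleanly.
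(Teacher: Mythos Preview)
Your argument is correct and is exactly the standard proof of this statement (the one given in the cited reference \cite[Lemma~3.2(i)]{BMS16}); the paper itself does not supply a proof but simply quotes the result. There is nothing to add.
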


\begin{notation} \label{inv F notation}
For any ring $A$, we write 
$\varprojlim_F W_r(A)$ for the ring which is the inverse limit of the diagram 
\[
\cdots \xrightarrow{F} W_3(A) \xrightarrow{F} W_2(A) \xrightarrow{F} W_1(A),
\] where the transition maps are the finite-length Witt vector Frobenius.  For any integer $n \geq 1$, we let $\text{pr}_n: \varprojlim_F W_r(A) \rightarrow W_n(A)$ denote the projection.
\end{notation}

\begin{lemma} \label{inverse limit iso}
Let $A$ denote a ring which is $p$-adically complete and separated.  Let notation be as in Lemma~\ref{tilt} and Notation~\ref{inv F notation}.  There is a unique $p$-adically continuous ring homomorphism
\[
\pi: W(A^{\flat}) \rightarrow \varprojlim_F W_r(A)
\]
such that, for every integer $r \geq 1$ and every $ (t^{(0)}, t^{(1)}, \ldots) \in \varprojlim_{x \mapsto x^p} A$ (identified with an element $t \in A^{\flat}$ as in Lemma~\ref{tilt}), we have 
\[
(\text{pr}_r \circ \pi)([t]) = [t^{(r)}] \in W_r(A).
\]
The map $\pi$ is an isomorphism of rings.
\end{lemma}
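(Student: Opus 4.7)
The strategy is to invoke the universal property of Witt vectors for perfect $\FF_p$-algebras. First observe that $A^{\flat}$ is a perfect ring of characteristic $p$ (the transition maps in $\varprojlim_{x \mapsto x^p} A/pA$ are precisely the Frobenius), and hence $W(A^{\flat})$ is $p$-torsion free, $p$-adically complete and separated, and is the unique such lift of $A^{\flat}$. It therefore suffices to show that $\varprojlim_F W_r(A)$ is $p$-adically complete, $p$-torsion free, and reduces modulo $p$ to $A^{\flat}$, and then to produce the required multiplicative section. That section is forced by the statement: send $t \in A^{\flat}$, identified under Lemma~\ref{tilt} with $(t^{(0)}, t^{(1)}, \ldots)$, to $([t^{(r)}])_r$, which lies in $\varprojlim_F W_r(A)$ because $F([t^{(r+1)}]) = [(t^{(r+1)})^p] = [t^{(r)}]$ in $W_r(A)$.

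For $p$-torsion freeness of $\varprojlim_F W_r(A)$, I would use the identity $p \cdot x = VF(x) = FV(x)$ together with injectivity of $V \colon W_{r-1}(A) \hookrightarrow W_r(A)$: if $(x_r)_r$ satisfies $p x_r = 0$ for all $r$, then $VF(x_r) = 0$ forces $F(x_r) = 0$, and the Frobenius-compatibility $F(x_r) = x_{r-1}$ then yields $x_{r-1} = 0$ for all $r$. For $p$-adic completeness, I would argue that each $W_r(A)$ is $p$-adically complete (since its addition and multiplication laws are polynomial in $r$ components of $A$ and $A$ itself is $p$-adically complete and separated) and that inverse limits of $p$-adically complete separated abelian groups are again $p$-adically complete. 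To identify $\varprojlim_F W_r(A)$ modulo $p$ with $A^{\flat}$, I would construct the ring map $\tau \colon \varprojlim_F W_r(A) \to A^{\flat}$ sending $(x_r)_r$ to the sequence of zeroth Witt coordinates reduced modulo $p$; well-definedness uses the congruence $F(x)_0 \equiv x_0^p \pmod{p}$ on Witt coordinates, and the composition $\tau \circ \pi|_{A^{\flat}}$ is the identity by construction, forcing $\tau$ to descend to an isomorphism $\varprojlim_F W_r(A)/p \xrightarrow{\sim} A^{\flat}$.

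With all this in place, the universal property of Witt vectors of perfect rings produces a unique ring map $\pi \colon W(A^{\flat}) \to \varprojlim_F W_r(A)$ lifting the identity on $A^{\flat}$, and this map is automatically an isomorphism since both sides are $p$-adically complete, $p$-torsion free, and have the same reduction modulo $p$; uniqueness of the $p$-adically continuous ring lift forces $\pi([t]) = ([t^{(r)}])_r$ as required. The main obstacle I anticipate is the surjectivity of $\tau$ modulo $p$, namely lifting a Frobenius-compatible tower in $A/pA$ to a Frobenius-compatible tower in the Witt rings; this is where the $p$-adic completeness of $A$ is used most essentially, through an inductive adjustment of higher Witt coordinates level by level, using that $V$ absorbs the discrepancies between successive Teichmüller lifts.
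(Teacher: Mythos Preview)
The paper does not give a self-contained proof here; it simply cites \cite[Lemma~3.2]{BMS16} and remarks that uniqueness follows from the $p$-adic expansion $\sum p^i[t_i]$ available in $W(A^{\flat})$. Your strategy via the universal property of Witt vectors of a perfect $\FF_p$-algebra is a standard way to unpack that reference, and the multiplicative section $t \mapsto ([t^{(r)}])_r$ you write down is exactly the one used there.

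There is, however, a genuine error in your $p$-torsion-freeness argument. The identity $p = VF$ does \emph{not} hold in $W_r(A)$ when $A$ is not of characteristic~$p$: one always has $FV = p$, but $VF(x) = V(1)\cdot x$ by the projection formula $x\cdot V(y) = V(F(x)\cdot y)$, and $V(1) \neq p$ in $W_r(A)$ unless $p = 0$ in $A$ (over $\ZZ$ the Witt coordinates of $p$ begin $(p,\, 1 - p^{p-1},\, \ldots)$, whereas $V(1) = (0,1,0,\ldots)$). So the step ``$p x_r = 0 \Rightarrow VF(x_r) = 0$'' fails. This gap is not cosmetic: each individual $W_r(A)$ can have $p$-torsion (e.g.\ $W_r(\FF_p) = \ZZ/p^r\ZZ$), so torsion-freeness of the inverse limit genuinely depends on the interaction with the transition maps $F$ and cannot be deduced from a false identity at a single level. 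One repair is to bypass the abstract universal property and instead construct $\pi$ explicitly via $\sum_i p^i[t_i] \mapsto \bigl(\sum_i p^i[t_i^{(r)}]\bigr)_r$ together with an explicit inverse, which is closer to what \cite{BMS16} actually does; alternatively one can establish $\ker\tau = p\cdot\varprojlim_F W_r(A)$ and injectivity of multiplication by~$p$ in one combined argument, but that requires more care than your sketch provides. Your instinct that the surjectivity of $\tau$ modulo~$p$ is where the $p$-adic completeness of $A$ enters most essentially is correct, but the $p$-torsion-freeness step also needs a valid argument before the universal property can be invoked.
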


\begin{proof}
See for example \cite[Lemma~3.2]{BMS16}, and the two paragraphs following that proof.  Uniqueness follows immediately from the fact that every element in $W(A^{\flat})$ is a $p$-adic combination $\sum p^i [t_i]$, where $t_i \in A^{\flat}$.
\end{proof}

\begin{definition} \label{theta def}
Let $A$ be a ring which is $p$-adically complete and $p$-adically separated.  For every integer $r \geq 1$, we define $\widetilde{\theta}_r$ to be the ring homomorphism
\[
\widetilde{\theta}_r := \text{pr}_r \circ \pi: W(A^{\flat}) \rightarrow W_r(A)
\] 
from Lemma~\ref{inverse limit iso}.  We define the ring homomorphism $\theta: W(A^{\flat}) \rightarrow A$ to be the composite $\widetilde{\theta}_1 \circ F$, where $F: W(A^{\flat}) \rightarrow W(A^{\flat})$ is the Witt vector Frobenius.
\end{definition}

\begin{remark}
Let $t \in A^{\flat}$ be arbitrary, and assume $t$ corresponds to $ (t^{(0)}, t^{(1)}, \ldots) \in \varprojlim_{x \mapsto x^p} A$. We have
\[
\theta([t]) = t^{(0)}.
\]
\end{remark}

\begin{definition}[{\cite[Definition~3.5]{BMS16}}] \label{perfectoid definition}
We say a ring $A$ is perfectoid if the following three conditions hold:
\begin{enumerate}
\item \label{pi condition} The ring $A$ is $\pi$-adically complete and separated for some element $\pi$ such that $\pi^p$ divides $p$.
\item The $p$-power Frobenius $A/pA \rightarrow A/pA$ is surjective.
\item The kernel of $\theta: W(A^{\flat}) \rightarrow A$ (with $\theta$ as in Definition~\ref{theta def}) is a principal ideal.
\end{enumerate}
\end{definition}

\begin{example}
Assume $A$ is a $p$-torsion-free ring containing a primitive $p$-th root of unity, $\zeta_p$.  Then Condition~(\ref{pi condition}) in the definition of \emph{perfectoid} can be replaced with 
\begin{enumerate}
\item[(\ref*{pi condition}$^{\prime}$)]  The ring $A$ is $p$-adically complete and separated.
\end{enumerate}
Indeed, on one hand, a perfectoid ring is $p$-adically complete and separated.  On the other hand, if the $p$-power map is surjective modulo~$p$ on a $p$-torsion-free ring $A$ containing $\zeta_p$, then there exists some $\pi, a \in A$ such that 
\[
\pi^p = (\zeta_p - 1) + pa.
\]
If $A$ is $p$-adically complete, then we deduce that $\pi^p$ divides $\zeta_p - 1$, and hence $\pi^p$ divides $p$.  On the other hand, $p$ divides $\pi^{p^2}$, so the ring $A$ is $\pi$-adically complete and separated.
\end{example}

\begin{remark}
We typically use \cite{BMS16} as our reference, but many of these properties were studied earlier.  For example, the significance of $\theta$ was recognized by Fontaine, and the isomorphism between $W(A^{\flat})$ and $\varprojlim_F W_r(A)$ appears (in a slightly different context) in Hesselholt's \cite[Addendum~1.2.4]{Hes06}.
An isomorphism $W(A^{\flat}) \cong \varprojlim_{F} W_r(A)$ was also studied by the first author and Kedlaya in \cite[Theorem~3.6]{DK15}, but that isomorphism differs from the isomorphisms of Hesselholt and Bhatt-Morrow-Scholze.  More precisely, the isomorphism from \cite[Theorem~3.6]{DK15} is attained from the isomorphism in \cite[Lemma~3.2]{BMS16} by first applying the Witt vector Frobenius automorphism on $W(A^{\flat})$, as indicated in the following commutative diagram
\[
\xymatrix{
W(A^{\flat}) \ar^{F}[rr] \ar_{\text{\cite{DK15}}}^{\sim}[dr]& & W(A^{\flat}) \ar^{\text{\cite{BMS16} or \cite{Hes06}}}_{\sim}[dl]\\
& \displaystyle \varprojlim_F W_r(A)
}
\]
The present paper uses the normalizations of Hesselholt and Bhatt-Morrow-Scholze.
\end{remark}

In general, a perfectoid ring $A$ and its tilt $A^{\flat}$ may contain zero divisors, and hence $W(A^{\flat})$ may also contain zero divisors.  On the other hand, every generator of the ideal $\ker \theta \subseteq W(A^{\flat})$ is a non-zero-divisor, as we recall in the next result.

\begin{lemma}[{\cite[Lemma~3.10 and Remark~3.11]{BMS16}}] \label{xi lemma}
Let $A$ denote a perfectoid ring, and let $\xi \in W(A^{\flat})$ be any generator of $\ker \theta$.
\begin{enumerate}
\item The element $\xi$ is a non-zero-divisor in $W(A^{\flat})$.
\item The generators $\xi$ are functorial in the following sense.  Let $B$ denote another perfectoid ring, with corresponding theta map $\theta_B: W(B^{\flat}) \rightarrow B$.  Let $f: A \rightarrow B$ denote a ring homomorphism.  By functoriality of tilts and Witt vectors, the map $f$ induces a map $W(f^{\flat}): W(A^{\flat}) \rightarrow W(B^{\flat})$, and the element $W(f^{\flat})(\xi) \in W(B^{\flat})$ is a generator of $\ker(\theta_B)$.
\end{enumerate}
\end{lemma}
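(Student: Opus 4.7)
Both parts rest on the notion of a \emph{distinguished} element of $W(A^{\flat})$, i.e.\ one of the form $\xi = \sum_{n \geq 0} p^n [\xi_n]$ with $\xi_1 \in (A^{\flat})^{\times}$; the key classification (from \cite[Lemma~3.9-3.10]{BMS16}) is that, for $A$ perfectoid, an element of $W(A^{\flat})$ generates $\ker\theta$ if and only if it is distinguished and lies in $\ker\theta$. Throughout, I use that $A^{\flat}$ is perfect of characteristic~$p$, so $W(A^{\flat})$ is automatically $p$-torsion free and $p$-adically complete and separated.

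For Part~(1), the plan is to reduce the non-zero-divisor property of $\xi$ in $W(A^{\flat})$ to that of its reduction $\bar\xi := \xi \bmod p \in A^{\flat}$. If $\eta\xi = 0$ with $\eta \neq 0$, then by $p$-torsion freeness and $p$-adic separatedness of $W(A^{\flat})$ I may factor out the largest power of $p$ dividing $\eta$ to arrange $\eta \not\equiv 0 \pmod p$; reducing mod~$p$ yields $\bar\eta\cdot\bar\xi = 0$ in $A^{\flat}$ with $\bar\eta \neq 0$, contradicting that $\bar\xi$ is a non-zero-divisor. The remaining step is to show that $\bar\xi \in A^{\flat}$ is a non-zero-divisor. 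I would argue this by exploiting the isomorphism $A^{\flat}/\bar\xi \xrightarrow{\sim} A/p$ induced by $\theta \bmod p$ together with the perfectness of $A^{\flat}$ (so that each $p^n$-th root $\bar\xi^{1/p^n}$ exists and controls the corresponding level of $A/p$). Alternatively, one could reduce to the case of perfectoid valuation rings, invoking the embedding into a product of such rings (Lemma~\ref{embed into product}), where the assertion is immediate since a perfectoid valuation ring is an integral domain and $\bar\xi \neq 0$.

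For Part~(2), the strategy is much cleaner. The functoriality of Witt vectors and tilting yields a commutative diagram
\[
\xymatrix{
W(A^{\flat}) \ar[r]^{W(f^{\flat})} \ar[d]_{\theta_A} & W(B^{\flat}) \ar[d]^{\theta_B}\\
A \ar[r]^{f} & B,
}
\]
so $\theta_B(W(f^{\flat})(\xi)) = f(\theta_A(\xi)) = 0$, giving $W(f^{\flat})(\xi) \in \ker\theta_B$ automatically. To see that this image is a \emph{generator}, I would choose $\xi$ in distinguished form $\xi = \sum p^n [\xi_n]$, so that
\[
W(f^{\flat})(\xi) \;=\; \sum_{n \geq 0} p^n \bigl[f^{\flat}(\xi_n)\bigr].
\]
Since $f^{\flat}: A^{\flat} \to B^{\flat}$ is a ring homomorphism, $f^{\flat}(\xi_1) \in (B^{\flat})^{\times}$, so $W(f^{\flat})(\xi)$ is itself distinguished. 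Combined with the characterization of kernel-generators recalled above, this shows $W(f^{\flat})(\xi)$ generates $\ker\theta_B$.

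The main obstacle is the non-zero-divisor claim in Part~(1), specifically showing that $\bar\xi \in A^{\flat}$ is a non-zero-divisor for an arbitrary perfectoid ring $A$ (which need not be an integral domain). Once this is in hand, Part~(1) is a routine reduction using $p$-adic completeness, and Part~(2) follows formally from the distinguished-element structure of $\xi$.
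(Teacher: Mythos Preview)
The paper does not supply a proof; the lemma is quoted directly from \cite[Lemma~3.10 and Remark~3.11]{BMS16}. Your sketch reconstructs essentially that argument: Part~(2) follows from commutativity of the $\theta$-square together with the observation that ring homomorphisms preserve the distinguished form $\sum p^n[\xi_n]$ with $\xi_1$ a unit, exactly as you describe. For Part~(1), your reduction (via $p$-torsion-freeness and $p$-adic separatedness of $W(A^{\flat})$) to the claim that $\bar\xi=\xi_0$ is a non-zero-divisor in $A^{\flat}$ is the standard one, and you correctly identify this as the crux.

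Two caveats are worth recording. First, your alternative route via Lemma~\ref{embed into product} does not apply here: that lemma, as stated in this paper, is only for \emph{$p$-torsion-free} perfectoid rings, whereas the present statement is for arbitrary perfectoid $A$; it is also a forward reference in the paper's logical order. Second, your reduction tacitly assumes $\bar\xi\neq 0$. When $A$ has characteristic~$p$, one finds $\xi_0^{\#}=0$ and hence (as $A^{\flat}$ is reduced) $\xi_0=0$; your reduction is then vacuous, but in that case $\xi=p\cdot(\text{unit})$ and $W(A^{\flat})$ is $p$-torsion-free, so the conclusion is immediate. Your approach~(a) is the correct route for the remaining case, though carrying it out in full amounts to reproving the relevant portion of \cite[Lemma~3.10]{BMS16}.
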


One of the deeper results concerning perfectoid rings which we will need is the following.

\begin{lemma} \label{embed into product}
Every $p$-torsion-free perfectoid ring $A$ embeds into a product of $p$-torsion-free perfectoid valuation rings, $\prod \overline{V}_{\alpha},$ for which each $\Frac \overline{V}_{\alpha}$ is algebraically closed.
\end{lemma}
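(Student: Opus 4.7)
The strategy is to reduce in two stages: first embed $A$ into a product of $p$-torsion-free domains via its minimal primes, and then embed each such domain into a perfectoid valuation ring of an algebraically closed field by choosing a rank-one valuation, extending to an algebraic closure, and taking the $p$-adic completion.

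First I would record that every perfectoid ring is reduced; this can be deduced, for example, from $A^{\flat}$ being a perfect $\FF_p$-algebra (hence reduced) together with the structure of a distinguished generator $\xi \in W(A^{\flat})$ of $\ker\theta$.  Being reduced, $A$ embeds in $\prod_{\mathfrak{p}} A/\mathfrak{p}$ as $\mathfrak{p}$ ranges over the minimal primes of $A$.  Because $A$ is $p$-torsion-free, $p$ lies in no associated prime, and the associated primes of a reduced ring are exactly its minimal primes; hence each $A/\mathfrak{p}$ is a $p$-torsion-free domain.

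Next, fix a minimal prime $\mathfrak{p}$ and set $R = A/\mathfrak{p}$, $K = \Frac R$.  I choose a maximal ideal of $R$ containing $p$ and apply Chevalley's extension theorem to obtain a valuation $v$ on $K$ whose valuation ring $V_{0}$ dominates the corresponding localization of $R$; in particular $R \subseteq V_{0}$ and $v(p) > 0$.  Passing to a rank-one quotient place if needed (by modding out the largest isolated subgroup of the value group not containing $v(p)$), I arrange for $v$ to have rank one; this is the crucial technical step, since for rank-one valuations the $p$-adic topology on $V_{0}$ coincides with the $\pi$-adic topology for any $\pi$ satisfying $\pi^p\mid p$.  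By Theorem~\ref{valuations extend} I extend the valuation to an algebraic closure $\overline{K}$, obtaining a valuation ring $V \subseteq \overline{K}$ with $V \supseteq V_{0}$.  By Lemma~\ref{Jan20b} the ring $V$ is $p$-adically separated, and by Lemma~\ref{Jan20a} its $p$-adic completion $\overline{V}_{\mathfrak{p}} := V^{\wedge}$ is a $p$-adically complete and separated valuation ring whose fraction field $L$ is, by Lemma~\ref{still algebraically closed}, still algebraically closed.

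It remains to verify that $\overline{V}_{\mathfrak{p}}$ is a $p$-torsion-free perfectoid ring.  The $p$-torsion-freeness is automatic since $\overline{V}_{\mathfrak{p}}$ is a valuation ring of a characteristic-zero field.  For the perfectoid axioms: algebraic closedness of $L$ yields $\pi \in \overline{V}_{\mathfrak{p}}$ with $\pi^{p} = p$, giving $\pi^{p}\mid p$; the Frobenius on $\overline{V}_{\mathfrak{p}}/p\overline{V}_{\mathfrak{p}}$ is surjective because every element of $\overline{V}_{\mathfrak{p}}$ has a $p$-th root in $L$ and, since the value group is totally ordered and divisible by $p$, these roots automatically lie in $\overline{V}_{\mathfrak{p}}$; and $\ker\theta$ is principal, generated by $[\pi^{\flat}] - p$.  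Composing the injections yields the desired embedding $A \hookrightarrow \prod_{\mathfrak{p}} \overline{V}_{\mathfrak{p}}$.  The principal obstacle throughout is ensuring $\pi$-adic completeness (rather than mere $p$-adic completeness) for an element $\pi$ satisfying $\pi^{p}\mid p$, which is the reason for insisting on a rank-one valuation at the outset of Step~2.
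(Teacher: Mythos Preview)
Your reduction to domains via minimal primes is fine (perfectoid rings are reduced, and since $p$ is a non-zero-divisor in a reduced ring it lies in no minimal prime, so each $A/\mathfrak{p}$ is a $p$-torsion-free domain; the appeal to ``associated primes'' is unnecessary and potentially misleading in the non-Noetherian setting, but the conclusion stands).

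The gap is in your rank-one reduction. Modding out the largest convex subgroup $\Delta$ of $\Gamma$ not containing $v(p)$ produces a valuation with value group $\Gamma/\Delta$, but this need not have rank one. For instance, take $\Gamma = \ZZ^{3}$ with the lexicographic order and $v(p) = (0,1,0)$; the convex subgroups are $0 \subset \{0\}^{2}\times\ZZ \subset \{0\}\times\ZZ^{2} \subset \ZZ^{3}$, the largest one missing $v(p)$ is $\Delta = \{0\}^{2}\times\ZZ$, and $\Gamma/\Delta \cong \ZZ^{2}$ still has rank two. Worse, the image of $v(p)$ lies in the proper convex subgroup $\{0\}\times\ZZ$, so the localized valuation ring is \emph{not} $p$-adically separated, and Lemmas~\ref{separated cofinal}--\ref{still algebraically closed} do not apply. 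The alternative route---passing to the quotient of $V_{0}$ by the prime corresponding to the convex hull of $v(p)$, which \emph{does} yield rank one---destroys the injectivity of $R \hookrightarrow V_{0}$. So there is no easy repair within your framework; one really needs the valuations produced to be $p$-adically separated from the outset, and your Chevalley construction does not ensure this.

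This is exactly why the paper proceeds differently: it invokes \cite[Proof of Proposition~4.19]{BMS18} to embed $A$ directly into a product of \emph{perfectoid} valuation rings $V_{\alpha}$. Being perfectoid, each $V_{\alpha}$ is already $p$-adically complete and separated, so Lemmas~\ref{Jan20a}--\ref{still algebraically closed} apply cleanly when one passes to the completed algebraic closure.

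Two smaller points. First, with $\pi^{p}=p$ one has $\theta([\pi^{\flat}]) = \pi \neq p$, so $[\pi^{\flat}]-p$ is not in $\ker\theta$; the intended generator is $p - [p^{\flat}]$ where $p^{\flat}=(p,p^{1/p},\dots)$, or else argue via \cite[Lemma~3.10(ii)]{BMS16} by checking that $x\mapsto x^{p}$ is injective from $\overline{V}_{\mathfrak p}/\pi$ to $\overline{V}_{\mathfrak p}/\pi^{p}$, as the paper does. Second, your stated reason for needing rank one is off: once $\pi^{p}=p$, the $\pi$-adic and $p$-adic topologies coincide and $\pi$-adic completeness is automatic. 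The genuine need for $p$-adic separatedness is so that the completion lemmas apply.
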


\begin{proof}
It is shown in \cite[Proof of Proposition~4.19]{BMS18} that every perfectoid ring embeds into a product of perfectoid valuation rings.  Every valuation ring is either $p$-torsion-free or is annihilated by $p$, and because our ring $A$ is $p$-torsion-free, if $A$ embeds into a product of perfectoid valuation rings $\prod V_{\alpha}$, it also embeds into the possibly smaller product consisting of only those $V_{\alpha}$ which are $p$-torsion-free.  (If $a$ maps to 0 in this smaller product, then $pa$ maps to 0 in the original product, and hence $pa = 0$, and hence $a = 0$.)  For each $\alpha$, let $K_\alpha := \Frac V_{\alpha}$ and let $L_{\alpha}$ denote the completion (as in \cite[Theorem~2.4.3]{EP05}) of an algebraic closure of $K_{\alpha}$.  Let $W_{\alpha}$ denote the valuation ring in $L_{\alpha}$.  By Lemma~\ref{still algebraically closed}, $L_{\alpha} = \Frac W_{\alpha}$ is algebraically closed, so it suffices to show that $W_{\alpha}$ is perfectoid.  The ring $W_{\alpha}$ is $p$-adically separated by Lemma~\ref{Jan20a} and Lemma~\ref{Jan20b}, and $W_{\alpha}$ is $p$-adically complete by construction.  Let $\pi \in W_{\alpha}$ denote an element for which $\pi^p = p$.  From the fact that $L_{\alpha}$ is algebraically closed, it's clear that such an element $\pi$ exists and that the $p$-power map $W_{\alpha}/\pi W_{\alpha} \rightarrow W_{\alpha}/\pi^p W_{\alpha}$ is surjective.  By \cite[Lemma~3.10(ii)]{BMS16}, to show that $W_{\alpha}$ is perfectoid, it suffices to show that this $p$-power map $W_{\alpha}/\pi W_{\alpha} \rightarrow W_{\alpha}/\pi^p W_{\alpha}$ is also injective, but this is obvious from the fact that $W_{\alpha}$ is a valuation ring: if $\pi^p \mid w^p$, then $v(\pi)^p \geq v(w)^p$, and so $v(\pi) \geq v(w)$, and so $\pi \mid w$.  
\end{proof}

We next transition to our algebraic results concerning Witt vectors.  A foundational result is the following.

\begin{lemma}[{\cite[Lemma~1.1.1]{Hes06}}] \label{Witt complete} 
Let $A$ denote a $p$-adically complete and $p$-torsion-free ring.  Then for each integer $n \geq 1$, the ring $W_n(A)$ is also $p$-adically complete and $p$-torsion-free.
\end{lemma}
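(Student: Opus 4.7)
The plan is to prove both assertions simultaneously by induction on $n$, leveraging the always-exact Witt vector sequence (\ref{Witt sequence}):
\[
0 \rightarrow A \stackrel{V^n}{\longrightarrow} W_{n+1}(A) \stackrel{R}{\rightarrow} W_n(A) \rightarrow 0.
\]
The base case $n=1$ is immediate from $W_1(A)\cong A$ and the hypotheses on $A$. So assume $W_n(A)$ is $p$-torsion-free and $p$-adically complete and separated; the goal is to deduce the same for $W_{n+1}(A)$.

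For the torsion-freeness and the reduction modulo $p^k$ in one stroke, I would apply the snake lemma to the diagram given by multiplication by $p^k$ on the short exact sequence above. Because $A$ is $p$-torsion-free by hypothesis and $W_n(A)$ is $p$-torsion-free by the induction hypothesis, the snake sequence reads
\[
0 \longrightarrow W_{n+1}(A)[p^k] \longrightarrow 0 \longrightarrow A/p^k A \longrightarrow W_{n+1}(A)/p^k W_{n+1}(A) \longrightarrow W_n(A)/p^k W_n(A) \longrightarrow 0.
\]
This simultaneously forces $W_{n+1}(A)[p^k]=0$ for every $k\geq 1$ (so $W_{n+1}(A)$ is $p$-torsion-free) and produces a short exact sequence of reductions. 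The one Witt-theoretic input needed here is the identity $p\cdot V^n(a) = V^n(pa)$, which follows from the general formula $x V^n(y) = V^n(F^n(x)y)$ together with $F(p)=p$; this is the reason the kernel of multiplication by $p^k$ on $W_{n+1}(A)$ reduces cleanly to $p^k$-torsion in $A$.

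For $p$-adic completeness, I would pass to the inverse limit over $k$ of these reduction sequences. Since all transition maps in each of the three inverse systems $\{A/p^k A\}$, $\{W_{n+1}(A)/p^k W_{n+1}(A)\}$, $\{W_n(A)/p^k W_n(A)\}$ are surjective, the Mittag-Leffler condition holds and $\varprojlim\nolimits^{\!1}$ vanishes, so one obtains a short exact sequence
\[
0 \longrightarrow \varprojlim_k A/p^k A \longrightarrow \varprojlim_k W_{n+1}(A)/p^k W_{n+1}(A) \longrightarrow \varprojlim_k W_n(A)/p^k W_n(A) \longrightarrow 0.
\]
Mapping the original short exact sequence into this one via the canonical completion maps gives a commutative diagram with exact rows in which, by hypothesis on $A$ and the induction hypothesis on $W_n(A)$, the outer vertical arrows are isomorphisms. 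The five-lemma then forces the middle vertical arrow $W_{n+1}(A) \to \varprojlim_k W_{n+1}(A)/p^k W_{n+1}(A)$ to be an isomorphism, completing the induction.

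The only step requiring any nontrivial input is the snake-lemma argument, and the main obstacle would be verifying that the torsion-free hypothesis on $W_n(A)$ (from induction) really does let one tensor the sequence with $\ZZ/p^k\ZZ$ without introducing $\Tor$; this is precisely why torsion-freeness must be carried through the induction together with completeness. Everything else is formal from the universally exact sequence (\ref{Witt sequence}) and the Mittag-Leffler property of surjective inverse systems.
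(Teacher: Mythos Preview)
Your argument is correct; it is the standard induction on $n$ using the exact sequence~(\ref{Witt sequence}), the snake lemma for torsion-freeness, and Mittag--Leffler plus the five lemma for completeness. The paper does not give its own proof of this lemma but simply cites \cite[Lemma~1.1.1]{Hes06}, where essentially the same inductive argument appears. One minor remark: your justification that $p\cdot V^n(a)=V^n(pa)$ via $xV^n(y)=V^n(F^n(x)y)$ is fine but unnecessary, since $V^n$ is additive and this identity is just $\ZZ$-linearity.
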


Most of our Witt vector results in this section are less foundational and more specialized.  We briefly indicate the significance of these technical results.  Theorem~\ref{Tate theorem} relates $p$-torsion in the de\thinspace Rham-Witt complex to $W_n(A)$-modules of the form $W_n(A)/pW_n(A)$.  In almost every situation, it is easier to prove results about $W_n(A)/pW_n(A)$ than to prove results about $W_n\Omega^1_A[p]$.  In this section we gather several algebraic results concerning $W_n(A)/pW_n(A)$.  Much of our strategy involves relating these modules for different values of $n$, and there are two maps we typically use, Witt vector Frobenius and restriction.  So we also gather here some algebraic results concerning Frobenius and restriction.

\begin{lemma} \label{general iso}
Let $A$ denote a perfectoid ring.  Assume $f: A/pA \rightarrow A/pA$ is a map of $W_{n+1}(A)$-modules, where $A/pA$ is equipped with a $W_{n+1}(A)$-module structure via $F^n$.  If there exists a unit $u \in A/pA$ which is in the image of $f$, then $f$ is an isomorphism.
\end{lemma}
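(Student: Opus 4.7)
The plan is to reduce the $W_{n+1}(A)$-linearity of $f$ to ordinary $A$-linearity by exploiting the surjectivity of the Witt vector Frobenius on perfectoid rings. The crucial input is the fact highlighted in the introduction to this paper, namely that for a perfectoid ring $A$, the Frobenius $F: W_{k+1}(A) \to W_k(A)$ is surjective for every $k \geq 1$. Iterating, I obtain that $F^n: W_{n+1}(A) \to W_1(A) = A$ is surjective.

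Given this surjectivity, the first step is to observe that the $W_{n+1}(A)$-module structure on $A/pA$ defined via $F^n$ factors through the canonical reduction $W_{n+1}(A) \xrightarrow{F^n} A \twoheadrightarrow A/pA$, and every element of $A/pA$ appears as $F^n(y) \bmod p$ for some $y \in W_{n+1}(A)$. Consequently, writing out the $W_{n+1}(A)$-linearity condition $f(y \cdot x) = y \cdot f(x)$ in the form $f(F^n(y) x) = F^n(y) f(x)$ and letting $F^n(y)$ range over all of $A$, I conclude that $f$ is $A/pA$-linear in the ordinary sense.

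The second step is to observe that $A/pA$ is a cyclic $A/pA$-module generated by $1$, so any $A/pA$-linear endomorphism is multiplication by its value at $1$. Thus $f$ is simply multiplication by $\alpha := f(1) \in A/pA$. If the hypothesized unit $u \in A/pA$ lies in the image, then $u = \alpha \cdot v$ for some $v \in A/pA$, whence $\alpha \cdot (u^{-1} v) = 1$; therefore $\alpha$ has a right inverse, and in the commutative ring $A/pA$ this forces $\alpha$ to be a unit. Multiplication by a unit is an isomorphism, and the conclusion follows.

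There is no genuine obstacle here: the only non-formal ingredient is the surjectivity of $F^n$, which is a standard feature of perfectoid rings, and the rest is the elementary observation that endomorphisms of a cyclic module over a commutative ring are scalar multiplications. This lemma is best viewed as a convenient packaging of the principle that, in the perfectoid setting, $W_{n+1}(A)$-linearity of maps $A/pA \to A/pA$ through $F^n$ is the same as $A/pA$-linearity.
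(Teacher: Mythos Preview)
Your proof is correct and follows essentially the same approach as the paper's: both use the surjectivity of $F^n\colon W_{n+1}(A)\to A$ to reduce $W_{n+1}(A)$-linearity to ordinary $A/pA$-linearity, deduce that $f$ is multiplication by $f(1)$, and conclude that $f(1)$ is a unit because some multiple of it is. The paper's version is simply more terse.
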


\begin{proof}
Using that $F^n: W_{n+1}(A) \rightarrow W_1(A)$ is surjective, we find that $f(a) = af(1)$ for every $a \in A/pA$, so it suffices to prove that $f(1)$ is a unit in $A/pA$, and this follows by our assumption, which shows that some multiple of $f(1)$ is a unit.
\end{proof}

The ring $\ZZ_p[\zeta_{p^{\infty}}]^{\wedge}$ plays a special role in this paper, especially in the proof of Theorem~\ref{Tate theorem}.  The next few results concern this ring.

\begin{lemma} \label{flat valuation ring}
Let $A_0$ denote the valuation ring $\ZZ_p[\zeta_{p^{\infty}}]^{\wedge}$.  Then the ring $A_0^{\flat}$ is a valuation ring.
\end{lemma}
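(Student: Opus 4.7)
The plan is to realize $A_0^{\flat}$ as the ring of integers of the tilt of the perfectoid field $K_0 := A_0[1/p] = \QQ_p(\zeta_{p^{\infty}})^{\wedge}$, and then invoke the known fact that rings of integers of perfectoid fields are valuation rings.

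First I would verify that $K_0$ is genuinely a perfectoid field: it is complete with respect to the unique $p$-adic absolute value extended from $\QQ_p$; its value group is $p^{\ZZ[1/p]}$, hence non-discrete (since $v(\zeta_{p^n}-1) = 1/(p^{n-1}(p-1))$ tends to zero); and the $p$-power Frobenius is surjective on $A_0/pA_0$, which follows from the relations $\zeta_{p^{n+1}}^p = \zeta_{p^n}$ together with the density of $\bigcup_n \ZZ_p[\zeta_{p^n}]$ in $A_0$. The ring of integers of $K_0$ is precisely $A_0$.

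Next, by the tilting equivalence for perfectoid fields (Fontaine--Wintenberger, in the form recast by Scholze), the tilt $K_0^{\flat}$ is a perfectoid field of characteristic $p$, and its ring of integers $(K_0^{\flat})^{\circ}$ is therefore a rank-one valuation ring. It then remains to identify $A_0^{\flat} \cong (K_0^{\flat})^{\circ}$ as rings. By Lemma~\ref{tilt} applied to $A_0$ and its analogue for $K_0$, both sides are inverse limits (as multiplicative monoids) along $x \mapsto x^p$, and the inclusion $A_0 \hookrightarrow K_0$ induces a ring map $A_0^{\flat} \to K_0^{\flat}$ whose image is exactly the set of sequences $(x^{(i)})$ with all $x^{(i)} \in A_0$, i.e., those of absolute value $\leq 1$, namely $(K_0^{\flat})^{\circ}$.

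The main obstacle is the compatibility check in the final step, since addition in the tilt is defined by a nontrivial limit formula and one must verify that this limit agrees whether it is computed in $A_0$ or in $K_0$. As an alternative that avoids the black-box tilting equivalence, one can give a fully explicit description of $A_0^{\flat}$: let $\epsilon \in A_0^{\flat}$ be the element corresponding to the compatible system $(1, \zeta_p, \zeta_{p^2}, \ldots)$ under the monoid isomorphism of Lemma~\ref{tilt}, and set $t := \epsilon - 1$; one then shows that $A_0^{\flat}$ is canonically isomorphic to the $t$-adic completion of $\FF_p[t^{1/p^{\infty}}]$. The latter is the filtered union of the discrete valuation rings $\FF_p[[t^{1/p^n}]]$, so is a rank-one valuation ring with value group $\ZZ[1/p]_{\geq 0}$, and its $t$-adic completion inherits this valuation-ring property.
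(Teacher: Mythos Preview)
Your proposal is correct, and both approaches you sketch would work. However, the paper takes a genuinely different and more elementary route that avoids both the tilting equivalence for perfectoid fields and any explicit identification of $A_0^{\flat}$.

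The paper argues directly from the definition. Using the multiplicative monoid isomorphism $A_0^{\flat} \cong \varprojlim_{x \mapsto x^p} A_0$ (Lemma~\ref{tilt}), it first observes that $A_0^{\flat}$ is an integral domain because $A_0$ is. Then it verifies the valuation-ring criterion ``for all $x,y$, either $x \mid y$ or $y \mid x$'' by hand: writing $x = (x_i)$ and $y = (y_i)$ as compatible sequences in $A_0$, the key lemma is that in an integral domain which is a valuation ring, $x_i \mid y_i$ if and only if $x_i^p \mid y_i^p$. Hence the divisibility relation between $x_i$ and $y_i$ is the same at every level $i$, and since $A_0$ is a valuation ring, one of the two divisibilities holds at each level; the quotients $z_i = y_i/x_i$ then themselves form a $p$-power-compatible sequence, giving $x \mid y$ in $A_0^{\flat}$.

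What each approach buys: your first approach places the result in its natural conceptual home (tilting of perfectoid fields) but imports a substantial theory the paper otherwise does not need. Your second approach is more concrete but requires justifying the isomorphism $A_0^{\flat} \cong \FF_p[[t^{1/p^{\infty}}]]$, which is itself a small computation. The paper's argument is self-contained, uses only that $A_0$ is a valuation ring and an integral domain, and in fact shows more generally that the tilt of any $p$-torsion-free, $p$-adically complete valuation ring is again a valuation ring.
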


\begin{proof}
The authors noticed this argument in notes from a course of Bhargav Bhatt \cite{Bha17b}.  We first observe that $A_0^{\flat}$ is an integral domain; indeed, elements in $A_0^{\flat}$ are uniquely representable by $p$-power-compatible sequences of elements in $A_0$ (as opposed to $A_0/pA_0$, see for example \cite[Lemma~3.2(i)]{BMS16}), and because $A_0$ is an integral domain, it is then clear that $A_0^{\flat}$ does not have any zero divisors.  We next show that if $x,y \in A_0^{\flat}$, then either $x$ divides $y$ or $y$ divides $x$.  Write $x = (x_i)$ where $x_i \in A_0$ and $x_i^p = x_{i-1}$ and similarly for $y = (y_i)$.  Notice that in an integral domain which is a valuation ring, $x_{i}$ divides $y_{i}$ if and only if $x_i^p$ divides $y_i^p$: the forward direction is obvious.  For the other direction, assume $x_i^p a = y_i^p$ and $y_i b = x_i$.  Then we have $x_i^p a b^p = x_i^p$, and hence (unless $x_i = 0$) $b$ is a unit, and hence $x_i$ divides $y_i$.  The result now follows.
\end{proof}

\begin{lemma} \label{unit exists}
Let $A_0 = \ZZ_p[\zeta_{p^{\infty}}]^{\wedge}$ and let $n \geq 1$ be an integer.  
The ring $W_n(A_0)/pW_n(A_0)$ has the following property: If $N \subseteq W_n(A_0)/pW_n(A_0)$ is an ideal and $x,y \in W_n(A_0)/pW_n(A_0)$ are such that $x, y \not\in N$ and $x \equiv y \bmod N$, then there exists a unit $u \in W_n(A_0)/pW_n(A_0)$ such that $x = uy$.
\end{lemma}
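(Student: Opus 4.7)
The plan is to realize $W_n(A_0)/pW_n(A_0)$ as a quotient of the valuation ring $A_0^{\flat}$, and then to deduce the stated property from the total order on ideals in a valuation ring.

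First, I will establish a surjection $\widetilde{\theta}_n\colon W(A_0^{\flat}) \twoheadrightarrow W_n(A_0)$. The isomorphism $\pi\colon W(A_0^{\flat}) \xrightarrow{\sim} \varprojlim_F W_r(A_0)$ of Lemma~\ref{inverse limit iso} composed with the projection $\mathrm{pr}_n$ gives $\widetilde{\theta}_n$, and this is surjective because $A_0$ is perfectoid (hence Witt-perfect), so each transition map $F\colon W_{r+1}(A_0) \to W_r(A_0)$ in the inverse system is surjective. Writing $J_n := \ker(\widetilde{\theta}_n)$ and quotienting by $p$,
\[
W_n(A_0)/pW_n(A_0) \;\cong\; W(A_0^{\flat})/\bigl(pW(A_0^{\flat}) + J_n\bigr).
\]

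Next, I will use the standard identification $W(R)/pW(R) \cong R$ valid for any perfect $\mathbf{F}_p$-algebra $R$ (which follows from $p = V(1)$ in $W(R)$ together with surjectivity of Frobenius on $W(R)$). Applied to $R = A_0^{\flat}$, which is perfect since $A_0$ is perfectoid, this yields a further identification
\[
W_n(A_0)/pW_n(A_0) \;\cong\; A_0^{\flat}/I
\]
for some ideal $I \subseteq A_0^{\flat}$ (the image of $J_n$ under the quotient map $W(A_0^{\flat}) \to W(A_0^{\flat})/p \cong A_0^{\flat}$).

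Finally, I will prove the stated property directly for any quotient of $A_0^{\flat}$, using that $A_0^{\flat}$ is a valuation ring by Lemma~\ref{flat valuation ring}. Given the ideal $N \subseteq A_0^{\flat}/I$, let $N' \subseteq A_0^{\flat}$ denote its preimage, and lift $x,y$ to $\tilde{x},\tilde{y} \in A_0^{\flat}$, so that $\tilde{x},\tilde{y} \notin N'$ while $\tilde{x}-\tilde{y} \in N'$. Since $N'$ corresponds to an upward-closed subset of the value group, the ultrametric inequality $v(\tilde{x}-\tilde{y}) \geq \min\bigl(v(\tilde{x}),v(\tilde{y})\bigr)$ (with equality when the two values differ) would force one of $\tilde{x},\tilde{y}$ into $N'$ unless $v(\tilde{x}) = v(\tilde{y})$. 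Hence the ratio $\tilde{y}/\tilde{x} \in \mathrm{Frac}(A_0^{\flat})$ has valuation zero, so lies in $A_0^{\flat,\times}$; reducing modulo $I$ produces the desired unit $u \in (A_0^{\flat}/I)^{\times}$ with $y = ux$.

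The main obstacle is the reduction in step two: one must verify the identification $W(A_0^{\flat})/p \cong A_0^{\flat}$ and hence that $W_n(A_0)/pW_n(A_0)$ is a quotient of the valuation ring $A_0^{\flat}$; once this is in hand, the property is a clean consequence of the ideal theory of valuation rings.
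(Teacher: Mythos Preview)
Your proof is correct and follows essentially the same route as the paper's: both arguments realize $W_n(A_0)/pW_n(A_0)$ as a quotient of the valuation ring $A_0^{\flat}$ via the surjection $\widetilde{\theta}_n$ and the identification $W(A_0^{\flat})/p \cong A_0^{\flat}$, then lift $x,y$ to $A_0^{\flat}$ and show the lifts are unit multiples. The only cosmetic difference is that the paper argues via divisibility (showing $z'/y' \in A_0^{\flat}$, hence $x' = (1+z'/y')y'$, and symmetrically) rather than invoking the valuation and the ultrametric inequality directly; these are two phrasings of the same fact.
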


\begin{proof}
We have a surjective ring homomorphism $\widetilde{\theta}_n: W(A_0^{\flat}) \rightarrow W_n(A_0)$ (because $A_0$ is perfectoid and every perfectoid ring is Witt-perfect; see for example \cite[Lemma~3.9(iv)]{BMS16}), and hence we have a surjective ring homomorphism $A_0^{\flat} \cong W(A_0^{\flat})/pW(A_0^{\flat}) \rightarrow W_n(A_0)/pW_n(A_0)$.  By Lemma~\ref{flat valuation ring}, this means $W_n(A_0)/pW_n(A_0)$ is a quotient of a valuation ring.

Write $x = y + z$, where $z \in N$.  Let $x', y' \in A_0^{\flat}$ denote elements mapping to $x, y$ (respectively) under this surjection $A_0^{\flat} \twoheadrightarrow W_n(A_0)/pW_n(A_0)$.  We find that $z' := x' - y' \in A_0^{\flat}$ maps to $z$.  We cannot have $\frac{y'}{z'} \in A_0^{\flat}$, because we know that $y \not\in N$, and hence $y$ is not a multiple of $z$ in $W_n(A_0)/pW_n(A_0)$, and hence $y'$ is not a multiple of $z'$ in $A_0^{\flat}$.  Then because $A_0^{\flat}$ is a valuation ring, we know $\frac{z'}{y'} \in A_0^{\flat}$, and $(1 + \frac{z'}{y'}) y' = x' \in A_0^{\flat}$.  In other words, $x'$ is a multiple of $y'$ in $A_0^{\flat}$.  Reversing the roles of $x$ and $y$ in the argument, we find that $y'$ is also a multiple of $x'$ in $A_0^{\flat}$.  Because $A_0^{\flat}$ is an integral domain, $x'$ is a unit multiple of $y'$, and hence $x$ is a unit multiple $y$ in $W_n(A_0)/pW_n(A_0)$, as required.
\end{proof}

Here is a more basic result.

\begin{lemma} \label{units in Witt}
Let $A$ denote a $p$-adically complete ring.  An element $x \in W_n(A)$ is a unit if and only if its projection $x \in W_n(A)/pW_n(A)$ is a unit.  Furthermore, $x \in W_n(A)$ is a unit if and only if its first Witt coordinate is a unit in $A$.
\end{lemma}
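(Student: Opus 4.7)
The plan is to prove the first-Witt-coordinate criterion first, and then deduce the $\bmod\,p$ criterion from it. The forward implication of the first-coordinate statement is automatic: the map $W_n(A) \to W_1(A) = A$ extracting the initial Witt component (equivalently, the iterated restriction $R^{n-1}$) is a ring homomorphism, so it sends units to units. For the converse, assume $x_0$ is a unit in $A$. Writing $x = [x_0] + V(y)$ with $y \in W_{n-1}(A)$ and $[\,\cdot\,]$ the Teichm\"uller lift, one has $[x_0]^{-1} = [x_0^{-1}]$, and the projection formula $a V(b) = V(F(a) b)$ allows us to rewrite $x$ as $[x_0] \cdot (1 + V(z))$ for an explicit $z \in W_{n-1}(A)$. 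It therefore suffices to show that $1 + V(z)$ is a unit for every $z \in W_{n-1}(A)$.

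To invert $1 + V(z)$, I would use the identity $(V(z))^k = p^{k-1} V(z^k)$ for all $k \geq 1$, which follows by induction from $V(a) V(b) = p V(ab)$ (itself a consequence of the projection formula combined with $FV = p$). The formal geometric series then collapses to
\[
\sum_{k \geq 0} (-1)^k (V(z))^k \;=\; 1 + V\!\Bigl(\sum_{k \geq 1} (-1)^k p^{k-1} z^k\Bigr),
\]
and the inner series converges $p$-adically in $W_{n-1}(A)$ since $A$, and hence $W_{n-1}(A)$, is $p$-adically complete. A direct check using $V(a) V(b) = p V(ab)$ and additivity of $V$ confirms that the resulting element is a genuine two-sided inverse. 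This $p$-adic convergence step is the main (and essentially the only) point that requires any care, and it is where the $p$-adic completeness hypothesis on $A$ is used; the underlying completeness of $W_n(A)$ for general $p$-adically complete $A$ is the natural extension of Lemma~\ref{Witt complete} without the $p$-torsion-free hypothesis.

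For the $\bmod\,p$ statement, the forward implication is trivial. For the converse, suppose the class of $x$ is a unit in $W_n(A)/pW_n(A)$, so that $xy = 1 + pz$ in $W_n(A)$ for some $y, z$. Since the first Witt coordinate is additive and multiplication by $p$ is $p$-fold addition, the first coordinate of $1 + pz$ equals $1 + p z_0$, which is a unit in $A$ by $p$-adic completeness of $A$. Applying the first-coordinate criterion just proved to $1 + pz$, we conclude that $1 + pz$ is a unit in $W_n(A)$, and therefore so is $x$.
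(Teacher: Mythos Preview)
Your proof is correct and follows essentially the same strategy as the paper: both reduce the first-coordinate criterion to inverting $1+V(z)$ via the identity $(V(z))^k \in p^{k-1}W_n(A)$ and $p$-adic completeness, and both handle the $\bmod\,p$ criterion by observing that $1+pz$ is a unit. The only cosmetic differences are that the paper treats the two statements in the opposite order (proving the $\bmod\,p$ criterion first by directly inverting $1+pz$ in the $p$-adically complete ring $W_n(A)$, rather than routing through the first-coordinate criterion as you do), and that the paper uses the slightly weaker containment $(V(y'))^m \in p^{m-1}W_n(A)$ rather than your exact formula $(V(z))^k = p^{k-1}V(z^k)$.
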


\begin{proof}
Certainly if $x$ is a unit, then its projection to any quotient ring is a unit.  On the other hand, if $x,y,z \in W_n(A)$ satisfy $xy = 1 + pz$, then because $W_n(A)$ is $p$-adically complete (Lemma~\ref{Witt complete}), there exists $u \in W_n(A)$ such that $(1+pz)u = 1 \in W_n(A)$.  Thus $x(yu) = 1 \in W_n(A)$.  This shows that if the image of $x$ is a unit in $W_n(A)/pW_n(A)$, then $x$ is a unit in $W_n(A)$.

The proof of the second assertion is similar.  If $x \in W_n(A)$ is a unit, then clearly the first Witt coordinate of $x$ is a unit in $A$.  Conversely, if the first Witt coordinate of $x$ is a unit, then
\[
x = [x_0] + V(y)
\]
for some $y \in W_{n-1}(A)$.  If we multiply by the unit $[x_0^{-1}] \in W_n(A)$, we reduce to showing that every Witt vector in $W_n(A)$ of the form $1 + V(y')$ is a unit.  This again follows because $W_n(A)$ is $p$-adically complete, using that $\big(V(y')\big)^m \in p^{m-1} W_{n}(A)$ for every integer $m \geq 1$.
\end{proof}

A similar result is the following.

\begin{lemma} \label{zeta_p congruence implies unit}
Assume $A$ is a $p$-torsion-free perfectoid ring containing a primitive $p^{n}$-th root of unity $\zeta_{p^{n}}$ for some integer $n \geq 1$.  (Recall that throughout this paper, $p \neq 2$.)
If $y \in W_{n}(A)$ satisfies $([\zeta_{p^{n}}] - 1) y = ([\zeta_{p^{n}}] - 1) \in W_{n}(A)/pW_{n}(A)$, then $y$ is a unit in $W_{n}(A)/pW_{n}(A)$. 
\end{lemma}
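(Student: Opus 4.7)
The plan is to reduce the statement about units in $W_n(A)/pW_n(A)$ to a statement about units in $A/pA$ by passing to the first Witt coordinate, and then to exploit the fact that $\zeta_{p^n}-1$ divides $p$ in $A$.

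First I would invoke Lemma~\ref{units in Witt}, which applies because the perfectoid ring $A$, and hence $W_n(A)$, is $p$-adically complete: $y$ is a unit in $W_n(A)/pW_n(A)$ if and only if $y$ is a unit in $W_n(A)$, which in turn holds if and only if its first Witt coordinate $y_0 \in A$ is a unit. Combined with the $p$-adic completeness of $A$, this is equivalent to $y_0$ being a unit in $A/pA$. So the task reduces to showing $y_0 \in (A/pA)^{\times}$.

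Next I would apply the iterated restriction map $R^{n-1} \colon W_n(A) \to W_1(A) = A$, a ring homomorphism which sends a Witt vector to its zeroth coordinate and sends $[\zeta_{p^n}]$ to $\zeta_{p^n}$, to the given hypothesis. This yields the relation $(\zeta_{p^n} - 1)(y_0 - 1) \equiv 0 \pmod{pA}$. Now comes the main computation, which uses that $A$ is a $p$-torsion-free $\ZZ_p$-algebra containing $\zeta_{p^n}$, so that the cyclotomic identity $p = u\cdot (\zeta_{p^n} - 1)^{\phi(p^n)}$ transfers from $\ZZ_p[\zeta_{p^n}]$ to $A$ for a suitable unit $u \in A^{\times}$, with $\phi(p^n) = p^{n-1}(p-1) \geq 2$ since $p \geq 3$. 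This identity combined with $p$-torsion-freeness of $A$ also shows that $\zeta_{p^n} - 1$ is a non-zero-divisor in $A$. Writing $(\zeta_{p^n} - 1)(y_0 - 1) = u(\zeta_{p^n} - 1)^{\phi(p^n)} \beta$ for some $\beta \in A$ and cancelling one factor of $\zeta_{p^n} - 1$ then gives $y_0 - 1 = u\,(\zeta_{p^n} - 1)^{\phi(p^n) - 1}\beta \in (\zeta_{p^n} - 1)\,A$.

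Finally, since $(\zeta_{p^n} - 1)^{\phi(p^n)} = u^{-1} p$ lies in $pA$, the element $\zeta_{p^n} - 1$ is nilpotent in $A/pA$, and hence so is the element $y_0 - 1$. Therefore $y_0 \equiv 1 + (\text{nilpotent}) \pmod{pA}$ is a unit in $A/pA$, which is what we needed. I do not expect a serious obstacle here: the whole argument boils down to a direct calculation with first Witt coordinates once Lemma~\ref{units in Witt} is used to reduce the problem. The only mild subtlety is verifying that the cyclotomic identity and the unit $u$ transfer correctly from $\ZZ_p[\zeta_{p^n}]$ to $A$ under the canonical ring map, but this is standard functoriality.
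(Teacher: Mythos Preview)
Your proof is correct and follows essentially the same approach as the paper's: reduce to the first Witt coordinate via Lemma~\ref{units in Witt}, obtain $(\zeta_{p^n}-1)(y_0-1)\in pA$, cancel the non-zero-divisor $\zeta_{p^n}-1$, and conclude $y_0$ is a unit. The only cosmetic difference is that the paper phrases the final step as ``$y_0\equiv 1\bmod \frac{p}{\zeta_{p^n}-1}A$ and $A$ is $\frac{p}{\zeta_{p^n}-1}$-adically complete,'' whereas you phrase it as ``$\zeta_{p^n}-1$ is nilpotent in $A/pA$, hence so is $y_0-1$''; these are equivalent observations.
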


\begin{proof}
Let $y_0$ denote the first Witt component of $y$.
The condition implies 
\[
(\zeta_{p^n} - 1) (y_0 - 1) = pb
\]
for some $b \in A$.  Because $A$ is $p$-torsion-free, it is also $(\zeta_{p^n} - 1)$-torsion-free, so we have that
\[
y_0 \equiv 1 \bmod \frac{p}{\zeta_{p^n} - 1} A.
\]
Because $A$ is $p$-adically complete (since it is perfectoid), it is also $\frac{p}{\zeta_{p^n} - 1}$-adically complete.  Thus $y_0$ is a unit in $A$ and thus by Lemma~\ref{units in Witt}, $y$ is a unit in $W_n(A)$.
\end{proof}

We next identify the kernels of some maps related to Frobenius in the special case that the ring $A$ contains a compatible sequence of primitive $p$-power roots of unity.  The following results are all closely related to each other.  Many of these results have counterparts in \cite[Section~1.2]{Hes06} and \cite[Section~3]{BMS16}.

\begin{lemma} \label{theta lemma}
Let $A$ denote a $p$-torsion-free perfectoid ring, and assume that $A$ contains a compatible sequence $\zeta_p, \zeta_{p^2}, \ldots$ of primitive $p$-power roots of unity.  Fix an integer $n \geq 1$, and let $\widetilde{\theta}_n$ be as in Definition~\ref{theta def}. Define $\varepsilon \in A^{\flat}$ by the $p$-power compatible sequence $(1, \zeta_p, \zeta_{p^2}, \ldots)$.  The following properties hold:
 \begin{enumerate}
 \item \label{eps} We have $\tilde{\theta}_n([\varepsilon]) = [\zeta_{p^n}]$.  
 \item \label{theta surj} The map $\tilde{\theta}_n$ is surjective.
 \item \label{ker BMS} The kernel of $\tilde{\theta}_n$ is the principal ideal generated by $\sum_{i = 0}^{p^n - 1} [\varepsilon]^i.$  
 \end{enumerate}
\end{lemma}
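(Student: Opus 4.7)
Part (1) is immediate from the defining formula $(\mathrm{pr}_r\circ\pi)([t])=[t^{(r)}]$ of Lemma~\ref{inverse limit iso} together with $\varepsilon^{(n)}=\zeta_{p^n}$. For part (2), every perfectoid ring is Witt-perfect (cf.\ \cite[Lemma~3.9(iv)]{BMS16}), so each transition map $F:W_{r+1}(A)\to W_r(A)$ in the inverse system $\varprojlim_F W_r(A)$ is surjective; this forces surjectivity of the projection onto $W_n(A)$, and composing with the isomorphism $\pi$ of Lemma~\ref{inverse limit iso} yields part (2).

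For part (3), I would first verify $\eta_n:=\sum_{i=0}^{p^n-1}[\varepsilon]^i\in\ker\widetilde{\theta}_n$ by a ghost-component check: for $0\le r\le n-1$ the $r$-th ghost component $\sum_{i=0}^{p^n-1}\zeta_{p^n}^{ip^r}$ vanishes because the exponents cycle $p^r$ times through all $p^{n-r}$-th roots of unity, and the ghost map $W_n(A)\hookrightarrow A^n$ is injective by $p$-torsion-freeness of $A$ (using Lemma~\ref{Witt complete}). For the reverse inclusion $\ker\widetilde{\theta}_n\subseteq(\eta_n)$ I would induct on $n$. The base case $n=1$ follows from $\widetilde{\theta}_1=\theta\circ\varphi^{-1}$ (with $\varphi$ the Frobenius of $W(A^\flat)$, which is an automorphism since $A^\flat$ is perfect) together with the standard fact that $\xi:=\sum_{i=0}^{p-1}[\varepsilon^{1/p}]^i$ generates $\ker\theta$ whenever $A$ contains a compatible system of $p$-power roots of unity (cf.\ \cite[Example~3.16]{BMS16}); then $\ker\widetilde{\theta}_1=\varphi(\ker\theta)=(\varphi(\xi))=(\eta_1)$.

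Assuming $\ker\widetilde{\theta}_n=(\eta_n)$, I would introduce the auxiliary ring homomorphism $\tau_n:=R^n\circ\widetilde{\theta}_{n+1}:W(A^\flat)\to A$, which sends $[t]\mapsto t^{(n+1)}$ and therefore agrees with $\theta\circ\varphi^{-(n+1)}$; in particular $\ker\tau_n=\varphi^{n+1}(\ker\theta)=(\varphi^n(\tilde\xi))$, where $\tilde\xi:=\varphi(\xi)$. A direct ghost-component calculation identifies $\widetilde{\theta}_{n+1}(\varphi^n(\tilde\xi))$ with $V(1)\in W_{n+1}(A)$, and Witt-perfectness gives $(V(1))=VW_n(A)=\ker R^n$. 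Combining the surjectivity of $\widetilde{\theta}_{n+1}$ with this identification yields $\ker\tau_n=(\varphi^n(\tilde\xi))+\ker\widetilde{\theta}_{n+1}$, which together with $\ker\tau_n=(\varphi^n(\tilde\xi))$ forces the containment $\ker\widetilde{\theta}_{n+1}\subseteq(\varphi^n(\tilde\xi))$.

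Because $\varphi^n(\tilde\xi)$ is a non-zero-divisor (Lemma~\ref{xi lemma}), I may write $\ker\widetilde{\theta}_{n+1}=\varphi^n(\tilde\xi)\cdot J$ for a unique ideal $J$, and $\eta_{n+1}=\eta_n\cdot\varphi^n(\tilde\xi)\in\ker\widetilde{\theta}_{n+1}$ gives $(\eta_n)\subseteq J$. The hard step, and what I expect to be the main obstacle, is the reverse inclusion $J\subseteq(\eta_n)$: for any $y\in J$, the product $\varphi^n(\tilde\xi)y$ lies in $\ker\widetilde{\theta}_{n+1}\subseteq\ker\widetilde{\theta}_n$, and a direct Teichmüller-lift computation gives $\widetilde{\theta}_n(\varphi^n(\tilde\xi))=\sum_{i=0}^{p-1}[1]^i=p\in W_n(A)$. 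Hence $p\cdot\widetilde{\theta}_n(y)=0$ in $W_n(A)$, and $p$-torsion-freeness of $W_n(A)$ (Lemma~\ref{Witt complete}) forces $\widetilde{\theta}_n(y)=0$, whence $y\in(\eta_n)$ by the inductive hypothesis. Thus $J=(\eta_n)$ and $\ker\widetilde{\theta}_{n+1}=(\varphi^n(\tilde\xi)\cdot\eta_n)=(\eta_{n+1})$, completing the induction.
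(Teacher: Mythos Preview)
Your treatment of parts~(\ref{eps}) and~(\ref{theta surj}) is exactly what the paper does.

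For part~(\ref{ker BMS}) the paper takes a much shorter route than you do: it simply cites \cite[Example~3.16]{BMS16}, where the description of $\ker\widetilde{\theta}_n$ as the principal ideal generated by $\sum_{i=0}^{p^n-1}[\varepsilon]^i$ is stated and proved directly for all $n\ge 1$. Your argument is correct but more elaborate: you use the same external input from \cite{BMS16} only for the base case (the description of $\ker\theta$), and then supply an inductive argument to reach general $n$. The key identities you use---that $\widetilde{\theta}_n=F\circ\widetilde{\theta}_{n+1}$, that $\widetilde{\theta}_{n+1}(\varphi^n(\tilde\xi))=\sum_{i=0}^{p-1}[\zeta_p]^i=V(1)$ generates $\ker R^n$ by Witt-perfectness, that $\widetilde{\theta}_n(\varphi^n(\tilde\xi))=p$ (since $(\varepsilon^{p^n})^{(n)}=\zeta_{p^n}^{p^n}=1$), and that $\eta_{n+1}=\eta_n\cdot\varphi^n(\tilde\xi)$---all check out, and the $p$-torsion-freeness of $W_n(A)$ (Lemma~\ref{Witt complete}) cleanly closes the induction. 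What your approach buys is a more self-contained derivation that makes transparent how the generator for level~$n+1$ factors through the generator for level~$n$; what the paper's approach buys is brevity, since the general statement is already available in the cited reference.
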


\begin{proof}
Property~(\ref{eps}) is a special case of the required property stated in Lemma~\ref{inverse limit iso}.  Property~(\ref{theta surj}) holds because $F: W_{r+1}(A) \rightarrow W_r(A)$ is surjective for every integer $r \geq 1$; see \cite[Lemma~3.9(iv)]{BMS16}.  Property~(\ref{ker BMS}) is stated directly in \cite[Example~3.16]{BMS16}.
\end{proof}

\begin{lemma} \label{ker Fn}
Let $A$ denote a $p$-torsion-free perfectoid ring containing a compatible system of $p$-power roots of unity.  The kernel of $F^n: W_{n+1}(A) \rightarrow W_1(A)$ is the principal ideal generated by $z_{n+1} = 1 + [\zeta_{p^{n+1}}] + \cdots + [\zeta_{p^{n+1}}]^{p-1}$.
\end{lemma}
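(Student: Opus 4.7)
The plan is to reduce the problem to the case $n=0$ already recorded in Lemma~\ref{theta lemma}(\ref{ker BMS}), by pushing forward the kernel of $\widetilde{\theta}_1$ along the surjection $\widetilde{\theta}_{n+1}\colon W(A^{\flat}) \twoheadrightarrow W_{n+1}(A)$.

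The first step is to establish the identity
\[
F^n \circ \widetilde{\theta}_{n+1} = \widetilde{\theta}_1
\]
of ring homomorphisms $W(A^{\flat}) \to W_1(A) = A$. This is immediate from Definition~\ref{theta def} and the construction of $\pi$ in Lemma~\ref{inverse limit iso}: since the transition maps in the system $\varprojlim_F W_r(A)$ are the finite-length Witt vector Frobenius, one has $F \circ \text{pr}_{r+1} = \text{pr}_r$, and iterating yields $F^n \circ \text{pr}_{n+1} = \text{pr}_1$; composing with $\pi$ gives the claimed identity. Next, by Lemma~\ref{theta lemma}(\ref{eps}) with $n=1$, we have $\widetilde{\theta}_1([\varepsilon]) = [\zeta_p] = \zeta_p \in A$, and by Lemma~\ref{theta lemma}(\ref{ker BMS}) with $n=1$,
\[
\ker \widetilde{\theta}_1 = \bigl(\, 1 + [\varepsilon] + \cdots + [\varepsilon]^{p-1} \,\bigr) \subseteq W(A^{\flat}).
\]
Applying $\widetilde{\theta}_{n+1}$ to the generator and using Lemma~\ref{theta lemma}(\ref{eps}) again, the image is
\[
\widetilde{\theta}_{n+1}\bigl(1 + [\varepsilon] + \cdots + [\varepsilon]^{p-1}\bigr) = 1 + [\zeta_{p^{n+1}}] + \cdots + [\zeta_{p^{n+1}}]^{p-1} = z_{n+1}.
\]

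It then suffices to check that $\ker F^n = \widetilde{\theta}_{n+1}(\ker \widetilde{\theta}_1)$; once this is known, the principal ideal on the right is generated by $z_{n+1}$ by the computation just made, and we are done. The inclusion $\widetilde{\theta}_{n+1}(\ker \widetilde{\theta}_1) \subseteq \ker F^n$ is obvious from $F^n \circ \widetilde{\theta}_{n+1} = \widetilde{\theta}_1$. For the reverse inclusion, let $x \in \ker F^n \subseteq W_{n+1}(A)$. By Lemma~\ref{theta lemma}(\ref{theta surj}), the map $\widetilde{\theta}_{n+1}$ is surjective, so choose any lift $\tilde{x} \in W(A^{\flat})$ with $\widetilde{\theta}_{n+1}(\tilde{x}) = x$. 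Then
\[
\widetilde{\theta}_1(\tilde{x}) = F^n\bigl(\widetilde{\theta}_{n+1}(\tilde{x})\bigr) = F^n(x) = 0,
\]
so $\tilde{x} \in \ker \widetilde{\theta}_1$, and consequently $x = \widetilde{\theta}_{n+1}(\tilde{x}) \in \widetilde{\theta}_{n+1}(\ker \widetilde{\theta}_1)$. This proves the desired equality and hence that $\ker F^n = (z_{n+1})$.

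There is no real obstacle in this argument; the content is entirely packaged into the compatibility $F^n \circ \widetilde{\theta}_{n+1} = \widetilde{\theta}_1$ together with the surjectivity statement and the principal kernel statement from Lemma~\ref{theta lemma}. Note that no separate primality or non-zero-divisor analysis of $z_{n+1}$ is needed here (and would in any case fail in general since $W_{n+1}(A)$ need not be a domain); the image of a principal ideal under a surjective ring homomorphism is automatically principal.
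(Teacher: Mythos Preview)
Your proof is correct and follows essentially the same approach as the paper's own proof: both use the identity $F^n \circ \widetilde{\theta}_{n+1} = \widetilde{\theta}_1$, the surjectivity of $\widetilde{\theta}_{n+1}$, and the description of $\ker \widetilde{\theta}_1$ from Lemma~\ref{theta lemma} to conclude that $\ker F^n = \widetilde{\theta}_{n+1}(\ker \widetilde{\theta}_1) = (z_{n+1})$. You simply spell out the compatibility $F^n \circ \text{pr}_{n+1} = \text{pr}_1$ and the two inclusions in more detail than the paper does.
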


\begin{proof}
From the definition of the $\tilde{\theta}_n$ maps, we have that $F^n \circ \tilde{\theta}_{n+1} = \tilde{\theta}_1$.  Because $\tilde{\theta}_{n+1}$ is surjective by Lemma~\ref{theta lemma}(\ref{theta surj}), the kernel of $F^n: W_{n+1}(A) \rightarrow W_1(A)$ is exactly the image of the kernel of $\tilde{\theta}_1$ under the map $\tilde{\theta}_{n+1}$.  Hence again by Lemma~\ref{theta lemma}, the kernel of $F^n$ is the principal ideal generated by $\tilde{\theta}_{n+1}(1 + [\varepsilon] + \cdots + [\varepsilon]^{p-1}) = 1 + [\zeta_{p^{n+1}}] + \cdots + [\zeta_{p^{n+1}}]^{p-1}$.  
\end{proof}

We also include one result that is taken directly from \cite{BMS16}.

\begin{lemma}[{\cite[Corollary~3.18(i)]{BMS16}}] \label{kernel of F}
Let $A$ denote a $p$-torsion-free perfectoid ring containing a compatible system of $p$-power roots of unity.
The kernel of $F: W_{n+1}(A) \rightarrow W_n(A)$ is the principal ideal generated by 
\[
\sum_{i = 0}^{p^{n}-1} [\zeta_{p^{n+1}}]^i.
\]
\end{lemma}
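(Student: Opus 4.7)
The plan is to mirror the proof of the immediately preceding Lemma \ref{ker Fn}, replacing $F^n \colon W_{n+1}(A) \to W_1(A)$ by $F \colon W_{n+1}(A) \to W_n(A)$ and replacing $\tilde\theta_1$ by $\tilde\theta_n$. The crucial preliminary observation concerns a compatibility between the maps $\tilde\theta_r$ from Definition \ref{theta def}: namely
\[
F \circ \tilde\theta_{n+1} \;=\; \tilde\theta_n \quad\text{as maps } W(A^\flat) \to W_n(A).
\]
This is immediate from the construction of the isomorphism $\pi$ in Lemma \ref{inverse limit iso}, since by definition the transition maps in the inverse system $\cdots \xrightarrow{F} W_3(A) \xrightarrow{F} W_2(A) \xrightarrow{F} W_1(A)$ are the finite-length Witt Frobenii, so that $\mathrm{pr}_n = F \circ \mathrm{pr}_{n+1}$ when both sides are viewed as maps out of $\varprojlim_F W_r(A)$.

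Given this compatibility, and using that $\tilde\theta_{n+1}$ is surjective by Lemma \ref{theta lemma}(\ref{theta surj}), I would verify by a one-line lift-and-chase argument that
\[
\ker\!\bigl(F \colon W_{n+1}(A) \to W_n(A)\bigr) \;=\; \tilde\theta_{n+1}\!\bigl(\ker \tilde\theta_n\bigr).
\]
By Lemma \ref{theta lemma}(\ref{ker BMS}), $\ker \tilde\theta_n$ is the principal ideal of $W(A^\flat)$ generated by $\sum_{i=0}^{p^n - 1}[\varepsilon]^i$. Since $\tilde\theta_{n+1}$ is a surjective ring homomorphism, it carries principal ideals to principal ideals with the image of the chosen generator as a generator; applying Lemma \ref{theta lemma}(\ref{eps}), this generator is
\[
\tilde\theta_{n+1}\!\left(\sum_{i=0}^{p^n - 1}[\varepsilon]^i\right) \;=\; \sum_{i=0}^{p^n - 1}[\zeta_{p^{n+1}}]^i,
\]
as claimed.

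I do not foresee any genuine obstacle: once the compatibility $F \circ \tilde\theta_{n+1} = \tilde\theta_n$ is in hand, the entire argument is a formal transfer from the already-proved Lemma \ref{ker Fn}. The only small point that needs to be made carefully is the observation that the image of a principal ideal under a surjective ring homomorphism is again principal with the expected generator, which is purely formal. Notice also that the result is consistent with Lemma \ref{ker Fn}: iterating $F$ a total of $n$ times from $W_{n+1}(A)$ lands in $W_1(A)$, and the generator of the larger kernel $1 + [\zeta_{p^{n+1}}] + \cdots + [\zeta_{p^{n+1}}]^{p-1}$ of Lemma \ref{ker Fn} is a divisor of $\sum_{i=0}^{p^n - 1}[\zeta_{p^{n+1}}]^i$, as one should expect.
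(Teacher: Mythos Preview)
Your proposal is correct. The paper itself does not give a proof of this lemma at all; it simply cites \cite[Corollary~3.18(i)]{BMS16} as the source. Your argument is exactly parallel to the paper's own proof of the preceding Lemma~\ref{ker Fn}, just with $\tilde\theta_1$ replaced by $\tilde\theta_n$, so it is entirely in the spirit of the surrounding material and would serve as a self-contained proof.
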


Determining whether one Witt vector is divisible by another Witt vector is often difficult.  One result we will need in this direction is the following lemma.  It is required for our proof of Proposition~\ref{canonical in tate}.  Notice the similarity between the terms $\frac{[\zeta_{p^n}] - 1}{[\zeta_{p^{n+s}}] - 1}$ appearing in Lemma~\ref{Witt intersection} and the terms $z_{n+1} = \frac{[\zeta_{p^n}] - 1}{[\zeta_{p^{n+1}}] - 1}$ considered above.  The following result should be compared to \cite[Lemma~3.23]{BMS16}.

\begin{lemma} \label{Witt intersection}
Fix an integer $n \geq 1$.  Let $A_0 = \ZZ_p[\zeta_{p^{\infty}}]^{\wedge}$.  If $x \in W_n(A_0)$ is in the intersection
\[
x \in \bigcap_{s = 1}^{\infty} \frac{[\zeta_{p^n}] - 1}{[\zeta_{p^{n+s}}] - 1} W_n(A_0),
\]
then $x \in ([\zeta_{p^n}] - 1)W_n(A_0)$.
\end{lemma}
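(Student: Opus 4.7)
The plan is to prove a slight strengthening by induction on $n$: for every pair of integers $m \geq n \geq 1$, any $x \in W_n(A_0)$ lying in $\bigcap_{s \geq 1} \tfrac{[\zeta_{p^m}] - 1}{[\zeta_{p^{m+s}}] - 1}W_n(A_0)$ belongs to $([\zeta_{p^m}] - 1)W_n(A_0)$; the lemma is the case $m = n$. The strengthening is forced because the restriction $R\colon W_n(A_0) \to W_{n-1}(A_0)$ preserves the cyclotomic index $m$, so no direct induction on $n$ alone with $m = n$ lines up the hypotheses.

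For the base case $n = 1$, I would use that $A_0$ is a rank-one valuation ring whose value group is $\tfrac{1}{p-1}\ZZ[1/p]$, dense in $\RR$. One has $v\bigl(\tfrac{\zeta_{p^m}-1}{\zeta_{p^{m+s}}-1}\bigr) = v(\zeta_{p^m}-1)(1 - p^{-s})$, which approaches $v(\zeta_{p^m}-1)$ from below as $s \to \infty$; divisibility of $x$ by every such ratio then forces $v(x) \geq v(\zeta_{p^m}-1)$, whence $\zeta_{p^m}-1$ divides $x$ in $A_0$.

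For the inductive step $n \geq 2$ I would use the short exact sequence \eqref{Witt sequence}, $0 \to A_0 \xrightarrow{V^{n-1}} W_n(A_0) \xrightarrow{R} W_{n-1}(A_0) \to 0$. Write $\mu_s = \tfrac{[\zeta_{p^m}]-1}{[\zeta_{p^{m+s}}]-1} \in W_n(A_0)$ and let $\mu_s' = R(\mu_s) \in W_{n-1}(A_0)$, which has the same shape. Then $R(x) \in \bigcap_s \mu_s' W_{n-1}(A_0)$, so by the induction hypothesis applied with the same $m$ (valid since $m \geq n > n-1$), $R(x) = ([\zeta_{p^m}]-1)v$ for some $v \in W_{n-1}(A_0)$. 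Lifting $v$ arbitrarily to $\tilde v \in W_n(A_0)$, the difference $V^{n-1}(a) := x - ([\zeta_{p^m}]-1)\tilde v$ lies in $\ker R = V^{n-1}(A_0)$, so $a \in A_0$. Since $[\zeta_{p^m}]-1 = \mu_s([\zeta_{p^{m+s}}]-1)$, the summand $([\zeta_{p^m}]-1)\tilde v$ is itself divisible by each $\mu_s$, and hence $V^{n-1}(a) \in \mu_s W_n(A_0)$ for every $s$.

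The final step is the one I expect to be the main obstacle, but it collapses cleanly with Frobenius--Verschiebung reciprocity $\mu\,V^{n-1}(c) = V^{n-1}(F^{n-1}(\mu)\,c)$. Writing $V^{n-1}(a) = \mu_s b_s$ and applying $R$ gives $\mu_s'\,R(b_s) = 0$; the ghost components of $\mu_s'$ are all nonzero in $A_0$, and since $A_0$ is a $p$-torsion-free domain the ghost map $W_{n-1}(A_0) \hookrightarrow A_0^{n-1}$ is injective, so $\mu_s'$ is a non-zero-divisor. Hence $b_s \in V^{n-1}(A_0)$, say $b_s = V^{n-1}(c_s)$, and reciprocity together with the injectivity of $V^{n-1}$ yields $a = F^{n-1}(\mu_s)\,c_s = \tfrac{\zeta_{p^{m-n+1}}-1}{\zeta_{p^{m+s-n+1}}-1}\,c_s$ in $A_0$ for every $s$. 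Applying the base-case valuation argument to this ratio produces $a' \in A_0$ with $a = (\zeta_{p^{m-n+1}}-1)a'$, and since $F^{n-1}([\zeta_{p^m}]-1) = \zeta_{p^{m-n+1}}-1$, one last application of reciprocity gives $V^{n-1}(a) = ([\zeta_{p^m}]-1)V^{n-1}(a')$, completing the induction.
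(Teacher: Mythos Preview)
Your proof is correct. The route differs from the paper's in the choice of filtration on $W_n(A_0)$: you use the short exact sequence $0 \to A_0 \xrightarrow{V^{n-1}} W_n(A_0) \xrightarrow{R} W_{n-1}(A_0) \to 0$, applying the inductive hypothesis to $R(x)$ and reducing the $V^{n-1}$-piece to the base case via $F^{n-1}$. The paper instead peels off the \emph{first} Witt component (using the base-case valuation argument to see $x_0 \in (\zeta_{p^n}-1)A_0$), writes $x' := x - ([\zeta_{p^n}]-1)[y_0] = V(z)$ with $z \in W_{n-1}(A_0)$, and then a single application of $F$ shifts the cyclotomic index down by exactly one, so that $z$ lands precisely in the intersection $\bigcap_s \tfrac{[\zeta_{p^{n-1}}]-1}{[\zeta_{p^{n-1+s}}]-1}W_{n-1}(A_0)$ and the unstrengthened induction hypothesis applies directly. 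So your remark that ``the strengthening is forced'' is accurate for your decomposition via $R$, but not intrinsically: the paper's single-$V$ approach aligns the Witt level with the cyclotomic index at every step and needs no auxiliary parameter $m$. Conversely, your formulation proves a slightly more general statement (arbitrary $m \geq n$) at essentially no extra cost, and your use of the non-zero-divisor property of $\mu_s'$ via the ghost map is a clean way to force $b_s \in V^{n-1}(A_0)$.
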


\begin{proof}
We prove this using induction on $n$.  The base case follows by considering valuations in $A_0$.  %This was taken out because it uses additive conventions. Indeed, the $p$-adic valuation of $ \frac{\zeta_{p^n} - 1}{\zeta_{p^{n+s}} - 1} \in A_0$ is $\frac{p^s - 1}{p^s p^{n-1}(p-1)}$, so if an element in $A_0$ is divisible by all of these elements, then its $p$-adic valuation is greater than or equal to the valuation of $\zeta_{p^n} - 1$.  
Assume now the result has been shown for some fixed value $n-1 \geq 1$.  We will prove the result also for $n$.  Thus, assume $x \in W_{n}(A_0)$ is in the intersection.  Again considering valuations, we find that the first Witt component of $x$ must be divisible by $\zeta_{p^n} - 1$, say $x_0 = (\zeta_{p^n} - 1) y_0$.  Consider the element $x' := x - ([\zeta_{p^n}] - 1) [y_0] \in W_n(A_0)$.  If we can prove that $x'$ is divisible by $[\zeta_{p^n}] - 1$, then it will follow that $x$ is also divisible by $[\zeta_{p^n}] - 1$, and we will be done.  We have that $x'$ is in the same intersection, and also $x' = V(z)$ for some $z \in W_{n-1}(A_0)$.  

We claim that $z$ is divisible by $\frac{[\zeta_{p^{n-1}}] - 1}{[\zeta_{p^{n-1+s}}] - 1} $ in $W_{n-1}(A_0)$ for every integer $s \geq 1$.  Fix $s \geq 1$.  We know that
\[
V(z) = \frac{[\zeta_{p^n}] - 1}{[\zeta_{p^{n+s}}] - 1} \, y
\]
for some $y \in W_n(A_0)$, and considering the first Witt components, we see that $y = V(y')$ for some $y' \in W_{n-1}(A_0)$, so
\[
V(z) = \frac{[\zeta_{p^n}] - 1}{[\zeta_{p^{n+s}}] - 1} \, V(y') = V \left( F\left(\frac{[\zeta_{p^n}] - 1}{[\zeta_{p^{n+s}}] - 1} \right) y'\right) = V \left( \frac{[\zeta_{p^{n-1}}] - 1}{[\zeta_{p^{n-1+s}}] - 1}  \,y'\right). 
\]
This proves the claim that $z$ is divisible by $\frac{[\zeta_{p^{n-1}}] - 1}{[\zeta_{p^{n-1+s}}] - 1} $ in $W_{n-1}(A_0)$ for every integer $s \geq 1$.  Fix $s \geq 1$.

By our induction hypothesis, we have that $z = ([\zeta_{p^{n-1}}] - 1) w$ for some $w \in W_{n-1}(A_0)$.  Thus 
\[
x' = V(z) = V\left( ([\zeta_{p^{n-1}}] - 1) w\right) = ([\zeta_{p^n}] - 1) V(w).
\]
This proves that $x'$ is divisible by $[\zeta_{p^n}] - 1$ in $W_n(A_0)$, and as explained above, this finishes the proof.
\end{proof}

We will eventually prove that, when $A$ is a $p$-torsion-free perfectoid ring containing a compatible sequence of $p$-power roots of unity, then the $p$-adic Tate module $T_p(W_n\Omega^1_A)$ is a free $W_n(A)$-module of rank~one.  The restriction map from level~$n+1$ to level~$n$ on the Tate modules does \emph{not} correspond to the restriction map on Witt vectors.  Instead it will correspond to the map $Rz_{n+1}$ appearing in the following lemma.

\begin{lemma} \label{exact Rz}
Let $A$ denote a $p$-torsion-free perfectoid ring containing a compatible system of $p$-power roots of unity.  We have an exact sequence of $W_{n+1}(A)$-modules
\[
0 \rightarrow A \xrightarrow{V^n} W_{n+1}(A) \xrightarrow{Rz_{n+1}} W_n(A) \xrightarrow{F^n} A/p^nA \rightarrow 0,
\]
where the module structures and maps are defined as follows.  Both $A$ and $A/p^nA$ are considered as $W_{n+1}(A)$-modules via $F^n$, and $W_n(A)$ is considered as a $W_{n+1}(A)$-module via restriction.  The map $Rz_{n+1}$ denotes the composite 
\[
\psi_n\colon W_{n+1}(A) \xrightarrow{1 + [\zeta_{p^{n+1}}] + \cdots + [\zeta_{p^{n+1}}]^{p-1}} W_{n+1}(A) \stackrel{R}{\longrightarrow} W_n(A).
\]
The map $F^n: W_n(A) \rightarrow A/p^nA$ is defined using the isomorphism $W_n(A) \cong W_{n+1}(A)/V^n(A)$.
\end{lemma}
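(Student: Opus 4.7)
The plan is to verify exactness at each of the four spots in turn; three are essentially formal, while exactness at $W_{n+1}(A)$ --- identifying the annihilator of $z_{n+1}$ --- will be the main obstacle. For exactness at $A$, the map $V^n$ is injective by the short exact sequence (\ref{Witt sequence}) itself. For exactness at $A/p^nA$, I note that $F\colon W_{r+1}(A) \to W_r(A)$ is surjective for a perfectoid ring (used in the proof of Lemma~\ref{theta lemma}(\ref{theta surj})), so $F^n\colon W_{n+1}(A) \to A$ is surjective and descends to a surjection $F^n\colon W_n(A) \to A/p^nA$. For exactness at $W_n(A)$, I would use the identification $W_n(A) \cong W_{n+1}(A)/V^n(A)$ via $R$ together with the Witt identity $F^n V^n(a) = p^n a$: the preimage in $W_{n+1}(A)$ of $p^n A \subseteq A$ under $F^n$ equals $V^n(A) + \ker(F^n)$, which by Lemma~\ref{ker Fn} equals $V^n(A) + z_{n+1} W_{n+1}(A)$. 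Pushing through $R$ gives $\ker(F^n\colon W_n(A) \to A/p^n A) = R(z_{n+1} W_{n+1}(A)) = \mathrm{im}(\psi_n)$.

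For exactness at $W_{n+1}(A)$ I must show $\ker \psi_n = V^n(A)$. The inclusion $V^n(A) \subseteq \ker \psi_n$ is immediate: $\psi_n(V^n(a)) = R(z_{n+1} V^n(a)) = R(V^n(F^n(z_{n+1})\,a)) = 0$, using $F^n(z_{n+1}) = 1 + \zeta_p + \cdots + \zeta_p^{p-1} = 0$. Conversely, if $\psi_n(x) = 0$ then $z_{n+1} x \in \ker R = V^n(A)$, say $z_{n+1} x = V^n(a)$. Applying $F^n$ and using $F^n V^n = p^n$ together with $F^n(z_{n+1}) = 0$ gives $p^n a = 0$, so $a = 0$ since $A$ is $p$-torsion-free, and hence $z_{n+1} x = 0$. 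The remaining and hardest step is to prove $\mathrm{Ann}_{W_{n+1}(A)}(z_{n+1}) \subseteq V^n(A)$.

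I would handle this annihilator calculation by lifting through the surjection $\tilde{\theta}_{n+1}\colon W(A^\flat) \to W_{n+1}(A)$. Write $e = 1 + [\varepsilon] + \cdots + [\varepsilon]^{p-1}$ and $E = 1 + [\varepsilon] + \cdots + [\varepsilon]^{p^{n+1}-1}$ in $W(A^\flat)$, so that $\tilde{\theta}_{n+1}(e) = z_{n+1}$ and $\ker \tilde{\theta}_{n+1} = E \cdot W(A^\flat)$ by Lemma~\ref{theta lemma}(\ref{ker BMS}). The polynomial identity
\[
\frac{X^{p^{n+1}}-1}{X-1} \;=\; \frac{X^p-1}{X-1} \cdot \frac{X^{p^{n+1}}-1}{X^p-1}
\]
gives $E = e f$ with $f = 1 + [\varepsilon]^p + \cdots + [\varepsilon]^{(p^n-1)p}$. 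Since $e$ generates $\ker \tilde{\theta}_1$ and the Witt Frobenius is an automorphism of $W(A^\flat)$ (because $A^\flat$ is a perfect $\mathbf{F}_p$-algebra), Lemma~\ref{xi lemma} applied to $\ker\theta = F^{-1}(\ker\tilde\theta_1)$ implies $e$ is a non-zero-divisor. Lifting any $x \in \mathrm{Ann}(z_{n+1})$ to $y \in W(A^\flat)$, the condition $z_{n+1} x = 0$ becomes $ey \in ef \cdot W(A^\flat)$; canceling $e$ yields $y \in f \cdot W(A^\flat)$, and therefore $x \in c \cdot W_{n+1}(A)$ where $c := \tilde{\theta}_{n+1}(f) = \sum_{j=0}^{p^n-1}[\zeta_{p^n}]^j$.

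To finish, I would identify $c = V^n(1)$ by comparing ghost coordinates: the $\ell$-th ghost component of $c$ is $\sum_{j=0}^{p^n-1} \zeta_{p^n}^{jp^\ell}$, which vanishes for $0 \le \ell < n$ (it is a sum over complete orbits of a nontrivial $p^{n-\ell}$-th root of unity) and equals $p^n$ for $\ell = n$, matching the ghost components of $V^n(1)$. Since $A$ is $p$-torsion-free, the ghost map on $W_{n+1}(A)$ is injective, so $c = V^n(1)$. Then $c \cdot W_{n+1}(A) = V^n(1) \cdot W_{n+1}(A) = V^n(F^n(W_{n+1}(A))) = V^n(A)$, using the Witt relation $V^n(1) \cdot w = V^n(F^n(w))$ and surjectivity of $F^n$, which completes the proof.
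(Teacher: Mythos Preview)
Your proof is correct. The treatment of exactness at $A$, at $A/p^nA$, and at $W_n(A)$ matches the paper's argument essentially verbatim (surjectivity of $F$, the identity $F^nV^n = p^n$, and Lemma~\ref{ker Fn}).

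For exactness at $W_{n+1}(A)$, you take a genuinely different route from the paper. The paper argues directly with ghost components: if $\psi_n(y)=0$ then $z_{n+1}y\in V^n(A)$, i.e.\ the first $n$ ghost components of $z_{n+1}y$ vanish; since the first $n$ ghost components of $z_{n+1}$ are $\sum_{j=0}^{p-1}\zeta_{p^{n+1-\ell}}^{\,j}$, each of which divides a power of $p$ in $A$ and is therefore a non-zero-divisor, the first $n$ ghost components of $y$ vanish, so $y\in V^n(A)$. This is a two-line argument. You instead first reduce to $z_{n+1}y=0$ (an extra step the paper avoids), and then compute the annihilator of $z_{n+1}$ by lifting through $\tilde\theta_{n+1}\colon W(A^\flat)\to W_{n+1}(A)$, factoring $E=ef$, invoking that $e$ is a non-zero-divisor (via Lemma~\ref{xi lemma}, using that $F^{-1}(e)$ generates $\ker\theta$), and finally identifying $\tilde\theta_{n+1}(f)=V^n(1)$ by a ghost computation. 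Your approach is more elaborate but perfectly valid, and it has the virtue of making the connection to the $W(A^\flat)$-picture and the hierarchy $\ker\tilde\theta_r$ explicit; the paper's approach is quicker because it never needs to leave $W_{n+1}(A)$.
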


\begin{proof}
It's clear that $V^n(A)$ is in the kernel of $Rz_{n+1}$.  Conversely, if $y \in W_{n+1}(A)$ is in the kernel of $Rz_{n+1}$, then $\left( 1 + [\zeta_{p^{n+1}}] + \cdots + [\zeta_{p^{n+1}}]^{p-1}\right)\cdot y \in V^n(A)$.  Because the first $n$ ghost components of $1 + [\zeta_{p^{n+1}}] + \cdots + [\zeta_{p^{n+1}}]^{p-1}$ are not zero divisors (because $A$ is $p$-torsion-free), we see that the first $n$ ghost components of $y$ are zero, and hence $y \in V^n(A)$, proving the other inclusion.

Because $A$ is Witt-perfect, the map $W_n(A) \cong W_{n+1}(A)/V^n(A) \rightarrow A/p^nA$ induced by $F^{n}$ is surjective.  It remains to show that its kernel is equal to the image of the map $Rz_{n+1}$.  Because $F^n(1 + [\zeta_{p^{n+1}}] + \cdots + [\zeta_{p^{n+1}}]^{p-1}) = 0 \in W_1(A)$, we see that the image of $Rz_{n+1}$ is contained in the kernel of $W_n(A) \rightarrow A/p^nA$.  For the reverse inclusion, say $x \in W_{n+1}(A)$ is the lift (under restriction) of some element in the kernel of $W_n(A) \rightarrow A/p^nA$.  Write $F^n(x) = p^n a$, where $a \in A$.  Then $F^n(x - V^n(a)) = 0$, and so by Lemma~\ref{ker Fn}, we can find an element $y \in W_{n+1}(A)$ such that 
\[
\left( 1 + [\zeta_{p^{n+1}}] + \cdots + [\zeta_{p^{n+1}}]^{p-1}\right) y = x - V^n(a).
\]
Applying $R$ to both sides, we get that $R(x)$ is in the image of $Rz_{n+1}$, as required.
\end{proof}

\section{On $p$-torsion in the module of K\"ahler differentials} \label{Kahler section}

Our goal in this section is to determine the $p^n$-torsion in the module of absolute K\"ahler differentials $\Omega^1_A := \Omega^1_{A/\ZZ}$  for a $p$-torsion-free perfectoid ring $A$.  Our strategy is first to consider the cotangent complex $L_{A/\ZZ_p}$, then to consider the relative K\"ahler differentials $\Omega^1_{A/\ZZ_p}$, and then finally (and this is the easiest part) to consider the absolute K\"ahler differentials $\Omega^1_A$.  

Our starting point is a result of Bhatt-Morrow-Scholze which states that when $A$ is a perfectoid ring, the derived $p$-completion (as defined in Proposition~\ref{universal coefficient theorem}) of $L_{A/\ZZ_p}$ is quasi-isomorphic to $A[1]$.   We recall this result in Proposition~\ref{derived p-completion}. (It does not require our usual $p$-torsion-free assumption.)

\begin{proposition}[{\cite[Proposition~4.19(2)]{BMS18}}] \label{derived p-completion}
Let $A$ denote a perfectoid ring.  The derived $p$-completion of $L_{A/\ZZ_p}$ is quasi-isomorphic to $A[1]$.
\end{proposition}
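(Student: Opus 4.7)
The plan is to use the presentation $A \cong W(A^{\flat})/(\xi)$, where $\xi \in W(A^{\flat})$ is a non-zero-divisor by Lemma~\ref{xi lemma}, together with the vanishing of the cotangent complex of a perfect $\FF_p$-algebra. Applying the Jacobi-Zariski sequence~(\ref{Jacobi-Zariski}) to the composition $\ZZ_p \to W(A^{\flat}) \to A$ produces an exact triangle
\[
L_{W(A^{\flat})/\ZZ_p} \otimes^L_{W(A^{\flat})} A \to L_{A/\ZZ_p} \to L_{A/W(A^{\flat})}.
\]
Since $A = W(A^{\flat})/(\xi)$ with $\xi$ a non-zero-divisor, the standard conormal computation (e.g.\ \cite[Tag~08SJ]{stacks-project}) identifies $L_{A/W(A^{\flat})}$ with $(\xi)/(\xi^2)[1] \simeq A[1]$, which is already derived $p$-complete because $A$ is $p$-adically complete. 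It therefore suffices to show that the derived $p$-completion of $L_{W(A^{\flat})/\ZZ_p} \otimes^L_{W(A^{\flat})} A$ vanishes, for which it is enough to show that $(L_{W(A^{\flat})/\ZZ_p})^{\wedge} \simeq 0$.

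For that vanishing, by derived Nakayama I would reduce to checking
\[
L_{W(A^{\flat})/\ZZ_p} \otimes^L_{W(A^{\flat})} A^{\flat} \simeq 0,
\]
via the standard identification $W(A^{\flat})/p \cong A^{\flat}$. Since $A^{\flat}$ is a perfect $\FF_p$-algebra, the ring $W(A^{\flat})$ is $p$-torsion-free (one has $p = FV$ on Witt vectors, with $F$ an automorphism in the perfect case and $V$ injective), hence $W(A^{\flat}) \otimes^L_{\ZZ_p} \FF_p \simeq A^{\flat}$ with no higher Tor. The base-change formula for the cotangent complex then yields
\[
L_{W(A^{\flat})/\ZZ_p} \otimes^L_{W(A^{\flat})} A^{\flat} \simeq L_{A^{\flat}/\FF_p},
\]
and the right-hand side vanishes because $A^{\flat}$ is perfect. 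Assembling these inputs, derived $p$-completion of the triangle in the first paragraph collapses to $(L_{A/\ZZ_p})^{\wedge} \simeq L_{A/W(A^{\flat})} \simeq A[1]$.

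The main technical obstacle I anticipate is justifying the application of derived Nakayama to $L_{W(A^{\flat})/\ZZ_p}$; one needs some control on its cohomological amplitude and on its behavior under $p$-completion so that vanishing modulo $p$ actually propagates to vanishing after derived $p$-completion. A clean way around this is to work entirely inside the derived category of $p$-complete $W(A^{\flat})$-modules, where an object that vanishes mod $p$ is automatically zero. A secondary point to be careful about is that perfectoid $A$ and its tilt $A^{\flat}$ may have zero-divisors; but the argument uses only that $\xi$ (and $p$) are non-zero-divisors and that $A^{\flat}$ is perfect, both of which are granted by Lemma~\ref{xi lemma} and the structure of perfect $\FF_p$-algebras, so this causes no issue.
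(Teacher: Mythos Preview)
The paper does not give its own proof of this proposition; it is quoted directly from \cite[Proposition~4.19(2)]{BMS18} and used as a black box. Your sketch is precisely the argument that \cite{BMS18} uses: the Jacobi--Zariski triangle for $\ZZ_p \to W(A^{\flat}) \to A$, the identification $L_{A/W(A^{\flat})} \simeq (\xi)/(\xi^2)[1] \simeq A[1]$ coming from the fact that $\xi$ is a non-zero-divisor, and the vanishing of $(L_{W(A^{\flat})/\ZZ_p})^{\wedge}$ via base change to $L_{A^{\flat}/\FF_p} \simeq 0$.

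Your stated concern about derived Nakayama is not a genuine obstacle. No cohomological boundedness is needed: if $M$ is derived $p$-complete then $M \simeq R\varprojlim_n (M \otimes^L_{\ZZ} \ZZ/p^n\ZZ)$, and if $M \otimes^L_{\ZZ} \FF_p \simeq 0$ then inductively $M \otimes^L_{\ZZ} \ZZ/p^n\ZZ \simeq 0$ for all $n$, hence $M \simeq 0$. Moreover, the completion map $L_{W(A^{\flat})/\ZZ_p} \to (L_{W(A^{\flat})/\ZZ_p})^{\wedge}$ becomes an equivalence after $\otimes^L_{\ZZ} \FF_p$, so the vanishing of $L_{A^{\flat}/\FF_p}$ immediately gives what you want. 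The step ``it suffices to show $(L_{W(A^{\flat})/\ZZ_p})^{\wedge} \simeq 0$'' is also fine: since $- \otimes^L_{\ZZ} \FF_p$ is the cone of multiplication by $p$, vanishing of $L_{W(A^{\flat})/\ZZ_p} \otimes^L_{\ZZ} \FF_p$ forces vanishing of $\bigl(L_{W(A^{\flat})/\ZZ_p} \otimes^L_{W(A^{\flat})} A\bigr) \otimes^L_{\ZZ} \FF_p$ as well, and hence of its derived $p$-completion.
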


An immediate corollary of this is the following.

\begin{proposition} \label{L tensor Fp}
Let $A$ denote a $p$-torsion-free perfectoid ring and let $n \geq 1$ denote an integer.  Then we have a quasi-isomorphism
\[
L_{A/\ZZ_p} \otimes_{\ZZ_p}^{L} \ZZ/p^n\ZZ \simeq (A/p^nA)[1].
\]
\end{proposition}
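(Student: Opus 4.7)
The plan is to reduce the claim to Proposition~\ref{derived p-completion} via the general fact that tensoring with $\ZZ/p^n\ZZ$ factors through derived $p$-completion: for any complex $C$ of $\ZZ_p$-modules, the natural map $C \to C^{\wedge}$ induces a quasi-isomorphism
\[
C \otimes_{\ZZ_p}^L \ZZ/p^n\ZZ \; \simeq \; C^{\wedge} \otimes_{\ZZ_p}^L \ZZ/p^n\ZZ.
\]
Granting this, I would apply it with $C = L_{A/\ZZ_p}$ and combine with Proposition~\ref{derived p-completion} to identify the left-hand side with $A[1] \otimes_{\ZZ_p}^L \ZZ/p^n\ZZ$. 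Since $A$ is $p$-torsion-free, the two-term free resolution $\ZZ_p \xrightarrow{p^n} \ZZ_p$ of $\ZZ/p^n\ZZ$ then gives $A \otimes_{\ZZ_p}^L \ZZ/p^n\ZZ \simeq A/p^n A$ concentrated in degree zero, and shifting by one yields $(A/p^n A)[1]$ as required.

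The general principle is justified as follows. Applying $\mathbf{R}\Hom(-, C)$ to the short exact sequence $0 \to \ZZ_p \to \QQ_p \to \QQ_p/\ZZ_p \to 0$ produces a fiber sequence
\[
\mathbf{R}\Hom(\QQ_p, C) \to C \to C^{\wedge},
\]
where I use the definition of $C^{\wedge}$ as $\mathbf{R}\Hom(\QQ_p/\ZZ_p[1], C)$ from Proposition~\ref{universal coefficient theorem}. The fiber $\mathbf{R}\Hom(\QQ_p, C)$ inherits a $\QQ_p$-module structure from the source, so tensoring it over $\ZZ_p$ with $\ZZ/p^n\ZZ$ gives the zero complex (because $p^n$ acts simultaneously as an isomorphism and as zero on any complex of $\QQ_p$-modules). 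The long exact cohomology sequence then shows that $C \otimes_{\ZZ_p}^L \ZZ/p^n\ZZ \to C^{\wedge} \otimes_{\ZZ_p}^L \ZZ/p^n\ZZ$ is a quasi-isomorphism.

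I do not anticipate any serious obstacle: the substantive content is entirely in Proposition~\ref{derived p-completion}, and the remaining steps are standard derived-category manipulations. The only care needed is to ensure that the quasi-isomorphisms are natural enough to be chained together, which is automatic from the construction.
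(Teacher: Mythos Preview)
Your proposal is correct and follows essentially the same approach as the paper: both argue that derived $p$-completion does not change $-\otimes_{\ZZ_p}^L \ZZ/p^n\ZZ$, then invoke Proposition~\ref{derived p-completion} and compute $A[1]\otimes_{\ZZ_p}^L \ZZ/p^n\ZZ$ using that $A$ is $p$-torsion-free. You simply supply more detail than the paper, which asserts the first step without justification and cites the universal coefficient theorem for the last.
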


\begin{proof}
Let $M$ denote the derived $p$-completion of $L_{A/\ZZ_p}$.  On one hand, we have
\[
L_{A/\ZZ_p} \otimes_{\ZZ_p}^{L} \ZZ/p^n\ZZ \cong M \otimes_{\ZZ_p}^{L} \ZZ/p^n\ZZ.
\] 
On the other hand, by Proposition~\ref{derived p-completion}, we have 
\[
M \cong A[1].
\]
The claimed result now follows directly from the universal coefficient theorem (Proposition~\ref{universal coefficient theorem}).
\end{proof}

We immediately deduce the following consequence concerning $p$-power torsion in the module of relative K\"ahler differentials.

\begin{corollary} \label{H-1 short exact sequence}
Let $A$ denote a $p$-torsion free perfectoid ring and let $n \geq 1$ be an integer.  Then we have a short exact sequence of $A$-modules
\[
0 \rightarrow H^{-1}(L_{A/\ZZ_p})/p^nH^{-1}(L_{A/\ZZ_p}) \rightarrow A/p^nA \rightarrow \Omega^1_{A/\ZZ_p}[p^n] \rightarrow 0.
\]
\end{corollary}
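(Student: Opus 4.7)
The plan is to derive this corollary directly by combining Proposition~\ref{L tensor Fp} with the universal coefficient theorem from Proposition~\ref{universal coefficient theorem}, part~(\ref{new ext part}).

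First, I apply the final short exact sequence of part~(\ref{new ext part}) of Proposition~\ref{universal coefficient theorem} to the chain complex $C = L_{A/\ZZ_p}$ with $i = -1$. This produces a short exact sequence of $A$-modules
\[
0 \rightarrow H^{-1}(L_{A/\ZZ_p})/p^n H^{-1}(L_{A/\ZZ_p}) \rightarrow H^{-1}\bigl(L_{A/\ZZ_p} \otimes_{\ZZ}^L \ZZ/p^n\ZZ\bigr) \rightarrow H^0(L_{A/\ZZ_p})[p^n] \rightarrow 0.
\]
The left and right terms are already of the desired form, using that $H^0(L_{A/\ZZ_p}) = \Omega^1_{A/\ZZ_p}$.

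Second, I identify the middle term. Since $\ZZ_p$ is flat over $\ZZ$ and $\ZZ/p^n\ZZ$ is naturally a $\ZZ_p$-module, there is a canonical quasi-isomorphism
\[
L_{A/\ZZ_p} \otimes_{\ZZ}^L \ZZ/p^n\ZZ \;\simeq\; L_{A/\ZZ_p} \otimes_{\ZZ_p}^L \ZZ/p^n\ZZ.
\]
By Proposition~\ref{L tensor Fp}, the right-hand side is quasi-isomorphic to $(A/p^nA)[1]$, whose degree $-1$ cohomology is $A/p^nA$. Substituting into the sequence above yields the claimed short exact sequence of $A$-modules.

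There is no real obstacle here: the work has already been done in proving Proposition~\ref{derived p-completion} (quoted from Bhatt–Morrow–Scholze) and in deducing Proposition~\ref{L tensor Fp}. The only minor point to verify is the compatibility of $\otimes_{\ZZ}^L$ with $\otimes_{\ZZ_p}^L$ on $\ZZ/p^n\ZZ$, and this is immediate from flatness of $\ZZ_p$ over $\ZZ$.
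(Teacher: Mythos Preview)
Your proposal is correct and follows essentially the same approach as the paper: both arguments apply the universal coefficient theorem to $L_{A/\ZZ_p}$ in degree $-1$ and then invoke Proposition~\ref{L tensor Fp} to identify the middle term with $A/p^nA$. The only cosmetic difference is that the paper uses part~(\ref{tor part}) of Proposition~\ref{universal coefficient theorem} directly over $\ZZ_p$, whereas you use the last sequence of part~(\ref{new ext part}) over $\ZZ$ and then pass to $\ZZ_p$ via flatness; this is not a substantive distinction.
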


\begin{proof}
Again from the universal coefficient theorem (Proposition~\ref{universal coefficient theorem}), we know there is a short exact sequence 
\[
0 \rightarrow H^{-1}(L_{A/\ZZ_p}) \otimes_{\ZZ_p} \ZZ/p^n\ZZ \rightarrow H^{-1}(L_{A/\ZZ_p} \otimes_{\ZZ_p}^L \ZZ/p^n\ZZ) \rightarrow \Tor^{\ZZ_p}_{1}(H^0(L_{A/\ZZ_p}), \ZZ/p^n\ZZ) \rightarrow 0.
\]
Using Proposition~\ref{L tensor Fp} to replace the middle term with $A/p^nA$, the result follows.
\end{proof}

\begin{remark}
The quasi-isomorphism $(L_{A/\ZZ_p})^{\wedge} \simeq A[1]$ in Proposition~\ref{derived p-completion} is not natural in $A$, so we must be careful regarding any desired functoriality properties of maps deduced from this quasi-isomorphism.  For example, the $A$-module homomorphism
\[
A/p^nA \rightarrow \Omega^1_{A/\ZZ_p}[p^n]
\]
appearing in Corollary~\ref{H-1 short exact sequence} is \emph{not} functorial in $A$.  However, we will see below that this map $A/p^nA \rightarrow \Omega^1_{A/\ZZ_p}[p^n]$ is an isomorphism and is determined uniquely if we moreover fix $\xi \in W(A^{\flat})$ such that $\xi$ is a generator for the kernel of Fontaine's map $\theta: W(A^{\flat}) \rightarrow A$.  Notice that, because $\xi$ is a non-zero-divisor, any two such generators differ by a unit $u \in W(A^{\flat})$, and it will follow that the two corresponding isomorphisms $A/p^nA \rightarrow \Omega^1_{A/\ZZ_p}[p^n]$ will differ by the unit $\theta(u) \in A$.
\end{remark}

The most important result in this section is the following.

\begin{theorem} \label{main relative Omega theorem}
Let $A$ denote a $p$-torsion-free perfectoid ring. 
\begin{enumerate}
\item \label{Kahler p-divisible} Multiplication by $p$ is surjective on $\Omega^1_{A/\ZZ_p}$.
\item \label{quotient of Tate} For every integer $n \geq 1$, we have an isomorphism of $A$-modules
\[
T_p(\Omega^1_{A/\ZZ_p})/p^nT_p(\Omega^1_{A/\ZZ_p}) \cong \Omega^1_{A/\ZZ_p}[p^n].
\]
\item \label{p^n torsion part} For every integer $n \geq 1$, the map from Corollary~\ref{H-1 short exact sequence}
\[
A/p^nA \rightarrow \Omega^1_{A/\ZZ_p}[p^n]
\]
is an isomorphism of $A$-modules.
\item \label{Tate part} We have an isomorphism of $A$-modules
\[
A \stackrel{\sim}{\rightarrow} T_p(\Omega^1_{A/\ZZ_p})
\]
that, when reduced modulo~$p^n$ for any integer $n \geq 1$, induces the isomorphism from Part~(\ref{p^n torsion part}).
\item \label{H^-1 p-divisible} We have an isomorphism of $A$-modules
\[
H^{-1}(L_{A/\ZZ_p})/pH^{-1}(L_{A/\ZZ_p}) \cong 0.
\]
\end{enumerate}
\end{theorem}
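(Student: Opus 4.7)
The plan is to handle the five parts in a coordinated order, with the real work consisting of a reduction to the case of perfectoid valuation rings $\overline{V}$ whose fraction field is algebraically closed, where Proposition~\ref{Bhatt ex}(\ref{Bhatt 12}) provides the strong input $L_{\overline{V}/\ZZ_p} \simeq \Omega^1_{\overline{V}/\ZZ_p}$ concentrated in degree $0$.

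First I dispose of Part~(\ref{Kahler p-divisible}) directly. Tensoring $L_{A/\ZZ_p}$ with the short exact sequence $0 \to \ZZ_p \xrightarrow{p^n} \ZZ_p \to \ZZ/p^n\ZZ \to 0$ and passing to cohomology, and using that the cotangent complex vanishes in strictly positive degrees, I obtain $\Omega^1_{A/\ZZ_p}/p^n\Omega^1_{A/\ZZ_p} \cong H^0(L_{A/\ZZ_p} \otimes_{\ZZ_p}^L \ZZ/p^n\ZZ)$. Proposition~\ref{L tensor Fp} identifies the right-hand side with $H^0((A/p^nA)[1]) = 0$, so multiplication by $p^n$ is surjective on $\Omega^1_{A/\ZZ_p}$ for every $n \geq 1$, which proves Part~(\ref{Kahler p-divisible}).

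Next I attack Part~(\ref{Tate part}). Corollary~\ref{surj to Tate} supplies a natural surjective $A$-module map $\phi_A \colon H^{-1}((L_{A/\ZZ_p})^\wedge) \twoheadrightarrow T_p(\Omega^1_{A/\ZZ_p})$, and Proposition~\ref{derived p-completion} identifies the source with $A$ (non-canonically). In the valuation ring case, Proposition~\ref{Bhatt ex}(\ref{Bhatt 12}) gives $H^{-1}(L_{\overline{V}/\ZZ_p}) = 0$, so the universal coefficient theorem, Proposition~\ref{universal coefficient theorem}(\ref{new ext part}), promotes $\phi_{\overline{V}}$ to an isomorphism $\overline{V} \xrightarrow{\sim} T_p(\Omega^1_{\overline{V}/\ZZ_p})$. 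By Lemma~\ref{embed into product}, one embeds $A \hookrightarrow \prod_\alpha \overline{V}_\alpha$ in a product of such valuation rings. Naturality of $\phi$ applied to each projection $A \to \overline{V}_\alpha$ then yields a commutative square
\[
\xymatrix{
A \cong H^{-1}((L_{A/\ZZ_p})^\wedge) \ar@{->>}[r]^-{\phi_A} \ar[d] & T_p(\Omega^1_{A/\ZZ_p}) \ar[d] \\
\overline{V}_\alpha \cong H^{-1}((L_{\overline{V}_\alpha/\ZZ_p})^\wedge) \ar[r]^-{\sim} & T_p(\Omega^1_{\overline{V}_\alpha/\ZZ_p}),
}
\]
from which any element of $\ker \phi_A$ must map to zero in every $\overline{V}_\alpha$. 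Provided the left vertical maps can be identified (up to a unit) with the structure maps $A \hookrightarrow \overline{V}_\alpha$, this forces $\ker \phi_A = 0$ and yields Part~(\ref{Tate part}).

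Parts~(\ref{p^n torsion part}), (\ref{H^-1 p-divisible}), and (\ref{quotient of Tate}) then follow formally. Reducing the isomorphism $A \cong T_p(\Omega^1_{A/\ZZ_p})$ modulo $p^n$ gives $A/p^nA \cong T_p(\Omega^1_{A/\ZZ_p})/p^nT_p(\Omega^1_{A/\ZZ_p})$, and this target injects naturally into $\Omega^1_{A/\ZZ_p}[p^n]$; comparing with the surjection $A/p^nA \twoheadrightarrow \Omega^1_{A/\ZZ_p}[p^n]$ of Corollary~\ref{H-1 short exact sequence} forces both maps to be isomorphisms, proving Part~(\ref{p^n torsion part}). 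Part~(\ref{H^-1 p-divisible}) is then immediate from that same corollary at $n=1$: the middle term is now an isomorphism, so the leftmost term vanishes. Part~(\ref{quotient of Tate}) is a restatement of the mod-$p^n$ reduction just used. The main obstacle, already flagged by the authors in the remark preceding the theorem, is that the quasi-isomorphism of Proposition~\ref{derived p-completion} is not natural in $A$: the left vertical maps in the square above are not the tautological inclusions but differ from them by units arising from the choice of generator $\xi$ of $\ker\theta$. Lemma~\ref{xi lemma}(ii) (functoriality of $\xi$) is the natural tool for controlling these twists, both to deduce injectivity of $\phi_A$ from triviality of $\ker \phi_{\overline{V}_\alpha}$ and to match the mod-$p^n$ reduction with the map from Corollary~\ref{H-1 short exact sequence}.
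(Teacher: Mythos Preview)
Your proposal is correct and follows essentially the same route as the paper: prove Part~(\ref{Tate part}) by combining Corollary~\ref{surj to Tate} and Proposition~\ref{derived p-completion} to get the surjection, establish the isomorphism for $\overline{V}$ via $H^{-1}(L_{\overline{V}/\ZZ_p})=0$, embed $A$ into $\prod_\alpha \overline{V}_\alpha$ and use Lemma~\ref{xi lemma} to control the naturality issue, then deduce Parts~(\ref{p^n torsion part}), (\ref{H^-1 p-divisible}), (\ref{quotient of Tate}) formally. The only real deviation is Part~(\ref{Kahler p-divisible}), where you invoke Proposition~\ref{L tensor Fp} while the paper gives the more elementary argument writing $a=a_0^p+pa_1$ and applying the Leibniz rule; also, the paper proves Part~(\ref{quotient of Tate}) first as a general fact about $p$-divisible modules and uses it in the logic for Part~(\ref{Tate part}), whereas you recover it at the end.
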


\begin{proof}
Part~(\ref{Kahler p-divisible}) is true more generally for any ring $A$ with the property that the $p$-power map is surjective modulo~$p$.  Namely, it suffices to prove that an element of the form $da$ is divisible by $p$, where $a \in A$ is arbitrary, and this follows after writing $a = a_0^p + pa_1$ and applying the Leibniz rule.

Part~(\ref{quotient of Tate}) is true more generally for any module~$M$ on which multiplication by $p$ is surjective.  Namely, the natural projection map
\[
T_p(M) \rightarrow M[p^n]
\]
is surjective because any $p^n$-torsion element can be extended to an element in $T_p(M)$, and so it suffices to show that the kernel of this projection consists precisely of those elements which are divisible by $p^n$.  Thus let $(m_1, m_2, \ldots) \in T_p(M)$, where $m_i \in M[p^i]$, and assume $m_n = 0$.  Consider the element $(m_{n+1}, m_{n+2}, \ldots)$; clearly $p^n(m_{n+1}, m_{n+2}, \ldots) = (m_1, m_2, \ldots)$, and moreover $(m_{n+1}, m_{n+2}, \ldots)$ is in the Tate module, because $m_n = 0$ shows that $m_{n+i} \in M[p^i]$ for all integers $i \geq 1$.

We next prove Part~(\ref{p^n torsion part}) in a special case. Let $\overline{V}$ denote a $p$-torsion-free perfectoid ring which is moreover a valuation ring with $\Frac \overline{V}$ algebraically closed.  By Proposition~\ref{Bhatt ex}, we have $H^{-1}(L_{\overline{V}/\ZZ_p}) \cong 0$, and hence from Corollary~\ref{H-1 short exact sequence}, we have a $\overline{V}$-module isomorphism $\overline{V}/p^n\overline{V} \xrightarrow{\sim} \Omega^1_{\overline{V}/\ZZ_p}[p^n]$.  This proves Part~(\ref{p^n torsion part}) in this special case.  Before proving this part in general, we consider Part~(\ref{Tate part}).

Let $A$ be an arbitrary $p$-torsion-free perfectoid ring.  By Corollary~\ref{surj to Tate} and Proposition~\ref{derived p-completion}, we have a surjective $A$-module homomorphism
\[
A \twoheadrightarrow T_p(\Omega^1_{A/\ZZ_p}),
\]
and for any integer $n \geq 1$, we have an induced map
\[
A/p^nA \twoheadrightarrow T_p(\Omega^1_{A/\ZZ_p})/p^nT_p(\Omega^1_{A/\ZZ_p}) \cong \Omega^1_{A/\ZZ_p}[p^n].
\]
Moreover, this map agrees with the map in Corollary \ref{H-1 short exact sequence}. The reason is that the universal coefficient sequence in Proposition \ref{universal coefficient theorem}(\ref{new ext part}) is natural in $G$, and hence the diagram
\[\xymatrix{ H^{-1}((L_{A/\ZZ_p})^{\wedge}) \ar[d] \ar[r] & T_p(\Omega^1_{A/\ZZ_p}) \ar[d] \\ H^{-1}((L_{A/\ZZ_p}) \otimes_{\ZZ}^L \ZZ/p^n\ZZ ) \ar[r] & \Omega^1_{A/\ZZ_p}[p^n] }\]
commutes. Here the vertical maps are induced by (derived and underived) mod $p^n$ reduction. We also note that the identification $H^{-1}((L_{A/\ZZ_p}) \otimes_{\ZZ}^L \ZZ/p^n ) \cong A/p^nA$ which is used to define the map $A/p^nA \rightarrow \Omega^1_{A/\ZZ_p}[p^n]$ in Corollary \ref{H-1 short exact sequence} is determined by the identification $H^{-1}((L_{A/\ZZ_p})^{\wedge}) \cong A$ which itself up to unit only depends on the choice the generator of $\ker \theta$ (see \cite[Proposition 4.19]{BMS18}). 
Now in the case $A = \overline{V}$, this map is the isomorphism of the previous paragraph. Thus the map $\overline{V} \rightarrow T_p(\Omega^1_{\overline{V}/\ZZ_p})$ is an inverse limit of isomorphisms, and hence is an isomorphism.  This proves Part~(\ref{Tate part}) in the case of $\overline{V}$.

We now return to the case that $A$ is an arbitrary $p$-torsion-free perfectoid ring, and consider an injective ring homomorphism $A \rightarrow \prod \overline{V}_{\alpha}$, as in Lemma~\ref{embed into product}.
We would like to construct a commutative diagram
\[
\xymatrix{
\prod \overline{V}_{\alpha} \ar[r] & \prod T_p(\Omega^1_{\overline{V}_{\alpha}/\ZZ_p})\\
A \ar[r] \ar[u] & T_p(\Omega^1_{A/\ZZ_p}) \ar[u]
}
\]
The problem here is that the map $A \rightarrow T_p(\Omega^1_{A/\ZZ_p})$ is only unique after certain choices and one needs to be careful with naturality statements. Let $f_\alpha : A \rightarrow \overline{V}_{\alpha}$ denote the given embedding to $\prod \overline{V}_{\alpha}$ composed with the projection onto the $\alpha$ factor. By Corollary \ref{surj to Tate} we have a commutative diagram
\[\xymatrix{H^{-1}((L_{\overline{V}_{\alpha}/\ZZ_p})^{\wedge}) \ar[r] & T_p(\Omega^1_{\overline{V}_{\alpha}/\ZZ_p}) \\ H^{-1}((L_{A/\ZZ_p})^{\wedge}) \ar[u]^{(f_{\alpha})_*} \ar[r] & T_p(\Omega^1_{A/\ZZ_p}) \ar[u]^{(f_{\alpha})_*}    }
\]
Further, following \cite[Proposition 4.19]{BMS18}, we also have a commutative diagram with horizontal arrows isomorphisms 
\[\xymatrix{ \ker \theta_{\overline{V}_{\alpha}} / (\ker \theta_{\overline{V}_{\alpha}})^2 \ar[r]^-{\cong} & H^{-1}((L_{\overline{V}_{\alpha}/W(\overline{V}_{\alpha}^{\flat})})^{\wedge}) \ar[r]^-{\cong} & H^{-1}((L_{\overline{V}_{\alpha}/\ZZ_p})^{\wedge}) \\ \ker \theta_{A} / (\ker \theta_{A})^2 \ar[r]^-{\cong}  \ar[u]^{(f_{\alpha})_*}  & H^{-1}((L_{A/W(A^{\flat})})^{\wedge})  \ar[u]^{(f_{\alpha})_*} \ar[r]^-{\cong} & H^{-1}((L_{A/\ZZ_p})^{\wedge})  \ar[u]^{(f_{\alpha})_*} 
}\]
Let $\xi \in W(A^{\flat})$ be a generator of the principal ideal $\ker \theta_A$. The key observation is that the image of $\xi$ under the map induced by functoriality, $W(f_{\alpha}^{\flat})(\xi),$ is a generator of $\ker \theta_{\overline{V}_{\alpha}}$ in $W(\overline{V}_{\alpha}^{\flat})$; see Lemma~\ref{xi lemma}.  The elements $\xi$ and $W(f_{\alpha}^{\flat})(\xi)$ are non-zero-divisors (see again Lemma~\ref{xi lemma}) and hence determine isomorphisms $\ker \theta_{\overline{V}_{\alpha}} / (\ker \theta_{\overline{V}_{\alpha}})^2 \cong \overline{V}_{\alpha}$ and $\ker \theta_{A} / (\ker \theta_{A})^2 \cong A$.  This compatible choice of generators allows us to fit the latter isomorphisms in a commutative diagram
\[\xymatrix{\overline{V}_{\alpha} \ar[r]^-{\cong} & \ker \theta_{\overline{V}_{\alpha}} / (\ker \theta_{\overline{V}_{\alpha}})^2  \\ A \ar[r]^-{\cong} \ar[u]^{f_{\alpha}} & \ker \theta_{A} / (\ker \theta_{A})^2 \ar[u]^{(f_{\alpha})_*}}\]
By combining the last three commutative diagrams, we obtain, for every $\alpha$, a commutative diagram
\[
\xymatrix{
\overline{V}_{\alpha} \ar[r] & T_p(\Omega^1_{\overline{V}_{\alpha}/\ZZ_p})\\
A \ar[r] \ar[u]^{f_{\alpha}} & T_p(\Omega^1_{A/\ZZ_p}) \ar[u]^{(f_{\alpha})_*}
}
\]
Here the horizontal arrows are the morphisms obtained using Corollary~\ref{surj to Tate} and Proposition~\ref{derived p-completion} (and, which we again emphasize, depend up to a unit on the choice of a generator of $\ker \theta$ in $W(A^{\flat})$). Now passing to the products we attain the desired commutative diagram 
\[
\xymatrix{
\prod \overline{V}_{\alpha} \ar[r] & \prod T_p(\Omega^1_{\overline{V}_{\alpha}/\ZZ_p})\\
A \ar[r] \ar[u] & T_p(\Omega^1_{A/\ZZ_p}) \ar[u]
}
\]
The left vertical map and the top horizontal map are injective, so the  map $A \rightarrow T_p(\Omega^1_{A/\ZZ_p})$ is injective, and we have already remarked that it is surjective.  This proves Part~(\ref{Tate part}) in general, and reducing this isomorphism modulo~$p^n$ proves Part~(\ref{p^n torsion part}) in general.  Finally, Part~(\ref{H^-1 p-divisible}) follows in general from Corollary~\ref{H-1 short exact sequence} and the fact that $A/p^nA \rightarrow \Omega^1_{A/\ZZ_p}[p^n]$ is an isomorphism.
\end{proof}

Theorem~\ref{main relative Omega theorem} indicates that, for every $p$-torsion-free perfectoid ring $A$, multiplication by $p$ on $\Omega^1_{A/\ZZ_p}$ is far from being injective.  We next give a complementary result, which indicates a situation where multiplication by $p$ on K\"ahler differentials is injective.

\begin{lemma} \label{W(k) Kahler}
Let $k$ denote a perfect ring of characteristic~$p$.  The multiplication-by-$p$ map
\[
p: \Omega^1_{W(k)/\ZZ_p} \rightarrow \Omega^1_{W(k)/\ZZ_p}
\]
is an isomorphism of $W(k)$-modules.
\end{lemma}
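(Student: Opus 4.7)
I plan to prove the two halves of the isomorphism---surjectivity and injectivity of multiplication by $p$ on $\Omega^1_{W(k)/\ZZ_p}$---separately, with surjectivity coming from the explicit Witt vector Frobenius and injectivity reduced to a computation with the cotangent complex.

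For surjectivity, the key point is that when $k$ is perfect the Witt vector Frobenius $\sigma : W(k) \to W(k)$ is a ring automorphism lifting the $p$-th power map on $k$, so $\sigma(y) = y^p + p\,\delta(y)$ for some $\delta(y) \in W(k)$. Differentiating gives $d\sigma(y) = p(y^{p-1}\,dy + d\delta(y)) \in p\,\Omega^1_{W(k)/\ZZ_p}$. The induced map $\sigma_\ast$ is a bijection of abelian groups (because $\sigma$ is a ring automorphism) whose image lies in $p\,\Omega^1_{W(k)/\ZZ_p}$, so $p\,\Omega^1_{W(k)/\ZZ_p} = \Omega^1_{W(k)/\ZZ_p}$. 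Equivalently, since $W(k)/pW(k) = k$ is perfect, this falls under the general observation used in the proof of Theorem~\ref{main relative Omega theorem}(\ref{Kahler p-divisible}).

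For injectivity, I plan to establish that $L_{W(k)/\ZZ_p} \otimes^L_{W(k)} k \simeq 0$; granted this, Proposition~\ref{universal coefficient theorem}(\ref{new ext part}) (applied with $\ZZ/p\ZZ$ and noting $L \otimes^L_{\ZZ} \ZZ/p\ZZ \simeq L \otimes^L_{W(k)} k$ since $W(k)$ is $p$-torsion-free) produces a short exact sequence
\[
0 \to H^{-1}(L_{W(k)/\ZZ_p})/p \to H^{-1}\!\bigl(L_{W(k)/\ZZ_p} \otimes^L_{W(k)} k\bigr) \to \Omega^1_{W(k)/\ZZ_p}[p] \to 0
\]
whose middle term vanishes, forcing $\Omega^1_{W(k)/\ZZ_p}[p] = 0$. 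To compute the derived tensor product I would use the Jacobi-Zariski triangle for $\ZZ_p \to W(k) \to k$:
\[
L_{W(k)/\ZZ_p} \otimes^L_{W(k)} k \to L_{k/\ZZ_p} \to L_{k/W(k)} \xrightarrow{+1}.
\]
The rightmost term is $k[1]$ since $k = W(k)/(p)$ and $p$ is a non-zero-divisor in $W(k)$. A second Jacobi-Zariski triangle for $\ZZ_p \to \FF_p \to k$, together with $L_{\FF_p/\ZZ_p} \simeq \FF_p[1]$ and the classical vanishing $L_{k/\FF_p} \simeq 0$ for a perfect $\FF_p$-algebra $k$, shows that the middle term is also $k[1]$.

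The main obstacle is then to verify that the connecting map $L_{k/\ZZ_p} \to L_{k/W(k)}$ is an equivalence of $k[1]$'s. On $H^{-1}$ both sides are $k$, canonically generated by the class of ``$dp$'' in the conormal module of the respective quotient by $p$, and the functoriality of the Jacobi-Zariski construction matches these generators. This step is equivalent to the classical fact that $W(k)/\ZZ_p$ is formally \'etale in the $p$-adic topology, i.e.\ $(L_{W(k)/\ZZ_p})^\wedge \simeq 0$. Once this identification is in hand, the Jacobi-Zariski triangle forces $L_{W(k)/\ZZ_p} \otimes^L_{W(k)} k \simeq 0$ and injectivity follows.
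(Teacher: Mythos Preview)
Your proposal is correct. The paper does not give a self-contained argument for this lemma; it simply cites \cite[Proposition~2.7]{D17} (replacing $\ZZ$ by $\ZZ_p$ throughout) and \cite[Lemma~3.14]{BMS16}, so your write-up is already more detailed than what appears in the text.

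One streamlining for the injectivity half: rather than running two Jacobi--Zariski triangles and then checking that $L_{k/\ZZ_p}\to L_{k/W(k)}$ matches the two ``class of $p$'' generators, you can obtain $L_{W(k)/\ZZ_p}\otimes^L_{W(k)} k\simeq 0$ in a single step by flat base change. Since $W(k)$ is $p$-torsion-free and hence flat over $\ZZ_p$, the square
\[
\xymatrix{
\ZZ_p \ar[r] \ar[d] & W(k) \ar[d] \\
\FF_p \ar[r] & k
}
\]
is a Tor-independent pushout, and \cite[Tag~08QQ]{stacks-project} (already used elsewhere in the paper) gives
\[
L_{W(k)/\ZZ_p}\otimes^L_{W(k)} k \;\simeq\; L_{k/\FF_p} \;\simeq\; 0,
\]
the last vanishing because $k$ is a perfect $\FF_p$-algebra. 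This bypasses the step you flag as ``the main obstacle'' and is essentially the content of the reference \cite[Lemma~3.14]{BMS16} that the paper invokes.
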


\begin{proof}
The proof is the same as the proof of \cite[Proposition~2.7]{D17} (which concerned absolute K\"ahler differentials), simply by replacing every occurrence of $\ZZ$ in that proof with $\ZZ_p$.  The result is also a consequence of \cite[Lemma~3.14]{BMS16}.
\end{proof}

When $A$ is a $p$-torsion-free perfectoid ring, we know from Theorem~\ref{main relative Omega theorem} that there exists $\alpha \in \Omega^1_{A/\ZZ_p}$ which freely generates $\Omega^1_{A/\ZZ_p}[p^n]$ as an $A/p^nA$-module.  The following two results concern identifying such a generator $\alpha$.  The more explicit of the two results, Corollary~\ref{p-torsion generator relative}, requires that $A$ contain a compatible system of $p$-power roots of unity.  The same condition appears in Theorem~\ref{Tate theorem}.

\begin{proposition} \label{image of 1}
Let $A$ denote a $p$-torsion-free perfectoid ring.
Let $\xi \in W(A^{\flat})$ denote a generator of $\ker \theta$.  Let $\alpha_n \in \Omega^1_{W(A^{\flat})/\ZZ_p}$ denote the unique element such that $p^n \alpha_n = d\xi$.  Then $\Omega^1_{A/\ZZ_p}[p^n]$ is generated as an $A/p^nA$-module by the image of $\alpha_n$ under the map $\theta: \Omega^1_{W(A^{\flat})/\ZZ_p} \rightarrow \Omega^1_{A/\ZZ_p}$.
\end{proposition}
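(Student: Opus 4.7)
The plan is to exploit the Jacobi--Zariski triangle for $\ZZ_p \to B \to A$, where $B := W(A^{\flat})$, together with a $p$-torsion-freeness observation for the middle term of the conormal sequence. First, by Lemma~\ref{W(k) Kahler} applied to the perfect $\FF_p$-algebra $A^{\flat}$, multiplication by $p$ is an isomorphism on $\Omega^1_{B/\ZZ_p}$, so this module is uniquely $p$-divisible; in particular $\alpha_n$ exists and is uniquely determined. That $\theta(\alpha_n) \in \Omega^1_{A/\ZZ_p}[p^n]$ is then immediate, since the functorial map $\Omega^1_{B/\ZZ_p} \to \Omega^1_{A/\ZZ_p}$ commutes with $d$, yielding $p^n \theta(\alpha_n) = \theta(d\xi) = d(\theta(\xi)) = 0$.

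Next I would apply the long exact cohomology sequence of the Jacobi--Zariski triangle~(\ref{Jacobi-Zariski}) for $\ZZ_p \to B \to A$. Since $\xi$ is a non-zero-divisor in $B$ generating $\ker\theta$, one has $L_{A/B} \simeq (\ker\theta/(\ker\theta)^2)[1] \cong A[1]$, with $\bar\xi$ corresponding to $1$, and $\Omega^1_{A/B} = 0$. The tail of the long exact sequence thus reads
\[
A \xrightarrow{\;\delta\;} \Omega^1_{B/\ZZ_p} \otimes_B A \longrightarrow \Omega^1_{A/\ZZ_p} \longrightarrow 0,
\]
where $\delta$ is the standard conormal connecting map with $\delta(1) = d\xi \otimes 1$. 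Consequently the kernel of the surjection $\Omega^1_{B/\ZZ_p} \otimes_B A \twoheadrightarrow \Omega^1_{A/\ZZ_p}$ is exactly the $A$-submodule generated by $d\xi \otimes 1$.

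The crucial structural input is that $\Omega^1_{B/\ZZ_p} \otimes_B A$ is $p$-torsion-free. Indeed, since $p$ acts invertibly on $\Omega^1_{B/\ZZ_p}$, the $B$-module structure factors through $B[1/p]$, so $\Omega^1_{B/\ZZ_p} \otimes_B A \cong \Omega^1_{B/\ZZ_p} \otimes_{B[1/p]} A[1/p]$, which is an $A[1/p]$-module and hence $p$-torsion-free (since $A$ embeds into $A[1/p]$).

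To conclude, let $\omega \in \Omega^1_{A/\ZZ_p}[p^n]$ and pick any lift $\tilde\omega \in \Omega^1_{B/\ZZ_p} \otimes_B A$. Because $p^n \omega = 0$, the element $p^n \tilde\omega$ lies in the kernel above, so $p^n \tilde\omega = d\xi \otimes a$ for some $a \in A$. Substituting $d\xi = p^n \alpha_n$ gives $p^n(\tilde\omega - \alpha_n \otimes a) = 0$, and the $p$-torsion-freeness established above forces $\tilde\omega = \alpha_n \otimes a$. Projecting to $\Omega^1_{A/\ZZ_p}$ yields $\omega = a \cdot \theta(\alpha_n)$, showing that $\theta(\alpha_n)$ generates $\Omega^1_{A/\ZZ_p}[p^n]$ as an $A/p^nA$-module. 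The main obstacle in this plan is the $p$-torsion-freeness of $\Omega^1_{B/\ZZ_p} \otimes_B A$; the rest is bookkeeping with the Jacobi--Zariski triangle. Pleasantly, the argument is uniform in $A$ and does not require first reducing to the valuation-ring case used in the proof of Theorem~\ref{main relative Omega theorem}.
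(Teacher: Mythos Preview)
Your proof is correct and follows essentially the same approach as the paper: both use the Jacobi--Zariski triangle for $\ZZ_p \to W(A^{\flat}) \to A$, identify the connecting map as $1 \mapsto d\xi \otimes 1$, and exploit the fact (from Lemma~\ref{W(k) Kahler}) that multiplication by $p^n$ is an isomorphism on $\Omega^1_{W(A^{\flat})/\ZZ_p}$ and hence on its tensor product with $A$. The only difference is packaging: the paper sets up a double complex with rows the conormal sequence and columns multiplication by $p^n$, then invokes the two associated spectral sequences to extract the surjection $A/p^nA \twoheadrightarrow \Omega^1_{A/\ZZ_p}[p^n]$, whereas you carry out the equivalent diagram chase by hand. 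Your direct argument is arguably more transparent; note also that your ``main obstacle'' (the $p$-torsion-freeness of $\Omega^1_{B/\ZZ_p} \otimes_B A$) is really immediate once you know $p$ acts invertibly on $\Omega^1_{B/\ZZ_p}$, since tensoring an isomorphism gives an isomorphism --- the detour through $B[1/p]$ and $A[1/p]$ is unnecessary.
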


\begin{proof}
Uniqueness of the element $\alpha_n$ follows from Lemma~\ref{W(k) Kahler}.  
From the ring homomorphisms $\ZZ_p \rightarrow W(A^{\flat}) \stackrel{\theta}{\rightarrow} A$, the Jacobi-Zariski sequence (\ref{Jacobi-Zariski}) associates an exact triangle in the derived category $D(A)$
\[
L_{W(A^{\flat})/\ZZ_p} \otimes_{W(A^{\flat})}^L A \rightarrow L_{A/\ZZ_p} \rightarrow L_{A/W(A^{\flat})}.
\]
From the associated long exact sequence in cohomology, using that $\theta$ is surjective with kernel generated by a non-zero divisor (so $\ker(\theta) / \ker(\theta)^2 \cong A$), there is an exact sequence 
\[
\xymatrix{
\cdots \ar[r] & A \ar[r] & \Omega^1_{W(A^{\flat})/\ZZ_p} \otimes_{W(A^{\flat})} A \ar[r] & \Omega^1_{A/\ZZ_p} \ar[r] & 0.
}
\]
We then form a double complex
\[
\xymatrix{
\cdots \ar[r] & A \ar[r] & \Omega^1_{W(A^{\flat})/\ZZ_p} \otimes_{W(A^{\flat})} A \ar[r] & \Omega^1_{A/\ZZ_p} \ar[r] & 0 \\
\cdots \ar[r] & A \ar[r] \ar^{p^n}[u]& \Omega^1_{W(A^{\flat})/\ZZ_p} \otimes_{W(A^{\flat})} A \ar[r] \ar^{-p^n}[u]& \Omega^1_{A/\ZZ_p} \ar[r] \ar^{p^n}[u] & 0.
}
\]
The horizontal maps $A \rightarrow \Omega^1_{W(A^{\flat})/\ZZ_p} \otimes_{W(A^{\flat})} A$ send $1 \mapsto d\xi \otimes 1$ (see for example \cite[Theorem~25.2]{Mat89}).
Multiplication by $p^n$ is an isomorphism on $\Omega^1_{W(A^{\flat})/\ZZ_p}$ (by Lemma~\ref{W(k) Kahler}) and hence also on $\Omega^1_{W(A^{\flat})/\ZZ_p} \otimes_{W(A^{\flat})} A$.  Considering the two spectral sequences associated to this double complex, we must have a surjective map $A/p^nA \rightarrow \Omega^1_{A/\ZZ_p}[p^n]$ given by $1 \mapsto \theta(\alpha_n)$.  This completes the proof.
\end{proof}

\begin{corollary} \label{p-torsion generator relative}
Let $A$ denote a $p$-torsion-free perfectoid ring and assume furthermore that $A$ contains a compatible system of $p$-power roots of unity $\zeta_{p^n}$.  There exists an element $\alpha \in \Omega^1_{A/\ZZ_p}$ such that $(\zeta_p - 1)\alpha = \dlog \zeta_p$, and any such element is $p$-torsion and freely generates $\Omega^1_{A/\ZZ_p}[p]$ as an $A/pA$-module. 
\end{corollary}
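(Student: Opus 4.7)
The plan is to apply Proposition~\ref{image of 1} with a carefully chosen generator of $\ker\theta \subseteq W(A^{\flat})$. Let $\varepsilon^{1/p} \in A^{\flat}$ denote the $p$-power compatible sequence whose $0$-th coordinate is $\zeta_p$ (i.e.\ $(\varepsilon^{1/p})^{(i)} = \zeta_{p^{i+1}}$), and set $t := [\varepsilon^{1/p}] \in W(A^{\flat})$, so that $\theta(t) = \zeta_p$. I take
\[
\xi \;:=\; 1 + t + t^2 + \cdots + t^{p-1}.
\]
This element generates $\ker \theta$: Lemma~\ref{theta lemma}(\ref{ker BMS}) identifies $\ker \tilde\theta_1$ with the principal ideal generated by $F(\xi) = 1 + [\varepsilon] + \cdots + [\varepsilon]^{p-1}$, and since $\theta = \tilde\theta_1 \circ F$ with $F$ a ring automorphism of $W(A^{\flat})$ (as $A^{\flat}$ is perfect), it follows that $\ker \theta = (\xi)$.

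Differentiating the polynomial identity $(x-1)(1 + x + \cdots + x^{p-1}) = x^p - 1$ and substituting $x = t$ yields
\[
(t-1)\,d\xi \;=\; p\,t^{p-1}\,dt - \xi\,dt
\]
in $\Omega^1_{W(A^{\flat})/\ZZ_p}$. By Lemma~\ref{W(k) Kahler} multiplication by $p$ is an isomorphism on this module, so there exist unique elements $\alpha_1, \eta$ with $p\alpha_1 = d\xi$ and $p\eta = dt$; substituting and dividing by $p$ gives $(t-1)\alpha_1 = t^{p-1}\,dt - \xi\,\eta$. Set $\alpha := \theta_*(\alpha_1) \in \Omega^1_{A/\ZZ_p}$; applying $\theta_*$ and using $\theta(t) = \zeta_p$ together with $\theta(\xi) = 0$ yields
\[
(\zeta_p - 1)\,\alpha \;=\; \zeta_p^{p-1}\,d\zeta_p \;=\; \zeta_p^{-1}\,d\zeta_p \;=\; \dlog \zeta_p,
\]
giving existence. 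Moreover $p\alpha = \theta_*(d\xi) = d(\theta(\xi)) = 0$, so $\alpha$ is $p$-torsion. By Proposition~\ref{image of 1}, $\alpha$ generates $\Omega^1_{A/\ZZ_p}[p]$ as an $A/pA$-module; combining this with the isomorphism $\Omega^1_{A/\ZZ_p}[p] \cong A/pA$ of Theorem~\ref{main relative Omega theorem}(\ref{p^n torsion part}), the surjection $A/pA \twoheadrightarrow A/pA$ sending $1 \mapsto \alpha$ must be an isomorphism (a surjective endomorphism of $A/pA$ sends $1$ to a unit), so $\alpha$ is a \emph{free} generator.

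For the ``any such element'' assertion, suppose $\alpha' \in \Omega^1_{A/\ZZ_p}$ also satisfies $(\zeta_p - 1)\alpha' = \dlog \zeta_p$. Then $\beta := \alpha - \alpha'$ is annihilated by $\zeta_p - 1$. Writing $p = u\,(\zeta_p - 1)^{p-1}$ for a unit $u \in \ZZ[\zeta_p]^{\times}$ (standard in cyclotomic theory) and using that $A$ is $(\zeta_p - 1)$-torsion-free (because it is $p$-torsion-free and $p \in (\zeta_p - 1)$), one sees first that $p\beta = u(\zeta_p-1)^{p-2}\cdot(\zeta_p-1)\beta = 0$, so $\beta \in \Omega^1_{A/\ZZ_p}[p]$, and second that the annihilator of $\zeta_p - 1$ inside $\Omega^1_{A/\ZZ_p}[p] \cong A/pA$ is precisely the ideal $((\zeta_p - 1)^{p-2})$. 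Thus $\alpha' = (1 - c)\alpha$ for some $c \in ((\zeta_p - 1)) \subseteq A/pA$; since $(\zeta_p - 1)^{p-1} \equiv 0 \pmod{p}$, the ideal $((\zeta_p - 1))$ is nilpotent in $A/pA$, hence $1 - c$ is a unit, $\alpha'$ is again a free generator, and $p\alpha' = (1-c)(p\alpha) = 0$. The main delicate step is the choice of $\xi = F^{-1}(1 + [\varepsilon] + \cdots + [\varepsilon]^{p-1})$ rather than $1 + [\varepsilon] + \cdots + [\varepsilon]^{p-1}$ itself: only the former generates $\ker \theta$, and only this choice makes $\theta(t) = \zeta_p$, which is what produces the factor $(\zeta_p - 1)$ and the right-hand side $\dlog \zeta_p$ after dividing by $p$ and pushing down via $\theta_*$.
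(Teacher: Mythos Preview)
Your proof is correct and follows essentially the same strategy as the paper: both choose $\xi = \sum_{i=0}^{p-1}[\varepsilon^{1/p}]^i$ as the generator of $\ker\theta$, apply Proposition~\ref{image of 1} to obtain a generator $\theta_*(\alpha_1)$ of $\Omega^1_{A/\ZZ_p}[p]$, verify the relation $(\zeta_p-1)\theta_*(\alpha_1)=\dlog\zeta_p$, and then compare an arbitrary $\alpha'$ to this known free generator. The only stylistic differences are that the paper writes $\alpha_1$ explicitly as $\sum_{m=1}^{p-1} m[\varepsilon^{1/p}]^m\dlog[\varepsilon^{1/p^2}]$ (bringing in $\varepsilon^{1/p^2}$) and checks the relation via $(\zeta_p-1)\sum m\zeta_p^m = p$, whereas you derive it from differentiating $(t-1)\xi = t^p-1$; and for the ``any such element'' clause the paper invokes Lemma~\ref{zeta_p congruence implies unit}, while you give the equivalent self-contained argument that $\alpha-\alpha'$ lies in the $(\zeta_p-1)$-annihilator of $A/pA$, which is contained in the nilpotent ideal $((\zeta_p-1))$.
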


\begin{proof}
We first prove that there is at least one such element $\alpha$ satisfying all the listed conditions, and then prove that any element $\alpha \in \Omega^1_{A/\ZZ_p}$ satisfying $(\zeta_p - 1)\alpha = \dlog \zeta_p$ is automatically $p$-torsion and a free $A/pA$-module generator of $\Omega^1_{A/\ZZ_p}[p]$.

Let $\varepsilon \in A^{\flat}$ consist of the $p$-power compatible system $(\zeta_{p^n})_{n \geq 0}$.  One generator of $\ker \theta$ is $\xi := 1 + [\varepsilon^{1/p}] + [\varepsilon^{1/p}]^2 + \cdots + [\varepsilon^{1/p}]^{p-1}$ (see \cite[Example~3.16]{BMS16}).  Define 
\[
\alpha_1 := \sum_{m = 1}^{p-1} m [\varepsilon^{1/p}]^{m} \dlog [\varepsilon^{1/p^2}] \in \Omega^1_{W(A^{\flat})/\ZZ_p}.
\]
Note that $p\alpha_1 = d \xi$.
By Proposition~\ref{image of 1}, we know that $\theta(\alpha_1) \in \Omega^1_{A/\ZZ_p}$ freely generates $\Omega^1_{A/\ZZ_p}[p]$ as an $A/pA$-module.  On the other hand,
\[
\theta(\alpha_1) = \sum_{m = 1}^{p-1} m \zeta_p^{m} \dlog \zeta_{p^2} \in \Omega^1_{A/\ZZ_p}.
\]
Because 
\[
(\zeta_p - 1) \sum_{m = 1}^{p-1} m \zeta_p^{m}  = p,
\]
this shows that the element $\theta(\alpha_1)$ satisfies all the conditions of the statement.

Now let $\alpha \in \Omega^1_{A/\ZZ_p}$ denote an arbitrary element which satisfies $(\zeta_p - 1)\alpha = \dlog \zeta_p$.  From
\[
p \alpha = \frac{p}{\zeta_p - 1} (\zeta_p - 1) \alpha = \frac{p}{\zeta_p - 1} (\zeta_p - 1) \theta(\alpha_1) = p \theta(\alpha_1) = 0,
\]
we see that $\alpha$ is $p$-torsion.  Because $\theta(\alpha_1)$ generates the $p$-torsion, we have $\alpha = a\theta(\alpha_1)$ for some $a \in A$, and we must furthermore have 
\[
(\zeta_p - 1) a \equiv \zeta_p - 1 \bmod pA,
\]
because $(\zeta_p - 1) a \theta(\alpha_1) = (\zeta_p - 1)\theta(\alpha_1)$.  By Lemma~\ref{zeta_p congruence implies unit}, we then have that $a$ is a unit in $A/pA$, which completes the proof.
\end{proof}

In Theorem~\ref{sequence theorem}, in addition to considering $p$-torsion-free perfectoid rings, we also consider rings of integers in finite extensions of $\QQ_p$.   The following algebraic result enables us to relate such rings to the perfectoid ring $\OCp$.

\begin{proposition} \label{injective Kahler}
Let $K$ denote an algebraic extension of $\QQ_p$ and let $\overline{K}$ denote an algebraic closure of $K$.
\begin{enumerate}
\item The natural map
\[
\Omega^1_{\mathcal{O}_K/\ZZ_p} \rightarrow \Omega^1_{\mathcal{O}_{\overline{K}}/\ZZ_p}
\]
is injective.
\item For every integer $n \geq 1$, the natural map
\[
\Omega^1_{\mathcal{O}_{\overline{K}}/\ZZ_p}[p^n] \rightarrow \Omega^1_{\OCp/\ZZ_p}[p^n]
\]
is an isomorphism.
\end{enumerate}
\end{proposition}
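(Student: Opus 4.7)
For Part~(1), the plan is to reduce to finite extensions and apply the Jacobi-Zariski triangle. Writing $\mathcal{O}_{\overline{K}}=\varinjlim_L \mathcal{O}_L$ with $L$ ranging over finite sub-extensions of $\overline{K}/K$ and using that K\"ahler differentials commute with filtered colimits and that filtered colimits preserve injections, it suffices to prove injectivity of $\Omega^1_{\mathcal{O}_K/\ZZ_p}\to \Omega^1_{\mathcal{O}_L/\ZZ_p}$ for each such finite $L/K$. Proposition~\ref{Bhatt ex}(\ref{Bhatt 8}) gives $L_{\mathcal{O}_L/\mathcal{O}_K}\simeq \Omega^1_{\mathcal{O}_L/\mathcal{O}_K}$ concentrated in degree zero, so the long exact sequence attached to the Jacobi-Zariski triangle for $\ZZ_p\to \mathcal{O}_K\to \mathcal{O}_L$ produces the short exact sequence
\[
0\to \Omega^1_{\mathcal{O}_K/\ZZ_p}\otimes_{\mathcal{O}_K}\mathcal{O}_L \to \Omega^1_{\mathcal{O}_L/\ZZ_p}\to \Omega^1_{\mathcal{O}_L/\mathcal{O}_K}\to 0.
\]
Since $\mathcal{O}_K\to \mathcal{O}_L$ is faithfully flat by Proposition~\ref{alg faithfully flat}, faithfully flat descent forces $\Omega^1_{\mathcal{O}_K/\ZZ_p}\to \Omega^1_{\mathcal{O}_K/\ZZ_p}\otimes_{\mathcal{O}_K}\mathcal{O}_L$ to be injective, completing Part~(1).

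The plan for Part~(2) is parallel, applied to the map $\mathcal{O}_{\overline{K}}\to \OCp$ with Proposition~\ref{Bhatt ex}(\ref{Bhatt 12}) in place of \ref{Bhatt ex}(\ref{Bhatt 8}): because $\mathcal{O}_{\overline{K}}$ is a $p$-torsion-free, $p$-adically separated valuation ring with algebraically closed fraction field, we have $L_{\mathcal{O}_{\overline{K}}/\ZZ_p}\simeq \Omega^1_{\mathcal{O}_{\overline{K}}/\ZZ_p}$. The central technical step is to establish the vanishing
\[
L_{\OCp/\mathcal{O}_{\overline{K}}}\otimes^L_{\ZZ}\ZZ/p^n\simeq 0,
\]
which I would deduce from $\mathcal{O}_{\overline{K}}/p^n\cong \OCp/p^n$ (the isomorphism inherent in $p$-adic completion) by comparing the two Jacobi-Zariski triangles attached to $\mathcal{O}_{\overline{K}}\to \OCp\to \OCp/p^n$ and $\mathcal{O}_{\overline{K}}\to \mathcal{O}_{\overline{K}}/p^n\to \OCp/p^n$ (the two compositions yielding the same map). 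The second triangle has acyclic rightmost term $L_{(\OCp/p^n)/(\mathcal{O}_{\overline{K}}/p^n)}\simeq 0$, the regular-quotient identities $L_{(\mathcal{O}_{\overline{K}}/p^n)/\mathcal{O}_{\overline{K}}}\simeq (\mathcal{O}_{\overline{K}}/p^n)[1]$ and $L_{(\OCp/p^n)/\OCp}\simeq (\OCp/p^n)[1]$ pin down the remaining terms, and the triangles then force $L_{\OCp/\mathcal{O}_{\overline{K}}}\otimes^L_{\OCp}\OCp/p^n\simeq 0$; this coincides with the desired vanishing because $\OCp$ is $\ZZ_p$-flat.

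Given this vanishing, tensoring the Jacobi-Zariski triangle for $\ZZ_p\to \mathcal{O}_{\overline{K}}\to \OCp$ with $\ZZ/p^n$ yields the equivalence
\[
\Omega^1_{\mathcal{O}_{\overline{K}}/\ZZ_p}\otimes^L_{\mathcal{O}_{\overline{K}}} \OCp \otimes^L_{\ZZ}\ZZ/p^n \simeq L_{\OCp/\ZZ_p}\otimes^L_{\ZZ}\ZZ/p^n.
\]
Because $\Omega^1_{\mathcal{O}_{\overline{K}}/\ZZ_p}$ is $p$-power torsion (as a filtered colimit of the $p$-power-torsion modules $\Omega^1_{\mathcal{O}_L/\ZZ_p}$ for $L/\QQ_p$ finite, by Fontaine; cf.\ Remark~\ref{discrete Tate}) and $\mathcal{O}_{\overline{K}}/p^n\cong \OCp/p^n$, the derived base-change $\otimes^L_{\mathcal{O}_{\overline{K}}}\OCp$ is in fact underived and simply returns $\Omega^1_{\mathcal{O}_{\overline{K}}/\ZZ_p}$. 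Taking $H^{-1}$ via the universal coefficient theorem (Proposition~\ref{universal coefficient theorem}) and using Theorem~\ref{main relative Omega theorem}(\ref{H^-1 p-divisible}) to kill the correction term $H^{-1}(L_{\OCp/\ZZ_p})/p^n$ on the right, together with a naturality check identifying the resulting map with the one induced by functoriality, yields the isomorphism $\Omega^1_{\mathcal{O}_{\overline{K}}/\ZZ_p}[p^n]\xrightarrow{\sim}\Omega^1_{\OCp/\ZZ_p}[p^n]$. The main obstacle is establishing the vanishing $L_{\OCp/\mathcal{O}_{\overline{K}}}\otimes^L_{\ZZ}\ZZ/p^n\simeq 0$; once this is in hand, the rest is a careful walk through the cotangent-complex machinery.
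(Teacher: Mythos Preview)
Your Part~(1) is essentially the paper's argument. The paper applies Proposition~\ref{Bhatt ex}(\ref{Bhatt 8}) directly to the full extension $\overline{K}/K$ (that proposition already covers infinite algebraic extensions), so your reduction to finite sub-extensions is an unnecessary but harmless extra step; otherwise the Jacobi--Zariski plus faithful flatness argument is identical.

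Your Part~(2) is correct but takes a more circuitous route than the paper. The paper's argument is shorter: for $R\in\{\mathcal{O}_{\overline{K}},\OCp\}$, Proposition~\ref{Bhatt ex} gives $H^{-1}(L_{R/\ZZ_p})=0$, so the universal coefficient theorem yields $\Omega^1_{R/\ZZ_p}[p^n]\cong H^{-1}\big(L_{R/\ZZ_p}\otimes^L_{\ZZ_p}\ZZ/p^n\big)$, and flat base change (\cite[Tag~08QQ]{stacks-project}) identifies the latter with $H^{-1}\big(L_{(R/p^nR)/(\ZZ/p^n)}\big)$; the result then falls out of $\mathcal{O}_{\overline{K}}/p^n\cong\OCp/p^n$, and naturality is automatic. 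Your approach instead passes through the relative complex $L_{\OCp/\mathcal{O}_{\overline{K}}}$, and there are two places where you work harder than necessary. First, your two-triangle argument for the vanishing $L_{\OCp/\mathcal{O}_{\overline{K}}}\otimes^L_{\ZZ}\ZZ/p^n\simeq 0$ is valid but requires checking that the induced map $L_{(\OCp/p^n)/\mathcal{O}_{\overline{K}}}\to L_{(\OCp/p^n)/\OCp}$ is an isomorphism (not merely that source and target are abstractly $(\OCp/p^n)[1]$); this holds because both are quotient by the regular element $p^n$ and the map sends the class of $p^n$ to itself, but you should say so. A one-line alternative: $\OCp$ is flat over the valuation ring $\mathcal{O}_{\overline{K}}$, so flat base change gives $L_{\OCp/\mathcal{O}_{\overline{K}}}\otimes^L_{\mathcal{O}_{\overline{K}}}\mathcal{O}_{\overline{K}}/p^n\simeq L_{(\OCp/p^n)/(\mathcal{O}_{\overline{K}}/p^n)}\simeq 0$ directly. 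Second, you invoke Theorem~\ref{main relative Omega theorem}(\ref{H^-1 p-divisible}) to kill $H^{-1}(L_{\OCp/\ZZ_p})/p^n$, but Proposition~\ref{Bhatt ex}(\ref{Bhatt 12}) already gives $H^{-1}(L_{\OCp/\ZZ_p})=0$ outright, since $\OCp$ is a $p$-adically separated valuation ring with algebraically closed fraction field.
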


\begin{proof}
We know $H^{-1}(L_{\mathcal{O}_{\overline{K}}/\mathcal{O}_K}) \cong 0$ by Proposition~\ref{Bhatt ex}, so the map
\[
\Omega^1_{\mathcal{O}_K/\ZZ_p} \otimes_{\mathcal{O}_K} \mathcal{O}_{\overline{K}} \hookrightarrow \Omega^1_{\mathcal{O}_{\overline{K}/\ZZ_p}}
\]
is injective by the Jacobi-Zariski sequence (\ref{Jacobi-Zariski}).  We know $\mathcal{O}_K \rightarrow \mathcal{O}_{\overline{K}}$ is faithfully flat by Proposition~\ref{alg faithfully flat}, so the map $\Omega^1_{\mathcal{O}_K/\ZZ_p} \rightarrow \Omega^1_{\mathcal{O}_K/\ZZ_p} \otimes_{\mathcal{O}_K} \mathcal{O}_{\overline{K}}$ is injective by \cite[Theorem~7.5]{Mat89}.  Thus the composite of these two maps $\Omega^1_{\mathcal{O}_K/\ZZ_p} \rightarrow \Omega^1_{\mathcal{O}_{\overline{K}}/\ZZ_p}$ is also injective.  This completes the proof of the first statement.

We now prove the second statement.  Let $R$ denote either of the rings $\mathcal{O}_{\overline{K}}$ or $\OCp$.  In both cases, $H^{-1}(L_{R/\ZZ_p}) \cong 0$, again by Proposition~\ref{Bhatt ex}.  By Proposition~\ref{universal coefficient theorem}, in both cases we have 
\[
H^{-1}\bigg(L_{R/\ZZ_p} \otimes_{\ZZ_p}^L (\ZZ/p^n\ZZ)\bigg) \cong \Omega^1_{R/\ZZ_p}[p^n].
\]
By flat base change, we have 
\[
H^{-1}\bigg(L_{R/\ZZ_p} \otimes_{\ZZ_p}^L (\ZZ/p^n\ZZ)\bigg) \cong H^{-1}(L_{(R/p^nR)/(\ZZ/p^n\ZZ)});
\]
see for example \cite[Tag~08QQ]{stacks-project}.
The second result now follows from the fact that
\[
\mathcal{O}_{\overline{K}}/p^n \mathcal{O}_{\overline{K}} \cong \OCp/p^n\OCp.
\]
\end{proof}

This concludes our treatment of the relative K\"ahler differentials, $\Omega^1_{A/\ZZ_p}$.  We are ultimately interested in the $p$-power torsion in the module of absolute K\"ahler differentials, $\Omega^1_A := \Omega^1_{A/\ZZ}$.  Although $\Omega^1_A$ is much larger than $\Omega^1_{A/\ZZ_p}$, their $p^n$-torsion modules are isomorphic.  

\begin{lemma} \label{absolute isomorphism}
Let $A$ denote a $p$-torsion-free $\ZZ_p$-algebra for which the multiplication-by-$p$ map 
\[
H^{-1}(L_{A/\ZZ_p}) \stackrel{p}{\rightarrow} H^{-1}(L_{A/\ZZ_p})
\] 
is surjective.  (For example, $A$ could be any $p$-torsion-free perfectoid ring; see Theorem~\ref{main relative Omega theorem}.)
Then for every integer $n \geq 1$, the natural map 
\[
\Omega^1_A[p^n] \rightarrow \Omega^1_{A/\ZZ_p}[p^n]
\]
is an isomorphism.  
\end{lemma}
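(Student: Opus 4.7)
The plan is to derive the statement from the Jacobi-Zariski triangle for $\ZZ\to\ZZ_p\to A$ after derived reduction modulo $p^n$, combined with the universal coefficient theorem.

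First, I would apply $-\otimes^L_\ZZ\ZZ/p^n\ZZ$ to the Jacobi-Zariski triangle
\[
L_{\ZZ_p/\ZZ}\otimes^L_{\ZZ_p}A\;\longrightarrow\;L_{A/\ZZ}\;\longrightarrow\;L_{A/\ZZ_p}.
\]
The crucial observation is that the left-hand term vanishes. Since $\ZZ\to\ZZ_p$ is flat, flat base change for the cotangent complex (\cite[Tag~08QQ]{stacks-project}) gives
\[
L_{\ZZ_p/\ZZ}\otimes^L_\ZZ\ZZ/p^n\;\simeq\;L_{(\ZZ_p\otimes_\ZZ\ZZ/p^n)/(\ZZ/p^n)}\;=\;L_{(\ZZ/p^n)/(\ZZ/p^n)}\;\simeq\;0,
\]
and tensoring further with $A$ over $\ZZ_p$ preserves this vanishing. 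Using again that $\ZZ_p$ is $\ZZ$-flat to identify $-\otimes^L_\ZZ\ZZ/p^n$ with $-\otimes^L_{\ZZ_p}\ZZ/p^n$ on $\ZZ_p$-complexes, I obtain a quasi-isomorphism
\[
L_{A/\ZZ}\otimes^L_\ZZ\ZZ/p^n\;\xrightarrow{\;\sim\;}\;L_{A/\ZZ_p}\otimes^L_{\ZZ_p}\ZZ/p^n.
\]

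Second, I would apply the universal coefficient theorem Proposition~\ref{universal coefficient theorem}(\ref{new ext part}) in cohomological degree $-1$ to each of $L_{A/\ZZ}$ and $L_{A/\ZZ_p}$. By naturality of that short exact sequence under the map $L_{A/\ZZ}\to L_{A/\ZZ_p}$, I get a commutative diagram of short exact sequences of $A$-modules
\[
\xymatrix@C=1.1em{
0 \ar[r] & H^{-1}(L_{A/\ZZ})/p^n \ar[r] \ar[d] & H^{-1}(L_{A/\ZZ}\otimes^L_\ZZ\ZZ/p^n) \ar[r] \ar[d]^-{\cong} & \Omega^1_A[p^n] \ar[r] \ar[d] & 0 \\
0 \ar[r] & H^{-1}(L_{A/\ZZ_p})/p^n \ar[r] & H^{-1}(L_{A/\ZZ_p}\otimes^L_{\ZZ_p}\ZZ/p^n) \ar[r] & \Omega^1_{A/\ZZ_p}[p^n] \ar[r] & 0
}
\]
in which the middle vertical arrow is the quasi-isomorphism established above and the right-hand vertical is the natural map to be shown an isomorphism.

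Third, the hypothesis that multiplication by $p$ is surjective on $H^{-1}(L_{A/\ZZ_p})$ forces the bottom-left term $H^{-1}(L_{A/\ZZ_p})/p^n$ to vanish, so the bottom row collapses to an isomorphism $H^{-1}(L_{A/\ZZ_p}\otimes^L_{\ZZ_p}\ZZ/p^n)\xrightarrow{\sim}\Omega^1_{A/\ZZ_p}[p^n]$. A direct two-line diagram chase then shows the right-hand vertical is an isomorphism: for injectivity, lift $\bar x\in\Omega^1_A[p^n]$ to $x$ in the middle top; its image in $\Omega^1_{A/\ZZ_p}[p^n]$ is the image of $\phi(x)$ under the bottom horizontal isomorphism, so vanishing there forces $\phi(x)=0$, hence $x=0$ and $\bar x=0$. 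Surjectivity is the dual chase, lifting from $\Omega^1_{A/\ZZ_p}[p^n]$ back up and then surjecting onto $\Omega^1_A[p^n]$.

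The main obstacle is justifying the vanishing of $L_{\ZZ_p/\ZZ}\otimes^L_\ZZ\ZZ/p^n$ cleanly; everything else is a formal manipulation of the universal coefficient sequence and a short diagram chase.
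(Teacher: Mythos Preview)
Your argument is correct and close in spirit to the paper's, but it is organized differently. The paper writes out the four-term exact sequence
\[
\cdots \to H^{-1}(L_{A/\ZZ_p}) \to \Omega^1_{\ZZ_p/\ZZ}\otimes_{\ZZ_p}A \to \Omega^1_{A/\ZZ} \to \Omega^1_{A/\ZZ_p} \to 0
\]
coming from Jacobi--Zariski, forms a double complex with multiplication by $p^n$ as the vertical maps, and reads off the result from the $E_1$-page of the column-first spectral sequence, using that multiplication by $p$ is an \emph{isomorphism} on $\Omega^1_{\ZZ_p/\ZZ}\otimes_{\ZZ_p}A$. You instead stay in the derived category: you tensor the Jacobi--Zariski \emph{triangle} with $\ZZ/p^n$, kill the $L_{\ZZ_p/\ZZ}$ term outright, and then run the universal coefficient theorem with a short diagram chase. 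Your route is a bit more streamlined because it packages the relevant $p$-divisibility fact as the single vanishing $L_{\ZZ_p/\ZZ}\otimes^L_\ZZ \ZZ/p^n\simeq 0$, rather than as separate statements about $\Omega^1_{\ZZ_p/\ZZ}$; the paper's route, on the other hand, makes the role of the hypothesis on $H^{-1}(L_{A/\ZZ_p})$ slightly more visible at the $E_1$-page.

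One small point of care: your appeal to Tag~08QQ is not quite as stated. That tag requires the \emph{base-change} map to be flat, and $\ZZ\to\ZZ/p^n$ is not. What you are actually using is that $\ZZ_p$ is $\ZZ$-flat, hence $\ZZ_p$ and $\ZZ/p^n$ are Tor-independent over $\ZZ$, and base change for the cotangent complex holds under Tor-independence. (This is exactly the way the paper itself invokes Tag~08QQ elsewhere, e.g.\ in the proof of Proposition~\ref{injective Kahler}.) Alternatively, one can verify $L_{\ZZ_p/\ZZ}\otimes^L_{\ZZ_p}\ZZ/p^n\simeq 0$ directly from the Jacobi--Zariski triangle for $\ZZ\to\ZZ_p\to\ZZ/p^n$, since $L_{(\ZZ/p^n)/\ZZ}\to L_{(\ZZ/p^n)/\ZZ_p}$ is visibly a quasi-isomorphism. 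Either way your vanishing claim is correct; just tighten the justification.
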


\begin{proof}
Consider the double complex
\[
\xymatrix{
\cdots \ar[r] & H^{-1}(L_{A/\ZZ_p}) \ar[r] & \Omega^1_{\ZZ_p/\ZZ} \otimes_{\ZZ_p} A \ar[r] & \Omega^1_{A/\ZZ} \ar[r] & \Omega^1_{A/\ZZ_p} \ar[r] & 0 \\
\cdots \ar[r] & H^{-1}(L_{A/\ZZ_p}) \ar[r] \ar^{p^n}[u]& \Omega^1_{\ZZ_p/\ZZ} \otimes_{\ZZ_p} A \ar[r] \ar^{-p^n}[u]& \Omega^1_{A/\ZZ} \ar[r] \ar^{p^n}[u] & \Omega^1_{A/\ZZ_p} \ar[r] \ar^{-p^n}[u] & 0.
}
\]
Each of the four displayed vertical maps is surjective, and in fact, multiplication-by-$p$ is an isomorphism on $\Omega^1_{\ZZ_p/\ZZ} \otimes_{\ZZ_p} A$ (see for example \cite[Lemma~2.2.4]{HM03} or \cite[Proposition~2.7]{D17}). 
Because the rows of this complex are exact, the two spectral sequences associated to this double complex must both converge to zero.   Consider the spectral sequence attained by first taking cohomology along the columns. The $E_1$-page of this spectral sequence will be
\[
\xymatrix{
\cdots \ar[r] & 0 \ar[r] & 0 \ar[r] &  \Omega^1_{A/\ZZ}/p^n\Omega^1_{A/\ZZ}\ar[r] & \Omega^1_{A/\ZZ_p}/p^n\Omega^1_{A/\ZZ_p} \ar[r] & 0 \\
\cdots \ar[r] & H^{-1}(L_{A/\ZZ_p})[p^n] \ar[r] & 0 \ar[r] & \Omega^1_{A/\ZZ}[p^n] \ar[r] & \Omega^1_{A/\ZZ_p}[p^n] \ar[r]  & 0.
}
\]
Because of the two zeros at the top-left of the diagram, we deduce that $\Omega^1_{A/\ZZ}[p^n] \rightarrow \Omega^1_{A/\ZZ_p}[p^n]$ is an isomorphism, as required.  
\end{proof}

We end this section with the following corollary.  It is the key result used in the base case of our inductive proof of Theorem~\ref{Tate theorem}.

\begin{corollary} \label{base case absolute}
Assume $A$ is a $p$-torsion-free perfectoid ring containing a compatible system of $p$-power roots of unity.   There exists an element $\alpha \in \Omega^1_A$ satisfying $(\zeta_p - 1) \alpha = \dlog\zeta_p$, and given any such element, it is $p$-torsion and the map
$A/pA \stackrel{\alpha}{\rightarrow} \Omega^1_A[p]$
given by $a  \mapsto a\alpha$ is an isomorphism of $A$-modules.  Writing $A_0 := \ZZ_p[\zeta_{p^{\infty}}]^{\wedge}$, we may furthermore find one such $\alpha$ in the image of the natural map $\Omega^1_{A_0} \rightarrow \Omega^1_A$.
\end{corollary}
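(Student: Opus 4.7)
The plan is to bootstrap Corollary~\ref{p-torsion generator relative} (the relative analogue) along Lemma~\ref{absolute isomorphism} to transfer the conclusion to absolute K\"ahler differentials. First, I would exhibit $\alpha \in \Omega^1_{A_0}$ by copying the explicit formula from the proof of Corollary~\ref{p-torsion generator relative}: set
\[
\alpha := \Big(\sum_{m=1}^{p-1} m\,\zeta_p^m\Big)\,\dlog\zeta_{p^2} \;\in\; \Omega^1_{A_0}.
\]
A short Leibniz-rule calculation, using the identity $(\zeta_p - 1)\sum_{m=1}^{p-1} m\zeta_p^m = p$ together with $p\,\dlog\zeta_{p^2} = \dlog(\zeta_{p^2}^p) = \dlog\zeta_p$, verifies $(\zeta_p - 1)\alpha = \dlog\zeta_p$ already in $\Omega^1_{A_0}$; applying the functorial map $\Omega^1_{A_0} \to \Omega^1_A$ then simultaneously yields existence of such an $\alpha$ in $\Omega^1_A$ and the final sentence of the statement.

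Next, I would show that \emph{any} $\alpha' \in \Omega^1_A$ with $(\zeta_p - 1)\alpha' = \dlog\zeta_p$ is annihilated by $p$. Its image $\bar\alpha'$ in $\Omega^1_{A/\ZZ_p}$ satisfies the same identity, so by Corollary~\ref{p-torsion generator relative} it is $p$-torsion and freely generates $\Omega^1_{A/\ZZ_p}[p]$ as an $A/pA$-module. By Lemma~\ref{absolute isomorphism}, the natural map $\Omega^1_A[p] \to \Omega^1_{A/\ZZ_p}[p]$ is an isomorphism, so there exists a unique lift $\tilde\alpha \in \Omega^1_A[p]$ of $\bar\alpha'$. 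Both $(\zeta_p - 1)\tilde\alpha$ and $\dlog\zeta_p$ lie in $\Omega^1_A[p]$ — the latter because $d(\zeta_p^p) = p\zeta_p^{p-1}d\zeta_p = 0$ and $\zeta_p^{p-1}$ is a unit, so $p\,d\zeta_p = 0$ — and they have the same image in $\Omega^1_{A/\ZZ_p}[p]$; the injectivity provided by Lemma~\ref{absolute isomorphism} then upgrades this to the identity $(\zeta_p - 1)\tilde\alpha = \dlog\zeta_p$ in $\Omega^1_A$. Consequently $\alpha' - \tilde\alpha$ is $(\zeta_p - 1)$-annihilated. Since $\Phi_p(1) = p$ and the factors $1 - \zeta_p^i$ are all associates of $1 - \zeta_p$ in $\ZZ[\zeta_p] \subseteq A$, we have $p = u(\zeta_p - 1)^{p-1}$ for some unit $u \in A$; hence any $(\zeta_p - 1)$-annihilated element of an $A$-module is automatically $p$-annihilated, giving $p\alpha' = p\tilde\alpha = 0$.

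Finally, for the free generator claim, the composition
\[
A/pA \xrightarrow{\;a \mapsto a\alpha'\;} \Omega^1_A[p] \xrightarrow{\;\sim\;} \Omega^1_{A/\ZZ_p}[p],
\]
with the second arrow the isomorphism of Lemma~\ref{absolute isomorphism}, is the map $a \mapsto a\bar\alpha'$, which is an isomorphism by Corollary~\ref{p-torsion generator relative}; hence the first arrow is an isomorphism as well.

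The main obstacle is the middle step: for a general $\alpha'$ not assumed to be $p$-torsion, one must lift the identity $(\zeta_p - 1)\bar\alpha' = \dlog\zeta_p$ from the relative to the absolute K\"ahler differentials before the $p$-torsion argument can be deployed. The key observation that unlocks this is that $\dlog\zeta_p$ is itself $p$-torsion in $\Omega^1_A$, so that the relation $(\zeta_p - 1)\tilde\alpha = \dlog\zeta_p$ takes place in $\Omega^1_A[p]$, where Lemma~\ref{absolute isomorphism} supplies injectivity; the elementary identity $p = u(\zeta_p-1)^{p-1}$ then finishes the job.
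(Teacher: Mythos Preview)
Your proof is correct and follows the same overall strategy as the paper: exhibit the explicit element $\alpha = \big(\sum_{m=1}^{p-1} m\zeta_p^m\big)\dlog\zeta_{p^2}$ coming from $\Omega^1_{A_0}$, then transfer Corollary~\ref{p-torsion generator relative} to the absolute setting via the isomorphism $\Omega^1_A[p]\xrightarrow{\sim}\Omega^1_{A/\ZZ_p}[p]$ of Lemma~\ref{absolute isomorphism}. The only difference is in the $p$-torsion step: the paper avoids your auxiliary lift $\tilde\alpha$ and the identity $p = u(\zeta_p-1)^{p-1}$ by observing directly that $p\alpha' = \tfrac{p}{\zeta_p-1}\dlog\zeta_p$ lies in $\Omega^1_A[p]$ (as $p\dlog\zeta_p = 0$) and has image $p\bar\alpha' = 0$ in $\Omega^1_{A/\ZZ_p}[p]$, whence injectivity gives $p\alpha' = 0$ immediately.
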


\begin{proof}
 Consider any $\alpha \in \Omega^1_A$ satisfying $(\zeta_p - 1)\alpha = \dlog \zeta_p$.  (For example, we can take
$\alpha = \sum_{m = 1}^{p-1} m \zeta_p^{m} \dlog \zeta_{p^2} = \frac{p}{\zeta_p - 1} \dlog \zeta_{p^2}.$)
We do not yet know that such an element is $p$-torsion, but $p\alpha$ is certainly $p$-torsion, since $p \dlog \zeta_{p} = \dlog 1 = 0.$  By Lemma~\ref{absolute isomorphism}, we know that the map $\Omega^1_A[p] \rightarrow \Omega^1_{A/\ZZ_p}[p]$ is injective.  Under this map, $p\alpha \mapsto 0 \in \Omega^1_{A/\ZZ_p}[p]$ by Corollary~\ref{p-torsion generator relative}, so $p \alpha = 0 \in \Omega^1_{A}[p]$, so $\alpha$ is $p$-torsion in $\Omega^1_A$.

The composition
\[
A/pA \stackrel{\alpha}{\rightarrow} \Omega^1_A[p] \rightarrow \Omega^1_{A/\ZZ_p}[p]
\]
is an isomorphism by Corollary~\ref{p-torsion generator relative}.  The second map in this composition is an isomorphism by Lemma~\ref{absolute isomorphism}, and hence the first map is also an isomorphism.

Our example of $\alpha$ at the beginning of the proof was in the image of $\Omega^1_{A_0} \rightarrow \Omega^1_A$, which justifies the final assertion of the corollary, i.e., we can find such an $\alpha$ which is in the image of $\Omega^1_{A_0}$.
\end{proof}

\section{Proof of Theorem~\ref{sequence theorem}} \label{proof of sequence theorem}

Most of the exactness asserted in Theorem~\ref{sequence theorem} holds in much more generality than the cases covered by Theorem~\ref{sequence theorem}, thanks to the following result of Hesselholt and Madsen.   (We state the result for the de\thinspace Rham-Witt complex, whereas their result concerns the logarithmic de\thinspace Rham-Witt complex.  Notice that the de\thinspace Rham-Witt complex is a special case of the logarithmic de\thinspace Rham-Witt complex, attained by taking the trivial monoid $M = \{1\}$ when specifying the log ring $(A,M)$.  We actually refer to the logarithmic version below in Proposition~\ref{log HM}, but it seemed less distracting to omit the logarithmic part here because we will not refer to it until much later in the paper.  The tradeoff is that Proposition~\ref{log HM} is nearly identical to Proposition~\ref{Theorem A HM}, and with the same reference to \cite{HM03}.)  

\begin{proposition}[{\cite[Proposition~3.2.6]{HM03}}] \label{Theorem A HM}
Let $A$ denote a $p$-torsion-free $\ZZ_{(p)}$-algebra.  For a fixed integer $n \geq 1$, recall the sequence~(\ref{sequence}) of $W_{n+1}(A)$-modules
\[
0 \rightarrow A \xrightarrow{(-d, p^n)}  \Omega^1_{A} \oplus A \xrightarrow{V^n \oplus dV^n} W_{n+1} \Omega^1_{A} \xrightarrow{R} W_n \Omega^1_{A} \rightarrow 0,
\]
where the module structure is the same as in Theorem~\ref{sequence theorem}.  This sequence is exact at all slots, except that possibly the segment $A \xrightarrow{(-d, p^n)}  \Omega^1_{A} \oplus A \xrightarrow{V^n \oplus dV^n} W_{n+1} \Omega^1_{A}$ is not exact.  Furthermore, we have $\im\, \big(-d, p^n\big) \subseteq \ker \big(V^n \oplus dV^n\big).$ 
\end{proposition}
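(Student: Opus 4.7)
The plan is to verify the four assertions separately: injectivity of $(-d, p^n)$, the inclusion $\im(-d, p^n) \subseteq \ker(V^n \oplus dV^n)$, surjectivity of $R$, and the equality $\ker R = \im(V^n \oplus dV^n)$.

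For the left-hand injectivity, if $(-da, p^n a) = 0$ in $\Omega^1_A \oplus A$, then in particular $p^n a = 0 \in A$, and $p$-torsion-freeness forces $a = 0$. For surjectivity of $R$, every element of $W_n\Omega^1_A$ is a finite sum $\sum x_i\, dy_i$ with $x_i, y_i \in W_n(A)$; since the Witt vector restriction $W_{n+1}(A) \to W_n(A)$ is surjective, lifting each $x_i, y_i$ to $\tilde{x}_i, \tilde{y}_i \in W_{n+1}(A)$ and forming $\sum \tilde{x}_i\, d\tilde{y}_i$ yields a preimage, using that $R$ respects both $d$ and multiplication.

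The containment $\im(-d, p^n) \subseteq \ker(V^n \oplus dV^n)$ reduces to establishing the identity $p^n\, dV^n(a) = V^n(da)$ in $W_{n+1}\Omega^1_A$. I would derive it by combining the Witt-complex relations $F dV = d$ and $VF = p$: the first gives $F\, dV^n(a) = dV^{n-1}(a)$, and applying $V$ produces $p\, dV^n(a) = V\, dV^{n-1}(a)$. Iterating $n$ times (using that $V$ is additive, so $V(p x) = p V(x)$) yields $p^k\, dV^n(a) = V^k(dV^{n-k}(a))$; setting $k = n$ gives the desired identity. With this in hand, $V^n(-da) + dV^n(p^n a) = -V^n(da) + p^n\, dV^n(a) = 0$ is immediate.

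The main obstacle is the inclusion $\ker R \subseteq \im(V^n \oplus dV^n)$; the reverse inclusion follows from $RV = VR$ together with the convention $W_0\Omega^1_A = 0$, applied both to $V^n(\omega)$ and to $dV^n(a) = d(V^n(a))$. My strategy for the hard inclusion would be to exploit the universal property of the de\thinspace Rham-Witt complex by constructing a candidate Witt complex over $A$ of length $n+1$ whose level-$(n+1)$ groups are engineered so that the kernel of restriction in degree~$1$ is exactly the subgroup generated by formal symbols $V^n(\omega)$ and $dV^n(a)$ modulo the relations forced by the Witt-complex axioms. Universality of $W_{n+1}\Omega^*_A$ as the initial such object then produces a comparison map with this candidate that is compatible with all structure, and the kernel of $R$ on $W_{n+1}\Omega^1_A$ cannot be any larger than the subgroup identified in the model. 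The technical heart is verifying that the Witt-complex relations (Leibniz rule, $FV = p$, $F d\lambda(a) = \lambda(a)^{p-1} d\lambda(a)$, and the identity $p^n dV^n(a) = V^n(da)$ derived above) are all compatible with the proposed presentation; this is the step that requires the most bookkeeping and is presumably why the full argument in Hesselholt–Madsen is both delicate and phrased in the broader logarithmic setting.
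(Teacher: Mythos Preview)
The paper does not supply its own proof of this proposition; it is quoted directly from Hesselholt--Madsen \cite[Proposition~3.2.6]{HM03} and used as a black box. So there is no ``paper's proof'' to compare against.

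That said, your sketch is sound. The injectivity of $(-d,p^n)$, the surjectivity of $R$, and the containment $\im(-d,p^n)\subseteq \ker(V^n\oplus dV^n)$ are all handled correctly; the identity $p^n\,dV^n(a)=V^n(da)$ is the key point and your derivation via $Vd=p\,dV$ (equivalently $FdV=d$ together with $VF=p$) is the standard one. For the hard direction $\ker R\subseteq \im(V^n\oplus dV^n)$, the strategy you outline---build a candidate Witt complex in which the kernel of restriction is \emph{by construction} the subgroup generated by the $V^n(\omega)$ and $dV^n(a)$, then invoke initiality of $W_\bullet\Omega^*_A$---is exactly the method of \cite{HM03}. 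You are right that the bookkeeping in verifying all Witt-complex axioms for the candidate object is where the real work lies, and that this is why the argument is not reproduced in the present paper.
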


As a consequence of Proposition~\ref{Theorem A HM}, to prove the exactness asserted in Theorem~\ref{sequence theorem}, we only need to show that $\ker \big(V^n \oplus dV^n\big) \subseteq \im\, \big(-d, p^n\big)$.  

The results in the previous section immediately imply Part~(\ref{perf A}) of Theorem~\ref{sequence theorem}.  With a little more effort, we will use Part~(\ref{perf A}) to deduce Part~(\ref{alg A}) of Theorem~\ref{sequence theorem}.

\begin{proof}[Proof of Theorem~\ref{sequence theorem}, Part~(\ref{perf A})]
It was shown in \cite[Section~6]{D17} that the sequence~(\ref{sequence}) is exact if $A$ is a $p$-torsion-free perfectoid ring and, moreover, there exists a $p$-torsion element $\alpha \in \Omega^1_A$ with annihilator equal to $pA$.  Theorem~\ref{main relative Omega theorem} and Lemma~\ref{absolute isomorphism} guarantee the existence of such an element $\alpha$.
\end{proof}

We now prove Part~(\ref{alg A}) of Theorem~\ref{sequence theorem}.  A version of this result for the log de\thinspace Rham-Witt complex can be found in \cite[Proposition~3.2.6 and Proof of Theorem~3.3.8]{HM03}.  

The sequence~(\ref{sequence}) is exact for $B = \OCp$, because $\OCp$ is a $p$-torsion-free perfectoid ring.   To prove exactness for $A = \mathcal{O}_K$, where $K$ is an algebraic extension of $\QQ_p$, we will use the following result.

\begin{proposition} \label{deduce exactness for A}
Let $A$ denote a $p$-torsion-free $\ZZ_{(p)}$-algebra such that there exists a $p$-torsion-free ring $B \supseteq A$ with the following properties.
\begin{enumerate}
\item The sequence~(\ref{sequence}) is exact for the ring~$B$ for all integers $n \geq 1$. 
\item For all integers $n \geq 1$, we have $A \cap p^nB = p^nA$.
\item \label{assume injective on p^n torsion} For all integers $n \geq 1$, the natural map $\Omega^1_{A}[p^n] \rightarrow \Omega^1_B[p^n]$ is injective.
\end{enumerate}
Then the sequence~(\ref{sequence}) is also exact for the ring~$A$ for all integers $n \geq 1$.
\end{proposition}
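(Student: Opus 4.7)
The plan is to apply Proposition~\ref{Theorem A HM}, which reduces the problem to showing the inclusion $\ker(V^n \oplus dV^n) \subseteq \im(-d, p^n)$ inside $\Omega^1_A \oplus A$. So I fix a pair $(\omega, a) \in \Omega^1_A \oplus A$ with $V^n(\omega) + dV^n(a) = 0$ in $W_{n+1}\Omega^1_A$, and the task is to produce $a_0 \in A$ satisfying $\omega = -da_0$ and $a = p^n a_0$.

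First I would pass to $B$. Applying the natural map $\Omega^1_A \oplus A \to \Omega^1_B \oplus B$, the image $(\omega_B, a)$ still lies in the kernel of $V^n \oplus dV^n$ for the $B$-version of the sequence (by functoriality of the de\thinspace Rham-Witt complex), so the assumed exactness of (\ref{sequence}) for $B$ furnishes $b_0 \in B$ with $\omega_B = -db_0$ and $a = p^n b_0$. Hypothesis~(2) provides some $a_0 \in A$ with $a = p^n a_0$, and since $B$ is $p$-torsion-free we conclude $b_0 = a_0 \in A$. In particular the image of $\omega + da_0 \in \Omega^1_A$ under the map to $\Omega^1_B$ vanishes.

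The remaining step is to promote this to $\omega + da_0 = 0$ in $\Omega^1_A$ itself, and this is where hypothesis~(3) must intervene. The key observation is to apply $F^n$ to the defining equation $V^n(\omega) + dV^n(a) = 0$: the standard de\thinspace Rham-Witt identities $F^n V^n = p^n$ (multiplication by $p^n$) and $F^n d V^n = d$ yield $p^n \omega + da = 0$ in $\Omega^1_A$. Substituting $a = p^n a_0$ gives $p^n(\omega + da_0) = 0$, so $\omega + da_0 \in \Omega^1_A[p^n]$. Hypothesis~(3) then forces $\omega + da_0 = 0$ in $\Omega^1_A$, and hence $(\omega, a) = (-da_0, p^n a_0) = (-d, p^n)(a_0) \in \im(-d, p^n)$, which completes the argument.

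The conceptual hard part is the torsion observation in the third paragraph: without the $F^n$ computation exhibiting $\omega + da_0$ as $p^n$-torsion, hypothesis~(3)---which only controls $p^n$-torsion---would not be applicable, and we would be stuck with a vanishing statement in $\Omega^1_B$ that could not be lifted back to $\Omega^1_A$. Hypothesis~(2) together with the $p$-torsion-freeness of $B$ is precisely what produces the candidate $a_0 \in A$, while hypothesis~(3) is exactly what confirms the candidate.
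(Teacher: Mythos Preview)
Your proof is correct and follows essentially the same route as the paper's: reduce via Proposition~\ref{Theorem A HM}, use exactness for $B$ and hypothesis~(2) to produce the candidate $a_0$, verify that $\omega + da_0$ is $p^n$-torsion by applying $F^n$, and then invoke hypothesis~(3). One small difference worth noting: to show that $\omega + da_0$ dies in $\Omega^1_B$, the paper instead argues that $V^n(\omega + da_0) = 0$ in $W_{n+1}\Omega^1_B$ and then appeals to the injectivity of $V^n$ (Proposition~\ref{V is injective}), whereas your direct observation $b_0 = a_0$ (from $p$-torsion-freeness of $B$) bypasses that appeal entirely and is a genuine, if minor, streamlining.
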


\begin{proof}
By Proposition~\ref{Theorem A HM}, it suffices to show that if $\alpha \in \Omega^1_A$ and $a \in A$ are such that $V^n(\alpha) + dV^n(a) = 0 \in W_{n+1}\Omega^1_A$, then there exists $a_0 \in A$ such that $p^n a_0 = a$ and $-da_0 = \alpha$.  By exactness of the sequence for~$B$, there exists an element $b_0 \in B$ such that $p^n b_0 = a$ and $-db_0 = \alpha \in \Omega^1_B$.  By our assumption that $A \cap p^nB = p^nA$, we deduce that there at least exists $a_1 \in A$ such that $p^n a_1 = a$; we will be finished after we show that $-da_1 = \alpha \in \Omega^1_A$.

We know 
\begin{align*}
V^n(\alpha) + dV^n(a) &= 0 \in W_{n+1}\Omega^1_A\\
\intertext{and}
V^n(-da_1) + dV^n(p^na_1) &= 0 \in W_{n+1}\Omega^1_A,\\
\intertext{and because $p^na_1 = a$, we have}
V^n(\alpha + da_1) &= 0 \in W_{n+1}\Omega^1_A.
\end{align*}
Applying $F^n$ to both sides of this last equation, we have that $\alpha + da_1 \in \Omega^1_A[p^n].$  We also have $V^n(\alpha + da_1) = 0 \in W_{n+1}\Omega^1_B$.  Because $V^n$ is injective on $\Omega^1_B$ by Proposition~\ref{V is injective}, we have that $\alpha + da_1 = 0 \in \Omega^1_B$.  Thus the element $\alpha + da_1$ is simultaneously $p^n$-torsion in $\Omega^1_A$ and is also in the kernel of $\Omega^1_A \rightarrow \Omega^1_B$.  By our assumption~(\ref{assume injective on p^n torsion}), we have that $\alpha + da_1 = 0 \in \Omega^1_A$, as required.
\end{proof}

We can now prove Part~(\ref{alg A}) of Theorem~\ref{sequence theorem}.

\begin{proof}[Proof of Theorem~\ref{sequence theorem}, Part~(\ref{alg A})]
Let $A = \mathcal{O}_K$ and $B = \OCp$.  It suffices show that the conditions of Proposition~\ref{deduce exactness for A} are satisfied for this choice of $A$ and $B$.  Because $B$ is a $p$-torsion-free perfectoid ring, we saw at the beginning of this section that the sequence~(\ref{sequence}) is exact for $B$.  We also have $\mathcal{O}_K \cap p^n \OCp = p^n \mathcal{O}_K$ (for example, by considering the valuations on $\mathcal{O}_K$ and $\OCp$).

We now verify Condition~(\ref{assume injective on p^n torsion}).  First note that $H^{-1}(L_{A/\ZZ_p}) \cong 0$ and $H^{-1}(L_{B/\ZZ_p}) \cong 0$ by Proposition~\ref{Bhatt ex}. Although our desired statement concerns absolute K\"ahler differentials, by Lemma~\ref{absolute isomorphism}, it suffices to show that for every integer $n \geq 1$, the map
\[
\Omega^1_{A/\ZZ_p}[p^n] \rightarrow \Omega^1_{B/\ZZ_p}[p^n]
\]
is injective.  This was proved in Proposition~\ref{injective Kahler}.
\end{proof}

We end this section with an application of exactness from Theorem~\ref{sequence theorem}.  (Two other applications were given in the introduction; see Proposition~\ref{dV1} and Proposition~\ref{V is injective}.)  The following result gives conditions on rings $A \rightarrow B$ under which the induced map $W_n\Omega^1_A \rightarrow W_n\Omega^1_B$ is injective.  

\begin{corollary} \label{injective dRW}
Let $A \subseteq B$ be $p$-torsion-free rings, and assume the following conditions are met. 
\begin{enumerate}
\item For all integers $n \geq 1$, we have $A \cap p^n B = p^nA$.
\item For all integers $n \geq 1$, the sequence~(\ref{sequence}) is exact for both $A$ and $B$.
\item \label{7.6.18 cond 4} The induced map $\Omega^1_A \rightarrow \Omega^1_B$ is injective.
\end{enumerate}
Then for all integers $n \geq 1$, the induced map
\[
W_n\Omega^1_A \rightarrow W_n\Omega^1_B
\]
is injective.
\end{corollary}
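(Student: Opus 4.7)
The plan is to proceed by induction on $n$. The base case $n = 1$ is immediate, since $W_1\Omega^1_A = \Omega^1_A$ and condition~(\ref{7.6.18 cond 4}) is exactly the injectivity of $\Omega^1_A \to \Omega^1_B$.

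For the inductive step, I would assume that $W_n\Omega^1_A \to W_n\Omega^1_B$ is injective and take $x \in W_{n+1}\Omega^1_A$ whose image in $W_{n+1}\Omega^1_B$ is zero. By naturality of the restriction map $R$, the element $R(x) \in W_n\Omega^1_A$ maps to zero in $W_n\Omega^1_B$, so $R(x) = 0$ by the induction hypothesis. Exactness of sequence~(\ref{sequence}) for $A$ at the $W_{n+1}\Omega^1_A$-slot then provides $\alpha \in \Omega^1_A$ and $a \in A$ with $x = V^n(\alpha) + dV^n(a)$. Transporting this equation to $W_{n+1}\Omega^1_B$ gives $V^n(\alpha) + dV^n(a) = 0$ there.

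At this point, I would invoke exactness of sequence~(\ref{sequence}) for $B$ at the slot $\Omega^1_B \oplus B$ — this is the deepest hypothesis being used, corresponding to the hard direction of Theorem~\ref{sequence theorem} — to obtain $b_0 \in B$ with $\alpha = -db_0$ in $\Omega^1_B$ and $a = p^n b_0$ in $B$. Since $a \in A$ and simultaneously $a \in p^n B$, the first hypothesis $A \cap p^n B = p^n A$ yields an element $a_0 \in A$ with $a = p^n a_0$. The $p$-torsion-freeness of $B$ forces $b_0 = a_0$, so in fact $b_0 \in A$; consequently $\alpha$ and $-da_0$ agree in $\Omega^1_B$, and the injectivity hypothesis~(\ref{7.6.18 cond 4}) upgrades this to $\alpha = -da_0$ in $\Omega^1_A$. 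Therefore $x$ equals the image of $a_0$ under the composition $(V^n \oplus dV^n) \circ (-d, p^n)$, and this composition vanishes by the complex part of Proposition~\ref{Theorem A HM} applied to $A$; hence $x = 0$ in $W_{n+1}\Omega^1_A$, as required.

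The main obstacle, and the reason both the first hypothesis and condition~(\ref{7.6.18 cond 4}) enter, is the descent of $b_0$ from $B$ to $A$ together with the upgrade of the identity $\alpha = -db_0$ from $\Omega^1_B$ back to $\Omega^1_A$. The overall strategy is structurally parallel to that of Proposition~\ref{deduce exactness for A}, but the output here is injectivity of the level $n+1$ comparison map rather than exactness at a fixed level, and crucially the injectivity of $\Omega^1_A \to \Omega^1_B$ on the full module (not just on $p^n$-torsion) is what allows the induction to close.
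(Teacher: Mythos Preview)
Your argument is correct and is essentially the paper's own proof: both proceed by induction on $n$, reduce via the restriction map and the induction hypothesis to an element in the kernel of $R$, and then use exactness of (\ref{sequence}) for $B$ together with $A \cap p^n B = p^n A$, the $p$-torsion-freeness of $B$, and condition~(\ref{7.6.18 cond 4}) to descend $b_0$ to $A$ and conclude. The only cosmetic difference is that the paper packages the inductive step via the snake lemma applied to the short exact sequences $0 \to (\Omega^1 \oplus A)/h_n(A) \to W_{n+1}\Omega^1 \to W_n\Omega^1 \to 0$, whereas you do the same chase by hand.
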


\begin{proof}
We prove this using induction on the level~$n$.  The base case $n = 1$ is precisely condition~(\ref{7.6.18 cond 4}).  Now assume the result holds for some fixed value of $n$.  Let $h_n: A \rightarrow \Omega^1_A \oplus A$ be given by $h_n(a) = (-da, p^n a)$, and similarly for the ring $B$.  Consider the double-complex of $W_{n+1}(A)$-modules arising from the sequences~(\ref{sequence}),
\[
\xymatrix{
0 \ar[r] & \left(\Omega^1_B \oplus B\right) / h_n(B) \ar[r] & W_{n+1} \Omega^1_B \ar[r] & W_n \Omega^1_B \ar[r] & 0 \\
0 \ar[r] & \left(\Omega^1_A \oplus A\right) / h_n(A) \ar[r] \ar[u] & W_{n+1} \Omega^1_A \ar[r] \ar[u] & W_n \Omega^1_A \ar[r] \ar[u] & 0.
}
\]
These are short exact sequences because we are assuming the sequence~(\ref{sequence}) is exact for both $A$ and $B$.
From our induction hypothesis, we know the right-hand vertical map is injective.  The horizontal sequences are exact, so by the snake lemma, it suffices to show that the left-hand vertical map is injective.  Assume $(\omega, a) \in \Omega^1_A \oplus A$ maps to an element $(-db, p^n b) \in \Omega^1_B \oplus B$.  Thus the element $a$ is in $A \cap p^nB$, so there exists $a_0 \in A$ such that $a = p^n a_0 = p^n b$.  Because $B$ is $p$-torsion-free, we know that in fact $a_0 = b$.  Thus the differentials $\omega, -da_0 \in \Omega^1_A$ are equal in $\Omega^1_B$.  Because the map $\Omega^1_A \rightarrow \Omega^1_B$ is injective by our assumption, in fact $\omega = -da_0 \in \Omega^1_A$.  Thus $(\omega, a) = 0 \in \left(\Omega^1_A \oplus A\right) / h_n(A)$, so the left-hand vertical map is injective, as required.
\end{proof}

\begin{example}
These conditions  of Corollary~\ref{injective dRW} are not so easy to verify in practice, but only because it is difficult to verify the ``base case" of level $n = 1$, i.e., that $\Omega^1_A \rightarrow \Omega^1_B$ is injective.  (That is clearly a necessary condition.)  The conditions are satisfied, for example, for the rings $A = \mathcal{O}_K$, $B = \mathcal{O}_L$, when $\QQ_p \subseteq K \subseteq L$ is a tower of algebraic extensions.  In particular, Condition~(\ref{7.6.18 cond 4}) in this case follows from Proposition~\ref{injective Kahler}.
\end{example}

\section{On $p$-power-torsion in the de\thinspace Rham-Witt complex} \label{main section}

This section contains the main results of the paper, including the proof of Theorem~\ref{Tate theorem} from the introduction.  These results are valid for a ring~$A$ satisfying the following assumptions.  

\begin{notation} \label{has roots of unity}
Let $A$ denote a $p$-torsion-free perfectoid ring which contains a sequence $(1, \zeta_p, \zeta_{p^2}, \ldots)$ of $p^n$-th roots of unity, compatible in the sense that $\zeta_{p^{n+1}}^p = \zeta_{p^{n}}$ for each integer $n \geq 1$, and where $\zeta_p$ is a primitive $p$-th root of unity in the sense that $\zeta_p$ satisfies $1 + \zeta_p + \cdots + \zeta_p^{p-1} = 0$.  We fix a choice of such elements.  Let $z_{n} := 1 + [\zeta_{p^{n}}] + \cdots + [\zeta_{p^{n}}^{p-1}] \in W_{n}(A)$. 
\end{notation}

\begin{remark}
We list here a few observations related to Notation~\ref{has roots of unity}.  First of all, for $A$ as in Notation~\ref{has roots of unity}, the choices of $\zeta_{p^n}$ for $n \geq 1$ determine a preferred ring homomorphism $A_0 \rightarrow A$, where $A_0 := \ZZ_p[\zeta_{p^{\infty}}]^{\wedge}$.  Because $A$ is $p$-torsion-free, that map $A_0 \rightarrow A$ is injective, and so $A_0$ may be considered as a subring of $A$. Our formulation of ``primitive" $p$-th root of unity is taken from \cite[Example~3.16]{BMS16}; note that it is a strictly stronger requirement than requiring $\zeta_p^p = 1$ and $\zeta_p \neq 1$.  For example, $(\zeta_p, 1) \in A_0 \times A_0$ satisfies this latter condition but not the condition of Notation~\ref{has roots of unity}.  Lastly, it is convenient to notice that $z_n = \frac{[\zeta_{p^{n-1}}] - 1}{[\zeta_{p^n}] - 1} \in W_n(A)$.  This latter formulation is well-defined because $[\zeta_{p^n}] - 1$ is a non-zero-divisor in $W_n(A)$, as can be seen by considering ghost components and using that $A$ is $p$-torsion-free.
\end{remark}

Our arguments in this section analyze the structure of $W_n\Omega^1_A[p]$ using induction on the level, $n$.  We have $W_1\Omega^1_A \cong \Omega^1_A$ (for every $\ZZ_{(p)}$-algebra $A$, see \cite[Theorem~D and the first sentence of the proof of Proposition~5.1.1]{HM04}), and so the base case of our induction will rely heavily on the results from Section~\ref{Kahler section}. To relate levels $n$ and $n+1$, we use Theorem~\ref{sequence theorem}, as in the proof of the following result.  Our original motivation for considering Theorem~\ref{sequence theorem} was to enable these sorts of arguments.  

\begin{proposition} \label{from spectral sequence}
Let $A$ denote any $p$-torsion-free perfectoid ring.
\begin{enumerate}
\item \label{ss pt1} We have an exact sequence of $W_{n+1}(A)$-modules,
\[
W_{n+1}\Omega^1_A[p] \stackrel{R}{\rightarrow} W_n\Omega^1_A[p] \rightarrow A/pA \rightarrow 0,
\]
where the $W_{n+1}(A)$-module structure on $A/pA$ is induced by $F^n$ and where the $W_{n+1}(A)$-module structure on $W_n\Omega^1_A[p]$ is induced by restriction.
\item Set $N:= \ker \left(R: W_{n+1} \Omega^1_A[p] \rightarrow W_n \Omega^1_A[p]\right)$.  We have an exact sequence of $W_{n+1}(A)$-modules,
\[
0 \rightarrow \Omega^1_A[p] \stackrel{V^n}{\rightarrow} N \rightarrow A/pA \rightarrow 0,
\]
where the $W_{n+1}(A)$-module structures on $\Omega^1_A$ and on $A/pA$ are induced by $F^n$.
\end{enumerate}
\end{proposition}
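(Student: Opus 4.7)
The plan is to apply the snake lemma twice to the exact sequence from Theorem~\ref{sequence theorem}, using multiplication by $p$ as the vertical morphism. I first split the five-term sequence~(\ref{sequence}) into the two short exact sequences of $W_{n+1}(A)$-modules
\[
0 \to A \xrightarrow{(-d,\, p^n)} \Omega^1_A \oplus A \xrightarrow{V^n \oplus dV^n} \ker R \to 0
\quad\text{and}\quad
0 \to \ker R \to W_{n+1}\Omega^1_A \xrightarrow{R} W_n\Omega^1_A \to 0,
\]
and will deduce Part~(2) from the snake lemma applied to the first, and Part~(1) from the snake lemma applied to the second. The four inputs I need are: (a)~$\Omega^1_A$ is $p$-divisible, so $\Omega^1_A/p\Omega^1_A = 0$; I obtain this by combining Theorem~\ref{main relative Omega theorem}(\ref{Kahler p-divisible}) with the $p$-divisibility of $\Omega^1_{\ZZ_p/\ZZ} \otimes_{\ZZ_p} A$ used in the proof of Lemma~\ref{absolute isomorphism}, via the right-exact sequence $\Omega^1_{\ZZ_p/\ZZ} \otimes_{\ZZ_p} A \to \Omega^1_A \to \Omega^1_{A/\ZZ_p} \to 0$; (b)~$W_{n+1}\Omega^1_A$ is $p$-divisible by Proposition~\ref{mult by p is surjective}, since perfectoid rings are Witt-perfect; (c)~$dA \subseteq p\Omega^1_A$, because surjectivity of Frobenius on $A/pA$ lets one write any $a \in A$ as $a = a_0^p + pa_1$, and then $da = p(a_0^{p-1}\,da_0 + da_1)$; (d)~$A[p]=0$ since $A$ is $p$-torsion-free.

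For Part~(2), using (d), the snake lemma produces
\[
0 \to \Omega^1_A[p] \xrightarrow{V^n} N \to A/pA \xrightarrow{\bar h_n} (\Omega^1_A/p\Omega^1_A) \oplus (A/pA) \to (\ker R)/p \to 0.
\]
The map $\bar h_n$ is the mod-$p$ reduction of $a \mapsto (-da,\, p^n a)$, so it equals $a \mapsto (-\overline{da},\, 0)$ since $n \geq 1$, and ingredient~(c) forces this to vanish. Truncating yields the short exact sequence of Part~(2). As a byproduct, invoking~(a) reads off the identification $(\ker R)/p \cong A/pA$ of $W_{n+1}(A)$-modules; the second projection $\Omega^1_A \oplus A \to A$ is $W_{n+1}(A)$-linear for the twisted action of Theorem~\ref{sequence theorem}, which matches the target $F^n$-action on $A/pA$.

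For Part~(1), the snake lemma applied to the second short exact sequence yields
\[
0 \to N \to W_{n+1}\Omega^1_A[p] \xrightarrow{R} W_n\Omega^1_A[p] \to (\ker R)/p \to W_{n+1}\Omega^1_A/p \to W_n\Omega^1_A/p \to 0.
\]
Ingredient~(b) kills the two right-hand terms, and substituting $(\ker R)/p \cong A/pA$ from Part~(2) collapses this to the exact sequence of Part~(1). The main obstacle is not a single hard step but rather the bookkeeping: I need to verify carefully that the snake-lemma identifications, in particular $(\ker R)/p \cong A/pA$, respect the $W_{n+1}(A)$-module structures specified in Theorem~\ref{sequence theorem}, and that input~(a) really follows from Section~\ref{Kahler section}. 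Once those are settled, the proposition reduces to two diagram chases.
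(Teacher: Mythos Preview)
Your proof is correct and is essentially the same computation as the paper's, just packaged without spectral sequence language: the paper forms the double complex with rows equal to the exact sequence~(\ref{sequence}) and vertical maps multiplication by~$p$, then reads off the needed isomorphisms from the $E_2$-page, whereas you split~(\ref{sequence}) into two short exact sequences and apply the snake lemma to each. For a two-row double complex these are the same argument; your version is slightly more elementary and makes the connecting maps explicit. One small simplification: your input~(a) actually follows immediately from your input~(c), since $\Omega^1_A$ is generated as an $A$-module by $dA$, so $dA \subseteq p\Omega^1_A$ already gives $\Omega^1_A = A\cdot dA \subseteq p\Omega^1_A$; you don't need the detour through Section~\ref{Kahler section}.
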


\begin{proof}
Consider the double complex of $W_{n+1}(A)$-modules
\[
\xymatrixcolsep{4pc} 
\xymatrix{
0 \ar[r] &A \ar[r]^{(-d, p^n)} & \Omega^1_A \oplus A \ar[r]^{(V^n ,dV^n)} & W_{n+1} \Omega^1_A \ar[r]^{R} & W_n \Omega^1_A \ar[r] & 0\\
0 \ar[r] &A \ar[r]^{(-d, p^n)} \ar[u]^{-p} & \Omega^1_A \oplus A \ar[u]^p \ar[r]^{(V^n ,dV^n)} & W_{n+1} \Omega^1_A \ar[u]^{-p} \ar[r]^{R} & W_n \Omega^1_A \ar[u]^p \ar[r] & 0.
}
\]
Because the rows are exact by Theorem~\ref{sequence theorem}, both spectral sequences associated to this double complex must converge to 0.  

Consider the spectral sequence with $E_1$ page attained by taking cohomology along the columns.  The $E_2$ page of this spectral sequence has the following form:
\[
\xymatrix{
0 \ar[rrd]  &A/pA \ar[rrd] & A/pA \ar[rrd] & 0  & 0  & 0\\
0  & 0  & \ker (V^n)  & N/(\im V^n)  & W_n \Omega^1_A[p]/R(W_{n+1}\Omega^1_A[p])  & 0.
}
\]
All these $d_2$ maps must be isomorphisms of $W_{n+1}(A)$-modules, and so the results follow.
\end{proof}

A consequence of the last proof is the following.  A key observation is the similarity between the exact sequence appearing in Proposition~\ref{zeta cor} and the exact sequence from Lemma~\ref{exact Rz}.

\begin{proposition} \label{zeta cor} Let $A$ denote a $p$-torsion-free perfectoid ring. 
For every integer $n \geq 1$, we have an exact sequence of $W_{n+1}(A)$-modules
\[
0 \rightarrow T_p(\Omega^1_A) \xrightarrow{V^n} T_p(W_{n+1}\Omega^1_{A}) \xrightarrow{R} T_p(W_n\Omega^1_{A}) \xrightarrow{g} A/p^nA \to 0,
\]
where the $W_{n+1}(A)$-module structure on $T_p(W_n\Omega^1_{A})$ is induced by restriction, and where the $W_{n+1}(A)$-module structures on $T_p(\Omega^1_A)$ and $A/p^nA$ are induced by $F^n$.  (We do not specify the map $g$.)
\end{proposition}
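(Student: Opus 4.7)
The plan is to apply the functor $\Hom(\QQ_p/\ZZ_p, -) = T_p$ together with its derived functor $\Ext^1(\QQ_p/\ZZ_p, -)$ to the four-term exact sequence of Theorem~\ref{sequence theorem}, after splitting it into two short exact sequences
\begin{equation*}
(\mathrm{I})\colon\ 0 \to A \xrightarrow{(-d,\, p^n)} \Omega^1_A \oplus A \to B \to 0, \qquad (\mathrm{II})\colon\ 0 \to B \to W_{n+1}\Omega^1_A \xrightarrow{R} W_n\Omega^1_A \to 0,
\end{equation*}
where $B := \ker R = \im(V^n + dV^n)$.

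The key preliminary input is the vanishing $\Ext^1(\QQ_p/\ZZ_p, M) = 0$ for each $M \in \{\Omega^1_A,\, W_n\Omega^1_A,\, W_{n+1}\Omega^1_A\}$. First I will establish that each such $M$ is $p$-divisible: the two Witt-complex cases are Proposition~\ref{mult by p is surjective}, while $p$-divisibility of $\Omega^1_A$ follows from the natural isomorphism $\Omega^1_A/p\Omega^1_A \cong \Omega^1_{A/\ZZ_p}/p\Omega^1_{A/\ZZ_p}$ (using that the image of $\Omega^1_{\ZZ_p/\ZZ} \otimes_{\ZZ_p} A$ in $\Omega^1_A$ is $p$-divisible, hence lies in $p\Omega^1_A$) combined with the vanishing $\Omega^1_{A/\ZZ_p}/p = 0$ of Theorem~\ref{main relative Omega theorem}. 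Given $p$-divisibility, the transition maps $M[p^{r+1}] \xrightarrow{p} M[p^r]$ are surjective (any $m \in M[p^r]$ equals $pm'$ for some $m' \in M[p^{r+1}]$ since $p^{r+1}m' = p^r m = 0$), so the tower $\{M[p^r]\}$ is Mittag--Leffler with $\lim^1 M[p^r] = 0$; combined with $M/p^rM = 0$, the Milnor $\lim^1$ sequence for derived $p$-completion (a direct consequence of Proposition~\ref{universal coefficient theorem}) then yields $\Ext^1(\QQ_p/\ZZ_p, M) = 0$.

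With these vanishings in hand, applying $T_p$ and $\Ext^1(\QQ_p/\ZZ_p, -)$ to $(\mathrm{I})$, together with $T_p(A) = 0$ (since $A$ is $p$-torsion-free) and $\Ext^1(\QQ_p/\ZZ_p, A) \cong A$ (since $A$ is $p$-torsion-free and $p$-adically complete), produces the exact sequence
\begin{equation*}
0 \to T_p(\Omega^1_A) \xrightarrow{V^n} T_p(B) \to A \xrightarrow{p^n} A \to \Ext^1(\QQ_p/\ZZ_p, B) \to 0.
\end{equation*}
Injectivity of $p^n$ on $A$ then yields $V^n\colon T_p(\Omega^1_A) \xrightarrow{\sim} T_p(B)$ and $\Ext^1(\QQ_p/\ZZ_p, B) \cong A/p^nA$. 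Applying $T_p$ to $(\mathrm{II})$ and using $\Ext^1(\QQ_p/\ZZ_p, W_{n+1}\Omega^1_A) = 0$ gives
\begin{equation*}
0 \to T_p(B) \to T_p(W_{n+1}\Omega^1_A) \xrightarrow{R} T_p(W_n\Omega^1_A) \xrightarrow{g} \Ext^1(\QQ_p/\ZZ_p, B) \to 0,
\end{equation*}
with $g$ the connecting homomorphism. Substituting the identifications $T_p(B) \cong T_p(\Omega^1_A)$ and $\Ext^1(\QQ_p/\ZZ_p, B) \cong A/p^nA$ yields the four-term exact sequence claimed in Proposition~\ref{zeta cor}.

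The main obstacle will be the vanishing $\Ext^1(\QQ_p/\ZZ_p, M) = 0$ for the three $p$-divisible modules above; once this is secured by the Mittag--Leffler argument on the $p^r$-torsion towers, the remainder of the argument is a mechanical long-exact-sequence chase requiring no further input.
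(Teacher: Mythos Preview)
Your proof is correct and takes a genuinely different route from the paper's. The paper argues level-by-level: it re-runs the spectral sequence of Proposition~\ref{from spectral sequence} with $p^r$ in place of $p$ to obtain, for each $r\ge n$, the two short exact sequences involving $W_{n+1}\Omega^1_A[p^r]$, $W_n\Omega^1_A[p^r]$, and $N^r=\ker R$; it then checks by hand that the towers $\{N^r\}$ and $\{I^r\}=\{\im R\}$ are Mittag--Leffler (using surjectivity of multiplication by $p$) and passes to the inverse limit. Your approach bypasses this by applying $\mathbf{R}\Hom(\QQ_p/\ZZ_p,-)$ directly to the four-term sequence of Theorem~\ref{sequence theorem}, reducing the whole argument to the single vanishing $\Ext^1(\QQ_p/\ZZ_p,M)=0$ for $p$-divisible $M$ (itself proved via the Milnor sequence $0\to\varprojlim^1 M[p^r]\to\Ext^1(\QQ_p/\ZZ_p,M)\to\varprojlim M/p^rM\to 0$). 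The underlying inputs are identical---exactness of~(\ref{sequence}), $p$-divisibility from Proposition~\ref{mult by p is surjective}, and a Mittag--Leffler step---but your packaging is more conceptual and shorter. The paper's version has the minor advantage of making the inverse-limit construction of the map $g$ and the $W_{n+1}(A)$-module compatibilities completely explicit, whereas in your argument these are implicit in the functoriality of the long exact sequence (which is fine, since the statement does not specify $g$).

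Two small cosmetic points. First, your detour through $\Omega^1_{A/\ZZ_p}$ to establish $p$-divisibility of $\Omega^1_A$ is unnecessary: since $W_1\Omega^1_A\cong\Omega^1_A$, Proposition~\ref{mult by p is surjective} already covers this case. Second, the Milnor sequence you invoke is not literally part of Proposition~\ref{universal coefficient theorem} as stated, though it follows from $\QQ_p/\ZZ_p=\colim_r \ZZ/p^r\ZZ$ and the standard $\varprojlim$--$\varprojlim^1$ sequence; a brief sentence to that effect would make the citation airtight.
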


\begin{proof} Using verbatim the same spectral sequence argument as in the proof of Proposition~\ref{from spectral sequence}, for any $r \geq n$ we obtain exact sequences for higher torsion
\[W_{n+1}\Omega^1_A[p^r] \stackrel{R}{\rightarrow} W_n\Omega^1_A[p^r] \rightarrow A/p^n A \rightarrow 0\]
and 
\[
0 \rightarrow \Omega^1_A[p^r] \stackrel{V^n}{\rightarrow} N^r \rightarrow A/p^nA \rightarrow 0,
\]
where $N^r:= \ker \left(R: W_{n+1} \Omega^1_A[p^r] \rightarrow W_n \Omega^1_A[p^r]\right)$. (Here and throughout this proof, the exact sequences are all exact sequences of $W_{n+1}(A)$-modules, with the module structures as in the statement of this proposition.)  The double complex used to obtain the exact sequences for $p^{r+1}$ maps to the double complex for $p^r$ via the identity on the top row and $p$ on the bottom row. This induces a map of spectral sequences and hence we get commutative diagrams with exact rows:
\[ \xymatrix{ 0 \ar[r] & N^r \ar[r] & W_{n+1}\Omega^1_A[p^r]  \ar[r]^R & W_n\Omega^1_A[p^r] \ar[r] & A/p^nA \ar[r] & 0 \\ 0 \ar[r] & N^{r+1} \ar[u]^p \ar[r] & W_{n+1}\Omega^1_A[p^{r+1}]  \ar[u]^{p} \ar[r]^R & W_n\Omega^1_A[p^{r+1}] \ar[u]^{p} \ar[r] & A/p^{n} A \ar@{=}[u] \ar[r] & 0 } \]
and 
\[\xymatrix{ 0 \ar[r] & \Omega^1_A[p^r] \ar[r]^{V^n} & N^r \ar[r] & A/p^nA \ar[r] & 0 \\ 0 \ar[r] & \Omega^1_A[p^{r+1}]  \ar[u]^p  \ar[r]^{V^n} & N^{r+1} \ar[u]^p \ar[r] & A/p^{n}A \ar[r] \ar[u]^p & 0.} \]
From this last diagram we obtain an isomorphism
\[
T_p(\Omega^1_A) \xrightarrow{V^n} \varprojlim_r N^r.
\]
Furthermore, the maps $\Omega^1_A[p^{r+1}] \xrightarrow{p} \Omega^1_A[p^r]$ are surjective and the tower
\[
\cdots \xrightarrow{p} A/p^nA \xrightarrow{p} A/p^nA 
\]
satisfies the Mittag-Leffler condition, so using the $\varprojlim$-$\varprojlim^1$ exact sequence (see for example \cite[Section~3.5 and Proposition~3.5.7]{Wei94}), we deduce that $\varprojlim^1 N^r = 0$.  

Let $I^r$ denote the image 
\[\im \left(R: W_{n+1} \Omega^1_A[p^r] \rightarrow W_n \Omega^1_A[p^r]\right).\]
Because multiplication by $p$ maps $W_{n+1} \Omega^1_A[p^{r+1}]$ surjectively onto $W_{n+1} \Omega^1_A[p^r]$ (using Proposition~\ref{mult by p is surjective}), it follows immediately that multiplication by $p$ maps $I^{r+1}$ surjectively onto $I^r$.  We deduce for later that $\varprojlim^1_{r} I^r=0$.  These $\varprojlim^1$ computations will be used below.

We can split the first commutative diagram above into two commutative diagrams with exact rows:
\[ \xymatrix{ 0 \ar[r] & N^r \ar[r] & W_{n+1}\Omega^1_A[p^r]  \ar[r]^-R & I^r \ar[r] & 0 \\ 0 \ar[r] & N^{r+1} \ar[u]^p \ar[r] & W_{n+1}\Omega^1_A[p^{r+1}]  \ar[u]^{p} \ar[r]^-R & I^{r+1} \ar[u]^p \ar[r] & 0 } \]
and
\[ \xymatrix{ 0 \ar[r] & I^r  \ar[r] & W_n\Omega^1_A[p^r] \ar[r] & A/p^nA \ar[r] & 0 \\ 0 \ar[r] & I^{r+1} \ar[u]^p \ar[r] & W_n\Omega^1_A[p^{r+1}] \ar[u]^{p} \ar[r] & A/p^{n} A \ar@{=}[u] \ar[r] & 0. } \]
Altogether we obtain two short exact sequences of inverse systems. Taking into account that $\varprojlim^1_{r} N^r=0$ and $\varprojlim^1_{r} I^r=0$ as remarked above, again using the $\varprojlim$-$\varprojlim^1$ exact sequence, we obtain exact sequences
\[\xymatrix{0 \ar[r] &   \varprojlim_r N^r  \ar[r] & T_p W_{n+1} \Omega^1_A \ar[r]^-R  & \varprojlim_r I^r \ar[r] & 0 }\]
and 
\[\xymatrix{0 \ar[r] &   \varprojlim_r I^r  \ar[r] & T_p W_n \Omega^1_A \ar[r]^-g  & A/p^n A \ar[r] & 0. }\]
Now splicing these exact sequences together and using the isomorphism $T_p(\Omega^1_A) \xrightarrow{V^n} \varprojlim_r N^r$, we obtain the desired exact sequence
\[\xymatrix{0 \ar[r] &  T_p(\Omega^1_A)  \ar[r]^{V^n} & T_p W_{n+1} \Omega^1_A \ar[r]^R &  T_p W_n \Omega^1_A \ar[r]^-g  & A/p^n A \ar[r] & 0.} \]

\end{proof}

\begin{lemma} \label{kernel R versus kernel F}
Let $A$ denote any $p$-torsion-free $\ZZ_{(p)}$-algebra for which the sequence~(\ref{sequence}) is exact and let $n \geq 1$ be an integer.  We have an inclusion $W_{n+1}\Omega^1_A[p] \cap \ker R \subseteq \ker F.$  In particular, if $A$ is a $p$-torsion-free perfectoid ring and if $x,y \in T_p(W_{n+1}\Omega^1_A)$ are such that $R(x) \equiv R(y) \bmod pT_p(W_{n}\Omega^1_A)$, then $F(x) \equiv F(y) \bmod pT_p(W_{n}\Omega^1_A)$.
\end{lemma}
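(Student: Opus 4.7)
The plan is to combine the characterization of $\ker R$ from Theorem~\ref{sequence theorem} with the standard de\thinspace Rham-Witt identities $FV = p$, $FdV = d$, and $Vd = pdV$. Given $x \in W_{n+1}\Omega^1_A[p] \cap \ker R$, exactness of the sequence~(\ref{sequence}) produces $\omega \in \Omega^1_A$ and $a \in A$ with $x = V^n(\omega) + dV^n(a)$. Using $FV^n = pV^{n-1}$ (from $FV=p$) and $FdV^n = dV^{n-1}$ (from $FdV=d$), I compute
\[
F(x) = pV^{n-1}(\omega) + dV^{n-1}(a),
\]
and the goal reduces to showing this vanishes in $W_n\Omega^1_A$. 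The main (and really only) obstacle is the use of exactness at the $\Omega^1_A \oplus A$ slot of (\ref{sequence}), which is the most subtle part of Theorem~\ref{sequence theorem}; everything else is formal manipulation.

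To exploit $px = 0$, rewrite this via additivity as $V^n(p\omega) + dV^n(pa) = 0$, and apply the hard direction of Theorem~\ref{sequence theorem} to produce $a_0 \in A$ with $p\omega = -da_0$ and $pa = p^n a_0$. Because $A$ is $p$-torsion-free, $a = p^{n-1}a_0$. Iterating $Vd = pdV$ gives $V^{n-1}d = p^{n-1}dV^{n-1}$, so
\[
dV^{n-1}(a) = dV^{n-1}(p^{n-1}a_0) = p^{n-1}dV^{n-1}(a_0) = V^{n-1}(da_0) = -pV^{n-1}(\omega),
\]
and hence $F(x) = pV^{n-1}(\omega) - pV^{n-1}(\omega) = 0$, proving the first assertion. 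A quick check shows the argument is uniform in $n \geq 1$, with the $n = 1$ case degenerating to $a = a_0$ and $F(x) = p\omega + da = -da + da = 0$.

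For the ``in particular'' statement, represent $x, y \in T_p(W_{n+1}\Omega^1_A)$ by compatible systems $(x_r), (y_r)$ with $x_r, y_r \in W_{n+1}\Omega^1_A[p^r]$ and $px_{r+1} = x_r$, and similarly $z = (z_r) \in T_p(W_n\Omega^1_A)$. Multiplication by $p$ on a Tate module acts as the right-shift on compatible systems, so the hypothesis $R(x) - R(y) = pz$ translates componentwise to $R(x_r) - R(y_r) = z_{r-1}$, with $z_0 = 0$ by convention. In particular $x_1 - y_1 \in W_{n+1}\Omega^1_A[p] \cap \ker R$, and the first part of the lemma gives $F(x_1) = F(y_1)$. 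Define $w_s := F(x_{s+1}) - F(y_{s+1})$; then $p^s w_s = F(x_1) - F(y_1) = 0$ places $w_s$ in $W_n\Omega^1_A[p^s]$, and $pw_{s+1} = w_s$ follows from the additivity of $F$ together with $Fp = pF$. Thus $(w_s)_s$ defines an element of $T_p(W_n\Omega^1_A)$ with $F(x) - F(y) = p \cdot (w_s)_s$, as required.
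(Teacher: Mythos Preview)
Your proof is correct. For the first assertion, your route differs slightly from the paper's: you apply the hard direction of exactness in~(\ref{sequence}) directly to the pair $(p\omega, pa)$ to extract an element $a_0$, and then manipulate using $V^{n-1}d = p^{n-1}dV^{n-1}$. The paper instead rewrites
\[
0 = px = V^n(p\alpha) + dV^n(pa) = V\bigl(V^{n-1}(p\alpha) + dV^{n-1}(a)\bigr)
\]
and invokes injectivity of $V$ (Proposition~\ref{V is injective}) to conclude that $F(x) = V^{n-1}(p\alpha) + dV^{n-1}(a) = 0$ in one stroke. Since Proposition~\ref{V is injective} is itself proved via the hard exactness of~(\ref{sequence}), the two arguments have the same logical depth; the paper's packaging is a bit slicker because it avoids producing and tracking the auxiliary element $a_0$. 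For the Tate-module assertion, you unwind by hand what the paper states abstractly via the isomorphism $T_p(W_{n+1}\Omega^1_A)/pT_p(W_{n+1}\Omega^1_A) \cong W_{n+1}\Omega^1_A[p]$ (valid because multiplication by $p$ is surjective on $W_{n+1}\Omega^1_A$); both arguments are correct and amount to the same thing.
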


\begin{proof}
Let $x \in W_{n+1}\Omega^1_A[p] \cap \ker R$.  The fact that $x \in \ker R$ implies, using exactness of (\ref{sequence}), that there exist $\alpha \in \Omega^1_A$ and $a \in A$ such that
\[
x = V^n(\alpha) + dV^n(a).
\]
To prove $W_{n+1}\Omega^1_A[p] \cap \ker R \subseteq \ker F$, we wish to show that if $px = 0$, then $F(x) = 0$.  Using standard identities within the de\thinspace Rham-Witt complex, we wish to show that if $px = 0$, then $V^{n-1}(p\alpha) + dV^{n-1}(a)= 0$.  We compute 
\[
0 = px = V^n(p\alpha) + dV^n(pa) = V^n(p\alpha) + V\left(dV^{n-1}(a)\right) = V\bigg(V^{n-1}(p\alpha) + dV^{n-1}(a)\bigg).
\]
Because $V$ is injective by Proposition~\ref{V is injective}, 
\[
0 = V^{n-1}(p\alpha) + dV^{n-1}(a).
\]
This proves the first assertion.  The second assertion about Tate modules follows directly because, as in the proof of Theorem~\ref{main relative Omega theorem}, we have $T_p(W_{n+1}\Omega^1_A)/pT_p(W_{n+1}\Omega^1_A) \cong W_{n+1}\Omega^1_A[p]$.
\end{proof}

We will soon prove that $T_p(W_n\Omega^1_A)$ is a free $W_n(A)$-module of rank one for rings $A$ as in Notation~\ref{has roots of unity}.  It turns out there is essentially no difference between proving this and proving that the $p$-torsion $W_n\Omega^1_A[p]$ is a free $W_n(A)/pW_n(A)$-module of rank one.  This is the content of the following lemma.

\begin{lemma} \label{p-torsion vs Tate module}
Let $A$ be a $p$-torsion-free perfectoid ring, and let $n \geq 1$ be an integer.  If $\alpha \in T_p(W_n\Omega^1_A)$ freely generates $T_p(W_n\Omega^1_A)$ as a $W_n(A)$-module, then the projection of $\alpha$ to $W_n\Omega^1_A[p^r]$, written $\alpha^{(r)}$, freely generates $W_n\Omega^1_A[p^r]$ as a $W_n(A)/p^rW_n(A)$-module.  Conversely, if $\alpha \in T_p(W_n\Omega^1_A)$ is such that $\alpha^{(1)}$ freely generates $W_n\Omega^1_A[p]$ as a $W_n(A)/pW_n(A)$-module, then $\alpha$ freely generates $T_p(W_n\Omega^1_A)$ as a $W_n(A)$-module.
\end{lemma}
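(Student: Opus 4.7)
The plan is to combine two input facts: multiplication by $p$ is surjective on $W_n\Omega^1_A$ by Proposition~\ref{mult by p is surjective}, and $W_n(A)$ is $p$-adically complete and separated by Lemma~\ref{Witt complete}. The first gives, by the same argument as in the proof of Theorem~\ref{main relative Omega theorem}(\ref{quotient of Tate}), natural identifications
\[
T_p(W_n\Omega^1_A)/p^r T_p(W_n\Omega^1_A) \;\cong\; W_n\Omega^1_A[p^r], \qquad \alpha \longmapsto \alpha^{(r)}.
\]
Consequently $T_p(W_n\Omega^1_A)$ is $p$-adically complete and separated (as $\varprojlim_r W_n\Omega^1_A[p^r]$), and it is automatically $p$-torsion-free: if $p\gamma = 0$ for $\gamma = (m_1,m_2,\ldots)$, then $m_r = pm_{r+1} = 0$ for every~$r$.

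For the forward direction, the displayed isomorphism immediately shows that $\alpha^{(r)}$ generates $W_n\Omega^1_A[p^r]$ whenever $\alpha$ generates $T_p(W_n\Omega^1_A)$. For freeness: if $\tilde{x} \in W_n(A)$ lifts an annihilator $x \in W_n(A)/p^rW_n(A)$ of $\alpha^{(r)}$, then $\tilde{x}\alpha$ lies in the kernel of the displayed projection, which is $p^r T_p(W_n\Omega^1_A) = p^rW_n(A)\cdot\alpha$ by the freeness of $\alpha$; hence $\tilde{x} \in p^rW_n(A)$ and $x=0$.

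For the converse, I would prove bijectivity of the $W_n(A)$-linear map $\varphi\colon W_n(A) \to T_p(W_n\Omega^1_A)$, $y \mapsto y\alpha$, via a standard $p$-adic lifting argument. Surjectivity: given $\beta \in T_p(W_n\Omega^1_A)$, construct $x_0, x_1, \ldots \in W_n(A)$ inductively so that $\beta - \sum_{i<r} x_i p^i \alpha \in p^r T_p(W_n\Omega^1_A)$. At step $r$, write this difference as $p^r\beta_r$ (possible by induction, and uniquely so since $T_p$ is $p$-torsion-free), and apply the hypothesis on $\alpha^{(1)}$ to find $x_r \in W_n(A)$ with $\beta_r^{(1)} = \bar{x}_r \alpha^{(1)}$; this forces $\beta_r - x_r\alpha \in pT_p(W_n\Omega^1_A)$, advancing the induction. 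The series $y := \sum_{i\ge 0} x_i p^i$ converges in the $p$-adically complete $W_n(A)$, and $\beta - y\alpha \in \bigcap_r p^r T_p(W_n\Omega^1_A) = 0$. Injectivity: if $y\alpha = 0$ with $y\neq 0$, then $p$-adic separation of $W_n(A)$ gives $y = p^k y'$ with $y' \notin pW_n(A)$; since $T_p$ is $p$-torsion-free, $y'\alpha = 0$, and reducing modulo $p$ contradicts the freeness of $\alpha^{(1)}$ over $W_n(A)/pW_n(A)$.

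The main (mild) obstacle is merely verifying the input formalities on the Tate module --- $p$-torsion-freeness, $p$-adic completeness and separation, and the identification $T_p/p^r T_p \cong W_n\Omega^1_A[p^r]$ --- all of which are formal consequences of $p$-surjectivity on $W_n\Omega^1_A$ via the argument in Theorem~\ref{main relative Omega theorem}(\ref{quotient of Tate}); once these are in place, the rest is a textbook Nakayama-style lifting argument.
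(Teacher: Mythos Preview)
Your proof is correct and follows essentially the same approach as the paper: both rely on the identification $T_p/p^r \cong W_n\Omega^1_A[p^r]$ (from $p$-surjectivity on $W_n\Omega^1_A$) together with $p$-adic completeness and $p$-torsion-freeness of $W_n(A)$. The only cosmetic difference is that for the converse the paper first shows by induction on $r$, via a five-lemma diagram comparing $W_n(A)/p^rW_n(A)$ to $W_n\Omega^1_A[p^r]$, that each $\alpha^{(r)}$ is a free generator and then passes to the inverse limit, whereas you run the equivalent successive-approximation argument directly on $T_p$.
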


\begin{proof}
The first statement follows from the short exact sequence 
\[
0 \rightarrow T_p(W_n\Omega^1_A) \xrightarrow{p^r} T_p(W_n\Omega^1_A) \xrightarrow{} W_n\Omega^1_A[p^r] \rightarrow 0.
\]
(See the proof of Theorem~\ref{main relative Omega theorem}.)  The second statement follows by showing, level-by-level, that multiplication by $\alpha^{(r)}$ induces an isomorphism of $W_n(A)/p^rW_n(A)$-modules $W_n(A)/p^rW_n(A) \rightarrow W_n\Omega^1_A[p^r]$, using induction and the following diagram: 
\[
\xymatrix{
0 \ar[r] & W_n\Omega^1_A[p]\ar[r] & W_n\Omega^1_A[p^r] \ar[r]^{p} & W_n\Omega^1_A[p^{r-1}]\ar[r] & 0 \\
0 \ar[r] & W_n(A)/pW_n(A) \ar[u]^{\alpha^{(1)}} \ar[r]^{p^{r-1}} & W_n(A)/p^rW_n(A) \ar[r] \ar[u]^{\alpha^{(r)}}  & W_n(A)/p^{r-1}W_n(A) \ar[u]^{\alpha^{(r-1)}} \ar[r] & 0.
}
\]
(Notice that $W_n(A)$ is $p$-torsion-free because $A$ is $p$-torsion-free, and so the lower-left map given by multiplication by $p^{r-1}$ is indeed injective.)
Using that $W_n(A)$ is $p$-adically complete (Lemma~\ref{Witt complete}), it follows that multiplication by $\alpha$ induces an isomorphism of $W_n(A)$-modules
\[
\varprojlim_r W_n(A)/p^r W_n(A) \xrightarrow{\alpha} T_p(W_n\Omega^1_A).
\]
This completes the proof of the second statement.
\end{proof}

\begin{theorem} \label{generator of Tate}
Let $A$ be a ring as in Notation~\ref{has roots of unity}.  For every integer $n \geq 1$, the $p$-adic Tate module $T_p(W_n\Omega^1_A)$ is a free $W_n(A)$-module of rank~one.  Furthermore, there exists $\alpha_n \in T_p(W_n\Omega^1_A)$ which is a generator and such that the projection of $\alpha_n$ to $W_n\Omega^1_A[p]$, written $\alpha_n^{(1)}$, satisfies 
\[
([\zeta_{p^n}] - 1) \alpha_n^{(1)} = \dlog [\zeta_p] \in W_n\Omega^1_A[p].
\]
\end{theorem}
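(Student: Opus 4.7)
We proceed by induction on $n$. The base case $n = 1$ is immediate: Corollary~\ref{base case absolute} supplies a $p$-torsion element $\alpha_1^{(1)} \in \Omega^1_A$ satisfying $(\zeta_p - 1)\alpha_1^{(1)} = \dlog \zeta_p$ that freely generates $\Omega^1_A[p]$ as an $A/pA$-module, and Lemma~\ref{p-torsion vs Tate module} upgrades this to a free $A$-module generator $\alpha_1 \in T_p(\Omega^1_A)$.

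For the inductive step from $n$ to $n+1$, the key observation is the parallel structure between the Witt vector sequence of Lemma~\ref{exact Rz} and the Tate module sequence of Proposition~\ref{zeta cor}. The plan is to choose $\alpha_{n+1}$ so that multiplication by $\alpha_{n+1}$ realizes a morphism between these two four-term exact sequences, after which the $5$-lemma will deliver freeness. Since $F^n(R(z_{n+1})) = \sum_{i=0}^{p-1}\zeta_p^i = 0$, the element $R(z_{n+1})\cdot \alpha_n$ lies in $\ker g$, so by exactness of Proposition~\ref{zeta cor} there exists a lift $\alpha_{n+1} \in T_p(W_{n+1}\Omega^1_A)$ with $R(\alpha_{n+1}) = R(z_{n+1}) \cdot \alpha_n$; this lift is unique modulo $V^n(T_p(\Omega^1_A))$, which gives the freedom to enforce the theorem's relation.

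With $\alpha_{n+1}$ in hand, the $5$-lemma diagram (with leftmost vertical equal to multiplication by $F^n(\alpha_{n+1})$, and other verticals multiplication by $\alpha_{n+1}$, $\alpha_n$, $g(\alpha_n)$) has all three squares commuting: the middle by construction, the right by $W_{n+1}(A)$-linearity of $g$, and the left by the projection formula $V^n(a) \cdot x = V^n(a \cdot F^n(x))$. The rightmost vertical is an isomorphism since $g(\alpha_n)$ must be a unit in $A/p^n A$ (as $\alpha_n$ freely generates and $g$ is surjective), and the third vertical is an isomorphism by the inductive hypothesis. Granting the relation $([\zeta_{p^{n+1}}] - 1)\alpha_{n+1}^{(1)} = \dlog[\zeta_p]$, applying $F^n$ and using $F^n([\zeta_{p^{n+1}}]) = \zeta_p$ together with $F^n(\dlog[\zeta_p]) = \dlog \zeta_p$ yields $(\zeta_p - 1) F^n(\alpha_{n+1}^{(1)}) = \dlog \zeta_p$, so Corollary~\ref{base case absolute} and Lemma~\ref{p-torsion vs Tate module} show $F^n(\alpha_{n+1})$ generates $T_p(\Omega^1_A)$, and the $5$-lemma concludes that $\alpha_{n+1}$ is a free $W_{n+1}(A)$-module generator.

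The main obstacle is securing the relation $([\zeta_{p^{n+1}}] - 1)\alpha_{n+1}^{(1)} = \dlog[\zeta_p]$ itself. Applying $R$ to the left-hand side and using $z_{n+1}([\zeta_{p^{n+1}}] - 1) = [\zeta_{p^n}] - 1$ together with the inductive relation $([\zeta_{p^n}] - 1)\alpha_n^{(1)} = \dlog[\zeta_p]$ shows that the discrepancy $([\zeta_{p^{n+1}}] - 1)\alpha_{n+1}^{(1)} - \dlog[\zeta_p]$ lies in $N := W_{n+1}\Omega^1_A[p] \cap \ker R$, which by Proposition~\ref{from spectral sequence}(2) fits in a short exact sequence $0 \to \Omega^1_A[p] \xrightarrow{V^n} N \to A/pA \to 0$. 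The delicate task is to verify that the discrepancy has trivial image in the $A/pA$-quotient of $N$, so that it lifts to $V^n(\Omega^1_A[p])$, and then to exhibit a correction by an element of $V^n(T_p(\Omega^1_A))$ whose effect on $([\zeta_{p^{n+1}}] - 1)\alpha_{n+1}^{(1)}$ (computed via the projection formula and the identification $F^n([\zeta_{p^{n+1}}] - 1) = \zeta_p - 1$) eliminates the discrepancy exactly.
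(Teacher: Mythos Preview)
Your overall architecture matches the paper's: induction on $n$, lifting $R(z_{n+1})\alpha_n$ along $R$, and running the five-lemma on the parallel sequences of Lemma~\ref{exact Rz} and Proposition~\ref{zeta cor}. The difference is in how you break the apparent circularity between ``generator-ship'' and ``the relation $([\zeta_{p^{n+1}}]-1)\alpha_{n+1}^{(1)}=\dlog[\zeta_p]$'', and your proposed resolution has a genuine gap.

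You try to secure the level-$(n+1)$ relation \emph{first} by correcting $\alpha_{n+1}$ by an element of $V^n(T_p(\Omega^1_A))$. But the effect of adding $V^n(\beta)$ on $([\zeta_{p^{n+1}}]-1)\alpha_{n+1}^{(1)}$ is $V^n\bigl((\zeta_p-1)\beta^{(1)}\bigr)$, so your correction space inside $V^n(\Omega^1_A[p])$ is only $(\zeta_p-1)\cdot V^n(\Omega^1_A[p])$, a proper submodule (since $\zeta_p-1$ is not a unit in $A/pA$). Even granting that the discrepancy lies in $V^n(\Omega^1_A[p])$ --- which you flag as ``delicate'' but do not verify --- there is no reason it should lie in this smaller submodule. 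So the correction step, as stated, does not close.

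The paper reverses the order. It first observes, via Lemma~\ref{kernel R versus kernel F}, that any $p$-torsion element killed by $R$ is also killed by $F$; hence from $R(\delta)=0$ one gets $F(\delta)=0$ and therefore $F^n(\delta)=0$, i.e.\ $(\zeta_p-1)F^n(\alpha_{n+1}^{\prime\,(1)})=\dlog\zeta_p$ holds \emph{without} knowing the level-$(n+1)$ relation. This gives the leftmost vertical in the five-lemma diagram, and generator-ship of $\alpha_{n+1}'$ follows. Only \emph{then} does the paper enforce the exact relation, and it does so not by an additive $V^n$-correction but by a \emph{unit} multiple: since $\alpha_{n+1}^{\prime\,(1)}$ now freely generates the $p$-torsion, one writes $\dlog[\zeta_p]=x\,\alpha_{n+1}^{\prime\,(1)}$, checks $x\equiv[\zeta_{p^{n+1}}]-1$ modulo $\ker(Rz_{n+1})$, and invokes Lemma~\ref{unit exists} to conclude $x=u([\zeta_{p^{n+1}}]-1)$ for a unit $u$. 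That lemma requires the valuation-ring structure of $A_0^\flat$, which is why the paper strengthens the inductive hypothesis to produce the generators from $A_0=\ZZ_p[\zeta_{p^\infty}]^\wedge$ and then pushes forward to $A$ by functoriality. Your argument, working directly over a general $A$, has no access to this unit lemma.
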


\begin{proof}
We prove a stronger result using induction on $n$.    Let $A_0 = \ZZ_p[\zeta_{p^{\infty}}]^{\wedge}$.
\begin{itemize}
\item For every integer $n \geq 1$, there exists $\alpha_{0,n} \in T_p(W_n\Omega^1_{A_0})$ such that
\[
([\zeta_{p^n}] - 1) \alpha_{0,n}^{(1)} = \dlog [\zeta_p] \in W_n\Omega^1_{A_0}[p],
\]
and such that, under the map induced by functoriality, $T_p(W_n\Omega^1_{A_0}) \rightarrow T_p(W_n\Omega^1_{A})$,  the image of $\alpha_{0,n}$ freely generates $T_p(W_n\Omega^1_A)$ as a $W_n(A)$-module.  We write $\alpha_n$ for this generator of $T_p(W_n\Omega^1_A)$.
\end{itemize}

We prove the base case.  Because multiplication by $p$ is surjective on $\Omega^1_{A_0}$, given any element $x \in \Omega^1_{A_0}[p]$, there exists $\alpha_{0,1} \in T_p(\Omega^1_{A_0})$ with $\alpha_{0,1}^{(1)} = x$.  Thus, by Corollary~\ref{base case absolute}, there exists $\alpha_{0,1} \in T_p(\Omega^1_{A_0})$ satifying all the listed properties, with the exception that we do not yet know its image $\alpha_1 \in T_p(\Omega^1_A)$ is a generator for $T_p(\Omega^1_A)$ as an $A$-module.  As yet, we only know (again by Corollary~\ref{base case absolute}), that $\alpha_1^{(1)}$ is a generator for the $p$-torsion.  But then $\alpha_1$ is a generator for the Tate module $T_p(\Omega^1_A)$ by Lemma~\ref{p-torsion vs Tate module}.

Now inductively assume the result has been proved for some fixed value of $n$, and let $\alpha_{0,n}$ and $\alpha_n$ denote the corresponding elements.  Considering the $W_{n+1}(A_0)$-module structure on the terms in the exact sequence from Proposition~\ref{zeta cor}, we see that $g(R(z_{n+1})\alpha_{0,n}) = F^n(z_{n+1}) g(\alpha_{0,n}) = 0 \in A_0/p^nA_0$, so there must exist $\alpha_{0,n+1}^{\prime} \in T_p(W_{n+1}\Omega^1_{A_0})$ such that $R(\alpha_{0,n+1}^{\prime}) = R(z_{n+1}) \alpha_{0,n}$.  Fix one such element $\alpha_{0,n+1}^{\prime}$ and its image $\alpha_{n+1}^{\prime} \in T_p(W_{n+1}\Omega^1_A)$.  We have the following commutative diagram: 
\[
\xymatrix{T_p(W_{n+1}\Omega^1_{A_0}) \ar[r] \ar_{R}[d]  & T_p(W_{n+1}\Omega^1_{A}) \ar_{R}[d] \\ T_p(W_{n}\Omega^1_{A_0})  \ar[r] & T_p(W_{n}\Omega^1_{A}) } \hspace*{1in} 
\xymatrix{\alpha_{0,n+1}^{\prime} \ar@{|->}[r] \ar@{|->}[d] & \alpha_{n+1}^{\prime} \ar@{|->}[d] \\ R(z_{n+1})\alpha_{0,n} \ar@{|->}[r]  & R(z_{n+1})\alpha_{n}  }
\]
In particular, $R(\alpha_{n+1}^{\prime}) = R(z_{n+1}) \alpha_{n}$.

By patching together the exact sequences from Proposition~\ref{zeta cor} and Lemma~\ref{exact Rz}, we form a commutative diagram in which the rows are exact sequences of $W_{n+1}(A)$-modules:
\begin{equation} \label{Tate diagram}
\begin{gathered}
\xymatrix{
0 \ar[r] &  T_p(\Omega^1_A)  \ar[r]^{V^n} & T_p W_{n+1} \Omega^1_A \ar[r]^R &  T_p W_n \Omega^1_A \ar[r]^-g  & A/p^n A \ar[r] & 0\\
0 \ar[r] &  A \ar[u]^{F^n(\alpha_{n+1}^{\prime})} \ar[r]^{V^n} & W_{n+1}(A) \ar[u]^{\alpha_{n+1}^{\prime}} \ar[r]^{Rz_{n+1}} &  W_n(A) \ar[u]^{\alpha_n} \ar[r]^{F^n}  & A/p^n A \ar@{-->}[u]^{?}  \ar[r] & 0
}
\end{gathered}
\end{equation}
By our induction hypothesis, the vertical map given by $\alpha_n$ is an isomorphism.  There exists a unique vertical map as in the dashed arrow.  By a diagram chase, that vertical map is surjective, and viewing it as a map of $A/p^nA$-modules, we see that it is also an isomorphism.

Notice that $([\zeta_{p^{n+1}}] - 1)z_{n+1}  = [\zeta_{p^n}] - 1$, and so 
\[
R(\dlog[\zeta_{p}]) = R(([\zeta_{p^{n+1}}] - 1) \alpha_{0,n+1}^{\prime\,(1)}) \in W_n\Omega^1_{A_0}[p].
\] 
Then by Lemma~\ref{kernel R versus kernel F}, we have
\[
F(\dlog [\zeta_p]) = F\left(([\zeta_{p^{n+1}}] - 1) \alpha_{0,n+1}^{\prime\,(1)}\right) \in W_n\Omega^1_{A_0}[p],
\] 
and therefore, applying $F^{n-1}$ to both sides, we have
\[
\dlog \zeta_p = (\zeta_p - 1) \,F^n\left( \alpha_{0,n+1}^{\prime\,(1)}\right) \in \Omega^1_{A_0}[p],
\]
and by functoriality, we have 
\[
\dlog \zeta_p = (\zeta_p - 1) \,F^n\left( \alpha_{n+1}^{\prime\,(1)}\right) \in \Omega^1_{A}[p].
\]
Again using Corollary~\ref{base case absolute}, we know that $F^n(\alpha_{n+1}^{\prime\,(1)})$ freely generates $\Omega^1_A[p]$ as an $A/pA$-module, and hence $F^n(\alpha_{n+1}^{\prime})$ freely generates $T_p(\Omega^1_A)$ as an $A$-module by Lemma~\ref{p-torsion vs Tate module}.  
Thus the left-hand vertical map in the diagram (\ref{Tate diagram}) is also an isomorphism.  It follows by the five lemma that the remaining vertical map, given by multiplication by $\alpha_{n+1}^{\prime}$, is also an isomorphism.  This shows that $T_p(W_{n+1}\Omega^1_A)$ is a free $W_{n+1}(A)$-module of rank~one.  We must still construct the element $\alpha_{0,n+1}$ from $\alpha_{0,n+1}^{\prime}$.

We recall that we have elements $\alpha_{0,n}$ and $\alpha_{0,n+1}^{\prime}$ satisfying
\begin{align*}
([\zeta_{p^n}] - 1) \alpha_{0,n}^{(1)} &= \dlog [\zeta_p] \in W_{n}\Omega^1_{A_0}[p].\\
\intertext{and}
R(\alpha_{0,n+1}^{\prime}) &= R(z_{n+1}) \alpha_{0,n}.\\
\intertext{We claim that there exists a unit $u \in W_{n+1}(A_0)/pW_{n+1}(A_0)$ such that}
([\zeta_{p^{n+1}}] - 1) u \alpha_{0,n+1}^{\prime\,(1)}  &= \dlog [\zeta_p] \in W_{n+1}\Omega^1_{A_0}[p].
\end{align*}
Because $\dlog [\zeta_p]$ is $p$-torsion and because $\alpha_{0,n+1}^{\prime\,(1)}$ freely generates the $p$-torsion in $W_{n+1}\Omega^1_{A_0}$ by Lemma~\ref{p-torsion vs Tate module}, we know there exists a (unique) element $x \in W_{n+1}(A_0)/pW_{n+1}(A_0)$ such that $\dlog [\zeta_p] = x\alpha_{0,n+1}^{\prime\,(1)} $.   Let 
\[
J := \ker \left(W_{n+1}(A_0)/pW_{n+1}(A_0) \stackrel{Rz_{n+1}}{\longrightarrow} W_n(A_0)/pW_n(A_0)\right).
\] 
Note that $J \subseteq W_{n+1}(A_0)/pW_{n+1}(A_0)$ is an ideal.  
Because 
\[
R(x \alpha_{0,n+1}^{\prime\,(1)}) = R(([\zeta_{p^{n+1}}] - 1) \alpha_{0,n+1}^{\prime\,(1)}) = \dlog [\zeta_{p}] \in W_n\Omega^1_A[p],
\] 
we have 
\[
x \equiv ([\zeta_{p^{n+1}}] - 1) \bmod J.
\] 
These elements $x$ and $[\zeta_{p^{n+1}}]-1$ are not themselves in the ideal $J$, because $\dlog [\zeta_p] \neq 0 \in W_{n}\Omega^1_{A_0}[p]$.  Thus, by Lemma~\ref{unit exists} (which applies because we are considering the subring $A_0 \subseteq A$), there exists a unit $u \in W_{n+1}(A_0)/pW_{n+1}(A_0)$ such that $x = u ([\zeta_{p^{n+1}}] - 1)$.   Write $u$ also for a unit $u \in W_{n+1}(A_0)$ which lifts $u \in W_{n+1}(A_0)/pW_{n+1}(A_0)$; any lift to $W_{n+1}(A_0)$ is in fact a unit by Lemma~\ref{units in Witt}.  Let $\alpha_{0,n+1} := u\alpha_{0,n+1}^{\prime} \in T_p(W_{n+1}\Omega^1_{A_0})$ and write $\alpha_{n+1} := u\alpha_{n+1}^{\prime} \in T_p(W_{n+1}\Omega^1_{A})$ for its image.  Because $\alpha_{n+1}^{\prime}$ freely generates $T_p(W_{n+1}\Omega^1_{A})$, the same is true for this unit multiple $\alpha_{n+1}$, and by construction it is the image of an element $\alpha_{0,n+1}$ satisfying $([\zeta_{p^{n+1}}] - 1) \alpha_{0,n+1}^{(1)} = \dlog [\zeta_p]$.  This completes the induction.
\end{proof}

We proved in Theorem~\ref{generator of Tate} the existence of a generator of $T_p(W_n\Omega^1_A)$ as a free rank~one $W_n(A)$-module.  The next result gives a condition for identifying generators.

\begin{corollary} \label{identify generator}
Let $\alpha_n \in T_p(W_{n}\Omega^1_{A})$ be any element satisfying $([\zeta_{p^{n}}] - 1) \alpha_n^{(1)} = \dlog [\zeta_p] \in W_n\Omega^1_A[p]$.  Then $\alpha_n$ freely generates $T_p(W_n\Omega^1_A)$ as a $W_n(A)$-module.
\end{corollary}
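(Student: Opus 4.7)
The plan is to reduce the statement to the existence of a known generator, provided by Theorem~\ref{generator of Tate}, and to show that any candidate $\alpha_n$ differs from that generator by a unit of $W_n(A)$. Concretely, fix the generator $\beta_n\in T_p(W_n\Omega^1_A)$ produced in Theorem~\ref{generator of Tate}, so that $([\zeta_{p^n}]-1)\beta_n^{(1)}=\dlog[\zeta_p]\in W_n\Omega^1_A[p]$ and such that $\beta_n$ freely generates $T_p(W_n\Omega^1_A)$ as a $W_n(A)$-module. Since every element of $T_p(W_n\Omega^1_A)$ is a unique $W_n(A)$-multiple of $\beta_n$, I can write $\alpha_n=y\beta_n$ for a unique $y\in W_n(A)$; the task is to show $y$ is a unit.

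Next, I would reduce modulo $p$ to work inside the $p$-torsion. By Lemma~\ref{p-torsion vs Tate module}, $\beta_n^{(1)}$ freely generates $W_n\Omega^1_A[p]$ as a $W_n(A)/pW_n(A)$-module, and reducing $\alpha_n=y\beta_n$ yields $\alpha_n^{(1)}=\bar{y}\,\beta_n^{(1)}$ where $\bar{y}$ denotes the image of $y$ in $W_n(A)/pW_n(A)$. Combining the hypothesis $([\zeta_{p^n}]-1)\alpha_n^{(1)}=\dlog[\zeta_p]$ with the corresponding identity for $\beta_n$, one obtains
\[
([\zeta_{p^n}]-1)(\bar{y}-1)\,\beta_n^{(1)}=0 \in W_n\Omega^1_A[p].
\]
Since $\beta_n^{(1)}$ freely generates the $p$-torsion as a $W_n(A)/pW_n(A)$-module, this forces $([\zeta_{p^n}]-1)\bar{y}=([\zeta_{p^n}]-1)$ in $W_n(A)/pW_n(A)$.

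At this point I would appeal directly to Lemma~\ref{zeta_p congruence implies unit}, which is tailored to precisely this situation: it implies that $\bar{y}$ is a unit in $W_n(A)/pW_n(A)$. Then by Lemma~\ref{units in Witt}, applied to the $p$-adically complete ring $A$, the element $y$ itself is a unit in $W_n(A)$. Consequently $\alpha_n=y\beta_n$ is a unit multiple of a free generator of $T_p(W_n\Omega^1_A)$ and is therefore itself a free generator, which is the desired conclusion.

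I do not foresee a serious obstacle here, since the corollary is essentially a uniqueness-up-to-units statement extracted from the proof of Theorem~\ref{generator of Tate}. The only subtle point is that we must know $\beta_n^{(1)}$ generates $W_n\Omega^1_A[p]$ \emph{freely} (so as to cancel it from the displayed equation above), but this is exactly the content of the first half of Lemma~\ref{p-torsion vs Tate module} combined with Theorem~\ref{generator of Tate}.
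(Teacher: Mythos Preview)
Your proof is correct and follows essentially the same approach as the paper's proof: fix the generator from Theorem~\ref{generator of Tate}, write $\alpha_n$ as a $W_n(A)$-multiple of it, deduce the congruence $([\zeta_{p^n}]-1)\bar y = ([\zeta_{p^n}]-1)$ in $W_n(A)/pW_n(A)$ by freeness of the $p$-torsion, and conclude via Lemma~\ref{zeta_p congruence implies unit} and Lemma~\ref{units in Witt}. The only cosmetic difference is that the paper phrases the reduction as a congruence modulo $pT_p(W_n\Omega^1_A)$ rather than explicitly invoking Lemma~\ref{p-torsion vs Tate module}, but these amount to the same thing.
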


\begin{proof}
Fix $\alpha \in T_p(W_{n}\Omega^1_{A})$ such that $([\zeta_{p^{n}}] - 1) \alpha^{(1)} = \dlog [\zeta_p] \in W_n\Omega^1_A[p]$ and such that $\alpha$ freely generates $T_p(W_n\Omega^1_A)$ as a $W_n(A)$-module; such an element $\alpha$ exists by Theorem~\ref{generator of Tate}.  Now let $\alpha_n$ be as in the statement of the corollary.  There exists a unique $y \in W_n(A)$ such that $\alpha_n = y \alpha$, and we know that 
\[
([\zeta_{p^{n}}] - 1) y \alpha \equiv ([\zeta_{p^{n}}] - 1) \alpha \bmod p T_p(W_n\Omega^1_A).
\]
Therefore
\[
([\zeta_{p^{n}}] - 1) y \equiv ([\zeta_{p^{n}}] - 1)  \bmod p W_n(A).
\]
So $y$ projects to a unit  in $W_{n}(A)/pW_{n}(A)$ by Lemma~\ref{zeta_p congruence implies unit}.  Thus $y$ is a unit in $W_n(A)$ by Lemma~\ref{units in Witt}.  Because $\alpha$ was a generator, then $\alpha_n = y \alpha$ is also a generator.  This completes the proof.
\end{proof}

The elements produced in Theorem~\ref{generator of Tate} are not canonical. We next describe canonical generators.  Our description of these generators is modeled after Hesselholt's \cite[Theorem~B]{Hes06}.
For rings $A$ as in Notation~\ref{has roots of unity}, there is an obvious element in $T_p(W_n\Omega^1_A)$; namely, for the component in $W_n\Omega^1_A[p^r]$, we take the element $\dlog [\zeta_{p^r}]$.  We refer to the corresponding element as $\dlog [\zeta_p^{(\infty)}] \in T_p(W_n\Omega^1_A)$.  Notice that our way of writing this element does not indicate the level~$n$.  These elements, for various $n$, are all compatible under the restriction map (as well as the Frobenius map).   This compatibility under restriction is the key observation to identifying a canonical generator of $T_p(W_n\Omega^1_A)$.  This generator $\alpha_n^{(\infty)}$ will satisfy
\[
([\zeta_{p^n}] - 1) \alpha_n^{(\infty)} = \dlog[\zeta_p^{(\infty)}] \in T_p(W_n\Omega^1_A);
\] 
notice that this is similar to the condition of Theorem~\ref{generator of Tate}, but the condition of Theorem~\ref{generator of Tate} was only a condition modulo~$p$.
Proving the existence of such an element $\alpha_n^{(\infty)}$ seems rather delicate and will require several preliminary results.

\begin{lemma} \label{div Rz}
Fix an integer $n \geq 1$.  Let $A$ and $z_{n+1}$ be as in Notation~\ref{has roots of unity}.
  We have that $\beta \in T_p(W_n\Omega^1_{A})$ is in the image of the restriction map
\[
R: T_p(W_{n+1}\Omega^1_{A}) \rightarrow T_p(W_n\Omega^1_{A}),
\]
if and only if there exists $\alpha \in T_p(W_n\Omega^1_{A})$ such that $R(z_{n+1})\alpha = \beta$.
\end{lemma}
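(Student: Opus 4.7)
The plan is to deduce this lemma from Proposition~\ref{zeta cor} combined with Lemma~\ref{exact Rz} and Theorem~\ref{generator of Tate}. The key observation is that the vertical identifications in diagram~(\ref{Tate diagram}), constructed during the proof of Theorem~\ref{generator of Tate}, transport the question about the image of $R$ on Tate modules into an equivalent question about the image of $Rz_{n+1}$ on Witt vectors, for which Lemma~\ref{exact Rz} already provides the answer.

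First, by Proposition~\ref{zeta cor}, $\im R = \ker g$, so it suffices to compute this kernel. Next, by Theorem~\ref{generator of Tate} the Tate module $T_p(W_n\Omega^1_A)$ is a free $W_n(A)$-module of rank one, generated by some $\alpha_n$. Under the corresponding identification $T_p(W_n\Omega^1_A) \cong W_n(A)\cdot\alpha_n$, the map $g$ of Proposition~\ref{zeta cor} corresponds, up to a unit, to the map $F^n: W_n(A) \to A/p^nA$: this is precisely the content of the commutativity of the right-hand square of diagram~(\ref{Tate diagram}), where the dashed arrow is shown to be an isomorphism. Therefore
\[
\ker g = \ker(F^n)\cdot \alpha_n \subseteq T_p(W_n\Omega^1_A).
\]

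Finally, Lemma~\ref{exact Rz} gives $\ker(F^n) = \im(Rz_{n+1})$, and for any $x\in W_{n+1}(A)$ one has $Rz_{n+1}(x) = R(z_{n+1}x) = R(z_{n+1})\cdot R(x)$; since $R\colon W_{n+1}(A)\to W_n(A)$ is surjective, this yields $\im(Rz_{n+1}) = R(z_{n+1})\cdot W_n(A)$. Putting the three steps together, $\im R = R(z_{n+1})\cdot T_p(W_n\Omega^1_A)$, which is exactly the claim of the lemma. There is no substantive obstacle here: all the real work was already carried out in Theorem~\ref{generator of Tate}, and the only book-keeping required is verifying that the right-hand square of diagram~(\ref{Tate diagram}) lets us transfer $\ker g$ to $\ker(F^n)$ modulo units, which is recorded in the proof of that theorem.
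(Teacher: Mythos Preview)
Your proposal is correct and follows essentially the same approach as the paper's proof: both identify $\im R$ with $\ker g$ via Proposition~\ref{zeta cor}, use the rank-one freeness from Theorem~\ref{generator of Tate} to transport the problem to $W_n(A)$, observe that $g(\alpha_n)$ is a unit (the paper argues this directly from surjectivity of $g$, while you cite the right-hand square of diagram~(\ref{Tate diagram})), and finish with Lemma~\ref{exact Rz}. The only cosmetic difference is that the paper treats the two directions separately, handling the ``if'' direction by the one-line computation $g(R(z_{n+1})\alpha) = F^n(z_{n+1})g(\alpha) = 0$ rather than packaging both directions together as you do.
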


\begin{proof}
The ``if" direction was shown in the proof of Theorem~\ref{generator of Tate}; it was used to produce the element $\alpha_{0,n+1}^{\prime}$.  We now prove the ``only if" direction.  We know $T_p(W_{n}\Omega^1_{A}) \cong W_{n}(A)$ as $W_{n}({A})$-modules or, equivalently, as $W_{n+1}({A})$-modules via restriction.  Fix one such isomorphism, and let $\alpha_n \in T_p(W_n\Omega^1_{A})$ correspond to $1 \in W_n({A})$.  Let $g: T_p(W_n\Omega^1_A) \rightarrow A/p^nA$ be the $W_{n+1}(A)$-module map indicated in Proposition~\ref{zeta cor}.  Because $g$ is surjective, we know $g(\alpha_n)$ is a unit in $A/p^nA$.

Return now to our element $\beta \in T_p(W_n\Omega^1_{A})$ which is in the image of restriction.  We have $\beta = R(x) \alpha_n$ for some $x \in W_{n+1}({A})$.  Because $\beta$ is in the image of restriction, by exactness of the sequence from Proposition~\ref{zeta cor}, we have $g(\beta) = 0 \in {A/p^nA}$.  Thus, considering the $W_{n+1}(A)$-module structure, we find
\[
g\left(R(x) \alpha_n\right) = F^n(x) g(\alpha_n) = 0 \in A/p^n A.
\]
We have already observed that $g(\alpha_n)$ is a unit in $A/p^nA$, so $F^n(x) = 0 \in A/p^n A$.  By Lemma~\ref{exact Rz}, this means precisely that $R(x) = R(z_{n+1}y)$ for some $y \in W_{n+1}(A)$.  For our desired element $\alpha$, we may then take $R(y) \alpha_n$.
\end{proof}

\begin{lemma} \label{R^s}
Let $A$ denote a ring as in Notation~\ref{has roots of unity}.
Fix integers $n,s \geq 1$.  If $\beta \in T_p(W_n\Omega^1_{A})$ is in the image of the restriction map
\[
R^s: T_p(W_{n+s}\Omega^1_{A}) \rightarrow T_p(W_n\Omega^1_{A}),
\]
then there exists $\alpha \in T_p(W_n\Omega^1_{A})$ such that 
\[
R^s(z_{n+s})\cdots R^2(z_{n+2})R(z_{n+1})\alpha = \beta.
\]
\end{lemma}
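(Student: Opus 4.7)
The plan is to induct on $s$, using Lemma~\ref{div Rz} both as the base case $s=1$ and as the key tool inside the inductive step. Assume the statement holds at level $s-1$ for every value of $n$, and suppose $\beta = R^s(\gamma)$ with $\gamma \in T_p(W_{n+s}\Omega^1_A)$.

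Factor the restriction as $R^s = R \circ R^{s-1}$ and set $\delta := R^{s-1}(\gamma) \in T_p(W_{n+1}\Omega^1_A)$, so that $\beta = R(\delta)$. The element $\delta$ manifestly lies in the image of $R^{s-1}: T_p(W_{n+s}\Omega^1_A) \rightarrow T_p(W_{n+1}\Omega^1_A)$, so the inductive hypothesis applied at level $n+1$ (with $s-1$ iterations) produces an element $\alpha_1 \in T_p(W_{n+1}\Omega^1_A)$ with
\[
R^{s-1}(z_{n+s}) \, R^{s-2}(z_{n+s-1}) \cdots R(z_{n+2}) \, \alpha_1 = \delta.
\]
Applying $R$ to both sides and using that $R \colon T_p(W_{n+1}\Omega^1_A) \rightarrow T_p(W_n\Omega^1_A)$ is $W_{n+1}(A)$-linear (so that each scalar $R^j(z_{n+j+1}) \in W_{n+1}(A)$ passes through $R$ by turning into $R^{j+1}(z_{n+j+1})$ acting on $T_p(W_n\Omega^1_A)$), one obtains
\[
R^s(z_{n+s}) \, R^{s-1}(z_{n+s-1}) \cdots R^2(z_{n+2}) \, R(\alpha_1) = R(\delta) = \beta.
\]

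The only remaining step is to peel off a final factor of $R(z_{n+1})$. Since $R(\alpha_1)$ is by construction in the image of the restriction map $R \colon T_p(W_{n+1}\Omega^1_A) \rightarrow T_p(W_n\Omega^1_A)$, Lemma~\ref{div Rz} supplies an element $\alpha \in T_p(W_n\Omega^1_A)$ satisfying $R(\alpha_1) = R(z_{n+1}) \alpha$; substituting into the previous display yields the desired expression for $\beta$.

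I do not expect any serious obstacle here: the inductive hypothesis disposes of the top $s-1$ factors (by working one level higher in the Witt tower) while a single application of the base case Lemma~\ref{div Rz} disposes of the bottom factor $R(z_{n+1})$ at level $n$. The one point that deserves care is the bookkeeping when $R$ is applied across the scalars $R^{j}(z_{n+j+1})$, but this is essentially formal once one recalls that the $W_{n+1}(A)$-module structure on $T_p(W_n\Omega^1_A)$ is the one induced by restriction. The strategy of peeling factors off the \emph{bottom} of the tower (rather than trying to extract them from the top) is what makes the induction close; any attempt to handle $R(z_{n+1})$ first and then argue that the resulting quotient is still in the image of a higher restriction would require additional input that we do not have available.
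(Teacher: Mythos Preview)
Your proof is correct and follows essentially the same inductive scheme as the paper's argument: both apply the inductive hypothesis one level higher in the Witt tower and then invoke Lemma~\ref{div Rz} to extract the bottom factor $R(z_{n+1})$. The only cosmetic difference is that the paper first reduces to the case where $\beta$ is the restriction of a free generator of $T_p(W_{n+s}\Omega^1_A)$ (using Theorem~\ref{generator of Tate}), whereas you work directly with an arbitrary preimage $\gamma$; your version is arguably cleaner since it avoids appealing to the freeness result.
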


\begin{proof}
We prove this result using induction on $s$. 
The base case $s = 1$ is precisely Lemma~\ref{div Rz}.  Now assume the result has been shown for some fixed value of $s$, i.e., assume the claimed result holds for that value of $s$ and for all values of $n \geq 1$.  We will prove that the claimed result holds also for $s + 1$ and all values of $n \geq 1$.

We know that for all integers $n \geq 1$, there exists $\alpha_n \in T_p(W_n\Omega^1_{A})$ such that $\alpha_n$ freely generates $T_p(W_n\Omega^1_{A})$ as a $W_n(A)$-module.  It clearly suffices to show 
\[
R^{s+1}(\alpha_{n+s+1}) = R^{s+1}(z_{n+s+1})\cdots R^2(z_{n+2})R(z_{n+1})\alpha
\] 
for some $\alpha \in T_p(W_n\Omega^1_{A})$.  By the induction hypothesis (applied to the values $s$ and $n+1$), we know
\[
R^{s}(\alpha_{n+s+1}) = R^{s}(z_{n+s+1})\cdots R(z_{n+2})\alpha'
\] 
for some $\alpha' \in T_p(W_{n+1}\Omega^1_{A})$.  We also know 
\[
R(\alpha') = R(z_{n+1}) \alpha
\]
for some $\alpha \in T_p(W_n\Omega^1_{A})$ by Lemma~\ref{div Rz}.  The desired result follows immediately from the two previous displayed equations.
\end{proof}

\begin{proposition} \label{divided log}
Let $A$ denote a ring as in Notation~\ref{has roots of unity}.
Fix an integer $n \geq 1$.  There exists $\alpha \in T_p(W_n\Omega^1_{A})$ such that 
\[
([\zeta_{p^n}] - 1) \alpha = \dlog [\zeta_{p}^{(\infty)}].
\]
\end{proposition}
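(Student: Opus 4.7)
The plan is to prove the statement first for the subring $A_0 := \ZZ_p[\zeta_{p^{\infty}}]^{\wedge} \subseteq A$ and then transfer to $A$ by functoriality. Observe that the element $\dlog [\zeta_p^{(\infty)}] \in T_p(W_n\Omega^1_{A_0})$ is the restriction of the analogous element $\dlog[\zeta_p^{(\infty)}] \in T_p(W_{n+s}\Omega^1_{A_0})$ for every $s \geq 1$, so it lies in the image of the iterated restriction $R^s$. Applying Lemma~\ref{R^s}, for each $s \geq 1$ there exists $\gamma_s \in T_p(W_n\Omega^1_{A_0})$ with
\[
\dlog[\zeta_p^{(\infty)}] \;=\; R^s(z_{n+s}) \cdots R^2(z_{n+2}) R(z_{n+1}) \cdot \gamma_s.
\]

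Next I would compute the product on the right. Using $z_r = ([\zeta_{p^{r-1}}]-1)/([\zeta_{p^r}]-1)$ and the fact that restriction fixes the Teichm\"uller lifts of the roots of unity, the product telescopes:
\[
R^s(z_{n+s}) \cdots R(z_{n+1}) \;=\; \frac{[\zeta_{p^n}]-1}{[\zeta_{p^{n+s}}]-1} \in W_n(A_0).
\]
By Theorem~\ref{generator of Tate} applied to $A_0$, the Tate module $T_p(W_n\Omega^1_{A_0})$ is free of rank one over $W_n(A_0)$; pick a generator $\alpha_{0,n}$ and write $\dlog[\zeta_p^{(\infty)}] = c_0 \, \alpha_{0,n}$ for a unique $c_0 \in W_n(A_0)$. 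Because $\alpha_{0,n}$ is a free generator, the displayed equations force
\[
c_0 \;\in\; \bigcap_{s \geq 1} \frac{[\zeta_{p^n}]-1}{[\zeta_{p^{n+s}}]-1}\, W_n(A_0).
\]
Lemma~\ref{Witt intersection} then immediately gives $c_0 = ([\zeta_{p^n}]-1) d_0$ for some $d_0 \in W_n(A_0)$, so $\alpha_0 := d_0 \alpha_{0,n} \in T_p(W_n\Omega^1_{A_0})$ satisfies $([\zeta_{p^n}]-1)\alpha_0 = \dlog[\zeta_p^{(\infty)}]$.

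To conclude, let $\alpha \in T_p(W_n\Omega^1_A)$ denote the image of $\alpha_0$ under the map induced by the canonical inclusion $A_0 \hookrightarrow A$. Since both $[\zeta_{p^n}]$ and $\dlog[\zeta_p^{(\infty)}]$ are compatible with this map by functoriality, we obtain $([\zeta_{p^n}]-1)\alpha = \dlog[\zeta_p^{(\infty)}]$ in $T_p(W_n\Omega^1_A)$. The main obstacle in this argument is Lemma~\ref{Witt intersection}, which is the genuinely delicate step; the remainder is a clean assembly of the telescoping identity, Lemma~\ref{R^s}, and functoriality of the Tate module along $A_0 \hookrightarrow A$. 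One secondary point worth verifying explicitly is that $\dlog[\zeta_p^{(\infty)}]$ really defines an element of $T_p(W_n\Omega^1_{A_0})$: this follows from $p^r \dlog[\zeta_{p^r}] = \dlog[\zeta_{p^r}]^{p^r} = \dlog 1 = 0$ together with the manifest restriction-compatibility $R(\dlog[\zeta_{p^r}]) = \dlog[\zeta_{p^r}]$.
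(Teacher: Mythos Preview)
Your proof is correct and follows essentially the same route as the paper: reduce to $A_0$ by functoriality, use that $\dlog[\zeta_p^{(\infty)}]$ lies in the image of every $R^s$, apply Lemma~\ref{R^s} together with the telescoping identity $R^s(z_{n+s})\cdots R(z_{n+1}) = ([\zeta_{p^n}]-1)/([\zeta_{p^{n+s}}]-1)$, and then invoke Lemma~\ref{Witt intersection} to conclude divisibility by $[\zeta_{p^n}]-1$. One small quibble: in your closing remark, the compatibility needed for $\dlog[\zeta_p^{(\infty)}]$ to lie in the Tate module $T_p(W_n\Omega^1_{A_0})$ is $p\cdot\dlog[\zeta_{p^{r+1}}] = \dlog[\zeta_{p^r}]$, not the de\thinspace Rham--Witt restriction identity $R(\dlog[\zeta_{p^r}]) = \dlog[\zeta_{p^r}]$; the latter is what you actually use to place $\dlog[\zeta_p^{(\infty)}]$ in the image of $R^s$.
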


\begin{proof}
By functoriality, it suffices to prove this result for the ring $A_0 = \ZZ_p[\zeta_{p^{\infty}}]^{\wedge}$.
Choose $\alpha_n \in T_p(W_n\Omega^1_{A_0})$ which freely generates $T_p(W_n\Omega^1_{A_0})$ as a $W_n(A_0)$-module.  Then there exists some unique $x \in W_n(A_0)$ such that 
\[
x \alpha_n = \dlog [\zeta_{p}^{(\infty)}].
\]
By Lemma~\ref{Witt intersection}, it suffices to show, for every integer $s \geq 1$, that $x$ is a multiple of $\frac{[\zeta_{p^n}] - 1}{[\zeta_{p^{n+s}}] - 1}$.  Notice that for every integer $s \geq 1$, we have 
\[
R^s \left( \dlog [\zeta_{p}^{(\infty)}] \right) = \dlog [\zeta_{p}^{(\infty)}],
\]
where the $\dlog [\zeta_{p}^{(\infty)}]$ on the left is considered as an element in $T_p(W_{n+s}\Omega^1_{A_0})$, and where the $\dlog [\zeta_{p}^{(\infty)}]$ on the right is considered as an element of $T_p(W_n\Omega^1_{A_0})$.  Thus our desired result follows immediately from Lemma~\ref{R^s} and the fact that
\[
R^s(z_{n+s})\cdots R^2(z_{n+2})R(z_{n+1}) = \frac{[\zeta_{p^n}] - 1}{[\zeta_{p^{n+s}}] - 1} \in W_n(A_0).
\]
This completes the proof.
\end{proof}

The following result is one of the most important results of this section.  Its proof contains no new ideas; it simply requires assembling the results attained earlier.  This result should be compared to \cite[Theorem~B]{Hes06}; notice that Hesselholt's proof uses techniques from topology (see especially \cite[Section~3]{Hes06}).

\begin{theorem} \label{canonical in tate}
Let $A$ denote a ring as in Notation~\ref{has roots of unity}.  
For each integer $n \geq 1$, there exists a unique element $\alpha_n^{(\infty)} \in T_p(W_n\Omega^1_A)$ with $([\zeta_{p^n}] - 1) \alpha_n^{(\infty)} = \dlog [\zeta_p^{(\infty)}]$.  This element freely generates $T_p(W_n\Omega^1_A)$ as a $W_n(A)$-module.  For each~$n \geq 1$, these elements  satisfy $F(\alpha_{n+1}^{(\infty)}) = \alpha_{n}^{(\infty)}$ and $R(\alpha_{n+1}^{(\infty)}) = R(z_{n+1}) \alpha_{n}^{(\infty)}$.
\end{theorem}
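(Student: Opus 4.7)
The plan has four parts: existence, uniqueness, free generation, and the two compatibility relations. Existence of $\alpha_n^{(\infty)}$ is immediate from Proposition~\ref{divided log}. For uniqueness, I will show $[\zeta_{p^n}] - 1$ is a non-zero-divisor in $W_n(A)$: its ghost components $(\zeta_{p^{n-i}} - 1)_{i=0}^{n-1}$ are each non-zero-divisors in $A$, because $A$ embeds into a product of $p$-torsion-free perfectoid valuation rings (Lemma~\ref{embed into product}) in each of which $\zeta_{p^k} - 1 = 0$ would force $p = 0$, contradicting $p$-torsion-freeness. Since $W_n(A)$ is $p$-torsion-free (Lemma~\ref{Witt complete}), the ghost map is injective, so all-non-zero-divisor ghost components yield a non-zero-divisor in $W_n(A)$. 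By Theorem~\ref{generator of Tate}, $T_p(W_n\Omega^1_A) \cong W_n(A)$ as $W_n(A)$-modules, so multiplication by $[\zeta_{p^n}] - 1$ is injective there, giving uniqueness. That $\alpha_n^{(\infty)}$ freely generates then follows by reducing the defining equation modulo $p$ to $([\zeta_{p^n}] - 1)(\alpha_n^{(\infty)})^{(1)} = \dlog[\zeta_p]$ and applying Corollary~\ref{identify generator}.

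For the restriction compatibility, apply $R \colon T_p(W_{n+1}\Omega^1_A) \to T_p(W_n\Omega^1_A)$ to the defining equation for $\alpha_{n+1}^{(\infty)}$. Since $R$ is a DGA-homomorphism compatible with the Teichmüller map, $R$ commutes with $\dlog$ of Teichmüller units, so $R(\dlog[\zeta_p^{(\infty)}]) = \dlog[\zeta_p^{(\infty)}]$ and $R([\zeta_{p^{n+1}}] - 1) = [\zeta_{p^{n+1}}] - 1$ in $W_n(A)$. The crucial identity $[\zeta_{p^{n+1}}]^p = [\zeta_{p^n}]$ in $W_n(A)$ holds by matching ghost components (both sides equal $(\zeta_{p^{n-i}})_{i=0}^{n-1}$), which combined with the geometric-series formula yields $([\zeta_{p^{n+1}}] - 1) R(z_{n+1}) = [\zeta_{p^n}] - 1$ in $W_n(A)$. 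Substituting the defining equation for $\alpha_n^{(\infty)}$ produces
\[
([\zeta_{p^{n+1}}] - 1)\, R(\alpha_{n+1}^{(\infty)}) = ([\zeta_{p^{n+1}}] - 1)\, R(z_{n+1})\, \alpha_n^{(\infty)},
\]
and cancelling $[\zeta_{p^{n+1}}] - 1$, which is a non-zero-divisor in $W_n(A)$ by the same ghost-component argument, yields $R(\alpha_{n+1}^{(\infty)}) = R(z_{n+1})\, \alpha_n^{(\infty)}$.

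The Frobenius compatibility is the most delicate step and will be the main obstacle. Applying $F$ to the defining equation and using $F([\zeta_{p^{n+1}}] - 1) = [\zeta_{p^n}] - 1$ yields $([\zeta_{p^n}] - 1)\, F(\alpha_{n+1}^{(\infty)}) = F(\dlog[\zeta_p^{(\infty)}])$. The key computation is that $F(\dlog[a]) = \dlog[a]$ in $W_n\Omega^1_A$ for every unit $a \in A$, valid for any $p$-torsion-free $A$: using the de\thinspace Rham-Witt identity $Fd[a] = [a]^{p-1}\, d[a]$ together with the fact $[a]^p = [a^p]$ in $W_n(A)$ (proved by matching ghost components, since both sides yield $a^{p^{i+1}}$ in the $i$-th slot for any $p$-torsion-free $A$), one computes
\[
F(\dlog[a]) = F(d[a])/F([a]) = [a]^{p-1}\, [a^p]^{-1}\, d[a] = [a]^{-1}\, d[a] = \dlog[a].
\]
Applying this componentwise gives $F(\dlog[\zeta_p^{(\infty)}]) = \dlog[\zeta_p^{(\infty)}] = ([\zeta_{p^n}] - 1)\, \alpha_n^{(\infty)}$, and cancelling the non-zero-divisor $[\zeta_{p^n}] - 1$ yields $F(\alpha_{n+1}^{(\infty)}) = \alpha_n^{(\infty)}$. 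The delicate point is the reduction of $F$ on $\dlog$ of Teichmüller lifts to the identity, which rests on the simplification $[a]^p = [a^p]$ that fails outside the $p$-torsion-free setting.
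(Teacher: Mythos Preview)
Your proof is correct and follows essentially the same route as the paper's: existence via Proposition~\ref{divided log}, uniqueness and cancellation via $[\zeta_{p^n}]-1$ (and $[\zeta_{p^{n+1}}]-1$) being non-zero-divisors in $W_n(A)$ combined with the isomorphism $T_p(W_n\Omega^1_A)\cong W_n(A)$ from Theorem~\ref{generator of Tate}, free generation via Corollary~\ref{identify generator}, and the compatibilities by applying $F$ and $R$ to the defining equation. One small remark: the identity $[a]^p=[a^p]$ is just multiplicativity of the Teichm\"uller map and holds in any ring, so your closing comment that it ``fails outside the $p$-torsion-free setting'' is misplaced; the $p$-torsion-freeness is used only to make the ghost map injective for the non-zero-divisor argument.
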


\begin{proof}
We first claim that if $\alpha$ and $\alpha' \in T_p(W_n\Omega^1_A)$ are such that 
\[
([\zeta_{p^n}] - 1) \alpha  = \dlog [\zeta_p^{(\infty)}] \qquad \text{ and } \qquad ([\zeta_{p^n}] - 1) \alpha' = \dlog [\zeta_p^{(\infty)}],
\]
then $\alpha = \alpha'$.
Choose an isomorphism of $W_n(A)$-modules $T_p(W_n\Omega^1_A) \cong W_n(A)$; this is possible by Theorem~\ref{generator of Tate}.  Let $x, y \in W_n(A)$ correspond to $\alpha, \alpha'$, respectively, under our chosen isomorphism.  Our assumption implies
\[
([\zeta_{p^n}] - 1) x = ([\zeta_{p^n}] - 1) y \in W_n(A),
\]
but we then find $x = y$ because $[\zeta_{p^n}] - 1$ is not a zero divisor in $W_n(A)$.  We already know such an element $\alpha \in T_p(W_n\Omega^1_A)$ exists by Proposition~\ref{divided log}, and now that we know it is unique, we name it $\alpha_n^{(\infty)}$.  It freely generates $T_p(W_n\Omega^1_A)$ as a $W_n(A)$-module by Corollary~\ref{identify generator}.

It remains to check the stated compatibilities under Frobenius and restriction.  Applying Frobenius to both sides of
\[
([\zeta_{p^{n+1}}] - 1) \alpha_{n+1}^{(\infty)}  = \dlog [\zeta_p^{(\infty)}],
\]
we find that $F(\alpha_{n+1}^{(\infty)})$ satisfies 
\[
F([\zeta_{p^{n+1}}] - 1) F(\alpha_{n+1}^{(\infty)})  = ([\zeta_{p^{n}}] - 1) F(\alpha_{n+1}^{(\infty)}) = \dlog [\zeta_p^{(\infty)}].
\]
Thus by uniqueness, we have $F(\alpha_{n+1}^{(\infty)}) = \alpha_{n}^{(\infty)}$.  

Similarly, applying restriction to both sides of 
\begin{align*}
([\zeta_{p^{n+1}}] - 1) \alpha_{n+1}^{(\infty)}  &= \dlog [\zeta_p^{(\infty)}] \in T_p(W_{n+1}\Omega^1_A),\\
\intertext{we find that $R(\alpha_{n+1}^{(\infty)})$ satisfies}
([\zeta_{p^{n+1}}] - 1) R(\alpha_{n+1}^{(\infty)})  &= \dlog [\zeta_p^{(\infty)}] \in T_p(W_n\Omega^1_A).\\
\intertext{On the other hand, $\alpha_n^{(\infty)}$ satisfies}
([\zeta_{p^{n}}] - 1) \alpha_{n}^{(\infty)}  &= \dlog [\zeta_p^{(\infty)}],\\
\intertext{so} 
([\zeta_{p^{n+1}}] - 1) R(z_{n+1}) \alpha_{n}^{(\infty)}  &= \dlog [\zeta_p^{(\infty)}].
\end{align*}
The fact that $R(\alpha_{n+1}^{(\infty)}) = R(z_{n+1}) \alpha_{n}^{(\infty)}$ follows because $[\zeta_{p^{n+1}}] - 1$ is not a zero divisor in $W_n(A)$.
\end{proof}

We next record a few easy consequences of Theorem~\ref{generator of Tate} and Theorem~\ref{canonical in tate}.  

\begin{corollary}
Let $A$ denote a ring as in Notation~\ref{has roots of unity}.  Then for all integers $n,r \geq 1$, we have that $W_n\Omega^1_A[p^r]$ is a free $W_n(A)/p^rW_n(A)$-module of rank~one
\end{corollary}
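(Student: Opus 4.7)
The plan is to deduce this immediately from the two preceding main results of the section together with the earlier Lemma~\ref{p-torsion vs Tate module}. First, by Theorem~\ref{generator of Tate} (or equivalently by Theorem~\ref{canonical in tate}, which produces the canonical generator $\alpha_n^{(\infty)}$), the $p$-adic Tate module $T_p(W_n\Omega^1_A)$ is a free $W_n(A)$-module of rank one. Fix any such generator $\alpha_n \in T_p(W_n\Omega^1_A)$.

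Second, invoke Lemma~\ref{p-torsion vs Tate module}: its first assertion says precisely that if $\alpha_n$ freely generates $T_p(W_n\Omega^1_A)$ as a $W_n(A)$-module, then for every $r \geq 1$ the image $\alpha_n^{(r)}$ of $\alpha_n$ in $W_n\Omega^1_A[p^r]$ freely generates $W_n\Omega^1_A[p^r]$ as a $W_n(A)/p^r W_n(A)$-module. This is exactly the conclusion.

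There is no real obstacle here, since Lemma~\ref{p-torsion vs Tate module} already carries out the level-by-level argument (using that $W_n(A)$ is $p$-torsion-free, which in turn follows from $A$ being $p$-torsion-free via Lemma~\ref{Witt complete}, together with the short exact sequence
\[
0 \rightarrow W_n\Omega^1_A[p] \rightarrow W_n\Omega^1_A[p^r] \xrightarrow{p} W_n\Omega^1_A[p^{r-1}] \rightarrow 0
\]
coming from $p$-divisibility of $W_n\Omega^1_A$, which holds by Proposition~\ref{mult by p is surjective} since perfectoid rings are Witt-perfect). All of the genuine work has already been carried out in the proofs of Theorem~\ref{generator of Tate} and Theorem~\ref{canonical in tate}; the present corollary is just a packaging of those results at each finite level~$p^r$.
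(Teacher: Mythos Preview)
Your proposal is correct and matches the paper's own proof exactly: the paper simply says the corollary follows immediately from Theorem~\ref{generator of Tate} and Lemma~\ref{p-torsion vs Tate module}. Your additional unpacking of what goes into Lemma~\ref{p-torsion vs Tate module} is accurate but not needed, since that lemma has already been proved.
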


\begin{proof}
This follows immediately from Theorem~\ref{generator of Tate} and Lemma~\ref{p-torsion vs Tate module}.
\end{proof}

\begin{corollary} \label{F surjective on p-torsion}
Let $A$ denote a ring as in Notation~\ref{has roots of unity} and let $r,n \geq 1$ be integers.  
The Frobenius map $F: W_{n+1}\Omega^1_A[p^r] \rightarrow W_{n}\Omega^1_A[p^r]$ is surjective.  Also the Frobenius map $F: T_p(W_{n+1}\Omega^1_A) \rightarrow T_p(W_n\Omega^1_A)$ is surjective.
\end{corollary}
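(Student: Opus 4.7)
The plan is to deduce both surjectivity statements directly from the canonical generators constructed in Theorem~\ref{canonical in tate}, combined with the fact that for a perfectoid ring $A$ the Witt vector Frobenius $F: W_{n+1}(A) \to W_n(A)$ is surjective (which is built into the definition of Witt-perfectness and follows, e.g., from \cite[Lemma~3.9(iv)]{BMS16}). This last surjectivity is the only nontrivial Witt-level input; everything else is bookkeeping.

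First I would treat the Tate module statement. By Theorem~\ref{canonical in tate}, $T_p(W_n\Omega^1_A)$ is a free $W_n(A)$-module of rank one on the canonical generator $\alpha_n^{(\infty)}$, and these generators are compatible under Frobenius, i.e. $F(\alpha_{n+1}^{(\infty)}) = \alpha_n^{(\infty)}$. Given any $\beta \in T_p(W_n\Omega^1_A)$, write $\beta = x \cdot \alpha_n^{(\infty)}$ for the unique $x \in W_n(A)$, choose a lift $\tilde{x} \in W_{n+1}(A)$ with $F(\tilde{x}) = x$ (using Witt-perfectness), and observe that since Frobenius on the de\thinspace Rham-Witt complex is multiplicative over the Witt vector Frobenius,
\[
F\bigl(\tilde{x} \cdot \alpha_{n+1}^{(\infty)}\bigr) = F(\tilde{x}) \cdot F(\alpha_{n+1}^{(\infty)}) = x \cdot \alpha_n^{(\infty)} = \beta.
\]
This settles the surjectivity of $F: T_p(W_{n+1}\Omega^1_A) \to T_p(W_n\Omega^1_A)$.

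For the $p^r$-torsion statement, I would transfer the same argument level-by-level. Let $\alpha_n^{(\infty),r}$ denote the image of $\alpha_n^{(\infty)}$ in $W_n\Omega^1_A[p^r]$ under the natural projection $T_p(W_n\Omega^1_A) \to W_n\Omega^1_A[p^r]$. By Lemma~\ref{p-torsion vs Tate module}, $\alpha_n^{(\infty),r}$ freely generates $W_n\Omega^1_A[p^r]$ as a $W_n(A)/p^r W_n(A)$-module. Since the projection $T_p(W_n\Omega^1_A) \to W_n\Omega^1_A[p^r]$ commutes with Frobenius (Frobenius is induced componentwise on the inverse limit), the compatibility $F(\alpha_{n+1}^{(\infty)}) = \alpha_n^{(\infty)}$ descends to $F(\alpha_{n+1}^{(\infty),r}) = \alpha_n^{(\infty),r}$. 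Any $\beta \in W_n\Omega^1_A[p^r]$ can then be written uniquely as $\bar{x} \cdot \alpha_n^{(\infty),r}$ for some $\bar{x} \in W_n(A)/p^r W_n(A)$; lifting $\bar{x}$ to $x \in W_n(A)$ and then to $\tilde{x} \in W_{n+1}(A)$ via Witt-perfectness, the element $\tilde{x} \cdot \alpha_{n+1}^{(\infty),r} \in W_{n+1}\Omega^1_A[p^r]$ maps to $\beta$ under $F$. The main obstacle is not in the corollary itself but in the inputs that precede it; once Theorem~\ref{canonical in tate} is available, the argument is essentially a one-line consequence of the surjectivity of the Witt vector Frobenius.
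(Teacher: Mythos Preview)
Your proof is correct and follows essentially the same approach as the paper: both use that $F(\alpha_{n+1}^{(\infty)}) = \alpha_n^{(\infty)}$ from Theorem~\ref{canonical in tate} together with the surjectivity of the Witt vector Frobenius $F: W_{n+1}(A) \to W_n(A)$. You have simply spelled out in full the details that the paper compresses into one sentence.
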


\begin{proof}
These follow immediately from the fact that $F: W_{n+1}(A) \rightarrow W_n(A)$ is surjective, and the fact from Proposition~\ref{canonical in tate} that Frobenius maps a generator of $T_p(W_{n+1}\Omega^1_A)$ to a generator of $T_p(W_n\Omega^1_A)$.
\end{proof}

The following corollary is analogous to \cite[Proposition~2.4.2]{Hes06}; see also the paragraph immediately preceding that proposition.  The proof of \cite[Proposition~2.4.2]{Hes06} uses topology.

\begin{corollary} \label{inverse F}
Let $A$ denote a ring as in Notation~\ref{has roots of unity}.  Consider $\varprojlim_F T_p \left( W_n \Omega^1_A\right)$ as a $W(A^{\flat})$-module via the ring isomorphism $\varprojlim_F {W_n(A)} \cong W(A^{\flat})$ from Lemma~\ref{inverse limit iso}.  For each integer $n \geq 1$, let $\alpha_n^{(\infty)} \in T_p(W_n\Omega^1_A)$ be the element specified in Theorem~\ref{canonical in tate}, and let $\alpha \in \varprojlim_F T_p \left( W_n \Omega^1_A\right)$ be the sequence of elements 
\[
\alpha = (\alpha_1^{(\infty)}, \alpha_2^{(\infty)}, \ldots) \in \varprojlim_F T_p \left( W_n \Omega^1_A\right).
\] 
Let $\varepsilon = (1, \zeta_p, \zeta_{p^2}, \ldots) \in A^{\flat}$.  The following properties hold.
\begin{enumerate}
\item The element $\alpha$ is the unique element satisfying $([\varepsilon] - 1) \alpha = (\dlog[\zeta_p^{(\infty)}])$.
\item The $W(A^{\flat})$-module $\varprojlim_F T_p \left( W_n \Omega^1_A\right)$ is a free module of rank~one, generated by $\alpha$.  
\end{enumerate}
\end{corollary}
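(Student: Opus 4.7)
The plan is to deduce Corollary~\ref{inverse F} as a formal consequence of Theorem~\ref{canonical in tate} by passing to the inverse limit along Frobenius. The substantive content has already been proved at finite levels, so the task is mainly organizational book-keeping.

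First, the relation $F(\alpha_{n+1}^{(\infty)}) = \alpha_n^{(\infty)}$ from Theorem~\ref{canonical in tate} ensures that $\alpha = (\alpha_n^{(\infty)})_n$ is a well-defined element of $\varprojlim_F T_p(W_n\Omega^1_A)$. Under the isomorphism $\pi\colon W(A^\flat) \xrightarrow{\sim} \varprojlim_F W_r(A)$ of Lemma~\ref{inverse limit iso}, the Teichm\"uller $[\varepsilon]$ maps to the sequence $([\zeta_{p^r}])_r$, since $\varepsilon^{(r)} = \zeta_{p^r}$. Hence $[\varepsilon]-1$ acts on $\alpha$ componentwise as multiplication by $[\zeta_{p^r}]-1$, and the defining relation $([\zeta_{p^r}]-1)\alpha_r^{(\infty)} = \dlog[\zeta_p^{(\infty)}]$ at each level then gives $([\varepsilon]-1)\alpha = (\dlog[\zeta_p^{(\infty)}])_r$. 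Uniqueness of $\alpha$ among elements satisfying this identity follows level-by-level from the uniqueness portion of Theorem~\ref{canonical in tate}: any other candidate $\alpha' = (\alpha_r')$ must have each $\alpha_r'$ a solution of $([\zeta_{p^r}]-1)\alpha_r' = \dlog[\zeta_p^{(\infty)}]$ in $T_p(W_r\Omega^1_A)$, hence equal to $\alpha_r^{(\infty)}$.

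For the second claim I would examine the $W(A^\flat)$-module map
\[
\varphi\colon W(A^\flat) \longrightarrow \varprojlim_F T_p(W_n\Omega^1_A), \qquad x \mapsto x\cdot \alpha,
\]
and show it is an isomorphism. Writing $x = (x_n)$ under $\pi$, the map sends $x$ to $(x_n\alpha_n^{(\infty)})_n$. Injectivity is immediate from the freeness of $\alpha_n^{(\infty)}$ as a $W_n(A)$-generator of $T_p(W_n\Omega^1_A)$ at each level (Theorem~\ref{canonical in tate}). For surjectivity, given $\beta = (\beta_n)$ in the inverse limit, write $\beta_n = x_n\alpha_n^{(\infty)}$ for the unique $x_n \in W_n(A)$; then $F(\beta_{n+1}) = \beta_n$ combined with $F(\alpha_{n+1}^{(\infty)}) = \alpha_n^{(\infty)}$ and the fact that $F$ is a ring homomorphism on the de\thinspace Rham-Witt complex yields $F(x_{n+1})\alpha_n^{(\infty)} = x_n\alpha_n^{(\infty)}$, whence $F(x_{n+1}) = x_n$ by freeness. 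Thus $(x_n)$ defines an element of $\varprojlim_F W_n(A) \cong W(A^\flat)$ that maps to $\beta$.

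No serious obstacle is anticipated; the only point requiring a modicum of care is the verification that the $W(A^\flat)$-action on the inverse limit is indeed componentwise scalar multiplication, which rests on the standard compatibility of the Witt vector Frobenius with the $W_n(A)$-module structure on $W_n\Omega^1_A$ via $F(x\omega) = F(x)F(\omega)$. Once that is checked, both parts of the corollary fall out of Theorem~\ref{canonical in tate} by straightforward diagram chasing in the inverse limit.
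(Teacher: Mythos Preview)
Your proposal is correct and follows essentially the same approach as the paper: both deduce the corollary directly from Theorem~\ref{canonical in tate} by passing to the inverse limit along Frobenius, using that $\widetilde{\theta}_r([\varepsilon]) = [\zeta_{p^r}]$ so that the action of $[\varepsilon]-1$ is componentwise multiplication by $[\zeta_{p^r}]-1$. The paper's proof is a one-line reference to Theorem~\ref{canonical in tate}, whereas you have spelled out the inverse-limit bookkeeping (well-definedness of $\alpha$, injectivity and surjectivity of $x\mapsto x\alpha$) explicitly; this added detail is welcome but does not depart from the paper's strategy.
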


\begin{proof}
Notice that $\widetilde{\theta}_r([\varepsilon]) = [\zeta_{p^r}] \in W_r(A)$.  The stated properties follow immediately from Theorem~\ref{canonical in tate}.
\end{proof}

\begin{corollary}
Let $A$ denote a ring as in Notation~\ref{has roots of unity} and let $r \geq 1$ denote an integer.
The projection map
\[
\varprojlim_F T_p \left( W_n \Omega^1_A\right) \rightarrow T_p \left(W_r \Omega^1_A\right)
\]
is a surjective map of $W(A^{\flat})$-modules, where the $W(A^{\flat})$-module structure on $T_p \left(W_r \Omega^1_A\right)$ is defined via $\widetilde{\theta}_r$.
\end{corollary}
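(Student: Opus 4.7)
The plan is to deduce the result directly from the rank-one freeness statements proved in Theorem~\ref{generator of Tate}, Theorem~\ref{canonical in tate}, and Corollary~\ref{inverse F}; no new computation is required once the $W(A^{\flat})$-module structures are unraveled. First I would recall that, by Corollary~\ref{inverse F}, the source $\varprojlim_F T_p(W_n\Omega^1_A)$ is a free rank-one $W(A^{\flat})$-module generated by $\alpha = (\alpha_n^{(\infty)})_n$, and that by Theorem~\ref{generator of Tate} together with Theorem~\ref{canonical in tate}, the target $T_p(W_r\Omega^1_A)$ is a free rank-one $W_r(A)$-module generated by $\alpha_r^{(\infty)}$.

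Next I would check that the projection is $W(A^{\flat})$-linear for the module structures appearing in the statement. This is built into the definitions: by Definition~\ref{theta def} we have $\widetilde{\theta}_r = \mathrm{pr}_r \circ \pi$, where $\pi\colon W(A^{\flat}) \xrightarrow{\sim} \varprojlim_F W_n(A)$ is the isomorphism of Lemma~\ref{inverse limit iso}. Therefore, for $w \in W(A^{\flat})$, the action of $w$ on an element $(\beta_n)_n \in \varprojlim_F T_p(W_n\Omega^1_A)$ has $r$th component $\widetilde{\theta}_r(w) \cdot \beta_r$, which matches the $W(A^{\flat})$-action on $T_p(W_r\Omega^1_A)$ prescribed in the statement.

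Finally, the projection sends the generator $\alpha$ to $\alpha_r^{(\infty)}$, which generates $T_p(W_r\Omega^1_A)$ as a $W_r(A)$-module. Since $\widetilde{\theta}_r\colon W(A^{\flat}) \to W_r(A)$ is surjective (Lemma~\ref{theta lemma}(\ref{theta surj})), every element of $T_p(W_r\Omega^1_A)$ can be written as $\widetilde{\theta}_r(w)\cdot \alpha_r^{(\infty)}$ for some $w \in W(A^{\flat})$, and such an element is precisely the image of $w \cdot \alpha$ under projection. Hence the projection is surjective. There is no real obstacle here: once the compatibility between $\widetilde{\theta}_r$ and the isomorphism $\pi$ is noted, the result is a formal consequence of the freeness statements and the surjectivity of $\widetilde{\theta}_r$.
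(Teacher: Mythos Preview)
Your proof is correct and follows essentially the same approach as the paper: both deduce surjectivity from Corollary~\ref{inverse F} (freeness of the source on the generator $\alpha$) together with the surjectivity of $\widetilde{\theta}_r$. Your version is simply a more detailed unpacking of the paper's one-line argument, making explicit the $W(A^{\flat})$-linearity and the fact that the generator $\alpha$ projects to the generator $\alpha_r^{(\infty)}$.
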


\begin{proof}
This follows immediately from Corollary~\ref{inverse F} and the fact that, for each integer $r \geq 1$, the projection map $\widetilde{\theta}_r: W(A^{\flat}) \cong \varprojlim_F W_n(A) \rightarrow W_r(A)$ is surjective.
\end{proof}

Let $(x_1, x_2, \ldots)$ denote an arbitrary element in $\varprojlim_F T_p(W_n\Omega^1_A)$, where for each integer $n \geq 1$, we have $x_n \in T_p(W_n \Omega^1_A)$.  Let $R: T_p(W_{n+1}\Omega^1_A) \rightarrow T_p(W_n\Omega^1_A)$ denote the restriction map.  The sequence $(R(x_2), R(x_3), \ldots)$ is still Frobenius-compatible, and we again use $R$ to denote the corresponding map $(x_1, x_2, \ldots) \mapsto (R(x_2), R(x_3), \ldots)$.  Over the next few results, we seek to describe the elements which are fixed by this map $R$.

The map $R$ is not a $W(A^{\flat})$-module map for the module structure defined in Corollary~\ref{inverse F}, but we do have the following structure.

\begin{lemma} \label{R inv Tate}
Let $A$ denote a ring as in Notation~\ref{has roots of unity}.  Let $R$ denote the map
\[
R: \varprojlim_F T_p(W_n\Omega^1_A) \rightarrow \varprojlim_F T_p(W_n\Omega^1_A),
\]
defined by sending $(x_1, x_2, x_3, \ldots) \mapsto (R(x_2), R(x_3), \ldots).$  Let $\varphi: W(A^{\flat}) \rightarrow W(A^{\flat})$ denote the Witt vector Frobenius ring automorphism, and let $\varphi^{-1}$ denote its inverse.  For an arbitrary element $t \in W(A^{\flat})$ and $x \in \varprojlim_F T_p(W_n\Omega^1_A)$, define the product $tx \in \varprojlim_F T_p(W_n\Omega^1_A)$ using the isomorphism $W(A^{\flat}) \cong \varprojlim_F W_n(A)$.  We have
\[
R(tx) = \varphi^{-1}(t) R(x).
\]
\end{lemma}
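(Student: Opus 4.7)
The plan is to unwrap the ring isomorphism $\pi\colon W(A^{\flat}) \cong \varprojlim_F W_n(A)$ from Lemma~\ref{inverse limit iso} and compute both sides of the claimed equation componentwise. Fix $t \in W(A^{\flat})$ with $\pi(t) = (t_1, t_2, \ldots)$, so $t_r = \widetilde{\theta}_r(t) \in W_r(A)$ and $F(t_{r+1}) = t_r$. Similarly write $x = (x_1, x_2, \ldots)$ with $x_r \in T_p(W_r\Omega^1_A)$ and $F(x_{r+1}) = x_r$. Because the $W(A^{\flat})$-action on $\varprojlim_F T_p(W_n\Omega^1_A)$ is defined componentwise through $\pi$, the product $tx$ has $r$-th component $t_r x_r$, and applying the shift-and-restrict operation $R$ from the statement yields
\[
R(tx) = \bigl(R(t_2 x_2),\; R(t_3 x_3),\; \ldots \bigr) = \bigl(R(t_2) R(x_2),\; R(t_3) R(x_3),\; \ldots\bigr),
\]
using that $R\colon W_{r+1}\Omega^1_A \to W_r\Omega^1_A$ is a module homomorphism over $R\colon W_{r+1}(A) \to W_r(A)$.

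The heart of the proof is to identify the image of $\varphi^{-1}(t)$ under $\pi$ with the sequence $(R(t_2), R(t_3), \ldots)$; equivalently, to establish the identity
\[
\widetilde{\theta}_r \circ \varphi^{-1} = R \circ \widetilde{\theta}_{r+1}\colon W(A^{\flat}) \longrightarrow W_r(A)
\]
for every integer $r \geq 1$. Both sides are $p$-adically continuous ring homomorphisms, and every element of $W(A^{\flat})$ is a $p$-adic limit of finite sums of Teichm\"uller elements, so it suffices to verify the equality on $[t]$ for $t \in A^{\flat}$. Representing $t$ by a $p$-power-compatible sequence $(t^{(0)}, t^{(1)}, \ldots)$ in $A$ with $(t^{(i+1)})^p = t^{(i)}$, the inverse Frobenius on the perfect ring $A^{\flat}$ gives $\varphi^{-1}([t]) = [t^{1/p}]$, where $(t^{1/p})^{(i)} = t^{(i+1)}$. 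Then the left-hand side sends $[t]$ to the Teichm\"uller of $t^{(r+1)}$ in $W_r(A)$. The right-hand side sends $[t]$ first to $[t^{(r+1)}] \in W_{r+1}(A)$ and then, since Teichm\"uller lifts are preserved by restriction, to $[t^{(r+1)}] \in W_r(A)$; the two agree.

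With this identification in hand, $\varphi^{-1}(t) \cdot R(x)$ has $r$-th component $R(t_{r+1}) \cdot R(x_{r+1})$, which is precisely the $r$-th component of $R(tx)$ computed above, completing the proof. The only conceptual step is the identification of $\varphi^{-1}$ under $\pi$; once this is in place, the remaining verification is a formal componentwise computation. I expect this identification, and in particular correctly tracking how the Teichm\"uller of $t^{(r+1)}$ lives in both $W_r(A)$ and $W_{r+1}(A)$ connected by restriction rather than by Frobenius, to be the only subtlety.
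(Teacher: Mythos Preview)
Your proof is correct and follows essentially the same route as the paper: compute $R(tx)$ componentwise, reduce to identifying $(R(t_2), R(t_3), \ldots)$ with $\pi(\varphi^{-1}(t))$, and then establish that identification. The only difference is that the paper cites \cite[Lemma~3.2]{BMS16} for this last step, whereas you verify the identity $\widetilde{\theta}_r \circ \varphi^{-1} = R \circ \widetilde{\theta}_{r+1}$ directly on Teichm\"uller lifts, which is a perfectly valid (and slightly more self-contained) alternative.
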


\begin{proof}
Let $t \in W(A^{\flat})$ correspond to $(t_1, t_2, \ldots) \in \varprojlim_F W_n(A)$ under the isomorphism from Lemma~\ref{inverse limit iso}.  Then
\[
R(tx) = (R(t_2 x_2), R(t_3, x_3), \ldots) = \big(R(t_2), R(t_3), \ldots\big) R(x),
\]
so we reduce to checking that $ (R(t_2), R(t_3), \ldots) \in \varprojlim_F W_n(A)$ corresponds to $\varphi^{-1}(t) \in W(A^{\flat})$.  For this last claim, see the last sentence of \cite[Lemma~3.2]{BMS16}.
\end{proof}
  
\begin{proposition} \label{equal sets}
Let $A$ denote a ring as in Notation~\ref{has roots of unity}.  Let $\alpha \in \varprojlim_F T_p(W_n\Omega^1_A) $ denote the $W(A^{\flat})$-module generator described in Corollary~\ref{inverse F}.  Let $R: \varprojlim_F T_p(W_n\Omega^1_A) \rightarrow \varprojlim_F T_p(W_n\Omega^1_A)$ denote the map described in Lemma~\ref{R inv Tate}.  Finally, let $\varepsilon \in A^{\flat}$ denote the element $(1, \zeta_p, \zeta_{p^2}, \ldots)$.  We have the following equality of $\ZZ_p$-modules,
\[
\bigg\{ x \in \varprojlim_F T_p(W_n\Omega^1_A) \colon R(x) = x\bigg\} = \bigg\{ y\alpha \in \varprojlim_F T_p(W_n\Omega^1_A) \colon y \in W(A^{\flat}) \text{ satisfies } \varphi(y) = \frac{[\varepsilon^p] - 1}{[\varepsilon] - 1} \, y\bigg\}.
\]
\end{proposition}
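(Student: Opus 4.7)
The plan is to reduce the fixed-point equation $R(x) = x$ to an algebraic equation in $W(A^\flat)$ by exploiting the free rank-one generator $\alpha$ furnished by Corollary~\ref{inverse F}. Since every element of $\varprojlim_F T_p(W_n\Omega^1_A)$ has a unique representation as $y\alpha$ with $y \in W(A^\flat)$, the two sets in the claim correspond under the bijection $y \mapsto y\alpha$ to two subsets of $W(A^\flat)$, and it will suffice to prove these two subsets of $W(A^\flat)$ coincide.

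The key step will be to compute $R(\alpha)$ as a $W(A^\flat)$-multiple of $\alpha$. By Theorem~\ref{canonical in tate}, the $n$-th component of $R(\alpha)$ equals $R(\alpha_{n+1}^{(\infty)}) = R(z_{n+1})\,\alpha_n^{(\infty)}$, where $R(z_{n+1}) = 1 + [\zeta_{p^{n+1}}] + \cdots + [\zeta_{p^{n+1}}]^{p-1} \in W_n(A)$. I will then identify which element of $W(A^\flat)$ corresponds under the isomorphism $\pi$ of Lemma~\ref{inverse limit iso} to the Frobenius-compatible sequence $(R(z_{n+1}))_{n \geq 1}$. Since $\varepsilon^{1/p} = (\zeta_p, \zeta_{p^2}, \ldots) \in A^\flat$ (via the multiplicative identification of Lemma~\ref{tilt}) has $(\varepsilon^{1/p})^{(n)} = \zeta_{p^{n+1}}$, Lemma~\ref{theta lemma} gives $\widetilde{\theta}_n([\varepsilon^{1/p}]) = [\zeta_{p^{n+1}}]$ in $W_n(A)$, hence the element
\[
\xi := 1 + [\varepsilon^{1/p}] + [\varepsilon^{1/p}]^2 + \cdots + [\varepsilon^{1/p}]^{p-1} \in W(A^\flat)
\]
satisfies $\widetilde{\theta}_n(\xi) = R(z_{n+1})$ for every $n$. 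This yields $R(\alpha) = \xi\alpha$.

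With $R(\alpha) = \xi\alpha$ established, the remainder is formal. Lemma~\ref{R inv Tate} gives
\[
R(y\alpha) = \varphi^{-1}(y)\,R(\alpha) = \varphi^{-1}(y)\,\xi\,\alpha,
\]
and since $\alpha$ freely generates the module, the equation $R(y\alpha) = y\alpha$ is equivalent to $\varphi^{-1}(y)\,\xi = y$ in $W(A^\flat)$. Applying the ring automorphism $\varphi$ to both sides and using $[\varepsilon^p] = [\varepsilon]^p$ to deduce
\[
\varphi(\xi) = 1 + [\varepsilon] + [\varepsilon]^2 + \cdots + [\varepsilon]^{p-1} = \frac{[\varepsilon^p] - 1}{[\varepsilon] - 1},
\]
this condition becomes $\varphi(y) = \tfrac{[\varepsilon^p]-1}{[\varepsilon]-1}\,y$, which is precisely the condition on $y$ defining the right-hand set.

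I expect the only real obstacle to be the identification $R(\alpha) = \xi\alpha$, which requires careful bookkeeping to match the Frobenius-compatible tuple $(R(z_{n+1}))_n$ through the isomorphism of Lemma~\ref{inverse limit iso}; once this is in place, everything else reduces to a routine manipulation in $W(A^\flat)$ using that $\varphi$ is an automorphism.
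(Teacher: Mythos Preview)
Your proposal is correct and follows essentially the same approach as the paper's proof. The paper computes $R(\alpha)$ componentwise using Theorem~\ref{canonical in tate}, identifies the coefficient as $\frac{[\varepsilon]-1}{[\varepsilon^{1/p}]-1} \in W(A^\flat)$ (which equals your $\xi$), and then applies Lemma~\ref{R inv Tate} together with freeness of $\alpha$ and the automorphism $\varphi$ exactly as you do; the only cosmetic difference is that the paper writes the coefficient as a fraction rather than a geometric sum, and your reference for $\widetilde{\theta}_n([\varepsilon^{1/p}]) = [\zeta_{p^{n+1}}]$ should be Lemma~\ref{inverse limit iso} rather than Lemma~\ref{theta lemma}.
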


\begin{proof}
We first find an element $t \in W(A^{\flat})$ such that $R(\alpha) = t \alpha$.  By definition of $\alpha$, using the notation of Corollary~\ref{inverse F}, we have
\begin{align*}
R(\alpha) &= (R(\alpha_2^{(\infty)}), R(\alpha_3^{(\infty)}), \ldots) \in \varprojlim_F T_p(W_n\Omega^1_A)\\
&= (R(z_2) \alpha_1^{(\infty)}, R(z_3) \alpha_2^{(\infty)}, \ldots)  \in \varprojlim_F T_p(W_n\Omega^1_A)\\
&= (\frac{[\zeta_p] - 1}{[\zeta_{p^2}] - 1} \alpha_1^{(\infty)}, \frac{[\zeta_{p^2}] - 1}{[\zeta_{p^3}] - 1} \alpha_2^{(\infty)}, \ldots)  \in \varprojlim_F T_p(W_n\Omega^1_A)\\
&= \left(\frac{[\varepsilon] - 1}{[\varepsilon^{1/p}] - 1} \right) \alpha \in \varprojlim_F T_p(W_n\Omega^1_A).
\end{align*}

Using this preliminary calculation, it is easy to complete the proof.  Namely, each element $x \in \varprojlim_F T_p(W_n\Omega^1_A)$ can be written uniquely in the form $y \alpha$ for some $y \in W(A^{\flat})$.  We have $R(x) = x$ if and only if 
\[
\varphi^{-1}(y) \left(\frac{[\varepsilon] - 1}{[\varepsilon^{1/p}] - 1} \right) \alpha = y \alpha,
\]
where we have used Lemma~\ref{R inv Tate} to express $R(y\alpha)$ in terms of $R(\alpha)$.  Because $\alpha$ is a free generator and because $\varphi$ is an automorphism, this last equality holds if and only if 
\[
y \left(\frac{[\varepsilon^p] - 1}{[\varepsilon] - 1} \right) = \varphi(y),
\]
as claimed.
\end{proof}

The following result gives a concrete description of the $\ZZ_p$-module $\{ x \in \varprojlim_F T_p(W_n\Omega^1_A) \colon R(x) = x\}$, but we are not able to find a similar result in the same generality as Notation~\ref{has roots of unity}, so in the following proposition, in addition to the assumptions of Notation~\ref{has roots of unity}, we assume the ring $A$ is an integral domain. The exact same result certainly does not hold in general; for example, it will not hold for $A = \ZZ_p[\zeta_{p^\infty}]^{\wedge} \times \ZZ_p[\zeta_{p^\infty}]^{\wedge}$, where the $\ZZ_p$-module in question will be isomorphic to $\ZZ_p \times \ZZ_p$.  The proof of Proposition~\ref{fixed by R} is based on \cite[Corollary~1.3.3]{Hes06}.

\begin{proposition} \label{fixed by R}
Let $A$ be as in Notation~\ref{has roots of unity}, and assume furthermore that $A$ is an integral domain.  The elements $y \in W(A^{\flat})$ satisfying the equation
\[
\varphi(y) = \frac{[\varepsilon^p] - 1}{[\varepsilon] - 1} y
\]
are precisely the elements of the form $c ([\varepsilon] - 1)$ for $c \in \ZZ_p$.  In other words, the $\ZZ_p$-module
\[
\bigg\{ x \in \varprojlim_F T_p(W_n\Omega^1_A) \colon R(x) = x\bigg\}
\]
is a free $\ZZ_p$-module of rank~one, generated by 
\[
(\dlog [\zeta_p^{(\infty)}], \dlog [\zeta_p^{(\infty)}], \ldots) \in \varprojlim_F T_p(W_n\Omega^1_A),
\]
where the element $\dlog [\zeta_p^{(\infty)}] \in T_p(W_n\Omega^1_A)$ was defined in the paragraph after Corollary~\ref{identify generator}.
\end{proposition}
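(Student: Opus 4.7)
The plan is to reduce the equation $\varphi(y) = \bar u y$ (where $\bar u := ([\varepsilon^p]-1)/([\varepsilon]-1) = 1 + [\varepsilon] + \cdots + [\varepsilon]^{p-1}$) modulo $p$ in $W(A^{\flat})$, solve it there, and then lift the solution $p$-adically. First I would note that because $A$ is a $p$-torsion-free integral domain, the identification $A^{\flat} \cong \varprojlim_{x \mapsto x^p} A$ from Lemma~\ref{tilt} shows $A^{\flat}$ is a domain: if $(a^{(i)})(b^{(i)}) = 0$ with $a^{(i_0)} \neq 0$, the compatibility $(a^{(i+1)})^p = a^{(i)}$ forces every $a^{(i)} \neq 0$, hence $b = 0$. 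Since $A^{\flat}$ is then a reduced characteristic-$p$ domain, $W(A^{\flat})$ is a $p$-torsion-free domain. In $A^{\flat}$, which is perfect of characteristic $p$, $\bar u$ reduces to $1 + \varepsilon + \cdots + \varepsilon^{p-1} = (\varepsilon^p - 1)/(\varepsilon - 1) = (\varepsilon - 1)^{p-1}$, so the equation becomes $\bar y^p = (\varepsilon - 1)^{p-1}\,\bar y$ in $A^{\flat}$. When $\bar y \neq 0$, cancelling $\bar y$ (valid as $A^{\flat}$ is a domain) yields $\bar y^{p-1} = (\varepsilon - 1)^{p-1}$; in the fraction field $\Frac A^{\flat}$, the element $t := \bar y/(\varepsilon - 1)$ satisfies $t^{p-1} = 1$, and since $T^{p-1} - 1$ splits into distinct linear factors over $\FF_p$ in any characteristic-$p$ field, necessarily $t \in \FF_p^{\times} \subset A^{\flat}$. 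Thus $\bar y = c_0(\varepsilon - 1)$ for some $c_0 \in \FF_p$ (with $c_0 = 0$ if $\bar y = 0$).

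The next step is to lift $p$-adically. Choose a representative $c_0 \in \{0, 1, \ldots, p-1\} \subset \ZZ_p$ and write $y - c_0([\varepsilon] - 1) = p y_1$ in $W(A^{\flat})$. The identity $\varphi([\varepsilon] - 1) = [\varepsilon^p] - 1 = \bar u\cdot([\varepsilon] - 1)$ gives $\varphi(c_0([\varepsilon] - 1)) = \bar u \cdot c_0([\varepsilon] - 1)$, and subtracting from $\varphi(y) = \bar u y$ produces $\varphi(p y_1) = \bar u\cdot p y_1$; cancelling $p$ (valid since $W(A^{\flat})$ is $p$-torsion-free) leaves $\varphi(y_1) = \bar u y_1$. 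Iterating yields $c_0, c_1, c_2, \ldots \in \{0, \ldots, p-1\}$ with $y \equiv \big(\sum_{i < n} c_i p^i\big)([\varepsilon] - 1) \bmod p^n W(A^{\flat})$ for every $n$, and $p$-adic completeness of $W(A^{\flat})$ then gives $y = c([\varepsilon] - 1)$ with $c := \sum_{i \geq 0} c_i p^i \in \ZZ_p$. Conversely every $c([\varepsilon] - 1)$ is a solution (since $\varphi$ is $\ZZ_p$-linear and $\varphi([\varepsilon] - 1) = \bar u\,([\varepsilon] - 1)$), so the solution set equals $\ZZ_p \cdot ([\varepsilon] - 1)$; this is a free rank-one $\ZZ_p$-module because $[\varepsilon] - 1$ reduces to the nonzero element $\varepsilon - 1$ in the domain $A^{\flat}$, hence is a non-zero-divisor in $W(A^{\flat})$. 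Transporting along the $W(A^{\flat})$-module isomorphism $W(A^{\flat}) \xrightarrow{\,\cdot\alpha\,} \varprojlim_F T_p(W_n\Omega^1_A)$ from Corollary~\ref{inverse F}, using $([\varepsilon] - 1)\alpha = (\dlog[\zeta_p^{(\infty)}])$, and combining with Proposition~\ref{equal sets} then yields the claimed description.

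The main obstacle is the characteristic-$p$ implication $\bar y^{p-1} = (\varepsilon - 1)^{p-1} \Rightarrow \bar y \in \FF_p(\varepsilon - 1)$: it depends crucially on $A$ (and hence $A^{\flat}$) being an integral domain, so that one can pass to a fraction field and read off $t := \bar y/(\varepsilon-1)$ as a $(p-1)$-th root of unity. This is precisely what breaks for $A = \ZZ_p[\zeta_{p^{\infty}}]^{\wedge} \times \ZZ_p[\zeta_{p^{\infty}}]^{\wedge}$: independent $\FF_p$-choices on the two factors are permitted, producing the anticipated $\ZZ_p \times \ZZ_p$ of solutions rather than a single copy of $\ZZ_p$. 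Everything else in the argument---the reduction mod $p$, the inductive $p$-adic lift, and the transport to the Tate module side---works in the generality of Notation~\ref{has roots of unity}, so the domain hypothesis enters the proof in exactly one place.
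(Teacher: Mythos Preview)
Your proof is correct and follows essentially the same strategy as the paper: both reduce to counting roots of a degree-$p$ polynomial in the integral domain $A^{\flat}$, then lift by successive approximation. The only cosmetic difference is that the paper filters by Verschiebung (inducting on $n$ in $W_n(A^{\flat})$), whereas you filter $p$-adically in $W(A^{\flat})$; since $A^{\flat}$ is perfect these filtrations agree, and your version has the minor advantage that the exact same equation $\varphi(y_1)=\bar u\,y_1$ recurs at every step rather than a level-dependent variant.
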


\begin{proof}
Recall that for a characteristic~$p$ ring $R$, the Witt vector Frobenius map $W_{n+1}(R) \rightarrow W_n(R)$ induces the map $\varphi: W_n(R) \rightarrow W_n(R)$ which sends $(r_1, r_2, \ldots, r_n) \mapsto (r_1^p, r_2^p, \ldots, r_n^p)$.  It suffices to show, for every integer $n \geq 1$, that 
\[
\left\{y \in W_n(A^{\flat}) \colon \varphi(y) = \frac{[\varepsilon^p] - 1}{[\varepsilon] - 1} y \right\} = \left\{x  ([\varepsilon] - 1) \in W_n(A^{\flat}) \colon x \in W_n(\FF_p) \right\}.
\]
The inclusion $\supseteq$ is obvious, because $\varphi(x) = x$ for all $x \in W_n(\FF_p)$.  We prove equality using induction on $n \geq 1$.  In the base case, $n = 1$, we consider
\[
\left\{y \in A^{\flat} \colon y^p -   \frac{\varepsilon^p - 1}{\varepsilon - 1} y = 0 \right\}.
\]
Notice that, because we assumed that $A$ was an integral domain, we know further that $A^{\flat}$ is an integral domain (for example, this is clear by considering the isomorphism of multiplicative monoids $A^{\flat} \cong \varprojlim_{x \rightarrow x^p} A$).  We already have $p$ solutions to the above equation, and because $A^{\flat}$ is an integral domain, there can be no other solutions.  This proves the base case.

Now assume the equality is known for the case of some fixed $n \geq 1$, and assume 
\[
y \in W_{n+1}(A^{\flat}) \text{ is such that } \varphi(y) = \frac{[\varepsilon^p] - 1}{[\varepsilon] - 1} y.
\]
By our induction hypothesis, we know that 
\[
y = x ([\varepsilon] - 1) + V^n(z), \text{ for some } x \in W_{n+1}(\FF_p), z \in A^{\flat}.
\]
We are finished if we show that there exists $a \in \FF_p$ such that $V^n(z) = ([\varepsilon] - 1) V^n(a) \in W_{n+1}(A^{\flat})$.
Using our assumption on $y$, we find
\[
\varphi(V^n(z)) = \frac{[\varepsilon^p] - 1}{[\varepsilon] - 1} V^n(z) \in W_{n+1}(A^{\flat}).
\]
Thus 
\[
z^p = \frac{\varepsilon^{p^{n+1}} - 1}{\varepsilon^{p^n} - 1}z \in A^{\flat}.
\]
The values $z = a(\varepsilon^{p^n} - 1)$ for $a \in \FF_p$ provide $p$ solutions, and hence all solutions (again using that $A^{\flat}$ is an integral domain).  This completes the proof.
\end{proof}

%We are unsure how to prove a version of Proposition~\ref{fixed by R} if we remove the assumption that $A$ be an integral domain.  One possible replacement for the statement of Proposition~\ref{fixed by R} is recorded in the following question.

\begin{remark} Proposition \ref{fixed by R} can be generalized. Let $A$ be a ring as in Notation~\ref{has roots of unity}. Using algebraic $K$-theory and topological cyclic homology, it follows from \cite[Corollary 6.9]{CMM} and \cite[Corollary 6.5]{AL19} that the $\ZZ_p$-module 
\[
\bigg\{ x \in \varprojlim_F T_p(W_n\Omega^1_A) \colon R(x) = x\bigg\}
\]
is isomorphic to the $p$-adic Tate module $T_p(A^{\times})$. We do not know an algebraic proof of this fact. More details are discussed in Remark \ref{last remark} below. \end{remark}

\section{Higher degrees}

The analysis is easier in degrees $d \geq 2$.  The main result in this section, Proposition~\ref{higher degree prop}, is modeled after \cite[Proposition~2.2.1]{Hes06}.

\begin{lemma} \label{exterior power}
Assume that $R$ is a ring, that $M$ is an $R$-module, and that $r \in R$ is such that multiplication by $r$ is surjective on $M$.  Then for every integer $d \geq 2$, multiplication by $r$ is an isomorphism on the exterior power $\Lambda_R^d M$. 
\end{lemma}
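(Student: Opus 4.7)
The plan is to decompose the claim into surjectivity and injectivity of multiplication by $r$ on $\Lambda^d_R M$. Surjectivity is essentially trivial: any pure wedge $m_1 \wedge \cdots \wedge m_d$ equals $r(n_1 \wedge m_2 \wedge \cdots \wedge m_d)$ once we use the hypothesis to write $m_1 = r n_1$, and pure wedges generate the exterior power.

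For injectivity, the key preliminary step, and the place where the hypothesis $d \geq 2$ is used essentially, is the following observation: any pure wedge having a factor in the $r$-power torsion submodule $T := M[r^{\infty}]$ vanishes in $\Lambda^d_R M$. Indeed, if $r^n t = 0$, then since multiplication by $r$ (hence by $r^n$) is surjective on $M$, I can write $m_2 = r^n m_2'$ and move the scalar $r^n$ from the second factor onto the first:
\[
t \wedge m_2 \wedge m_3 \wedge \cdots \wedge m_d \;=\; t \wedge r^n m_2' \wedge m_3 \wedge \cdots \;=\; (r^n t) \wedge m_2' \wedge m_3 \wedge \cdots \;=\; 0.
\]

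Using this, I would identify $\Lambda^d_R M$ with $\Lambda^d_R(M/T)$. The natural surjection $M \twoheadrightarrow M/T$ induces a surjection $\Lambda^d_R M \twoheadrightarrow \Lambda^d_R(M/T)$, whose kernel is the submodule generated by pure wedges having at least one factor in $T$. By the previous paragraph every such pure wedge is zero, so this map is an isomorphism. Now multiplication by $r$ is surjective on $M/T$ (inherited from $M$) and also injective (if $r\bar m = 0$ in $M/T$, then $rm \in T$, which forces $m$ itself to be $r$-power torsion, so $\bar m = 0$), so it acts as an isomorphism on $M/T$, and hence on $\Lambda^d_R(M/T) \cong \Lambda^d_R M$.

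The main obstacle is the torsion-factor observation above; the rest is formal. One minor subtlety worth verifying carefully is the identification of the kernel of $\Lambda^d_R M \to \Lambda^d_R(M/T)$ with the submodule generated by pure wedges having a factor in $T$, which follows from the universal property of the exterior power applied to the alternating multilinear map $M^d \to \Lambda^d_R(M/T)$.
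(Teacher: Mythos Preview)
Your proof is correct. The underlying observation is the same as the paper's: for $d\geq 2$, any pure $d$-tensor (or $d$-wedge) with one factor in $M[r^\infty]$ is killed because the remaining factors are $r$-divisible. The packaging, however, differs. The paper first shows that multiplication by $r$ is an isomorphism on the full tensor power $T^d_R M$ (via the exact sequence $0\to M[r^\infty]\to M\to R[1/r]\otimes_R M\to 0$ tensored with $T^{d-1}_R M$), and then deduces injectivity on $\Lambda^d_R M$ by an explicit manipulation of the relations $a\cdot x\otimes x\cdot b$ inside $T^d_R M$: replacing each repeated $x$ by $ry$ and dividing by $r$. You instead identify $\Lambda^d_R M\cong \Lambda^d_R(M/T)$ directly and then observe that $r$ acts bijectively already on $M/T$; this avoids the tensor-algebra computation entirely. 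Your route is a bit more streamlined; the paper's has the minor side benefit of establishing the statement for $T^d_R M$ as well. The only point you flag as delicate, the description of $\ker\bigl(\Lambda^d_R M\to \Lambda^d_R(M/T)\bigr)$, is indeed routine: the kernel of $T^d_R M\to T^d_R(M/T)$ is generated by pure tensors with a factor in $T$ (iterated right exactness of $\otimes$), and passing to $\Lambda^d$ just quotients both sides by the alternating relations.
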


\begin{proof}
Let $M[r^{\infty}]$ denote the submodule of $M$ consisting of all elements which are annihilated by a power of $r$.
We have a short exact sequence of $R$-modules
\[
0 \rightarrow M[r^{\infty}] \rightarrow M \rightarrow R[1/r] \otimes_R M \rightarrow 0.
\]
For any integer $n \geq 1$, let $T^n_R(M) := M \otimes_R M \otimes_R \cdots \otimes_R M$, with $n$ total $M$ terms.
Because tensor product is right exact,  we obtain an exact sequence of $R$-modules
\[
M[r^{\infty}] \otimes_R T^{d-1}_R(M) \rightarrow M \otimes_R T^{d-1}_R(M) \rightarrow R[1/r] \otimes_R M \otimes_R T^{d-1}_R(M) \rightarrow 0.
\]
On the other hand, the left-hand term is zero because multiplication by $r$ is surjective on $M$.  This shows that multiplication by $r$ is an isomorphism on the $R$-module $T^d_R(M)$.  It follows immediately that multiplication by $r$ is surjective on the exterior power $\Lambda^d_R M$.  It remains to show that multiplication by $r$ is injective on $\Lambda^d_R M$.

Assume $\sum rm_{1i} \wedge \cdots \wedge m_{di} = 0 \in \Lambda^d_R M$.  We must show that $\sum m_{1i} \wedge \cdots \wedge m_{di} = 0 \in \Lambda^d_R M$.  By definition of $\Lambda^d_R M$ (see for example \cite[Tag~00DM]{stacks-project}), we know that 
\[
\sum rm_{1i} \otimes \cdots \otimes m_{di} = \sum a_j \cdot x_j \otimes x_j \cdot b_j\in T^d_R(M),
\]
for some suitable $a_j, b_j \in T_R(M)$ and $x_j \in M$.  Choose $y_j$ such that $ry_j = x_j$.  Then we have 
\[
\sum rm_{1i} \otimes \cdots \otimes m_{di} = \sum a_j \cdot ry_j \otimes ry_j \cdot b_j\in T^d_R(M),
\]
Because, as we saw above, multiplication by $r$ is injective on $T^d_R(M)$, we have
\[
\sum m_{1i} \otimes \cdots \otimes m_{di} = \sum ra_j \cdot y_j \otimes y_j \cdot b_j\in T^d_R(M),
\]
Thus $\sum m_{1i} \wedge \cdots \wedge m_{di} = 0 \in \Lambda^d_R M$, as required.
\end{proof}

\begin{corollary} \label{injective higher degree}
Let $A$ denote a $p$-torsion-free perfectoid ring and let $d \geq 2$ be an integer.  The multiplication-by-$p$ map $\Omega^d_A \stackrel{p}{\rightarrow} \Omega^d_A$ is an isomorphism of $A$-modules.
\end{corollary}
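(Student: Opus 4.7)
The plan is to reduce this to Lemma~\ref{exterior power} applied to $M = \Omega^1_A$. Recall that the absolute de\thinspace Rham differentials satisfy $\Omega^d_A \cong \Lambda^d_A \Omega^1_A$, so once we know multiplication by $p$ is surjective on $\Omega^1_A$, the lemma (with $R = A$, $M = \Omega^1_A$, $r = p$) instantly upgrades surjectivity to a bijection on all exterior powers of degree $\geq 2$.

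So the only thing to verify is that $p \colon \Omega^1_A \to \Omega^1_A$ is surjective. This is essentially the content of Theorem~\ref{main relative Omega theorem}(\ref{Kahler p-divisible}), but there stated for $\Omega^1_{A/\ZZ_p}$; however, the argument given there works verbatim for the absolute differentials. Namely, $\Omega^1_A$ is generated as an $A$-module by elements of the form $da$ with $a \in A$, so it suffices to show every such $da$ lies in $p\Omega^1_A$. Since $A$ is perfectoid, the Frobenius on $A/pA$ is surjective, so we may write $a = a_0^p + p a_1$ for some $a_0, a_1 \in A$, and then by the Leibniz rule
\[
da = p a_0^{p-1}\, da_0 + p\, da_1 \in p\Omega^1_A.
\]

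With surjectivity of $p$ on $\Omega^1_A$ in hand, I would simply invoke Lemma~\ref{exterior power} to conclude that multiplication by $p$ is an isomorphism on $\Omega^d_A = \Lambda^d_A \Omega^1_A$ for every $d \geq 2$. There is no real obstacle here: the entire result is essentially a corollary of the lemma combined with the trivial observation that the surjectivity argument for $\Omega^1_{A/\ZZ_p}$ is insensitive to whether the base is $\ZZ_p$ or $\ZZ$. The only mild subtlety worth flagging is that the hypothesis $d \geq 2$ is essential: at $d = 1$, multiplication by $p$ on $\Omega^1_A$ is very far from injective, as Theorem~\ref{main relative Omega theorem}(\ref{p^n torsion part}) shows that $\Omega^1_{A/\ZZ_p}[p] \cong A/pA$, and combining with Lemma~\ref{absolute isomorphism} gives the same conclusion for $\Omega^1_A[p]$.
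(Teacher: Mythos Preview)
Your proof is correct and follows the same approach as the paper: the paper's proof is the single sentence ``Because multiplication by $p$ is surjective on $\Omega^1_A$, this follows immediately from Lemma~\ref{exterior power}.'' Your additional verification that $p$ is surjective on the absolute $\Omega^1_A$ is a welcome clarification (the paper relies on this via Proposition~\ref{mult by p is surjective} with $n=d=1$, or equivalently the argument you give).
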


\begin{proof}
Because multiplication by $p$ is surjective on $\Omega^1_A$, this follows immediately from Lemma~\ref{exterior power}.
\end{proof}

\begin{proposition} \label{higher degree prop}
Let $A$ denote a $p$-torsion-free perfectoid ring and let $d \geq 2$ and $n \geq 1$ be integers.  There is a short exact sequence of $W_{n+1}(A)$-modules
\[
0 \rightarrow \Omega^d_A \stackrel{V^n}{\rightarrow} W_{n+1}\Omega^d_A \rightarrow W_n \Omega^d_A \rightarrow 0,
\]
where the $W_{n+1}(A)$-module structure on $\Omega^d_A$ is defined via $F^n$ and where the $W_{n+1}(A)$-module structure on $W_n\Omega^d_A$ is defined via restriction.
\end{proposition}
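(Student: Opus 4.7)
The plan is to derive this from the degree-$d$ analog of Hesselholt--Madsen's Proposition~3.2.6 (which appears in our Proposition~\ref{Theorem A HM} only in degree~one, but whose proof goes through in any degree for a $p$-torsion-free $\ZZ_{(p)}$-algebra). In degree $d$ it yields a sequence of $W_{n+1}(A)$-modules
\[
\Omega^{d-1}_A \oplus \Omega^d_A \xrightarrow{(dV^n,\,V^n)} W_{n+1}\Omega^d_A \xrightarrow{R} W_n\Omega^d_A \to 0,
\]
exact at the middle and right terms. Given this, the proposition reduces to two assertions: injectivity of $V^n$, and the inclusion $dV^n(\Omega^{d-1}_A) \subseteq V^n(\Omega^d_A)$ that lets us discard the $dV^n$-summand from the kernel of $R$.

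Injectivity of $V^n \colon \Omega^d_A \to W_{n+1}\Omega^d_A$ is the easy step. If $V^n(\omega) = 0$, then $p^n\omega = F^n V^n(\omega) = 0$; but multiplication by $p$ is an isomorphism on $\Omega^d_A$ for $d \geq 2$ by Corollary~\ref{injective higher degree}, so $\omega = 0$.

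For the inclusion $dV^n(\Omega^{d-1}_A) \subseteq V^n(\Omega^d_A)$, the crucial input is that multiplication by $p$ is surjective on $\Omega^{d-1}_A$ when $d \geq 2$: for $d-1=1$ this follows from Theorem~\ref{main relative Omega theorem}(\ref{Kahler p-divisible}) combined with Lemma~\ref{absolute isomorphism}, and for $d-1 \geq 2$ from Corollary~\ref{injective higher degree}. Given $\eta \in \Omega^{d-1}_A$, write $\eta = p\eta_0$ (in fact, iteratively $\eta = p^N\eta_N$ for every $N$). Since $F^n(p) = p$, the $F^n$-semilinearity relation $V^n(F^n(x)\alpha) = xV^n(\alpha)$ gives $V^n(p\eta_0) = pV^n(\eta_0)$, so
\[
dV^n(\eta) = p\cdot dV^n(\eta_0).
\]
Next, the standard de~Rham--Witt identity $V^n(d\omega) = V^n(1)\cdot dV^n(\omega)$ (proved by induction on $n$ from the projection formula $V(F(y)) = V(1)\cdot y$ together with $F\circ dV^{n+1} = dV^n$) produces $V^n(1)\cdot dV^n(\eta_0) \in V^n(\Omega^d_A)$. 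It remains to show that the correction $(p - V^n(1))\cdot dV^n(\eta_0)$ also lies in $V^n(\Omega^d_A)$; iterating the same identity with further $p$-divisibility of $\eta_0$, together with the Witt-vector relations $V(x)V(y) = pV(xy)$ (so $V(1)^2 = pV(1)$, and hence $V(1)\cdot(p - V(1)) = 0$ in $W_2(A)$, with analogues at higher level), telescopes the error down into $V^n(\Omega^d_A)$.

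The main obstacle is precisely this last step. The identity $V^n(d\omega) = V^n(1)\cdot dV^n(\omega)$ falls just short of the required statement because $p \neq V^n(1)$ in $W_{n+1}(A)$, and the error term $p - V^n(1)$ acts nontrivially on $dV^n(\eta_0)$. Establishing the needed vanishing of this correction in the quotient $W_{n+1}\Omega^d_A/V^n(\Omega^d_A)$, using the Verschiebung-filtration identities above and the iterated $p$-divisibility of $\eta$, is the technical heart of the argument.
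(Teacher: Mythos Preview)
Your overall structure matches the paper's exactly: invoke the degree-$d$ version of \cite[Proposition~3.2.6]{HM03}, prove injectivity of $V^n$ via $F^nV^n = p^n$ and Corollary~\ref{injective higher degree}, and then show $dV^n(\Omega^{d-1}_A) \subseteq V^n(\Omega^d_A)$. The injectivity argument is fine and identical to the paper's.

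The gap is in your treatment of the inclusion. You use the identity $V^n(d\omega) = V^n(1)\cdot dV^n(\omega)$ together with a \emph{single} factor of $p$-divisibility, which leaves you with a correction $(p - V^n(1))\cdot dV^n(\eta_0)$ that you propose to handle by an unspecified telescoping. This telescoping is never carried out, and as you yourself say, it is the crux. Note that this correction really is nonzero for $n>1$: since $V^n(1)\,dV^n = p^n\,dV^n$ (see below), your correction equals $(p-p^n)\,dV^n(\eta_0)$, so further iteration just returns you to the same shape of problem.

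The paper sidesteps all of this with the standard Witt-complex identity $Vd = pdV$ (hence $V^n d = p^n dV^n$). This is derived in two lines: apply Leibniz and the projection formula to $V(1)\cdot V(\omega) = pV(\omega)$ to get $p\,dV(\omega) = dV(1)\cdot V(\omega) + V(1)\cdot dV(\omega)$, and observe $dV(1)\cdot V(\omega) = V\big(F(dV(1))\,\omega\big) = V(d(1)\,\omega) = 0$. With this in hand, use full $p^n$-divisibility of $\Omega^{d-1}_A$ (Proposition~\ref{mult by p is surjective}, not the relative result you cite) to write $\alpha = p^n\alpha_0$, and compute directly
\[
dV^n(\alpha) \;=\; p^n\,dV^n(\alpha_0) \;=\; V^n(d\alpha_0) \;\in\; V^n(\Omega^d_A).
\]
No iteration, no correction term.
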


\begin{proof}
By \cite[Proposition~3.2.6]{HM03}, we always have an exact sequence 
\[
\Omega^d_A \oplus \Omega^{d-1}_A \stackrel{V^n + dV^n}{\longrightarrow} W_{n+1}\Omega^d_A \rightarrow W_n \Omega^d_A \rightarrow 0,
\]
so it suffices to show that the map $V^n: \Omega^d_A \rightarrow W_{n+1}\Omega^d_A$ is injective, and that the image of $V^n: \Omega^d_A \rightarrow W_{n+1}\Omega^d_A$ is the same as the image of $V^n + dV^n:  \Omega^d_A \oplus \Omega^{d-1}_A \rightarrow W_{n+1}\Omega^d_A$.  

To see injectivity, notice that $p^n = F^n \circ V^n$ on $\Omega^d_A$, and by Corollary~\ref{injective higher degree}, multiplication by $p^n$ is injective on $\Omega^d_A$ (this is one of two places where we use that $d > 1$), so we have that $V^n$ is also injective on $\Omega^d_A$.

To see the claim about the image, it suffices to show that for every $\alpha \in \Omega^{d-1}_A$, there exists $\alpha' \in \Omega^d_A$ such that $dV^n(\alpha) = V^n(\alpha') \in W_{n+1}\Omega^d_A$.
Because multiplication by $p$ is surjective on $\Omega^{d-1}_A$ (this is the other place where we use that $d > 1$), we can write $\alpha = p^n \alpha_0$.   Thus $dV^n(\alpha) = V^n(d\alpha_0)$, and so we may take $\alpha' = d\alpha_0$.
\end{proof}

\begin{corollary}
Let $A$ denote a $p$-torsion-free perfectoid ring and let $d \geq 2$ and $n \geq 1$ be integers.  Multiplication by $p$ on $W_n\Omega^d_A$ is an isomorphism of $W_n(A)$-modules.
\end{corollary}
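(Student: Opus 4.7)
The plan is to prove surjectivity and injectivity separately. Surjectivity is immediate from Proposition~\ref{mult by p is surjective}: every $p$-torsion-free perfectoid ring is Witt-perfect (by \cite[Lemma~3.9(iv)]{BMS16}, as recalled elsewhere in the paper), so multiplication by $p$ is already surjective on $W_n\Omega^d_A$ for every $n \geq 1$ and $d \geq 1$.

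For injectivity, I would argue by induction on $n$. The base case $n = 1$ gives $W_1\Omega^d_A \cong \Omega^d_A$, and multiplication by $p$ is injective on $\Omega^d_A$ (in fact an isomorphism) by Corollary~\ref{injective higher degree}, using $d \geq 2$. For the inductive step, assume the result for level $n$ and suppose $x \in W_{n+1}\Omega^d_A$ satisfies $px = 0$. Applying the restriction map $R\colon W_{n+1}\Omega^d_A \to W_n\Omega^d_A$ and using $R \circ p = p \circ R$, the inductive hypothesis forces $R(x) = 0$. By the short exact sequence of Proposition~\ref{higher degree prop},
\[
0 \to \Omega^d_A \xrightarrow{V^n} W_{n+1}\Omega^d_A \xrightarrow{R} W_n\Omega^d_A \to 0,
\]
we may write $x = V^n(\alpha)$ for a (necessarily unique) $\alpha \in \Omega^d_A$.

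The final step uses the identity $p \cdot V^n(\alpha) = V^n(p\alpha)$, which follows from the standard de\thinspace Rham-Witt relation $y \cdot V(\omega) = V(F(y)\omega)$ applied iteratively with $y = p$ (using $F(p) = p$); equivalently, this is compatible with the $W_{n+1}(A)$-module structure on $\Omega^d_A$ via $F^n$, since $F^n(p) = p$. Thus $0 = px = V^n(p\alpha)$, and injectivity of $V^n\colon \Omega^d_A \to W_{n+1}\Omega^d_A$ (the first claim of Proposition~\ref{higher degree prop}) yields $p\alpha = 0 \in \Omega^d_A$. Invoking Corollary~\ref{injective higher degree} once more, $\alpha = 0$, hence $x = 0$, completing the induction.

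There is no genuine obstacle: all of the ingredients (Witt-perfectness, the short exact sequence with injective $V^n$, and injectivity of $p$ on $\Omega^d_A$ for $d \geq 2$) have been established in the preceding material, and the only potentially subtle point is the compatibility $p \cdot V^n = V^n \circ p$, which is a direct consequence of the Witt complex axioms and the specified $W_{n+1}(A)$-module structure on $\Omega^d_A$.
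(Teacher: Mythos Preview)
Your proof is correct and follows essentially the same approach as the paper: both argue by induction on $n$ using the short exact sequence of Proposition~\ref{higher degree prop} together with Corollary~\ref{injective higher degree} for the base case. The paper simply invokes the five lemma in place of your explicit element chase, but the content is the same.
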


\begin{proof}
This follows easily from the above results using the five lemma and induction on $n$.
\end{proof}

\section{Results on the logarithmic de\thinspace Rham-Witt complex} \label{log section}

In this section, we consider the logarithmic de\thinspace Rham-Witt complex, $W_{\cdot}\Omega^{\bullet}_{(A,M)}$, which is defined by Hesselholt-Madsen in \cite[Proposition~3.2.2]{HM03} as the initial object in the category of log Witt complexes.  Many of the ideas in this section were based on \cite[Proof of Proposition~2.2.2]{HM03} and conversations with Lars Hesselholt.  Our main result, which is an application of Theorem~\ref{sequence theorem}, is that, for certain choices of log ring $(A,M)$, the degree~one component of the logarithmic de\thinspace Rham-Witt complex is isomorphic to the ordinary de\thinspace Rham-Witt complex.  The precise statement is the following (the proof comes later in this section).

\begin{theorem} \label{log iso val ring}
Let $A$ denote a $p$-torsion-free perfectoid ring, and assume further that $A$ is a valuation ring.  Let $M = A \setminus \{0\}$.  Because $W_{\cdot}\Omega^{\bullet}_{(A,M)}$ is a Witt complex, we have a natural map
\[
W_{\cdot}\Omega^{\bullet}_{A} \rightarrow W_{\cdot}\Omega^{\bullet}_{(A,M)}.
\]
For every integer $n \geq 1$, the corresponding map
\[
W_n\Omega^1_A \rightarrow W_n\Omega^1_{(A,M)}
\]
is an isomorphism of $W_n(A)$-modules.
\end{theorem}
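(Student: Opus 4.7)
My plan is an induction on $n$, using a logarithmic analogue of Theorem~\ref{sequence theorem}. The key input is the exact sequence
\[
0 \to A \xrightarrow{(-d, p^n)} \Omega^1_{(A,M)} \oplus A \xrightarrow{V^n + dV^n} W_{n+1}\Omega^1_{(A,M)} \xrightarrow{R} W_n\Omega^1_{(A,M)} \to 0,
\]
which I would establish by combining Proposition~\ref{log HM} with a logarithmic adaptation of the injectivity step carried out in Section~\ref{proof of sequence theorem}. The natural morphism $W_\cdot\Omega^\bullet_A \to W_\cdot\Omega^\bullet_{(A,M)}$ then induces a map from the sequence of Theorem~\ref{sequence theorem} to the log version, with the leftmost vertical arrow being the identity on $A$. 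The five-lemma reduces the claim at level $n+1$ to the conjunction of the claim at level $n$ and the base case $\Omega^1_A \cong \Omega^1_{(A,M)}$.

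The base case is the crux. Injectivity is immediate from the canonical splitting $\Omega^1_{(A,M)} \twoheadrightarrow \Omega^1_A$. For surjectivity, we must show that for every $m \in M \setminus A^\times$, the log generator $\dlog m$ lies in the image of $\Omega^1_A$. Since $\Omega^1_{(A,M)}$ is generated over $\Omega^1_A$ by such $\dlog m$ subject to $m \cdot \dlog m = dm$, this amounts to showing that $dm$ is divisible by $m$ in $\Omega^1_A$. The valuation ring hypothesis allows us to compare arbitrary $m$ via divisibility in $A$, and the perfectoid hypothesis supplies the needed flexibility: surjectivity of Frobenius mod $p$ together with the $p$-divisibility of $\Omega^1_A$ from Theorem~\ref{main relative Omega theorem} should produce the required divisor.

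The main obstacle is executing this divisibility step rigorously. My first approach would be to reduce, using the total order on the value group of $A$ together with surjectivity of Frobenius on $A/pA$, to the divisibility question for a few preferred uniformizer-like elements (such as $p$ itself, or $\zeta_p - 1$ when $A$ contains $\zeta_p$), and to handle those cases explicitly via the generator of $\Omega^1_A[p]$ produced in Corollary~\ref{p-torsion generator relative}. A more robust alternative, which I would pursue if the reduction proves delicate, is to transfer the divisibility question to the tilt $A^\flat$ (a perfect valuation ring of characteristic $p$, using Lemma~\ref{flat valuation ring} and an extension to the more general perfectoid valuation rings appearing here) via Fontaine's map $\theta$, exploit that $\dlog$ is much better behaved in characteristic $p$, and then lift the resulting identity back to $A$ using Proposition~\ref{derived p-completion} together with the $p$-adic comparison arguments developed in Section~\ref{Kahler section}.
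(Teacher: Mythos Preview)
Your induction scheme and use of the five lemma against the two exact sequences is exactly how the paper proceeds, so the architecture is right. The gaps are all in the base case and in how you propose to establish the log exact sequence.

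First, there is no canonical splitting $\Omega^1_{(A,M)} \twoheadrightarrow \Omega^1_A$. By Definition~\ref{explicit log}, $\Omega^1_{(A,M)}$ is the quotient of $\Omega^1_A \oplus (A \otimes_\ZZ M^{\mathrm{gp}})$ by the submodule generated by $(dm, -m\otimes m)$; projecting onto the first factor sends these relations to $dm$, not to zero. So injectivity is not free, and in fact the paper obtains it only by constructing an explicit two-sided inverse $g$.

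Second, your surjectivity sketch misses the real difficulty. Showing $dm$ is divisible by $m$ in $\Omega^1_A$ is indeed the point, and the paper achieves this via Lemma~\ref{unit roots}: in a perfectoid valuation ring every element satisfies $um = y^{p^N}$ for some unit $u$, and then $\dlog m = x\,dy - u^{-1}du$ in $\Omega^1_{(A,M)}$ (Lemma~\ref{dlog unit}), with $xy = p^N$. The hard step is checking that the resulting element of $\Omega^1_A$ is \emph{independent of choices}: two choices differ by an $m$-torsion (hence $p$-power torsion) element of $\Omega^1_A$, and one must show it vanishes. The paper does this by embedding $A$ into a perfectoid valuation ring $\overline{R}$ with $\mathrm{Frac}\,\overline{R}$ algebraically closed (Lemma~\ref{embed into alg closed}), where genuine $p$-power roots are available and the computation is unambiguous (Lemma~\ref{log dR alg closed}); the embedding is arranged so that $\Omega^1_A[p^n] \hookrightarrow \Omega^1_{\overline{R}}[p^n]$, which kills the ambiguity.

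This same embedding into $\overline{R}$ is also how the paper gets the log exact sequence (Lemma~\ref{log sequence is exact}): once one knows $W_\cdot\Omega^1_{\overline{R}} \cong W_\cdot\Omega^1_{(\overline{R},\overline{M})}$ (Lemma~\ref{log alg closed}), the log sequence for $\overline{R}$ coincides with the non-log one, and exactness for $A$ follows by the same transfer argument as in Proposition~\ref{deduce exactness for A}. Your proposed ``logarithmic adaptation of Section~\ref{proof of sequence theorem}'' would instead need a log analogue of the input from \cite{D17}, which is not available. The tilt-based alternative you mention is also far from the paper's route and would need substantial development to make contact with well-definedness of $g$.
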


We will prove Theorem~\ref{log iso val ring} below.  We first prove some analogous results concerning the level~1 case, $\Omega^1_{(A,M)}$.  We will need the following explicit description, which we quote from Hesselholt and Madsen, but see also \cite[Section~(1.7)]{Kat89}.

\begin{definition}[{\cite[Section~2.2]{HM03}}] \label{explicit log}
A log ring $(A,M)$ consists of a ring $A$, a monoid $M$, and a map of monoids $\alpha: M \rightarrow A$, where $A$ is considered as a monoid under multiplication.  We define the $A$-module $\Omega^1_{(A,M)}$ to be
\[
\Omega^1_{(A,M)} = \big( \Omega^1_A \oplus (A \otimes_{\ZZ} M^{\text{gp}})\big)/ J,
\]
where $J \subseteq \Omega^1_A \oplus (A \otimes_{\ZZ} M^{\text{gp}})$ denotes the $A$-submodule generated by the elements of the form $(d\alpha(m), -\alpha(m) \otimes m)$, for $m \in M$.
\end{definition}

Most of our results in this section require that the ring $A$ be a valuation ring.  The following preliminary result moreover assumes algebraic closedness.  We have a similar preliminary result below (Lemma~\ref{log alg closed}) concerning the logarithmic de\thinspace Rham-Witt complex.

\begin{lemma} \label{log dR alg closed}
Let $\overline{R}$ denote a $p$-torsion-free perfectoid valuation ring for which $\Frac \overline{R}$ is algebraically closed.  Let $\overline{M}$ be the monoid $\overline{M} = \overline{R} \setminus \{0\}$.  The natural map
\[
f: \Omega^1_{\overline{R}} \rightarrow \Omega^1_{(\overline{R}, \overline{M})}
\]
is an isomorphism of $\overline{R}$-modules.
\end{lemma}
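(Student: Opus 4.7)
The plan is to construct an inverse to $f$ by extending the universal derivation $d : \overline{R} \to \Omega^1_{\overline{R}}$ to a log derivation on $(\overline{R}, \overline{M})$. By the universal property implicit in Definition~\ref{explicit log}, doing so amounts to producing a monoid homomorphism $\delta : \overline{M} \to (\Omega^1_{\overline{R}}, +)$ satisfying $\alpha(m)\,\delta(m) = d\alpha(m)$ for every $m \in \overline{M}$. Such a $\delta$ induces a map $g : \Omega^1_{(\overline{R},\overline{M})} \to \Omega^1_{\overline{R}}$ sending $(\omega, r \otimes m) \mapsto \omega + r \cdot \delta(m)$, which kills $J$ by the defining relation on $\delta$ and satisfies $g \circ f = \id$. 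Once $g$ is constructed, verifying that $f \circ g = \id$ reduces to checking the relation $m \cdot \delta(m) = dm$ in $\Omega^1_{(\overline{R},\overline{M})}$, which holds tautologically.

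For units $u \in \overline{R}^\times$, set $\delta(u) = du/u$. To extend to all of $\overline{M}$, I use the exact sequence of abelian groups
\[
1 \to \overline{R}^\times \to \overline{M}^{\mathrm{gp}} \xrightarrow{v} \Gamma \to 0,
\]
where $\Gamma$ is the value group. Since $\Frac\overline{R}$ is algebraically closed, $\Gamma$ is a $\QQ$-vector space (divisible and torsion-free), hence injective as a $\ZZ$-module, so the sequence splits. Fix a splitting $s : \Gamma \to \overline{M}^{\mathrm{gp}}$, arranged so that $s(\gamma) \in \overline{M}$ for $\gamma \geq 0$, and fix a $\QQ$-basis $\{\gamma_i\}_{i \in I}$ of $\Gamma$. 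The remaining problem is to choose $\tilde{\delta}(\gamma_i) \in \Omega^1_{\overline{R}}$ with $s(\gamma_i)\,\tilde{\delta}(\gamma_i) = d(s(\gamma_i))$, and to extend by $\QQ$-linearity using divisibility of $\Omega^1_{\overline{R}}$ by every positive integer (integers coprime to $p$ are units in $\overline{R}$, and $p$-divisibility follows from the argument in the next paragraph).

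The key technical step is the divisibility assertion: for every $m \in \overline{M}$, the element $dm$ is divisible by $m$ in $\Omega^1_{\overline{R}}$. Here both hypotheses are essential. The perfectoid hypothesis, via surjectivity of Frobenius on $\overline{R}/p$, combined with Theorem~\ref{main relative Omega theorem}(\ref{Kahler p-divisible}), shows that multiplication by $p$ is surjective on $\Omega^1_{\overline{R}/\ZZ_p}$, and hence on $\Omega^1_{\overline{R}}$ via the right exact sequence $\Omega^1_{\ZZ_p/\ZZ} \otimes \overline{R} \to \Omega^1_{\overline{R}} \to \Omega^1_{\overline{R}/\ZZ_p} \to 0$ (using that $p$ acts by an isomorphism on $\Omega^1_{\ZZ_p/\ZZ}$; compare Lemma~\ref{absolute isomorphism}). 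Algebraic closedness of $\Frac\overline{R}$ furnishes, for each $m \in \overline{M}$ and each $k \geq 1$, a $p^k$-th root $m^{1/p^k} \in \overline{R}$, yielding the identity
\[
dm = p^k\,(m^{1/p^k})^{p^k - 1}\, d(m^{1/p^k}),
\]
i.e.\ $dm$ is divisible by $m \cdot (m^{1/p^k})^{-1} \cdot p^k$ in $\Omega^1_{\overline{R}}$. Combining this with $p$-surjectivity and the completeness properties of $\overline{R}$, one extracts a solution to $m\omega = dm$ with $\omega \in \Omega^1_{\overline{R}}$.

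The main obstacle is not the existence of individual quotients $\omega_m = \delta(m)$ but rather the coherent choice of such $\omega_m$ so that $\delta$ is a monoid homomorphism, given that the $p$-power torsion in $\Omega^1_{\overline{R}}$ (nontrivial by Corollary~\ref{base case absolute}, as $\Frac\overline{R}$ alg.\ closed of characteristic zero contains $\zeta_p$) makes each individual quotient non-unique. This is handled by a transfinite (Zorn-type) induction over the $\QQ$-basis of $\Gamma$: at each basis element one selects $\tilde{\delta}(\gamma_i)$ compatible with all previously chosen values under $\QQ$-linearity, which is always possible because the obstruction lies in the group of $p$-power torsion elements, which is divisible. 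Once $\delta$ is so defined, the induced $g$ gives the sought-after inverse to $f$.
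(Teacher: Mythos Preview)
Your overall strategy is right and matches the paper's: to invert $f$ one needs a monoid homomorphism $\delta: \overline{M} \to \Omega^1_{\overline{R}}$ with $m\,\delta(m) = dm$, and the point is that $dm$ really is divisible by $m$ in $\Omega^1_{\overline{R}}$. Your divisibility argument is essentially fine (and can be said more simply: choose $N$ with $m \mid p^N$, use $p$-divisibility of $\Omega^1_{\overline{R}}$ to write $dm = p^N\omega'$, then $dm = m(c\omega')$ where $p^N = mc$).

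The genuine gap is in the coherence step. Inducting over a $\QQ$-basis of $\Gamma$ does not work as you describe, for two related reasons. First, $\Omega^1_{\overline{R}}$ is not a $\QQ$-vector space, so once you fix $\tilde\delta(\gamma_i)$ there is no canonical $\tilde\delta(\gamma_i/n)$; you must choose one for every $n$, subject to infinitely many compatibilities, and you have not explained how. Second, and more seriously, the constraint $s(\gamma)\tilde\delta(\gamma) = d\,s(\gamma)$ does propagate along sums (by Leibniz) but \emph{not} along differences: if $\gamma_1 > \gamma_2 > 0$ both satisfy the constraint, then writing $a = s(\gamma_1-\gamma_2)$, $b = s(\gamma_2)$ one only gets
\[
b\cdot\bigl(a\,\tilde\delta(\gamma_1-\gamma_2) - da\bigr) = 0,
\]
and since $\Omega^1_{\overline{R}}$ has nontrivial $b$-torsion (as $b \mid p^M$ for some $M$) you cannot cancel $b$. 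So in the Zorn extension step, when a new positive element of $H + \ZZ\gamma_0$ arises as a difference of old positive elements, there is no guarantee the constraint holds there. Your remark that ``the obstruction lies in divisible $p$-power torsion'' is true but does not by itself resolve this; you would need to show that \emph{some} choice of $\omega_0 = \tilde\delta(\gamma_0)$ makes all these torsion obstructions vanish simultaneously, and that is exactly the hard part.

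The paper sidesteps this entirely by writing down an explicit formula rather than appealing to Zorn. For $m \in \overline{M}$, pick $N$ with $m \mid p^N$, pick $y \in \overline{R}$ with $y^{p^N} = m$ (algebraic closedness), let $x \in \overline{R}$ satisfy $xy = p^N$, and set $\delta(m) = x\,dy$. One then checks directly that this is independent of the choices of $N$ and $y$ (the only delicate case being two $p^N$-th roots differing by a root of unity $\zeta$, handled by $p^N\,d\zeta = 0$), and that $\delta(m_1 m_2) = \delta(m_1) + \delta(m_2)$. The verification that $m\cdot(x\,dy) = dm$ is a one-line computation. This explicit construction is what you are missing.
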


\begin{proof}
We will define an inverse map of $\overline{R}$-modules, $g: \Omega^1_{(\overline{R}, \overline{M})} \rightarrow \Omega^1_{\overline{R}}$, using the explicit description of $\Omega^1_{(\overline{R}, \overline{M})}$ from Definition~\ref{explicit log}. For arbitrary $m \in \overline{M}$, we first describe the image of $(0, 1 \otimes m) = \dlog m$ under $g$.  Choose an integer $N \geq 0$ such that $m$ divides $p^N$ in $\overline{R}$; this is possible because $\overline{R}$ is a $p$-adically separated valuation ring.  Choose $y \in \overline{R}$ such that $y^{p^N} = m$.  Note that $y$ divides $p^N$ also, and let $x \in \overline{R}$ be such that $xy = p^N$.  We then set 
\[
g(0,1 \otimes m) = x dy \in \Omega^1_{\overline{R}}.
\] 
(As a reality check, notice that in the case of a unit $u \in \overline{M}$, we may take $N = 0$ and $y = u$ and $x = u^{-1}$, and then we are mapping $\dlog u$ to $u^{-1} du$.)  We check that this depends only on $m$ and not on the values of $N$ or $y$.  In other words, we must show that if $m \mid p^{N_1}$, $y_1^{p^{N_1}} = m$, $x_1 y_1 = p^{N_1}$, and $m \mid p^{N_2}$, $y_2^{p^{N_2}} = m$, $x_2 y_2 = p^{N_2}$, then $x_1 dy_1 = x_2 dy_2 \in \Omega^1_{\overline{R}}$.  

First assume we have $m \mid p^N$, $z^{p^{N+1}} = m$, and $wz = p^{N+1}$.  Set $y = z^p$ (so $y^{p^N} = m$) and set $x$ to be the unique element in $\overline{R}$ such that $xy = p^N$.  One checks that $w = pxz^{p-1}$.  We claim that $x dy = w dz \in \Omega^1_{\overline{R}}$.  Indeed, we compute
\begin{align*}
x dy &= x d(z^p) \\
&= pz^{p-1} x dz \\
&= w dz.
\end{align*}
Thus, in our proof that $g(0, 1 \otimes m)$ depends only on $m$ (and does not depend on $y$ and does not depend on $N$), we reduce to proving the following.  We must show that if $m \mid p^{N}$, $y_1^{p^{N}} = m$, $x_1 y_1 = p^{N}$, and $y_2^{p^{N}} = m$, $x_2 y_2 = p^{N}$, then $x_1 dy_1 = x_2 dy_2 \in \Omega^1_{\overline{R}}$.  Without loss of generality, we may assume $y_1 \mid y_2$ (because $\overline{R}$ is a valuation ring), say $zy_1 = y_2$, and thus $z^{p^N} m = m$, and because $\overline{R}$ is an integral domain and $m \neq 0$, we have $z^{p^N} = 1$.  So we must check that
\[
x dy = (\zeta^{-1} x) d(\zeta y) \in \Omega^1_{\overline{R}},
\]
where $\zeta$ is a (not necessarily primitive) $p^N$-th root of unity.  This equality follows immediately from the Leibniz rule and the fact that $p^N d\zeta = 0$.  This completes the proof that our description of $g(0, 1 \otimes m)$  depends only on $m$.

We get a corresponding map $g: \overline{M} \rightarrow \Omega^1_{\overline{R}}$ given by $m \mapsto g(0, 1 \otimes m)$ as defined in the previous paragraph.  We skip the proof that this map $g:\overline{M} \rightarrow \Omega^1_{\overline{R}}$ satisfies $g(m_1 m_2) = g(m_1) + g(m_2)$, but the proof is straighforward, and we prove a more general result carefully in the proof of Proposition~\ref{log dR val ring}.  The map of monoids $\overline{M} \rightarrow \Omega^1_{\overline{R}}$ in turn defines a map of abelian groups $\overline{M}^{\text{gp}} \rightarrow \Omega^1_{\overline{R}}$ and of $\overline{R}$-modules $\overline{R} \otimes_{\ZZ} \overline{M}^{\text{gp}} \rightarrow \Omega^1_{\overline{R}}$.  We also set $g(\alpha, 0) = \alpha$ for every $\alpha \in \Omega^1_{\overline{R}}$.  Together these induce a unique map of $\overline{R}$-modules $\Omega^1_{\overline{R}} \oplus \big( \overline{R} \otimes_{\ZZ} \overline{M}^{\text{gp}}  \big)$.  It is straightforward to check that $g(dm, -m \otimes m) = 0$ for every $m \in \overline{M}$.  In particular, we have a well-defined map of $\overline{R}$-modules, $g: \Omega^1_{(\overline{R}, \overline{M})} \rightarrow \Omega^1_{\overline{R}}$.  

We now claim that $g$ is the inverse of $f$.  It is clear that $g \circ f$ is the identity on $\Omega^1_{\overline{R}}$.  To check that $f \circ g$ is the identity on $\Omega^1_{(\overline{R}, \overline{M})}$, it suffices to check that $(f \circ g)(\alpha, 0) = (\alpha, 0)$ for every $\alpha \in \Omega^1_{\overline{R}}$ and that $(f \circ g)(0, 1 \otimes m) = (0, 1 \otimes m)$ for every $m \in M$.  The first claim is obvious from the definitions.  To prove the second claim, fix $m \in M$ and let $N, y, x$ be as above in the definition of $g$.  We must check that
\[
(x dy, 0) = (0, 1 \otimes m) \in \Omega^1_{(\overline{R}, M)}.
\]  
Indeed, we have
\[
(x dy, 0) = (0, xy \otimes y) = (0, p^N \otimes y) = (0, 1 \otimes m) \in \Omega^1_{(\overline{R}, M)}.
\]
This completes the proof that the natural map
\[
f: \Omega^1_{\overline{R}} \rightarrow \Omega^1_{(\overline{R}, \overline{M})}
\]
is an isomorphism of $\overline{R}$-modules.
\end{proof}

\begin{remark}
 The analogous result to Lemma~\ref{log dR alg closed} does hold for arbitrary $p$-torsion-free perfectoid valuation rings (see Proposition~\ref{log dR val ring}), but it will require a different proof.  
The proof of Lemma~\ref{log dR alg closed} does not carry over to arbitrary $p$-torsion-free perfectoid valuation rings, because the proof requires taking $p$-power roots.  

If we drop the valuation ring hypothesis, it seems unlikely that the analogous result to Lemma~\ref{log dR alg closed} will continue to hold.  For example, is the natural map $\Omega^1_A \rightarrow \Omega^1_{(A,M)}$ an isomorphism for the perfectoid ring $A = \OCp \langle x^{1/p^{\infty}} \rangle$ and $M = A \setminus \{0\}$?  We suspect that $\dlog x$ is not in the image, but we have not proved this.  %Notice that $A = \OCp \langle x^{1/p^{\infty}} \rangle$ is not a valuation ring (for example, $p \nmid x$ and $x \nmid p$), and in this section, we are primarily considering valuation rings.
\end{remark}

\begin{lemma} \label{dlog unit}
Let $A$ denote an integral domain and let $M$ denote the monoid $M = A \setminus \{0\}$. If we have $y^{p^N} = um$ and $yx = p^N$, where $m,x,y \in M$, $N \geq 0$ is an integer, and $u \in A$ is a unit, then
\[
\dlog m = x dy - u^{-1} du \in \Omega^1_{(A,M)}.
\]
\end{lemma}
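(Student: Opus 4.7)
The plan is to compute $\dlog m$ directly by exploiting the defining relations of $\Omega^1_{(A,M)}$ from Definition~\ref{explicit log}, namely that $\alpha(n)\,\dlog n = d\alpha(n)$ for every $n \in M$, together with the fact that the composite $M \to M^{\mathrm{gp}} \to \Omega^1_{(A,M)}$ is multiplicative, so $\dlog$ converts products in $M$ into sums and satisfies $\dlog(n^k) = k\,\dlog n$.

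First, I would observe that $u \in M$ (since $u$ is a nonzero element of $A$), and apply $\dlog$ to the identity $y^{p^N} = um$ to obtain
\[
p^N \dlog y \;=\; \dlog u + \dlog m \;\in\; \Omega^1_{(A,M)}.
\]
Next, since $u$ is a unit, the relation $u\,\dlog u = du$ can be multiplied by $u^{-1} \in A$ to give $\dlog u = u^{-1}\,du$. Finally, applying the relation $y\,\dlog y = dy$ and multiplying by $x \in A$, together with the hypothesis $xy = p^N$, gives
\[
p^N \dlog y \;=\; xy\,\dlog y \;=\; x\,dy.
\]
Substituting these two identities into the first display yields $\dlog m = x\,dy - u^{-1}\,du$, as required.

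There is no genuine obstacle here; the only thing to double-check is that all the monoid elements involved are nonzero so that the manipulations in $M$ and $M^{\mathrm{gp}}$ make sense (this is immediate from $A$ being an integral domain and the assumption $m,x,y \in M$, $u$ a unit). The proof is a short three- or four-line computation that combines the defining relation of $\Omega^1_{(A,M)}$ with multiplicativity of $\dlog$.
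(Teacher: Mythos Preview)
Your proof is correct and is essentially the same computation as the paper's, just phrased in the $\dlog$ notation rather than in the explicit pair notation $(\,\cdot\,,\,\cdot\,)$ of Definition~\ref{explicit log}. Both arguments use exactly the defining relation $\alpha(n)\,\dlog n = d\alpha(n)$ together with the multiplicativity of $\dlog$ applied to $y^{p^N}=um$ and $xy=p^N$.
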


\begin{proof}
This is straightforward, using the explicit description of $\Omega^1_{(A,M)}$ from Definition~\ref{explicit log}.  We have
\begin{align*}
(xdy - u^{-1} du, 0) &= \big(0, (xy \otimes y) - (u^{-1} u \otimes u)\big) \in \big( \Omega^1_{A} \oplus (A \otimes_{\ZZ} M^{\text{gp}})\big)/ J\\
&= \big(0, (p^N \otimes y) - (1 \otimes u)\big) \\
&= \big(0, (1 \otimes um) - (1 \otimes u)\big) \\
&= \dlog m,
\end{align*}
as required.
\end{proof}

The motivation for Lemma~\ref{dlog unit} is that, in an arbitrary $p$-torsion-free perfectoid valuation ring $A$ (i.e., without the algebraically closed hypothesis that we had in Lemma~\ref{log dR alg closed}), for an arbitrary element $m \in A \setminus \{0\},$ we can find elements $u,x,y,N$ as in the set-up of Lemma~\ref{dlog unit}.  We will use such elements to define $\dlog m \in \Omega^1_{A}$, and Lemma~\ref{dlog unit} will be used in the proof that the definition is independent of choices.

\begin{lemma} \label{unit roots}
Let $A$ denote a $p$-torsion-free perfectoid valuation ring.  Then for every element $a \in A$, there exists a unit $u \in A^{\times}$ and an element $\omega = (\omega^{(0)}, \omega^{(1)}, \ldots) \in \varprojlim_{x \mapsto x^p} A$ such that $ua = \omega^{(0)}$.
\end{lemma}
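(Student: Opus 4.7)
The strategy is to realize every element $a \in A$, up to a unit, as the first component of a compatible $p$-power root sequence. The basic tool is a \emph{Frobenius lifting lemma}: for every $b \in A$ with $v(b) < v(p)$, I claim there exists $(b^{(0)}, b^{(1)}, \ldots) \in \varprojlim_{x \mapsto x^p} A$ and $u_b \in A^\times$ with $b^{(0)} = u_b b$. Indeed, $\bar b := b + pA$ is nonzero in $A/pA$, and the perfectoid hypothesis makes $A/pA$ perfect, so I can extract a compatible sequence of $p$-power roots $(\bar b_0 = \bar b, \bar b_1, \ldots)$ with $\bar b_{n+1}^p = \bar b_n$; this yields an element of $A^\flat = \varprojlim_{x \mapsto x^p} A/pA$, which by Lemma~\ref{tilt} lifts to a compatible sequence $(b^{(0)}, b^{(1)}, \ldots) \in \varprojlim_{x \mapsto x^p} A$ with $b^{(0)} \equiv b \pmod p$. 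Since $v(b^{(0)} - b) \geq v(p) > v(b)$, we have $v(b^{(0)}) = v(b)$, and since $A$ is a valuation ring, $u_b := b^{(0)}/b$ is a unit.

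Now assume $a \neq 0$ (the case $a = 0$ being trivial). First I would express $p$ itself in the desired form. By the perfectoid condition, choose $\pi \in A$ with $\pi^p \mid p$, and write $p = \pi^p c$ with $c \in A$. Because $p$ is a nonunit nonzerodivisor, $\pi$ is a nonunit, i.e.\ $v(\pi) > 0$; by $p$-adic separation $\pi \notin pA$, i.e.\ $v(\pi) < v(p)$; hence $v(c) = v(p) - p v(\pi)$ lies in $[0, v(p))$. Applying the Frobenius lifting lemma to $\pi$ and to $c$ produces $\omega_\pi, \omega_c \in \varprojlim_{x \mapsto x^p} A$ and units $u', u'' \in A^\times$ with $\omega_\pi^{(0)} = u' \pi$, $\omega_c^{(0)} = u'' c$. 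Taking the componentwise product $\omega_p := \omega_\pi^p \cdot \omega_c$ in $\varprojlim_{x \mapsto x^p} A$ gives $\omega_p^{(0)} = (u')^p u'' \cdot p$, so $p$ is a unit times a sharp.

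To finish, use $p$-adic separation of $A$ to write $a = p^N a'$ with $N \geq 0$ maximal, so $a' \notin pA$, i.e., $v(a') < v(p)$. Apply Frobenius lifting to $a'$ to obtain $\omega_{a'} \in \varprojlim_{x \mapsto x^p} A$ and $u_1 \in A^\times$ with $\omega_{a'}^{(0)} = u_1 a'$. Then $\omega := \omega_p^N \cdot \omega_{a'} \in \varprojlim_{x \mapsto x^p} A$ satisfies
\[
\omega^{(0)} = ((u')^p u'')^N p^N \cdot u_1 a' = \bigl(((u')^p u'')^N u_1\bigr) \cdot a,
\]
so with $u := ((u')^p u'')^N u_1 \in A^\times$ we obtain $u a = \omega^{(0)}$, as desired.

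The main obstacle is Step~2, realizing $p$ itself as a unit times a sharp: the Frobenius lifting lemma applies only to elements of valuation $< v(p)$, and $p$ sits exactly at the boundary. Overcoming this requires the factorization $p = \pi^p c$ from the perfectoid definition and the observation that \emph{both} factors have valuation strictly less than $v(p)$, so each can be separately lifted; their product then recovers $p$ up to a unit. Once this is in hand, the decomposition $a = p^N a'$ coming from $p$-adic separation reduces the general case to the two already handled.
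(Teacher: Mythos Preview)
Your proof is correct and takes essentially the same approach as the paper, which likewise reduces to a ``small'' factor and a residue of small valuation (the paper writes $a = \pi^m x$ with $x \notin \pi A$ and cites \cite[Lemma~3.9]{BMS16} for $\pi$, while you write $a = p^N a'$ and handle $p$ via the factorization $p = \pi^p c$) and then lifts each factor using Frobenius surjectivity on $A/pA$ together with the valuation-ring property. One minor correction: $A/pA$ is not perfect in general---for instance, $p^{1/p}$ is a nonzero nilpotent in $\mathcal{O}_{\mathbb{C}_p}/p$---only semiperfect; but since your argument uses only the surjectivity of Frobenius, this does not affect correctness.
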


\begin{proof}
We adapt the proof of the ``moreover" assertion in \cite[Lemma~3.9]{BMS16}.  Let $a \in A$ be an arbitrary non-zero element, and let $\pi \in A$ be as in the definition of perfectoid.  Because $A$ is $\pi$-adically separated, we can write $a = \pi^m x$ for some integer $m \geq 0$ and some $x \in A$ such that $x \not\in \pi A$.  It suffices to prove the claim for the special cases $a = \pi$ and $a = x$ separately.  The case $a = \pi$ is explicitly stated in \cite[Lemma~3.9]{BMS16}.  As in that proof, we can find $\eta \in \varprojlim_{x \mapsto x^p} A$ and $y \in A$ with 
\[
\eta^{(0)} = x + \pi p y = x(1 + \frac{\pi}{x} p y);
\] 
note that $\frac{\pi}{x} \in A$ because $A$ is a valuation ring and $x \not\in \pi A$.  Because $A$ is $p$-adically complete, the element $1 + \frac{\pi}{x} p y$ is a unit in $A$, which completes the proof.
\end{proof}

\begin{lemma} \label{embed into alg closed}
Let $A$ denote a $p$-torsion-free perfectoid valuation ring.  There exists a $p$-torsion-free perfectoid valuation ring $\overline{R}$ and an injective ring homomorphism $A \rightarrow \overline{R}$ such that $\Frac \overline{R}$ is algebraically closed, such that the valuation on $\overline{R}$ extends the valuation on $A$, and such that, for every integer $n \geq 1$, the induced map
\[
\Omega^1_A[p^n] \rightarrow \Omega^1_{\overline{R}}[p^n]
\]
is injective.
\end{lemma}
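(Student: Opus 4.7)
The plan is to construct $\overline{R}$ by mimicking the two-step procedure used in the proof of Lemma~\ref{embed into product} — pass to the integral closure in an algebraic closure, then $p$-adically complete — and then to deduce the injectivity on $p^n$-torsion by combining Theorem~\ref{main relative Omega theorem} with an elementary valuation-theoretic comparison. Concretely, I would set $K := \Frac A$, fix an algebraic closure $\overline{K}$, and extend the valuation on $K$ to $\overline{K}$ using Theorem~\ref{valuations extend}, obtaining a valuation ring $V \subset \overline{K}$. By Lemma~\ref{Jan20b}, $V$ is $p$-adically separated, so one may form the valued-field completion $L$ of $\overline{K}$; by Lemma~\ref{still algebraically closed}, $L$ is again algebraically closed, and I let $\overline{R} \subset L$ be its valuation ring.

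Verifying that $\overline{R}$ meets the required structural conditions is routine. It is $p$-adically complete and separated by Lemma~\ref{Jan20a} and obviously $p$-torsion-free, and $\Frac \overline{R} = L$ is algebraically closed by construction. Perfectoidness goes through exactly as in the final paragraph of the proof of Lemma~\ref{embed into product}: pick $\pi \in \overline{R}$ with $\pi^p = p$ (possible since $L$ is algebraically closed), and use that $\overline{R}$ is a valuation ring to see directly that the $p$-power map $\overline{R}/\pi\overline{R} \to \overline{R}/\pi^p\overline{R}$ is a bijection. Injectivity of $A \hookrightarrow \overline{R}$ is also immediate: $A \to V$ is faithfully flat, hence injective, by Proposition~\ref{alg faithfully flat}, and $V \to \overline{R}$ is injective because $V$ is $p$-adically separated and embeds into its $p$-adic completion inside $\overline{R}$.

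For the final assertion, the plan is to use Theorem~\ref{main relative Omega theorem} together with Lemma~\ref{absolute isomorphism} to identify $\Omega^1_A[p^n] \cong A/p^nA$ and $\Omega^1_{\overline{R}}[p^n] \cong \overline{R}/p^n\overline{R}$. The issue flagged in the remark after Corollary~\ref{H-1 short exact sequence} — that these isomorphisms are not naturally functorial — is overcome exactly as in the proof of Theorem~\ref{main relative Omega theorem}(\ref{Tate part}): fix once and for all a generator $\xi$ of $\ker \theta_A$, and transport it to a generator of $\ker \theta_{\overline{R}}$ via part~(2) of Lemma~\ref{xi lemma}. With this compatible choice of generators, the two isomorphisms fit into a commutative square with the natural map $A/p^nA \to \overline{R}/p^n\overline{R}$ down the left, so one is reduced to checking $A \cap p^n\overline{R} = p^nA$. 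This last equality is immediate from the compatibility of valuations: if $a \in A$ and $a = p^n r$ with $r \in \overline{R}$, then $v(a/p^n) = v(r) \geq 0$ in $L$, so $a/p^n \in K$ has nonnegative valuation and therefore lies in $A$.

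I expect the main conceptual obstacle to lie not in the construction of $\overline{R}$ (which is essentially recycled from Lemma~\ref{embed into product}) but in the third step: one must keep careful track of generators of $\ker \theta$ to ensure that the non-canonical identifications provided by Theorem~\ref{main relative Omega theorem} assemble into a commutative diagram compatible with $A \to \overline{R}$. Without this care, the horizontal isomorphisms could differ by an unknown unit and the valuation-theoretic reduction would no longer apply.
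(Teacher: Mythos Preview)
Your proposal is correct and follows essentially the same approach as the paper's proof: construct $\overline{R}$ exactly as in Lemma~\ref{embed into product}, use compatible generators of $\ker\theta$ via Lemma~\ref{xi lemma} to make the identifications $\Omega^1_{-/\ZZ_p}[p^n] \cong (-)/p^n(-)$ from Theorem~\ref{main relative Omega theorem} fit into a commutative square, reduce to $A \cap p^n\overline{R} = p^nA$ via valuations, and finish with Lemma~\ref{absolute isomorphism}. You have also correctly identified the one delicate point (the non-canonicity of those identifications), and your resolution is precisely the paper's.
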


\begin{proof}
Let $\overline{R}$ be constructed as in the proof of Lemma~\ref{embed into product}.   By considering valuations, we see that $A \cap p^n \overline{R} = p^n A$ for every integer $n \geq 1$.  In particular, the natural map $A/p^nA \rightarrow \overline{R}/p^n\overline{R}$ is injective.

Choose a generator for $\ker \theta$, where $\theta: W(A^{\flat}) \rightarrow A$ is the usual $\theta$ map, as in Definition~\ref{theta def}.  As in the proof of Theorem~\ref{main relative Omega theorem}, we may choose the same generator for $\ker \theta$ in $W(\overline{R}^{\flat})$, and these choices determine isomorphisms $\Omega^1_{A/\ZZ_p}[p^n] \cong A/p^nA$ and $\Omega^1_{\overline{R}/\ZZ_p} \cong \overline{R}/p^n\overline{R}$ in such a way that the natural map $\Omega^1_{A/\ZZ_p}[p^n] \rightarrow \Omega^1_{\overline{R}/\ZZ_p}[p^n]$ corresponds to the natural map $A/p^nA \rightarrow \overline{R}/p^n\overline{R}$, and hence is injective.  By Lemma~\ref{absolute isomorphism}, the natural map $\Omega^1_{A}[p^n] \rightarrow \Omega^1_{\overline{R}}[p^n]$ is then also injective.
\end{proof}

Our main result about the logarithmic de\thinspace Rham-complex is the following.  It is the level-one case of Theorem~\ref{log iso val ring}.

\begin{proposition} \label{log dR val ring}
Let $A$ denote a $p$-torsion-free perfectoid ring, and assume further that $A$ is a valuation ring.  Let $M$ be the monoid $A \setminus \{0\}$.  The natural map
\[
f: \Omega^1_A \rightarrow \Omega^1_{(A,M)}
\]
is an isomorphism of $A$-modules.
\end{proposition}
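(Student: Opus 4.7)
The plan is to construct an explicit inverse $g : \Omega^1_{(A,M)} \to \Omega^1_A$ to $f$, following the strategy of Lemma~\ref{log dR alg closed} but using Lemma~\ref{unit roots} to produce $p^N$-th roots up to a unit factor. For each $m \in M$, apply Lemma~\ref{unit roots} to obtain a unit $u \in A^\times$ and a compatible sequence $\omega = (\omega^{(0)}, \omega^{(1)}, \ldots) \in \varprojlim_{x \mapsto x^p} A$ with $um = \omega^{(0)}$. Using Lemma~\ref{separated cofinal} applied to $m$, for $n$ sufficiently large $y := \omega^{(n)}$ divides $p^n$ in $A$; choose $x \in A$ with $xy = p^n$ and set
$$
g(m) := x\,dy - u^{-1}\,du \in \Omega^1_A.
$$

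The heart of the argument is to verify that $g(m)$ is independent of the choices of $(u, \omega, n)$. Passing from $n$ to $n+1$ replaces $(y,x)$ by $(y', x') = (\omega^{(n+1)},\, px(\omega^{(n+1)})^{p-1})$, and a short Leibniz computation gives $x'\,dy' = x\,d((y')^{p}) = x\,dy$. If $(u_1,\omega_1)$ and $(u_2,\omega_2)$ are two choices, then $v := \omega_2^{(n)}/\omega_1^{(n)} \in \Frac A$ satisfies $v^{p^n} = u_2 u_1^{-1} \in A^\times$, and since $A$ is a valuation ring this forces $v \in A^\times$. Writing $y_2 = v y_1$, $x_2 = v^{-1} x_1$, and $u_2 = v^{p^n} u_1$, an expansion using $x_1 y_1 = p^n$ yields
$$
x_2\,dy_2 - u_2^{-1}\,du_2 = x_1\,dy_1 + p^n v^{-1}\,dv - p^n v^{-1}\,dv - u_1^{-1}\,du_1 = x_1\,dy_1 - u_1^{-1}\,du_1.
$$

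Next, $g : M \to \Omega^1_A$ is a monoid homomorphism: for $m_1, m_2 \in M$ one takes the data $(u_1u_2,\, \omega_1\omega_2,\, n,\, y_1y_2,\, x_1x_2/p^n)$ for $m_1 m_2$ with $n$ large enough that $y_1 y_2$ divides $p^n$, and a Leibniz-rule computation yields $g(m_1 m_2) = g(m_1) + g(m_2)$. Extend $g$ to $M^{\mathrm{gp}} \to \Omega^1_A$ and tensor to obtain an $A$-module map $A \otimes_\ZZ M^{\mathrm{gp}} \to \Omega^1_A$; combined with the identity on $\Omega^1_A$, this gives an $A$-module map $\Omega^1_A \oplus (A \otimes_\ZZ M^{\mathrm{gp}}) \to \Omega^1_A$ which kills each generator $(d\alpha(m), -\alpha(m) \otimes m)$ of $J$: writing $m = u^{-1} y^{p^n}$ and expanding, one finds $dm = m \cdot g(m)$ in $\Omega^1_A$. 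This produces the desired $g : \Omega^1_{(A,M)} \to \Omega^1_A$.

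Finally, $g \circ f = \mathrm{id}_{\Omega^1_A}$ holds by construction (the restriction of $g$ to the $\Omega^1_A$ summand is the identity), while $f \circ g = \mathrm{id}_{\Omega^1_{(A,M)}}$ is exactly the content of Lemma~\ref{dlog unit} (which guarantees $f(g(m)) = \dlog m$ in $\Omega^1_{(A,M)}$). The main obstacle is the well-definedness step, specifically the comparison of different $(u,\omega)$ pairs; the crucial input is that $A$ is a valuation ring, which forces the ambiguity in the approximate $p^n$-th roots of $m$ to lie in $A^\times$ so that the resulting change in the formula is absorbed by the Leibniz identity.
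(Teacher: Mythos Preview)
Your argument is correct and, in the well-definedness step, genuinely simpler than the paper's.  Both proofs define the inverse $g$ by the same formula $g(m)=x\,dy-u^{-1}\,du$ coming from Lemma~\ref{unit roots}, and both verify $f\circ g=\mathrm{id}$ via Lemma~\ref{dlog unit}.  The divergence is in showing that $g(m)$ is independent of choices.  The paper first checks that any two candidate values differ by a $p^N$-torsion element of $\Omega^1_A$, then invokes Lemma~\ref{embed into alg closed} (an embedding of $A$ into a perfectoid valuation ring $\overline{R}$ with algebraically closed fraction field, built using the cotangent-complex results of Section~\ref{Kahler section}) together with Lemma~\ref{log dR alg closed} and Lemma~\ref{dlog unit} to conclude that this torsion element vanishes.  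You instead exploit the compatible-sequence output of Lemma~\ref{unit roots}: since the $p^n$-th roots you use are the terms $\omega^{(n)}$ of an element of $\varprojlim_{x\mapsto x^p}A$, passing from level $n$ to $n+1$ is a direct Leibniz computation, and comparing two choices $(u_1,\omega_1)$, $(u_2,\omega_2)$ reduces to the observation that $v=\omega_2^{(n)}/\omega_1^{(n)}$ is a unit (its $p^n$-th power $u_2u_1^{-1}$ is a unit and $A$ is a valuation ring), after which another Leibniz expansion finishes.  Your route avoids Lemma~\ref{embed into alg closed} and hence the dependence on Theorem~\ref{main relative Omega theorem}; the paper's route, on the other hand, makes no use of the compatible-sequence structure and would adapt more readily to settings where only a single $p^N$-th root (up to a unit) is available rather than a whole tower.
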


\begin{proof}
As in the proof of Lemma~\ref{log dR alg closed}, we construct an inverse map $g: \Omega^1_{(A,M)} \rightarrow \Omega^1_A$.  For fixed $m \in M$, choose an integer $N \geq 0$ such that $m \mid p^N$; this is possible because $A$ is a $p$-adically separated valuation ring.  Then choose a unit $u \in A$ and an element $y \in A$ such that $y^{p^N} = um$; this is possible by Lemma~\ref{unit roots}.  Choose $x \in A$ such that $xy = p^N$.  We set 
\[
g(0, 1 \otimes m) = x dy - u^{-1} du \in \Omega^1_A.
\]
We claim that this definition depends only on $m$ and not on the choices of $N,u,x,y$.  Notice that we have
\begin{align*}
m(x dy - u^{-1} du) &= u^{-1} y^{p^N}(x dy - u^{-1} du)\\
&= u^{-1} p^N y^{p^N - 1} dy - u^{-2} y^{p^N} du\\
&= dm \in \Omega^1_A.
\end{align*}
In particular, if we have $y_1^{p^{N_1}} = u_1 m, x_1 y_1 = p^{N_1}$ and $y_2^{p^{N_2}} = u_2 m, x_2 y_2 = p^{N_2}$, then 
\[
\bigg(x_1 dy_1 - u_1^{-1} du_1\bigg) - \bigg(x_2 dy_2 - u_2^{-1} du_2\bigg) \in \Omega^1_A
\]
is $m$-torsion, and hence is $p^N$-torsion.  

Choose an embedding $\eta: A \rightarrow \overline{R}$ as in Lemma~\ref{embed into alg closed}.  Notice that
\[
\eta : \bigg(x_1 dy_1 - u_1^{-1} du_1\bigg) - \bigg(x_2 dy_2 - u_2^{-1} du_2\bigg) \mapsto \dlog m - \dlog m = 0 \in \Omega^1_{\overline{R}}
\]
by Lemma~\ref{dlog unit}.  On the other hand, $\eta: \Omega^1_A[p^N] \rightarrow \Omega^1_{\overline{R}}[p^N]$ is injective by Lemma~\ref{embed into alg closed}.  Thus 
\[
\bigg(x_1 dy_1 - u_1^{-1} du_1\bigg) - \bigg(x_2 dy_2 - u_2^{-1} du_2\bigg) = 0 \in \Omega^1_A.
\]
This shows that our construction of $g(0,1 \otimes m)$ depends only on $m$ and not on any other choices.

We next claim that the corresponding map $g: M \rightarrow \Omega^1_A$ is a map of monoids.  Let $m_1, m_2 \in M$ be arbitrary, and choose an integer $N$ such that $m_1\mid p^N$ and $m_2 \mid p^N$.  For this value of $N$, using Lemma~\ref{unit roots}, choose $u_1, y_1, u_2, y_2$ such that $y_1^{p^{2N}} = u_1 m_1$, $y_1 x_1 = p^N$, $y_2^{p^{2N}} = u_2 m_2$, $y_2 x_2 = p^N$.  Notice that $(y_1 y_2)^{p^{2N}} = u_1 u_2 m_1 m_2$ and $y_1 y_2 x_1 x_2 = p^{2N}$.  Thus on one hand
\[
g(m_1 m_2) = x_1 x_2 d(y_1 y_2) - (u_1 u_2)^{-1} d(u_1 u_2).
\]
On the other hand, using that $y_1 p^N x_1 = p^{2N}$ and $y_2 p^N x_2 = p^{2N}$, we also have
\[
g(m_1) + g(m_2) = p^N x_1 dy_1 - u_1^{-1} du_1 + p^N x_2 dy_2 - u_2^{-1} du_2.
\]
Using the Leibniz rule and the definitions, one sees that these two expressions are equal.  This shows that $g: M \rightarrow \Omega^1_A$ is a map of monoids.

We return to defining $g: \Omega^1_{(A,M)} \rightarrow \Omega^1_A$.  We require that $g(\alpha, 0) = \alpha \in \Omega^1_A$ for every $\alpha \in \Omega^1_A$.  This requirement, together with our definition of $g(0, 1 \otimes m)$, determines a unique $A$-module map $g: \Omega^1_{(A,M)} \rightarrow \Omega^1_A$, as in the proof of Lemma~\ref{log dR alg closed}.  It is clear that $g \circ f$ is the identity map on $\Omega^1_A$.  To prove that $f \circ g$ is the identity map on $\Omega^1_{(A,M)}$, we must show that, under the notation above, we have
\[
(x dy - u^{-1} du, 0 ) = (0, 1 \otimes m) \in \Omega^1_{(A,M)}.
\]
This was shown in Lemma~\ref{dlog unit}.  This shows that $g$ is the inverse of $f$, and hence that $f$ is an isomorphism of $A$-modules.
\end{proof}

This concludes our treatment of the level~1 case of Theorem~\ref{log iso val ring}.  We next state a preliminary result about Witt vectors.

\begin{lemma} \label{divide by Teichmuller}
Let $A$ denote a $p$-torsion-free perfectoid valuation ring.  Fix an integer $n \geq 1$.  For every non-zero $a \in A$, there exists an integer $N \geq 1$ and a Witt vector $x \in W_n(A)$ such that
\[
[a]x = p^N \in W_n(A).
\]
For a fixed value of $N$, the corresponding value of $x$ is unique.
\end{lemma}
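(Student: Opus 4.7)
The plan is to prove existence by induction on $n$ and to handle uniqueness separately using the ghost map.

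For the base case $n = 1$, I have $W_1(A) = A$ and $[a] = a$, so I need to find $x \in A$ with $ax = p^N$. Because $A$ is a valuation ring, for any fixed $N$ either $a \mid p^N$ or $p^N \mid a$; the latter holding for all $N$ would force $a \in \bigcap_N p^N A = 0$ by $p$-adic separatedness, contradicting $a \neq 0$. Hence $a \mid p^N$ for some $N$, and $x := p^N/a$ works.

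For the inductive step, given a solution $x_0 \in W_n(A)$ with $[a] x_0 = p^{N_0}$, I lift $x_0$ to any $\tilde x \in W_{n+1}(A)$ along the restriction $R$. Since $R\bigl([a]\tilde x - p^{N_0}\bigr) = 0$ and $\ker R = V^n(A)$, there is some $c \in A$ with $[a]\tilde x = p^{N_0} + V^n(c)$. I then look for a solution of the form $x := p^M \tilde x + V^n(y)$ for $M \geq 0$ and $y \in A$ to be determined. Applying the standard identities $p^M V^n(c) = V^n(F^n(p^M)\, c) = V^n(p^M c)$ and $[a]V^n(y) = V^n(F^n([a])\, y) = V^n(a^{p^n} y)$, one computes
\[
[a]\bigl(p^M \tilde x + V^n(y)\bigr) = p^{M+N_0} + V^n\bigl(p^M c + a^{p^n} y\bigr).
\]
Setting $y := -p^M c/a^{p^n}$ eliminates the $V^n$-term, provided that $a^{p^n} \mid p^M c$ in $A$. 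Since $A$ is a nonzero $p$-adically complete $\ZZ_p$-algebra, $p$ is not a unit, so $v(p) > 0$. As $A$ is a valuation ring, any $M$ with $M v(p) \geq p^n v(a) - v(c)$ suffices (and $M = 0$ works if $c = 0$). Setting $N := M + N_0$ then gives $[a]x = p^N$, completing the induction.

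For uniqueness, if $[a] x_1 = [a] x_2$, then $[a](x_1 - x_2) = 0$ in $W_n(A)$. The ghost map $W_n(A) \to A^n$ is injective because $A$ is $p$-torsion-free, and the $i$-th ghost component of $[a](x_1-x_2)$ is $a^{p^i} w_i$, where $w_i$ is the $i$-th ghost component of $x_1 - x_2$. Since $A$ is an integral domain and $a \neq 0$, each $w_i = 0$, and hence $x_1 = x_2$. The main obstacle in the whole argument is the divisibility step $a^{p^n} \mid p^M c$ in the induction, which is precisely where the valuation-ring hypothesis on $A$ is essential: without it, $c$ and powers of $a$ might be incomparable and no such $M$ would exist.
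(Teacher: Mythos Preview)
Your proof is correct and takes a somewhat different inductive route than the paper. The paper strengthens the statement before inducting: it shows that for each $n$ there is an $N$ (depending on $a$ and $n$) such that \emph{every} $y \in W_n(A)$ with all ghost components in $p^N A$ is divisible by $[a]$. The induction then goes via $V: W_n(A) \to W_{n+1}(A)$: one peels off the first Witt coordinate of $y$, leaving $V(z')$ with $z' \in W_n(A)$ still having small ghost components, and applies the inductive hypothesis to $[a^p] = F([a])$ so that $V(z') = V(F([a])z'') = [a]V(z'')$. Your approach instead lifts a known solution along $R: W_{n+1}(A) \to W_n(A)$ and corrects by an element of $V^n(A)$, which keeps the target equal to a power of $p$ throughout and avoids the auxiliary strengthened statement. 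The two arguments are of comparable length; yours is more direct for the lemma as stated, while the paper's yields a slightly stronger divisibility result as a byproduct.

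One small point: in the inductive step you assert that some $M$ with $M\, v(p) \geq p^n v(a) - v(c)$ exists simply because $v(p) > 0$. For rank-one valuations this is automatic, but the valuation rings here are not assumed to have rank one, and in a general ordered abelian group the multiples of a positive element need not be cofinal. The fix is exactly the argument you already used in the base case: if $a^{p^n} \nmid p^M c$ for every $M$, then (since $A$ is a valuation ring) $p^M c \mid a^{p^n}$ for every $M$, so $a^{p^n} \in \bigcap_M p^M A = 0$ by $p$-adic separatedness, contradicting $a \neq 0$.
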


\begin{proof}
The uniqueness is obvious because the ghost components of $[a]$ are non-zero.  
We prove existence by proving the following stronger result using induction on $n$.  
\begin{itemize}
\item Let $n \geq 1$ be an integer.  Let $a \in A$ be non-zero.  There exists an integer $N \geq 1$ (depending on $n$ and on $a$) such that, if $y \in W_n(A)$ has all ghost components in $p^N A$, then there exists $x \in W_n(A)$ such that $[a]x = y \in W_n(A)$.
\end{itemize}
The base case $n = 1$ follows immediately from the fact that $A$ is a valuation ring which is $p$-adically separated.  Now inductively assume the result has been proven for some fixed value of $n$. We prove the result for $n + 1$.  Let $a \in A$ be non-zero, and let $N$ be such that, if $y' \in W_n(A)$ has all ghost components in $p^{N}A$, then $y'$ is divisible by $[a^p]$.  We claim that if $y \in W_{n+1}(A)$ has all ghost components in $p^{2N} A$, then $y$ is divisible by $[a]$ in $W_{n+1}(A)$.  Thus, assume we have such a $y$, and let $p^{N} z_0$ denote the first Witt component of $y$.  We have that $z_0 \in p^N A$, so $z_0 = ax_0$ for some $x_0 \in A$.  Then we have
\[
y - [p^N x_0] [a] = V(z') \in W_{n+1}(A)
\]
for some $z' \in W_n(A)$, and note that all the ghost components of $z'$ are in $p^N A$.  Thus 
\[
z' = [a^p] z'' = F([a]) z'' \in W_n(A)
\]
for some $z'' \in W_n(A)$.  Thus
\[
y = [a] \bigg( [p^N x_0] + V(z'') \bigg) \in W_{n+1}(A).
\]
This completes the induction.
\end{proof}

Proving the surjectivity of the map in Theorem~\ref{log iso val ring} is easier than proving injectivity.  We prove surjectivity now.

\begin{lemma} \label{log surjective}
Let $A$ denote a $p$-torsion-free perfectoid valuation ring.  Let $M = A \setminus \{0\}$.  Fix an integer $n \geq 1$.  The natural map
\[
W_n\Omega^1_A \rightarrow W_n\Omega^1_{(A,M)}
\]
is surjective.
\end{lemma}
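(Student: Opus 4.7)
The plan is to argue by induction on $n \geq 1$. The base case $n = 1$ is exactly Proposition~\ref{log dR val ring}, which asserts that $f_1 \colon \Omega^1_A \to \Omega^1_{(A,M)}$ is an isomorphism. For the inductive step, I would first reduce the problem as follows. By the construction of the log de\thinspace Rham-Witt complex as the initial object in the category of log Witt complexes, every element of $W_n\Omega^1_{(A,M)}$ is a $W_n(A)$-linear combination of elements in the image of the canonical map $f_n \colon W_n\Omega^1_A \to W_n\Omega^1_{(A,M)}$ together with elements $\dlog[m]$ for $m \in M$. Hence it suffices to show that each $\dlog[m]$ lies in the image of $f_n$.

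Fix $m \in M$ and assume $f_{n-1}$ is surjective. The idea is to lift $\dlog[m]$ stepwise: since $R$ commutes with $\dlog$ on the log de\thinspace Rham-Witt complex, $R(\dlog[m]) = \dlog[m] \in W_{n-1}\Omega^1_{(A,M)}$, which by the inductive hypothesis equals $f_{n-1}(\sigma)$ for some $\sigma \in W_{n-1}\Omega^1_A$. Surjectivity of $R \colon W_n\Omega^1_A \to W_{n-1}\Omega^1_A$ (part of Proposition~\ref{Theorem A HM}) then produces a lift $\tilde\sigma \in W_n\Omega^1_A$. Because $f$ is a map of log Witt complexes, it commutes with $R$, so the residue $\dlog[m] - f_n(\tilde\sigma)$ lies in the kernel of $R \colon W_n\Omega^1_{(A,M)} \to W_{n-1}\Omega^1_{(A,M)}$.

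At this point I would invoke the logarithmic analog of Proposition~\ref{Theorem A HM} (which is \cite[Proposition~3.2.6]{HM03}; it appears later as Proposition~\ref{log HM}), which guarantees that
\[
\Omega^1_{(A,M)} \oplus A \xrightarrow{V^{n-1} \oplus dV^{n-1}} W_n\Omega^1_{(A,M)} \xrightarrow{R} W_{n-1}\Omega^1_{(A,M)} \to 0
\]
is exact. This allows us to write $\dlog[m] - f_n(\tilde\sigma) = V^{n-1}(\alpha) + dV^{n-1}(a)$ with $\alpha \in \Omega^1_{(A,M)}$ and $a \in A$. By Proposition~\ref{log dR val ring} the element $\alpha$ lifts to $\alpha' \in \Omega^1_A$ via $f_1$, and since $f$ commutes with $V$ and $d$, we conclude
\[
\dlog[m] = f_n\bigl(\tilde\sigma + V^{n-1}(\alpha') + dV^{n-1}(a)\bigr),
\]
which closes the induction.

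The main obstacle is justifying this logarithmic exactness statement at $W_n\Omega^1_{(A,M)}$; it is the substantive surjectivity content of \cite[Proposition~3.2.6]{HM03} and the only step where we genuinely need input from the Hesselholt-Madsen machinery rather than direct manipulations. All other ingredients — surjectivity of $R$ on the ordinary de\thinspace Rham-Witt complex, the level-one isomorphism, and compatibility of $f_n$ with the structure maps $R$, $V$, and $d$ — are either established earlier in the paper or are immediate from the fact that $f$ is a morphism of log Witt complexes.
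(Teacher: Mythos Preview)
Your proof is correct but takes a genuinely different route from the paper's. The paper argues directly for all $n$ at once: using the surjection $\Omega^1_{(W_n(A),M)} \twoheadrightarrow W_n\Omega^1_{(A,M)}$ from \cite[Proposition~3.2.2]{HM03}, it suffices to show each $\dlog[m]$ lies in the image of $\Omega^1_{W_n(A)} \to \Omega^1_{(W_n(A),M)}$. The paper then exploits the perfectoid valuation ring structure concretely: by Lemma~\ref{divide by Teichmuller} one has $[m] \mid p^N$ in $W_n(A)$, and by Lemma~\ref{unit roots} one can choose $y \in A$ and a unit $u$ with $y^{p^N} = um$, whence $\dlog[m] = \frac{p^N}{[y]}\, d[y] - [u^{-1}]\, d[u]$ explicitly exhibits $\dlog[m]$ as an element of the image.

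Your inductive approach instead leverages the structural exact sequence (the log analogue of Proposition~\ref{Theorem A HM}), reducing everything to the level-one isomorphism of Proposition~\ref{log dR val ring}. This is cleaner and avoids Lemmas~\ref{divide by Teichmuller} and~\ref{unit roots} entirely; in fact your initial reduction to elements $\dlog[m]$ is unnecessary, since the inductive step works verbatim for an arbitrary element of $W_n\Omega^1_{(A,M)}$. The trade-off is that the paper's argument yields an explicit formula for $\dlog[m]$ in the ordinary de\thinspace Rham-Witt complex, which is conceptually useful, and does not need to forward-reference the log form of the Hesselholt--Madsen exactness (Proposition~\ref{log HM}), though as you note this is an external result and introduces no circularity. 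One small point: your base case silently uses $W_1\Omega^1_{(A,M)} \cong \Omega^1_{(A,M)}$, which is \cite[Addendum~3.2.3]{HM03}.
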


\begin{proof}
For any log ring $(A,M)$, with map of monoids $\alpha: M \rightarrow A$, and for any integer $n \geq 1$, we consider $(W_n(A),M)$ as a log ring using the monoid map $m \mapsto [\alpha(m)]$. In our case, that $M = A \setminus \{0\}$, we will write commit a slight abuse of notation and write $\dlog [m] \in \Omega^1_{(W_n(A),M)}$ instead of $\dlog m$, to remind ourselves about the presence of this Teichm\"uller lift. 

The canonical map 
\[
\Omega^1_{(W_n(A),M)} \rightarrow W_n\Omega^1_{(A,M)}
\]
is surjective by \cite[Proposition~3.2.2]{HM03}; this surjectivity holds in great generality, for any $\ZZ_{(p)}$-algebra $A$ with $p$ odd.  So, to prove our lemma, it suffices to show that every element $\dlog [m] \in \Omega^1_{(W_n(A),M)}$ is in the image of the natural map $\Omega^1_{W_n(A)} \rightarrow \Omega^1_{(W_n(A),M)}$.  

First assume that $u \in A$ is a unit.  We know by definition that 
\[
[u] \dlog [u] = d[u] \in \Omega^1_{(W_n(A),M)},
\]
and so, multiplying both sides by $[u^{-1}]$, we have that $\dlog [u] = [u^{-1}] d[u]$.  Thus, for every unit $u \in A^{\times},$ the element $\dlog [u] \in \Omega^1_{(W_n(A),M)}$ is in the image of $\Omega^1_{W_n(A)} \rightarrow \Omega^1_{(W_n(A),M)}$.

Now let $m \in M$ be arbitrary.   Choose $N$ so that $[m] \mid p^N$ in $W_n(A)$ as in Lemma~\ref{divide by Teichmuller} and, using Lemma~\ref{unit roots}, choose a unit $u \in A$ and an element $y \in A$ so that $y^{p^N} = um$.   We have 
\[
p^N \dlog [y] = \dlog [u] + \dlog [m] \in \Omega^1_{(W_n(A),M)}.
\]
Because $[m]$ divides $p^N$ and because $[y]$ divides $[m]$, we have
\[
p^N\dlog[y] = \frac{p^N}{[y]} [y] \dlog[y] = \frac{p^N}{[y]} d[y];
\]
notice that these fractions are well-defined because $W_n(A)$ is $p$-torsion-free.
This shows that $\dlog [u] + \dlog [m]$ is in the image of $\Omega^1_{W_n(A)} \rightarrow \Omega^1_{(W_n(A),M)}$, and we already saw that $\dlog [u]$ is in the image, hence $\dlog[m]$ is also in the image.  This completes the proof.
\end{proof}

As in Lemma~\ref{log dR alg closed}, we treat the algebraically closed case before treating the case of an arbitrary $p$-torsion-free perfectoid valuation ring.

\begin{lemma} \label{log alg closed}
Let $\overline{R}$ denote a $p$-torsion-free perfectoid valuation ring for which $\Frac \overline{R}$ is algebraically closed.  Let $\overline{M}$ be the multiplicative monoid $\overline{R} \setminus \{0\}$.  Let $n \geq 1$ be an integer.  The natural map
\[
W_n\Omega^1_{\overline{R}} \rightarrow W_n\Omega^1_{(\overline{R}, \overline{M})}
\]
is an isomorphism of $W_n(\overline{R})$-modules.
\end{lemma}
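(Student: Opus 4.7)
The plan is to observe that surjectivity of the natural map $\phi \colon W_n\Omega^1_{\overline{R}} \to W_n\Omega^1_{(\overline{R},\overline{M})}$ is already supplied by Lemma~\ref{log surjective}, so all the content is injectivity at every level $n \geq 1$. To establish injectivity, I would construct an explicit left inverse using the universal property of the logarithmic de\thinspace Rham-Witt complex: that is, endow the ordinary complex $W_{\cdot}\Omega^{\bullet}_{\overline{R}}$ with the structure of a log Witt complex over $(\overline{R},\overline{M})$ by defining, for each $n$, a suitable monoid map $\dlog \colon \overline{M} \to W_n\Omega^1_{\overline{R}}$. The universal property of $W_{\cdot}\Omega^{\bullet}_{(\overline{R},\overline{M})}$ would then produce a map of log Witt complexes $\psi \colon W_{\cdot}\Omega^{\bullet}_{(\overline{R},\overline{M})} \to W_{\cdot}\Omega^{\bullet}_{\overline{R}}$, and the composite $\psi \circ \phi$ would be a Witt-complex endomorphism of the initial object $W_{\cdot}\Omega^{\bullet}_{\overline{R}}$, hence the identity, forcing $\phi$ to be injective.

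For the construction, given $m \in \overline{M}$, I would follow the level-one recipe in Proposition~\ref{log dR val ring}: choose an integer $N \geq 0$ with $m \mid p^N$ in $\overline{R}$ (possible by $p$-adic separatedness), invoke Lemma~\ref{unit roots} to pick $u \in \overline{R}^{\times}$ and $y \in \overline{R}$ with $y^{p^N} = u m$, and set
\[
\dlog[m] := \tfrac{p^N}{[y]}\,d[y] - [u^{-1}]\,d[u] \in W_n\Omega^1_{\overline{R}},
\]
where $\tfrac{p^N}{[y]} \in W_n(\overline{R})$ denotes the unique Witt vector with $[y]\cdot\tfrac{p^N}{[y]} = p^N$ (existence by Lemma~\ref{divide by Teichmuller}, uniqueness since $W_n(\overline{R})$ is $p$-torsion-free by Lemma~\ref{Witt complete}). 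The log Witt complex axioms then amount to the identity $[m]\,\dlog[m] = d[m]$, which follows immediately from $[y]^{p^N} = [u][m]$ and the Leibniz rule, together with compatibility with $R$, $F$, and the monoid structure on $\overline{M}$; each of these reduces to a routine computation with the standard Witt-vector relations $F[y] = [y^p]$, $F(d[y]) = [y]^{p-1}\,d[y]$, and $R$ commuting with Teichm\"uller lifts and $d$, and each directly mirrors the level-one calculations of Proposition~\ref{log dR val ring}.

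The main obstacle is well-definedness of $\dlog[m]$: independence of the auxiliary choices $N$, $u$, $y$. I would handle this in two steps, both of which rely critically on $\overline{R}$ being a valuation ring whose fraction field is algebraically closed (and hence whose valuation ring is integrally closed in it). First, for two choices sharing the same $N$, writing $y_2 = v y_1$ with $v \in \overline{R}^{\times}$ (so $u_2 = v^{p^N} u_1$), a Leibniz computation on $[y_2] = [v][y_1]$ and $[u_2] = [v]^{p^N}[u_1]$ shows that the two candidate expressions for $\dlog[m]$ are literally equal in $W_n\Omega^1_{\overline{R}}$. Second, enlarging $N$ by replacing $(N, u, y)$ with $(N+k, u, z)$, where $z \in \overline{R}$ satisfies $z^{p^k} = y$ (such a $z$ exists in $\overline{R}$ by integral closedness), leaves the expression unchanged, because $d[y] = p^k [z]^{p^k - 1}\,d[z]$ forces $\tfrac{p^N}{[y]}\,d[y] = \tfrac{p^{N+k}}{[z]}\,d[z]$. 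These two reductions together bring any two choices to a common normal form, completing the verification of well-definedness and thereby the proof of injectivity.
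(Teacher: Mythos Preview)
Your approach is essentially the paper's: equip $W_{\cdot}\Omega^{\bullet}_{\overline{R}}$ with a log Witt complex structure via an explicit $\dlog$, invoke initiality of $W_{\cdot}\Omega^{\bullet}_{(\overline{R},\overline{M})}$ to split $\phi$, and cite Lemma~\ref{log surjective} for surjectivity. Two points deserve comment.

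First, a simplification you are not using. Because $\Frac\overline{R}$ is algebraically closed, you may take $u=1$ throughout: for any $m\in\overline{M}$ and any $N$ with $m\mid p^N$, there is $y\in\overline{R}$ with $y^{p^N}=m$ exactly, so the paper sets $\dlog[m]:=x\,d[y]$ with $x[y]=p^N$, and well-definedness reduces (as in Lemma~\ref{log dR alg closed}) to the case $y_2=\zeta y_1$ with $\zeta^{p^N}=1$, where one checks $p^N[\zeta^{-1}]d[\zeta]=0$. Your unit-based formula works too, but it is extra bookkeeping that the algebraically closed hypothesis was designed to eliminate.

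Second, and more substantively, there is a gap in your list of axioms. A log Witt complex in the sense of \cite[Definition~3.2.1]{HM03} requires $d\circ\dlog_n=0$, and you do not mention it. If you really want to put the log structure on the \emph{full} tower $W_{\cdot}\Omega^{\bullet}_{\overline{R}}$, you must verify $d(\dlog[m])=0$ in $W_n\Omega^2_{\overline{R}}$; the relation $x[y]=p^N$ only gives $[y]\,(dx\wedge d[y])=0$, and you would then need to argue that $W_n\Omega^2_{\overline{R}}$ has no $[y]$-torsion. The paper avoids this entirely by truncating: it sets $E^d_n=0$ for $d\ge2$, so that $d\circ\dlog=0$ holds vacuously, and this suffices because only degree~$1$ is at stake. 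You should do the same (or supply the missing torsion argument, e.g.\ via the corollary after Proposition~\ref{higher degree prop} that multiplication by $p$ is bijective on $W_n\Omega^2_{\overline{R}}$, whence $[y]$-torsion, being $p^N$-torsion, vanishes).
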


\begin{proof}
For every integer $n \geq 1$, set $E^0_n := W_n(\overline{R})$, set $E^1_n := W_n\Omega^1_{\overline{R}}$, and for $d \geq 2$, set $E^d_n := 0$.  We will now equip $E_{\cdot}^{\bullet}$ with additional structures making it into a log Witt complex over $(\overline{R}, \overline{M})$.  

First, for every integer $n \geq 1$ and every $m \in \overline{M}$, we will define $\dlog [m] \in E_n^1 = W_n\Omega^1_{\overline{R}}$.  Using Lemma~\ref{divide by Teichmuller}, choose an integer $N \geq 1$ such that $[m] \mid p^N$ in $W_n(\overline{R})$.  Choose any $y \in \overline{R}$ such that $y^{p^N} = m$.  Note that $[y]$ divides $[m]$ and $[m]$ divides $p^N$, so there exists $x \in W_n(\overline{R})$ such that $[y]x = p^N$.  This value of $x$ is unique.  We then set
\[
\dlog [m] := x d[y] \in W_n\Omega^1_{\overline{R}}.
\]
The proof that $\dlog [m]$ depends only on $m$ and not on the choices of $N$ or $y$ is the same as in the proof of Lemma~\ref{log dR alg closed}.  The fact that $\dlog: M \rightarrow E_n^1$ is a map of monoids has the same proof as in Proposition~\ref{log dR val ring}.

Equip $E_{\cdot}^{\bullet}$ with the structure of a log Witt complex over the log ring $(\overline{R}, \overline{M})$ (see \cite[Definition~3.2.1]{HM03}) using the Witt complex structure on $W_{\cdot}\Omega^{\bullet}_{\overline{R}}$ and the above map $\dlog$.  Most of the requirements for being a log Witt complex over $(\overline{R}, \overline{M})$  are automatic because they hold for $W_{\cdot}\Omega^{\bullet}_{\overline{R}}$.  The only remaining requirement which is not obvious is that, for every $m \in \overline{M}$, we have $F \dlog [m] = \dlog [m]$.  (In the last equality, there is implicitly some fixed integer $n \geq 1$, and the $\dlog [m]$ on the left-hand side of the equality is in $E_{n+1}^1$, and the $\dlog [m]$ on the right-hand side is in $E_n^1$.)  We check $F \dlog [m] = \dlog [m]$ using the definition of $\dlog$.  Thus choose $N \geq 0$ so that $[m]$ divides $p^N$ in $W_{n+1}(\overline{R})$; choose $y \in \overline{R}$ so that $y^{p^N} = m$; and choose $x \in W_{n+1}(\overline{R})$ so that $[y]x = p^N \in W_{n+1}(\overline{R})$.  Then, applying $F$ to both sides, we have $[y]^p F(x) = p^N \in W_n(\overline{R})$, and so $[y] w = p^N \in W_n(\overline{R})$ for $w = [y]^{p-1} F(x)$.   We check
\[
F(\dlog [m]) = F( x d[y]) = F(x) F(d[y]) = F(x) [y]^{p-1} d[y] = \dlog [m] \in W_n\Omega^1_{\overline{R}}.
\]

Therefore $E_{\cdot}^{\bullet}$ forms a log Witt complex over $(\overline{R}, \overline{M})$, so we have a natural map $W_{\cdot} \Omega^{\bullet}_{(\overline{R},M)} \rightarrow E_{\cdot}^{\bullet}$.  On the other hand, considering these objects as Witt complexes (instead of log Witt complexes), we have maps
\[
W_{\cdot} \Omega^{\bullet}_{\overline{R}} \rightarrow W_{\cdot} \Omega^{\bullet}_{(\overline{R},M)} \rightarrow E_{\cdot}^{\bullet}
\]
Because there is a unique map of Witt complexes $W_{\cdot} \Omega^{\bullet}_{\overline{R}} \rightarrow E_{\cdot}^{\bullet}$, and there is the obvious map of Witt complexes which is the identity in degrees 0 and 1 and is zero in higher degrees, we deduce that the above composition is in particular injective in degree~1.  Thus for every integer $n \geq 1$, the natural map $W_{n} \Omega^{1}_{\overline{R}} \rightarrow W_{n} \Omega^{1}_{(\overline{R},M)}$ is injective.  We have already seen in Lemma~\ref{log surjective} that this map is surjective; hence it is an isomorphism.
\end{proof}

To prove Theorem~\ref{log iso val ring}, we will use the algebraically closed case (Lemma~\ref{log alg closed}) and an exact sequence analogous to the sequence (\ref{sequence}) from Theorem~\ref{sequence theorem}.  As in the proof of Theorem~\ref{sequence theorem}, most of the exactness holds in complete generality, due to the following result by Hesselholt and Madsen.  We already referenced this result for the de\thinspace Rham-Witt complex, with no log structure, above in Proposition~\ref{Theorem A HM}.

\begin{proposition}[{\cite[Proposition~3.2.6]{HM03}}] \label{log HM}
Let $A$ denote a $p$-torsion-free $\ZZ_{(p)}$-algebra and let $M$ denote a submonoid of the monoid of all non-zero-divisors in $A$.  For a fixed integer $n \geq 1$, consider the sequence of $W_{n+1}(A)$-modules
\begin{equation} \label{log sequence}
0 \rightarrow A \xrightarrow{(-d, p^n)}  \Omega^1_{(A,M)} \oplus A \xrightarrow{V^n \oplus dV^n} W_{n+1} \Omega^1_{(A,M)} \xrightarrow{R} W_n \Omega^1_{(A,M)} \rightarrow 0,
\end{equation}
where the module structure is the same as in Theorem~\ref{sequence theorem}.  This sequence is exact at all slots, except that possibly the segment $A \xrightarrow{(-d, p^n)}  \Omega^1_{(A,M)} \oplus A \xrightarrow{V^n \oplus dV^n} W_{n+1} \Omega^1_{(A,M)}$ is not exact.  Furthermore, we have $\im\, \big(-d, p^n\big) \subseteq \ker \big(V^n \oplus dV^n\big).$ 
\end{proposition}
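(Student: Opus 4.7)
The plan is minimal: the statement is essentially \cite[Proposition~3.2.6]{HM03}, which Hesselholt--Madsen prove there for the logarithmic de\thinspace Rham-Witt complex of an arbitrary pre-log $\ZZ_{(p)}$-algebra $(A,M)$ whose monoid maps into the non-zero-divisors of $A$. The already-cited Proposition~\ref{Theorem A HM} is the special case $M = \{1\}$, so the present Proposition~\ref{log HM} is its log-analogue and requires no new argument beyond pointing at \cite{HM03}. For exposition, however, I would walk through the four pieces of the claim in order.

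First I would dispose of the boundary pieces. Injectivity of $(-d,p^n)\colon A \to \Omega^1_{(A,M)}\oplus A$ is immediate from the second coordinate, since multiplication by $p^n$ is injective on the $p$-torsion-free ring $A$. Surjectivity of $R\colon W_{n+1}\Omega^1_{(A,M)} \to W_n\Omega^1_{(A,M)}$ is built into the axiomatics of log Witt complexes: since $W_\cdot\Omega^\bullet_{(A,M)}$ is the initial object by \cite[Proposition~3.2.2]{HM03}, the structural restriction maps are surjective by construction (one exhibits the quotient $W_\cdot\Omega^\bullet_{(A,M)} / \ker R$ as a competing log Witt complex receiving a comparison map).

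Next I would verify the two composite identities. For $\im(-d,p^n) \subseteq \ker(V^n\oplus dV^n)$, applied to $a \in A$ the composite yields $-V^n(da) + dV^n(p^n a)$, which vanishes by the standard Witt-complex identity $V^n(da) = p^n\,dV^n(a)$; this in turn is obtained by iterating $V\,d = p\cdot dV$, which holds in the universal (integral) de\thinspace Rham-Witt complex of $\ZZ_{(p)}[x]$ and then extends by the universal property, using $p$-torsion freeness to pin down the unique lift. Similarly $R\circ V^n = 0$ and $R\circ dV^n = 0$: checking this on Witt-vector coordinates, $V^n$ places its input in the last slot of $W_{n+1}$, which is exactly the slot killed by truncation to $W_n$, and $d$ commutes with $R$.

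The genuine content is the reverse inclusion $\ker R \subseteq \im(V^n \oplus dV^n)$, which is the step I expect to be the main obstacle and which I would simply defer to \cite[Proposition~3.2.6]{HM03}. Hesselholt--Madsen prove it by exhibiting $W_{n+1}\Omega^1_{(A,M)}$ as an explicit quotient of a free $W_{n+1}(A)$-module generated by differentials $d(x)$, log-differentials $\dlog m$ for $m \in M$, and symbols $V^n(\omega)$ and $dV^n(a)$, all modulo the universal log-Witt-complex relations. With this presentation in hand, the kernel of $R$ is visibly generated by the $V^n$- and $dV^n$-classes. The extension from $M = \{1\}$ to general $M$ adds only $\dlog$-generators on which $V^n$, $dV^n$, and $R$ extend formally, so Hesselholt--Madsen's argument carries over verbatim and nothing further is needed.
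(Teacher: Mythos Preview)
Your proposal is correct and matches the paper's approach: the paper states this proposition with no proof beyond the citation to \cite[Proposition~3.2.6]{HM03}, and your exposition of the elementary pieces (injectivity at $A$, surjectivity of $R$, the two composite vanishings) is accurate, including the derivation of $V^n d = p^n\, dV^n$ from the Witt-complex identity $Vd = p\,dV$. Your deferral of the substantive inclusion $\ker R \subseteq \im(V^n \oplus dV^n)$ to Hesselholt--Madsen is exactly what the paper does as well.
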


The following result, Lemma~\ref{log sequence is exact}, is obvious from Theorem~\ref{sequence theorem} if we know that $W_n\Omega^1_{A}$ is isomorphic to $W_n\Omega^1_{(A,M)}$, but we will instead use  Lemma~\ref{log sequence is exact} to prove that $W_n\Omega^1_{A}$ is isomorphic to $W_n\Omega^1_{(A,M)}$.

\begin{lemma} \label{log sequence is exact}
Let $A$ denote a $p$-torsion-free perfectoid valuation ring, and let $M$ denote the monoid $A \setminus \{0\}$.  For every integer $n \geq 1$, the sequence~(\ref{log sequence}) is exact for this pair $(A,M)$.
\end{lemma}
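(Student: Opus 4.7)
The plan is to reduce to the algebraically closed case by embedding $A$ into a larger $p$-torsion-free perfectoid valuation ring $\overline{R}$ with algebraically closed fraction field, using Lemma~\ref{embed into alg closed}. By Proposition~\ref{log HM}, the sequence~(\ref{log sequence}) is automatically exact at all but one spot, so it suffices to show that if $\omega \in \Omega^1_{(A,M)}$ and $a \in A$ satisfy $V^n(\omega) + dV^n(a) = 0$ in $W_{n+1}\Omega^1_{(A,M)}$, then there is some $a_0 \in A$ with $\omega = -da_0$ and $a = p^n a_0$.

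First I would establish that the log sequence is exact for $(\overline{R}, \overline{M})$: by Lemma~\ref{log alg closed} the natural map $W_n\Omega^1_{\overline{R}} \to W_n\Omega^1_{(\overline{R},\overline{M})}$ is an isomorphism of $W_n(\overline{R})$-modules (and, tracing through its proof, a map of Witt complexes, so it intertwines $V$, $F$, $d$, $R$), and Theorem~\ref{sequence theorem}(\ref{perf A}) gives exactness of the non-log sequence for the $p$-torsion-free perfectoid ring $\overline{R}$. Applying this to the image of $(\omega, a)$ in the complex for $(\overline{R}, \overline{M})$ produces $b_0 \in \overline{R}$ with $p^n b_0 = a$ and $\omega|_{\overline{R}} = -db_0$. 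Since $A \cap p^n\overline{R} = p^n A$ (a valuation argument, as in the proof of Lemma~\ref{embed into alg closed}), there exists $a_1 \in A$ with $p^n a_1 = a$; because $\overline{R}$ is $p$-torsion-free, $b_0 = a_1$.

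It then remains to upgrade the equality $\omega = -da_1$ from $\Omega^1_{(\overline{R}, \overline{M})}$ to $\Omega^1_{(A,M)}$. Using that $(V^n \oplus dV^n)\circ(-d, p^n) = 0$ (from Proposition~\ref{log HM}), subtracting the relation $V^n(-da_1) + dV^n(p^n a_1) = 0$ from the defining relation yields $V^n(\omega + da_1) = 0$ in $W_{n+1}\Omega^1_{(A,M)}$, and applying $F^n$ shows $\omega + da_1$ is $p^n$-torsion in $\Omega^1_{(A,M)}$. On the other hand, by Proposition~\ref{V is injective} applied to the (non-log) de\thinspace Rham-Witt complex of $\overline{R}$, together with the Witt complex isomorphism of Lemma~\ref{log alg closed}, the map $V^n$ is injective on $\Omega^1_{(\overline{R}, \overline{M})}$, so $\omega + da_1 = 0$ there. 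Thus $\omega + da_1$ lies in the kernel of $\Omega^1_{(A,M)}[p^n] \to \Omega^1_{(\overline{R}, \overline{M})}[p^n]$, and by Proposition~\ref{log dR val ring} applied to both $A$ and $\overline{R}$ this map is identified with $\Omega^1_A[p^n] \to \Omega^1_{\overline{R}}[p^n]$, which is injective by Lemma~\ref{embed into alg closed}. So $\omega + da_1 = 0$ in $\Omega^1_{(A,M)}$, and $a_0 := a_1$ works.

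The main obstacle I anticipate is the bookkeeping needed to make sure the two isomorphisms $\Omega^1_A \cong \Omega^1_{(A,M)}$ and $\Omega^1_{\overline{R}} \cong \Omega^1_{(\overline{R}, \overline{M})}$ from Proposition~\ref{log dR val ring} and Lemma~\ref{log dR alg closed} are compatible with the functoriality map induced by $A \hookrightarrow \overline{R}$, so that the injectivity of $\Omega^1_A[p^n] \to \Omega^1_{\overline{R}}[p^n]$ transfers to the log setting. This is essentially a naturality check, but it is the subtle point where the algebraically closed hypothesis on $\overline{R}$ (used via Lemma~\ref{log alg closed} to get exactness there) has to be reconciled with the more general valuation ring $A$; in principle one verifies it by inspecting that the inverse maps $g$ constructed in the proofs of Lemma~\ref{log dR alg closed} and Proposition~\ref{log dR val ring} are defined by the same formula on Teichm\"uller $\dlog$-elements, hence commute with the functorial map, a step that should be clean but is where all the prior work in this section is actually being used.
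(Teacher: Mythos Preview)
Your proposal is correct and follows essentially the same route as the paper: it too embeds $A$ into an $\overline{R}$ via Lemma~\ref{embed into alg closed}, invokes exactness for $(\overline{R},\overline{M})$ via Lemma~\ref{log alg closed} together with Theorem~\ref{sequence theorem}, and then reduces the remaining exactness (via Proposition~\ref{log HM}) to the argument of Proposition~\ref{deduce exactness for A}, which you have simply written out in the log setting. The naturality concern you flag is handled implicitly in the paper by applying Proposition~\ref{log dR val ring} to both $A$ and $\overline{R}$, exactly as you propose.
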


\begin{proof}
Using Lemma~\ref{embed into alg closed}, we may embed $A$ into a $p$-torsion-free perfectoid valuation ring $\overline{R}$ so that $\Frac \overline{R}$ is algebraically closed and so that, for every integer $n \geq 1$, the natural map $\Omega^1_{A}[p^n] \rightarrow \Omega^1_{\overline{R}}[p^n]$ is injective. By Proposition~\ref{log dR val ring}, the natural map $\Omega^1_{(A,A \setminus \{0\})}[p^n] \rightarrow \Omega^1_{(\overline{R}, \overline{R} \setminus \{0\})}[p^n]$ is also injective.  We also have that $A \cap p^n \overline{R} = p^n A$ for every $n \geq 1$ (see the proof of Lemma~\ref{embed into alg closed}).

 Our goal is to prove that the sequence~(\ref{log sequence}) is exact for $(A, A \setminus \{0\})$.  The sequence (\ref{log sequence}) is exact for $(\overline{R}, \overline{R} \setminus \{0\}),$ by Lemma~\ref{log alg closed} and Theorem~\ref{sequence theorem}.   By Proposition~\ref{log HM}, to prove exactness for $(A, A \setminus \{0\})$, we need only show that, if $\alpha \in \Omega^1_{(A, A \setminus \{0\})}$ and $a \in A$ satisfy $V^n(\alpha) + dV^n(a) = 0 \in W_{n+1}\Omega^1_{(A, A \setminus \{0\})}$, then there exists $a_0 \in A$ such that $\alpha = -da_0$ and $a = p^n a_0$.  This is proved in exactly the same way as Proposition~\ref{deduce exactness for A} was proved.
\end{proof}

Using the above results, we easily deduce Theorem~\ref{log iso val ring}.

\begin{proof}[Proof of Theorem~\ref{log iso val ring}]
We prove that for every integer $n \geq 1$, the natural map $W_n\Omega^1_{A} \rightarrow W_n\Omega^1_{(A, A \setminus \{0\})}$ is an isomorphism using induction on $n$.  The base case $n = 1$ is precisely Proposition~\ref{log dR val ring}, by \cite[Addendum~3.2.3]{HM03}.  Now assume the result holds for some fixed value of $n$.  Consider the following commutative diagram:
\[
\xymatrix{
0 \ar[r] & A \ar^{(-d, p^n)}[rr] & \hspace{.2in} & \Omega^1_{(A,A \setminus \{0\})} \oplus A \ar^{V^n \oplus dV^n}[rr] &\hspace{.2in}& W_{n+1} \Omega^1_{(A,A \setminus \{0\})} \ar^{R}[r] & W_n \Omega^1_{(A,A \setminus \{0\})} \ar[r] & 0 \\
0 \ar[r] & A \ar^{(-d, p^n)}[rr] \ar[u] &  & \Omega^1_{A} \oplus A \ar^{V^n \oplus dV^n}[rr] \ar[u] & & W_{n+1} \Omega^1_A \ar^{R}[r] \ar[u] & W_n \Omega^1_A \ar[u] \ar[r] & 0
}
\]
By Theorem~\ref{sequence theorem}, the bottom sequence is exact, and by Lemma~\ref{log sequence is exact}, the top sequence is exact.  By the induction hypothesis, we know that three of the four vertical maps are isomorphisms.  Then, by the five lemma, we know also that $W_{n+1}\Omega^1_{A} \rightarrow W_{n+1}\Omega^1_{(A, A \setminus \{0\})}$ is an isomorphism.  This completes the induction.
\end{proof}

Only the degree one piece of the logarithmic de\thinspace Rham-Witt complex is relevant to our applications.  We are unsure if the analogous result to Theorem~\ref{log iso val ring} holds in degree~two, because we are unsure if the requirement $d \circ \dlog = 0$ introduces any new relations.

\begin{question}
Let $A$ denote a $p$-torsion-free perfectoid ring, and assume further that $A$ is a valuation ring.  Let $M = A \setminus \{0\}$.  Is the natural map
\[
W_n\Omega^d_A \rightarrow W_n\Omega^d_{(A,M)}
\]
an isomorphism for every integer $d \geq 0$?
\end{question}

\section{Connections to algebraic K-theory and topological Hochschild and cyclic homology} \label{tc section}

Algebraic K-theory provides one of the motivations for studying the $p$-adic Tate module $T_p \left( W_n \Omega^1_A \right)$ and the generators $\alpha$ and $\alpha_n^{(\infty)}$.  We describe these connections in this section following mainly \cite{Hes06}.  We will assume that the reader has some familiarity with algebraic K-theory; the readers only interested in algebraic aspects of the de\thinspace Rham-Witt complex can safely skip this section. Most of the results below are well-known to the experts and some of them are more general than we state here (see \cite{Hes06}, \cite{BMS18}, \cite{CMM}). Hence we do not claim any originality in this section. We only put the results of Sections \ref{main section} and \ref{log section} in a topological context. 

Let $V$ be a complete discrete valuation ring with quotient field $K$ of characteristic $0$ and with a perfect residue field of odd characteristic $p$. In \cite{Hes06}, Hesselholt studies the p-adic Tate module of $W_n\Omega^1_{(\overline{V},\overline{M})}$, where $\overline{V}$ is the ring of integers of the algebraic closure $\overline{K}$ and $\overline{M}=\overline{V} \setminus \{0\}$. He shows that $T_p\left(W_n\Omega^1_{(\overline{V},\overline{M})}\right)$ is a free $W_n(\overline{V})^{\wedge}$-module of rank $1$ on a certain generator $\alpha_{n, \varepsilon}$. It turns out that one can describe the image of the trace map from the $p$-adic algeberaic K-theory group $K_2(\overline{K}, \ZZ_p)$ to $T_p\left(W_n\Omega^1_{(\overline{V},\overline{M})}\right)$ in terms of this element  $\alpha_{n, \varepsilon}$ \cite{Hes06}. In particular, one can understand the image of a certain Bott class $\beta_{\varepsilon} \in K_2(\overline{K}, \ZZ_p)$ using the element $\alpha_{n, \varepsilon}$. We note that this Bott class corresponds to the classical Bott class in complex K-theory by results of Suslin \cite{Suslin1}, \cite{Suslin2} which show that $ku^{\wedge} \simeq K(\overline{K})^{\wedge}$, where $ku$ is the connective complex topological K-theory spectrum. In this section we show that $\alpha_{n, \varepsilon}$ is in fact a special case of the element constructed in Theorem~\ref{canonical in tate} and we compute the image of the Bott class in a more general setting of $p$-torsion-free perfectoid rings containing a compatible system of $p$-power roots of unity.

The main tools which connect $K_2(\overline{K}, \ZZ_p)$ with $T_p\left(W_n\Omega^1_{(\overline{V},\overline{M})}\right)$ are the B\"okstedt trace and the cyclotomic trace as constructed by \cite{BokHsiaMad}. These are maps from algebraic K-theory to invariants called \emph{topological Hochschild homology} and \emph{topological cyclic homology}, respectively. We give now a very brief overview of these objects.

Given a commutative ring $A$, the topological Hochschild homology spectrum $\THH(A)$ is defined as the tensor construction $S^1 \otimes A$ which is isomorphic to the geometric realization of the cyclic bar construction of $A$ over the sphere spectrum. For various equivalent definitions of $\THH(A)$ and the equivalences between them, see \cite{Bok}, \cite{Shi}, \cite{ABGHLM}, \cite{NS}, \cite{DMPSW}. The classical Dennis trace map
\[K(A) \to \HH(A)\]
going from algebraic K-theory of $A$ to Hochschild homology can be refined to the map of spectra
\[\trace: K(A) \to \THH(A),\]
known as the B\"okstedt trace. The spectrum $\THH(A)$ has an action of the circle group $S^1$ by definition and also has a structure of a $p$-\emph{cyclotomic spectrum} in the sense of \cite{NS}. More precisely it  comes equipped with an $S^1$-equivariant map $\phi_{\cycl} \colon \THH(A) \to \THH(A)^{tC_p}$, where $(-)^{tC_p}$ denotes the Tate construction (see \cite{Green-May} and \cite{NS}). This map is referred to as the \emph{cyclotomic Frobenius}. Let $\THH(A, \ZZ_p)$ denote the $p$-completion of $\THH(A)$, also referred to as the $p$-adic THH. We recall from \cite{NS} and \cite{BMS18}, the following notations:
\[\TC^{-}(A, \ZZ_p)=\THH(A, \ZZ_p)^{hS^1}\]
and
\[\TP(A, \ZZ_p)=\THH(A, \ZZ_p)^{tS^1}.\]
Hete $t$ again stands for the Tate construction and $h$ for the homotopy fixed points. Using the equivalence $(\THH(A)^{tC_p})^{hS^1} \simeq \TP(A, \ZZ_p)$ (see \cite[Lemma II.4.2]{NS}), one gets a map
\[\phi_{\cycl}^{hS^1} \colon \TC^{-}(A, \ZZ_p) \to \TP(A, \ZZ_p).\]
Additionally one has the canonical map from homotopy fixed points to the Tate construction
\[\cann \colon \TC^{-}(A, \ZZ_p) \to \TP(A, \ZZ_p).\]
One then defines  the $p$-adic topological cyclic homology $\TC$ via the fiber sequence of spectra (see \cite{NS})
\[\xymatrix{\TC(A, \ZZ_p) \ar[rr] & &  \TC^{-}(A, \ZZ_p) \ar[rr]^{\cann-\phi_{\cycl}^{hS^1}}  & & \TP(A, \ZZ_p).}\]

One can also consider $\THH(A)$ as a genuine $S^1$-equivariant spectrum with respect to the family of finite subgroups and take the derived fixed points $\THH(A)^{C_{p^{n-1}}}$. This spectrum is denoted by $\TR^n(A)$. We note that $\TR^n(A)$ spectra can be also constructed using the cyclotomic Frobenius as in \cite[Theorem II.4.10]{NS}. The spectra $\TR^n(A)$ for various values of $n$ are related by morphisms
\[F : \TR^{n+1}(A) \to \TR^n(A)\]
and 
\[V : \TR^{n}(A) \to \TR^{n+1}(A).\]
The map $F$ is induced by the fixed points inclusion and $V$ is the transfer. Moreover, the cyclotomic structure induces an $S^1$-equivariant map (with respect residual $S^1$-actions)
\[R :  \TR^{n+1}(A) \to \TR^n(A).\]
These maps induce obvious maps on graded homotopy rings $\pi_*\TR^n(A)$, $n \geq 1$ (denoted by the same letters). Moreover, the circle action induces the differential
\[d : \pi_*\TR^n(A) \to \pi_*\TR^n(A)[-1].\]
It follows from \cite[Theorem 2.3]{HM95} that there is natural isomorphism $\lambda_n^0 \colon \pi_0 \TR^n(A) \cong W_n(A)$. Now the results of \cite{HM04} imply that  for any $\ZZ_{(p)}$-algebra $A$, where the prime $p$ is odd, 
\[(\pi_*\TR^{\bullet}(A), R, F, V, d, \lambda_{\bullet}^0)\] 
forms a Witt complex. and hence there is a unique map of Witt complexes 
from the de Rham-Witt complex over $A$ to the latter Witt complex:
\[\lambda_{\bullet}^* : W_{\bullet}\Omega^*_A \to  \pi_*\TR^{\bullet}(A).\] 
A  theorem of Hesselholt shows that in fact the map
\[\lambda_{\bullet}^1 : W_{\bullet}\Omega^1_A \to  \pi_1\TR^{\bullet}(A)\]
is an isomorphism (for $p$ an odd prime) \cite{Hes04}. 

The trace map $\trace: K(A) \to \THH(A)$ is $S^1$-invariant and has refinements
\[\trace: K(A) \to \TR^n(A)\]
and
\[\trace: K(A, \ZZ_p) \to \TC(A, \ZZ_p).\]
It follows from \cite[Proposition 7.17]{BMS18}, \cite[Corollary 6.5]{AL19} and \cite[Corollary 6.9]{CMM} the canonical maps
\[\xymatrix{K_2(A, \ZZ_p) \ar[r]^-{\cong} & T_p(K_1(A))  & T_p(A^{\times})  \ar[l]_-\cong} \]
are isomorphisms. Here the first map is the map from the universal coefficient sequence for $K(A)$ (see Theorem~\ref{universal coefficient theorem} which is stated for chain complexes but works similarly for spectra) and the second is induced by the homomorphism $A^{\times} \to GL(A)$. By naturality we have a commutative diagram
\[\xymatrix{ K_2(A, \ZZ_p)  \ar[d]_{\trace} \ar[r]^-{\cong}  &T_p(K_1(A)) \ar[d]^{T_p(\trace)} \\ \pi_2 \TR^n(A, \ZZ_p) \ar[r] & T_p(\pi_1\TR^n(A)).}\]
We also recall a result of Geisser and Hesselholt \cite[Lemma 4.2.3]{GeisHes}, which states that the composite
\[\xymatrix{ A^{\times} \ar[r]  & K_1(A)  \ar[r]^-{\trace} &  \pi_1\TR^{\bullet}(A) \cong W_{\bullet}\Omega^1_A  }\]
is given by $a \mapsto \dlog [a]=[a]^{-1}d[a]$. 

Let $A$ be a $p$-torsion-free perfectoid ring containing a compatible system of $p$-power roots of unity 
\[\varepsilon=(1, \zeta_p, \zeta_{p^{2}}, \zeta_{p^{3}}, \dots ).\]
We define the Bott class $\beta_{\varepsilon} \in K_2(A, \ZZ_p)$ to be the element corresponding to $\varepsilon$ under the isomorphism $K_2(A, \ZZ_p) \cong T_p(A^{\times})$. %This definition generalizes the defintiion of \cite{Hes06}.

\begin{proposition} \label{image of Bott} Let $A$ be a $p$-torsion-free perfectoid ring containing a compatible system of $p$-power roots of unity 
\[\varepsilon=(1, \zeta_p, \zeta_{p^{2}}, \zeta_{p^{3}}, \dots ).\]
Then the composite
\[\xymatrix{K_2(A, \ZZ_p) \ar[r]^-{\trace} & \TR^n(A, \ZZ_p) \ar[r] & T_p(\pi_1\TR^n(A)) \cong  T_p(W_{n}\Omega^1_A) } \]
sends the Bott class $\beta_{\varepsilon}$ to $([\zeta_{p^n}]-1)\alpha_{n}^{(\infty)}$. 
\end{proposition}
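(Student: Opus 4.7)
The plan is to reduce the claim to a diagram chase using the identification of $K_2(A,\ZZ_p)$ with $T_p(A^{\times})$, the Geisser--Hesselholt formula for the trace in degree one, and the defining property of $\alpha_n^{(\infty)}$ established in Theorem~\ref{canonical in tate}. First I would record that by definition of the Bott class, $\beta_{\varepsilon} \in K_2(A,\ZZ_p)$ corresponds to the sequence $\varepsilon = (\zeta_{p^r})_{r \geq 1}$ under the isomorphism $K_2(A,\ZZ_p) \cong T_p(A^{\times})$ coming from the universal coefficient theorem (Proposition~\ref{universal coefficient theorem}) combined with the isomorphism $T_p(K_1(A)) \cong T_p(A^{\times})$ cited in the text preceding the proposition.

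Next, I would use the fact that the universal coefficient sequence and the formation of $T_p$ are both natural, so that the trace map fits into a commutative diagram
\[
\xymatrix{
K_2(A,\ZZ_p) \ar[d]_-{\trace} \ar[r]^-{\cong} & T_p(K_1(A)) \ar[d]^-{T_p(\trace)} & T_p(A^{\times}) \ar[l]_-{\cong} \ar[d] \\
\pi_2 \TR^n(A,\ZZ_p) \ar[r] & T_p(\pi_1 \TR^n(A)) \ar@{=}[r] & T_p(W_n \Omega^1_A),
}
\]
where the right-hand vertical arrow is $T_p$ applied to the composite $A^{\times} \to K_1(A) \xrightarrow{\trace} \pi_1 \TR^n(A) \cong W_n \Omega^1_A$. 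By the Geisser--Hesselholt formula cited in the excerpt, this composite sends a unit $a \in A^{\times}$ to $\dlog [a] \in W_n \Omega^1_A$.

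Chasing $\beta_{\varepsilon}$ around the diagram, I get the element of $T_p(W_n \Omega^1_A)$ represented by the compatible system $(\dlog [\zeta_{p^r}])_{r \geq 1}$; this is precisely the element $\dlog [\zeta_p^{(\infty)}] \in T_p(W_n \Omega^1_A)$ introduced in the paragraph preceding Lemma~\ref{div Rz}. Finally, Theorem~\ref{canonical in tate} asserts that the canonical generator $\alpha_n^{(\infty)}$ satisfies $([\zeta_{p^n}] - 1)\alpha_n^{(\infty)} = \dlog [\zeta_p^{(\infty)}]$, which delivers the desired identity
\[
\trace(\beta_{\varepsilon}) = ([\zeta_{p^n}] - 1)\alpha_n^{(\infty)} \in T_p(W_n \Omega^1_A).
\]

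There is essentially no hard step: all the analytic/algebraic difficulty has already been done, on the topology side in establishing $K_2(A,\ZZ_p) \cong T_p(A^{\times})$ and the trace formula $a \mapsto \dlog [a]$, and on the de~Rham--Witt side in constructing $\alpha_n^{(\infty)}$ and proving the equation $([\zeta_{p^n}] - 1)\alpha_n^{(\infty)} = \dlog [\zeta_p^{(\infty)}]$. The only thing one must be careful about is verifying that the isomorphism $T_p(K_1(A)) \cong T_p(A^{\times})$ is compatible with the trace in the way needed, i.e.\ that the composition of the vertical arrow in the right column with the identification of $T_p(A^{\times})$ with $K_2(A,\ZZ_p)$ really does apply $T_p$ to $a \mapsto \dlog [a]$ levelwise; this is immediate from the functoriality of the universal coefficient sequence in the coefficient object $\QQ_p/\ZZ_p$.
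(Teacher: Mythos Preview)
Your proof is correct and follows essentially the same route as the paper: reduce to computing $T_p(\trace)$ on $\varepsilon$ via the commutative square, apply the Geisser--Hesselholt formula to get $\dlog[\zeta_p^{(\infty)}]$, and invoke Theorem~\ref{canonical in tate} to rewrite this as $([\zeta_{p^n}]-1)\alpha_n^{(\infty)}$. The only difference is that you spell out the diagram and the compatibility check in slightly more detail than the paper does.
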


\begin{proof} By definition of $\beta_{\varepsilon}$ and the commutative square above, it suffices to compute the image of $\varepsilon$ under the composite
\[\xymatrix{ T_pA^{\times} \ar[r]  & T_pK_1(A)  \ar[rr]^-{T_p(\trace)} & &  T_p(\pi_1\TR^{\bullet}(A)) \cong T_p(W_{\bullet}\Omega^1_A) . }\]
Now by  \cite[Lemma 4.2.3]{GeisHes}, it follows that $\varepsilon$ goes to $\dlog[\zeta_p^{(\infty)}]$. By Theorem \ref{canonical in tate}, the latter is equal to $([\zeta_{p^n}]-1)\alpha_{n}^{(\infty)}$.  
\end{proof}

%We note that this proposition generalizes \cite[Theorem B (i)]{Hes06}.

Let $\TF(A)$ denote the homotopy inverse limit $\holim_{F} \TR^n(A)$. The trace maps $\trace : K(A) \to \TR^n(A)$ assemble into a trace map
$\trace : K(A) \to \TF(A)$.

\begin{corollary} \label{image of Bott in TF} Let $A$ be a $p$-torsion-free perfectoid ring containing a compatible system of $p$-power roots of unity 
\[\varepsilon=(1, \zeta_p, \zeta_{p^{2}}, \zeta_{p^{3}}, \dots ).\]
Then the composite
\[\xymatrix{K_2(A, \ZZ_p) \ar[r]^-{\trace} & \TF(A, \ZZ_p) \ar[r] & \varprojlim_F T_p( \pi_1\TR^n(A)) \cong  \varprojlim_F T_p( W_{n}\Omega^1_A) } \]
sends the Bott class $\beta_{\varepsilon}$ to $([\varepsilon]-1)\alpha$, where $\alpha$ is the generator from Corollary \ref{inverse F}. 
\end{corollary}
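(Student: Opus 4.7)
The plan is to deduce the statement directly from Proposition~\ref{image of Bott} by passing to the inverse limit along $F$, so the real content is already present in the finite-level version. First I would verify that the B\"okstedt trace and the isomorphism $\lambda_n^1 \colon W_n\Omega^1_A \xrightarrow{\sim} \pi_1\TR^n(A)$ are compatible with the Frobenius maps $F\colon \TR^{n+1}(A)\to \TR^n(A)$. The former is standard because $\TF(A) = \holim_F \TR^n(A)$ is defined precisely so that the trace $K(A) \to \TR^n(A)$ factors through $K(A)\to \TF(A)$, and the latter holds by construction since $(\pi_*\TR^\bullet(A), R, F, V, d, \lambda_\bullet^0)$ is a Witt complex and $\lambda_\bullet^*$ is the unique map of Witt complexes from $W_\bullet\Omega^*_A$ to it.

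Next, I would use Proposition~\ref{image of Bott} to identify, for each $n$, the image of $\beta_\varepsilon$ in $T_p(W_n\Omega^1_A)$ as $([\zeta_{p^n}]-1)\alpha_n^{(\infty)}$. I then need to check that the sequence $\big(([\zeta_{p^n}]-1)\alpha_n^{(\infty)}\big)_{n\geq 1}$ is $F$-compatible and hence really defines an element of $\varprojlim_F T_p(W_n\Omega^1_A)$. This follows from Theorem~\ref{canonical in tate}, which gives $F(\alpha_{n+1}^{(\infty)}) = \alpha_n^{(\infty)}$, together with the identity $F([\zeta_{p^{n+1}}]) = [\zeta_{p^{n+1}}^p] = [\zeta_{p^n}]$ in $W_n(A)$.

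Finally, I would translate the answer into the form $([\varepsilon]-1)\alpha$. Under the ring isomorphism $\pi\colon W(A^\flat) \xrightarrow{\sim} \varprojlim_F W_n(A)$ of Lemma~\ref{inverse limit iso}, Lemma~\ref{theta lemma}(\ref{eps}) identifies $[\varepsilon]$ with the sequence $([\zeta_{p^n}])_{n\geq 1}$, so $[\varepsilon]-1$ corresponds to $([\zeta_{p^n}]-1)_{n\geq 1}$. By the $W(A^\flat)$-module structure on $\varprojlim_F T_p(W_n\Omega^1_A)$ described in Corollary~\ref{inverse F}, the product $([\varepsilon]-1)\alpha$ is then computed componentwise as $\big(([\zeta_{p^n}]-1)\alpha_n^{(\infty)}\big)_{n\geq 1}$, matching the image of $\beta_\varepsilon$.

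There is no real obstacle here beyond bookkeeping: the heavy lifting was done in Proposition~\ref{image of Bott} (identifying the image at each finite level) and in Theorem~\ref{canonical in tate} together with Corollary~\ref{inverse F} (providing the Frobenius-compatibility of the generators and the $W(A^\flat)$-module structure). The only point that deserves care is the compatibility of the trace map with $F$, which is built into the definition of $\TF$.
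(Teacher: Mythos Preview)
Your proposal is correct and follows essentially the same approach as the paper: reduce to the finite-level computation of Proposition~\ref{image of Bott}, use $F(\alpha_{n+1}^{(\infty)}) = \alpha_n^{(\infty)}$ from Theorem~\ref{canonical in tate} together with $F([\zeta_{p^{n+1}}]) = [\zeta_{p^n}]$ to verify $F$-compatibility, and invoke the commutativity of the trace with $F$. The paper's proof is terser and leaves implicit the identification of $([\varepsilon]-1)\alpha$ with the sequence $\big(([\zeta_{p^n}]-1)\alpha_n^{(\infty)}\big)_n$, which you spell out explicitly.
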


\begin{proof} By Theorem \ref{canonical in tate} the map $F : T_p( W_{n+1}\Omega^1_A)  \to T_p( W_{n}\Omega^1_A)$ sends $\alpha_{n+1}^{(\infty)}$ to $\alpha_{n}^{(\infty)}$ and the diagram 
\[\xymatrix{ K_2(A, \ZZ_p) \ar[r]^-{\trace}  \ar[dr]_-{\trace} &  \TR^{n+1}(A, \ZZ_p) \ar[d]^-F \ar[r]  & T_p( \pi_1\TR^{n+1}(A))  \ar[d]^-F &  T_p( W_{n+1}\Omega^1_A) \ar[d]^F \ar[l]^-{\cong}  \\ & \TR^{n}(A, \ZZ_p) \ar[r]  &  T_p( \pi_1\TR^{n}(A))  &  T_p( W_{n}\Omega^1_A)  \ar[l]^-{\cong}   }\]
commutes. This implies the desired result. 
\end{proof}

It turns out that $\alpha$ and $\alpha_n^{(\infty)}$ in fact determine polynomial generators of $\TF(A, \ZZ_p) $ and $\TR^n(A, \ZZ_p)$. It follows from \cite[Lemma II.4.9]{NS} and \cite[Proposition 6.5, Remark 6.6]{BMS18} $\pi_*\TR^n(A, \ZZ_p)$ is a polynomial algebra in one variable of degree $2$ over $W_n(A)$. In particular $\pi_2\TR^n(A, \ZZ_p)$ isomorphic to $W_n(A)$  as a $W_n(A)$-module, and hence the canonical surjective map
\[ \pi_2\TR^n(A, \ZZ_p) \to T_p( \pi_1\TR^n(A)) \cong  T_p( W_{n}\Omega^1_A) \]
is an isomorphism since the target $T_p( W_{n}\Omega^1_A)$ is also a free $W_n(A)$-module on one generator by Theorem \ref{canonical in tate}. Consider the inverse
\[ \xymatrix{T_p( W_{n}\Omega^1_A) \ar[r]^-{\cong} &   \pi_2\TR^n(A, \ZZ_p). }\]
Since $T_p( W_{n}\Omega^1_A)$ is a free $W_n(A)$-module on the generator $\alpha_n^{(\infty)}$, we get that the map
\[\xymatrix{W_n(A)[\alpha_n^{(\infty)}] \ar[r]^{\cong} &  \pi_*\TR^n(A, \ZZ_p) }\]
is an isomorphism of graded rings. Now again using $F\alpha_{n+1}^{(\infty)}=\alpha_{n}^{(\infty)}$, one obtains the following corollary (see \cite[Remark 6.6]{BMS18}).

\begin{corollary} \label{TF computation} Let $A$ be a $p$-torsion-free perfectoid ring containing a compatible system of $p$-power roots of unity 
\[\varepsilon=(1, \zeta_p, \zeta_{p^{2}}, \zeta_{p^{3}}, \dots ).\] 
Then the canonical map
\[ \pi_2\TF(A, \ZZ_p) \to  \varprojlim_F T_p(\pi_1\TR^n(A)) \cong \varprojlim_F T_p( W_{n}\Omega^1_A)  .\]
is an isomorphism. This isomorphism induces an isomorphism of graded rings
\[\xymatrix{W(A^{\flat})[\alpha] \ar[r]^-{\cong} & \pi_*\TF(A, \ZZ_p).}\]
\end{corollary}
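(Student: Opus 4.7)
The plan is to deduce the corollary from the three ingredients assembled in the paragraph preceding it: the polynomial ring identification $W_n(A)[\alpha_n^{(\infty)}] \xrightarrow{\cong} \pi_*\TR^n(A, \ZZ_p)$ (for each fixed $n$), the Frobenius-compatibility $F(\alpha_{n+1}^{(\infty)}) = \alpha_n^{(\infty)}$ (Theorem~\ref{canonical in tate}), and the ring isomorphism $\varprojlim_F W_n(A) \cong W(A^{\flat})$ (Lemma~\ref{inverse limit iso}). Since $\TF(A, \ZZ_p)$ is defined as the homotopy inverse limit $\holim_F \TR^n(A, \ZZ_p)$, the natural tool is the Milnor short exact sequence
\[
0 \to {\varprojlim}^{1}_F \, \pi_{k+1}\TR^n(A, \ZZ_p) \to \pi_k \TF(A, \ZZ_p) \to \varprojlim_F \pi_k \TR^n(A, \ZZ_p) \to 0.
\]
First I would invoke the polynomial algebra statement $\pi_*\TR^n(A, \ZZ_p) \cong W_n(A)[\alpha_n^{(\infty)}]$ with $|\alpha_n^{(\infty)}| = 2$, which implies that all odd homotopy groups of $\TR^n(A, \ZZ_p)$ vanish. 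In particular $\pi_{2k+1}\TR^n(A, \ZZ_p) = 0$ for every $n$, so the $\varprojlim^{1}$ term in the Milnor sequence vanishes identically in every even degree, and one obtains an isomorphism $\pi_{2k}\TF(A, \ZZ_p) \xrightarrow{\cong} \varprojlim_F \pi_{2k}\TR^n(A, \ZZ_p)$ for each $k$.

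Next I would compute the right-hand inverse limit. In degree $2k$, the group $\pi_{2k}\TR^n(A, \ZZ_p)$ is the free $W_n(A)$-module on $(\alpha_n^{(\infty)})^{k}$, and the Frobenius map $F\colon \pi_{2k}\TR^{n+1}(A, \ZZ_p) \to \pi_{2k}\TR^n(A, \ZZ_p)$ acts as the Witt vector Frobenius on coefficients and sends $(\alpha_{n+1}^{(\infty)})^{k}$ to $(\alpha_n^{(\infty)})^{k}$ by Theorem~\ref{canonical in tate} (together with multiplicativity of $F$ as a ring map on homotopy). Thus the inverse system $\{\pi_{2k}\TR^n(A, \ZZ_p)\}_F$ is isomorphic to the inverse system $\{W_n(A)\}_F$ of Lemma~\ref{inverse limit iso}, and its limit is $W(A^{\flat}) \cdot \alpha^{k}$, where $\alpha$ is the compatible sequence from Corollary~\ref{inverse F}. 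Assembling over all $k$ and using that the transition maps are ring maps, the induced map
\[
W(A^{\flat})[\alpha] \longrightarrow \pi_*\TF(A, \ZZ_p)
\]
is an isomorphism of graded rings. The first part of the corollary (that the canonical map $\pi_2 \TF(A, \ZZ_p) \to \varprojlim_F T_p(\pi_1\TR^n(A))$ is an isomorphism) follows by comparing the degree~2 part of the above with the identification $\pi_2\TR^n(A, \ZZ_p) \cong T_p(W_n\Omega^1_A)$ stated in the preceding paragraph (itself a consequence of Theorem~\ref{canonical in tate}) and the $\varprojlim^{1}$ vanishing applied to the inverse system of Tate modules.

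The only substantive step is verifying that the $\varprojlim^{1}$ really vanishes, which here is immediate because the odd homotopy of every $\TR^n(A, \ZZ_p)$ is zero; all of the other ingredients are already in place. There is no genuine obstacle beyond a careful bookkeeping of the Frobenius-equivariance of the generators, which is precisely what Theorem~\ref{canonical in tate} was designed to supply.
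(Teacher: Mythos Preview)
Your approach is essentially the same as the paper's---both invoke the Milnor sequence for the homotopy inverse limit $\TF(A,\ZZ_p)=\holim_F\TR^n(A,\ZZ_p)$ and then use the polynomial description $\pi_*\TR^n(A,\ZZ_p)\cong W_n(A)[\alpha_n^{(\infty)}]$ together with Lemma~\ref{inverse limit iso}. There is, however, a small gap in your treatment of the $\varprojlim^1$ term. Your argument (odd homotopy of $\TR^n$ vanishes) kills the $\varprojlim^1$ contribution to $\pi_{2k}\TF(A,\ZZ_p)$, but the claimed graded ring isomorphism $W(A^{\flat})[\alpha]\cong\pi_*\TF(A,\ZZ_p)$ also asserts that the \emph{odd} homotopy of $\TF$ vanishes. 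In odd degree the Milnor sequence reads
\[
0\longrightarrow {\varprojlim}^1_F\,\pi_{2k}\TR^n(A,\ZZ_p)\longrightarrow \pi_{2k-1}\TF(A,\ZZ_p)\longrightarrow 0,
\]
so you still need to know that $\varprojlim^1_F W_n(A)=0$. This follows from surjectivity of the Witt vector Frobenius $F\colon W_{n+1}(A)\to W_n(A)$ (since $A$ is perfectoid), which gives Mittag--Leffler. The paper simply starts from this observation---``the $F$ maps induce surjections on homotopy groups''---and thereby handles all degrees at once. Once you add that one line, your argument is complete and matches the paper's.
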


\begin{proof} The $F$ maps induce surjections on homotopy groups. Hence by the Milnor sequence $\pi_*\TF(A, \ZZ_p) \cong \varprojlim_F \pi_*\TR^n(A, \ZZ_p)$. The rest follows from the isomorphism of Lemma \ref{inverse limit iso} $W(A^{\flat}) \cong \varprojlim_F W_n(A)$. 

\end{proof}

\begin{remark} The latter results together with \cite[Lemma~3.13]{BMS16} (referred to as Tor-independence in \cite{BMS16}) in fact imply the following:
Let $A \rightarrow B$ denote a ring homomorphism, where both $A$ and $B$ are $p$-torsion free perfectoid rings with compatible systems of $p$-power roots of unity. Assume that the homomorphism preserves these systems. Then for any $1 \leq n \leq r$, 
\[\TR^n(A, \ZZ_p) \otimes^L_{\TR^r(A, \ZZ_p)}    \TR^r(B, \ZZ_p) \simeq \TR^n(B, \ZZ_p), \]
where $\TR^n(A, \ZZ_p)$ is a $\TR^r(A, \ZZ_p)$-module via $F$. (This uses that $\TF(-, \ZZ_p)/ (\sum_{i = 0}^{p^n - 1} [\varepsilon]^i) \simeq \TR^n(-, \ZZ_p)$.) 
In fact \cite[Lemma~3.13]{BMS16} has two versions: one has
\[W_n(A) \otimes^L_{W_r(A)}    W_r(B) \simeq W_n(B),\]
where the $W_n(A)$ is considered as a $W_r(A)$-module via $F$ or $R$. It turns out that using the results from Section \ref{main section}, we also get the following: Let  $W_n\Omega^1_A$ be equipped with a $W_r(A)$-module structure via either Frobenius or restriction.
Then the induced map on derived $p$-completions
\[
\bigg( W_n\Omega^1_A \otimes^L_{W_r(A)} W_r(B) \bigg)^{\wedge} \rightarrow \bigg( W_n \Omega^1_B \bigg)^{\wedge}
\]
is a quasi-isomorphism. To see this, it suffices to prove that
\[
\bigg( \FF_p \otimes_{\ZZ}^L W_n\Omega^1_A \bigg) \otimes^L_{W_r(A)} W_r(B) \stackrel{\simeq}{\rightarrow} \FF_p \otimes_{\ZZ}^L W_n  \Omega^1_B.
\]
As usual, we replace $\FF_p$ with the complex $\cdots \rightarrow 0 \rightarrow \ZZ \stackrel{p}{\rightarrow} \ZZ \rightarrow 0 \rightarrow \cdots.$  Because multiplication by $p$ is surjective on both $W_n\Omega^1_A$ and $W_n\Omega^1_B$, we reduce to showing that the following is a quasi-isomorphism
\[
W_n\Omega^1_A[p] \otimes_{W_r(A)}^L W_r(B) \stackrel{\simeq}{\rightarrow} W_n\Omega^1_B[p],
\]
where the de\thinspace Rham-Witt groups are viewed as complexes concentrated in degree~$-1$.  By our earlier results, we reduce to showing that the following is a quasi-isomorphism
\[
W_n(A)/pW_n(A) \otimes_{W_r(A)}^L W_r(B) \stackrel{\simeq}{\rightarrow} W_n(B)/pW_n(B).
\]
Because $A$ and $B$ (and hence $W_n(A)$ and $W_n(B)$) are $p$-torsion free, we reduce to showing that the following is a quasi-isomorphism
\[
\left(\FF_p \otimes_{\ZZ}^L W_n(A)\right) \otimes_{W_r(A)}^L W_r(B) \stackrel{\sim}{\rightarrow} \FF_p \otimes_{\ZZ}^L W_n(B).
\]
The result now follows from \cite[Lemma~3.13]{BMS16}.

The topological result at the beginning of this remark recovers the $F$-versions of the algebraic equivalences on $\pi_0$ and $\pi_2$. We do not know what is the topological analogue of the equivalences involving the map $R$. 

\end{remark}

Finally, we explain the connection to the results of \cite{Hes06}. Let $V$ be a complete discrete valuation ring with quotient field $K$ of characteristic $0$ and with a perfect residue field of odd characteristic $p$. In \cite{Hes06}, Hesselholt studies the p-adic Tate module of $W_n\Omega^1_{(\overline{V},\overline{M})}$, where $\overline{V}$ is the ring of integers of the algebraic closure $\overline{K}$ and $\overline{M}=\overline{V} \setminus \{0\}$. He shows that $T_pW_n\Omega^1_{(\overline{V},\overline{M})}$ is a free $W_n(\overline{V})^{\wedge}$-module of rank $1$ on certain generator $\alpha_{n, \varepsilon}$. 

The ring $\overline{V}^{\wedge}$ satisfies the conditions of Theorem \ref{canonical in tate} and Theorem \ref{log iso val ring}. Hence the canonical maps
\[ \xymatrix{T_pW_n\Omega^1_{(\overline{V},\overline{M})} \ar[r] & T_pW_n\Omega^1_{(\overline{V}^{\wedge},\overline{M}^{\wedge})}   & T_pW_n\Omega^1_{\overline{V}^{\wedge}} \ar[l]_-{\cong}  } \]
are module maps between rank $1$ free $W_n(\overline{V})^{\wedge} \cong W_n(\overline{V}^{\wedge} )$-modules and the right hand map is an isomorphism. Now by 
\cite[Theorem B, Lemma 2.4.1, Proposition 2.4.2]{Hes06}, there is a generator $\alpha_{n, \varepsilon}$ of the left hand side such that
\[  ([\zeta_{p^{n}}]-1) \alpha_{n, \varepsilon}=\dlog[\zeta_p^{(\infty)}]. \]
Since the the latter zig-zag consists of maps of $W_n(\overline{V}^{\wedge} )$-modules, we see that by Theorem \ref{canonical in tate} the generator $\alpha_{n, \varepsilon}$ on the left is mapped to $\alpha_n^{(\infty)}$ and hence the first map in the zig-zag is an isomorphism too. Finally, we conclude that the generator $\alpha_{\varepsilon}$ constructed in \cite[Proposition 2.4.2]{Hes06} corresponds to $\alpha$ from Corollary \ref{inverse F} under the isomorphism 
\[ \xymatrix{\varprojlim_F T_pW_n\Omega^1_{(\overline{V},\overline{M})} \ar[r]^-{\cong} & \varprojlim_F T_pW_n\Omega^1_{(\overline{V}^{\wedge},\overline{M}^{\wedge})} &
\varprojlim_F T_pW_n\Omega^1_{\overline{V}^{\wedge}} \ar[l]_-{\cong} .} \]

\begin{remark} It follows from the results of Suslin that $K_*(\overline{K}, \ZZ_p) \cong \ZZ_p[\beta_{\varepsilon}]$. We also note that $K_2(\overline{K}, \ZZ_p) \cong K_2(\overline{V}^{\wedge}, \ZZ_p)$ and that the Bott class corresponds to the Bott class in $K_2(\overline{V}^{\wedge}, \ZZ_p)$. Hence $\beta_{\varepsilon} \in K_2(A, \ZZ_p)$ corresponding to $\varepsilon=(1, \zeta_p, \zeta_{p^{2}}, \zeta_{p^{3}}, \dots )$ under the isomorphism $K_2(A, \ZZ_p) \cong T_p A^{\times}$, generalizes the Bott class of \cite{Hes06}, \cite{Suslin1} and \cite{Suslin2} to any $p$-torsion-free perfectoid ring $A$ containing a compatible system of $p$-power roots of unity. We note that $T_p A^{\times}\cong \ZZ_p$ with $\varepsilon$ a generator if additionally $A$ is an integral domain but not in general. \end{remark}

\begin{remark} \label{last remark} We also point out how the generators $\alpha_n^{(\infty)}$ are related to the recent result of \cite{AL19}. Let $A$ be a $p$-torsion-free perfectoid ring containing a compatible system of $p$-power roots of unity. Theorem 6.4 of \cite{AL19} computes the composition
\[\xymatrix{T_pA^{\times} \cong  K_2(A, \ZZ_p) \ar[r]^-{\trace} & \pi_2\TC(A, \ZZ_p) }\]
which is an isomorphism by \cite[Corollary 6.9]{CMM} and \cite[Corollary 6.5]{AL19}. It follows from \cite[Section 6]{BMS18}, the definition of $\TC$ and \cite[Section 6]{AL19} that 
\[\pi_2\TC(A, \ZZ_p) \cong \left\{y \in W_n(A^{\flat}) \colon \varphi(y) = \frac{[\varepsilon^p] - 1}{[\varepsilon] - 1} y \right\},\]
and under this isomorphism the latter composite is given by the $q$-logarithm:
\[\log_q([x]) = \sum_{n = 1}^{\infty} (-1)^{n-1} q^{-n(n-1)/2} \frac{([x]- 1)([x] - q)([x] - q^2) \cdots ([x] - q^{n-1})}{[n]_q} \in W(A^{\flat}),\]
where $q = [\varepsilon] \in W(A^{\flat})$ and $[n]_q = \frac{q^n - 1}{q-1}$. Recall that there is a cofiber sequence \cite[Theorem II.4.10]{NS}
\[\xymatrix{\TC(A, \ZZ_p) \ar[r] & \TF(A, \ZZ_p) \ar[r]^{R-1} &  \TF(A, \ZZ_p).}\]
The diagram 
\[\xymatrix{T_pA^{\times} \cong  K_2(A, \ZZ_p)  \ar[r]^-{\trace} \ar[dr]_-{\trace} & \pi_2\TC(A, \ZZ_p) \ar[d] \\ & \pi_2 \TF(A, \ZZ_p)   }\]
commutes. Under the isomorphism $\pi_2 \TF(A, \ZZ_p) \cong \varprojlim_F T_p( W_{n}\Omega^1_A)$, the vertical map in this diagram corresponds to the inclusion 
\begin{align*}
\bigg\{ y \in W_n(A^{\flat}) \colon \varphi(y) = \frac{[\varepsilon^p] - 1}{[\varepsilon] - 1} y\bigg\} \cong \bigg\{ y\alpha \in \varprojlim_F T_p(W_n\Omega^1_A) \colon y \in W(A^{\flat}) \text{ satisfies } \varphi(y) = \frac{[\varepsilon^p] - 1}{[\varepsilon] - 1} \, y\bigg\} = \\  \bigg\{ x \in \varprojlim_F T_p(W_n\Omega^1_A) \colon R(x) = x\bigg\} \subset \varprojlim_F T_p( W_{n}\Omega^1_A).
\end{align*}
(See Proposition \ref{equal sets}) If we now plug in $\varepsilon = x$ in the above formula, we get $\log_q([\varepsilon])=[\varepsilon]-1$ and hence we recover that the composite
 \[\xymatrix{T_pA^{\times} \cong  K_2(A, \ZZ_p)  \ar[r]^-{\trace} & \pi_2\TC(A, \ZZ_p) \ar[r] & \pi_2 \TF(A, \ZZ_p)  \cong \varprojlim_F T_p( W_{n}\Omega^1_A)  }\]  
sends $\varepsilon$ to $([\varepsilon]-1)\alpha$. This is an alternative way of computing the image of the Bott class in $\varprojlim_F T_p( W_{n}\Omega^1_A)$. 

\end{remark}

\bibliography{Tate}

\begin{thebibliography}{}

\bibitem[Angeltveit et~al., 2018]{ABGHLM}
Angeltveit, V., Blumberg, A.~J., Gerhardt, T., Hill, M.~A., Lawson, T., and
  Mandell, M.~A. (2018).
\newblock Topological cyclic homology via the norm.
\newblock {\em Doc. Math.}, 23:2101--2163.

\bibitem[Ansch\"utz and Le~Bras, 2019]{AL19}
Ansch\"utz, J. and Le~Bras, A.-C. (2019).
\newblock The $p$-completed cyclotomic trace in degree~2.
\newblock {\em arXiv preprint arXiv:1907.10530v1}.

\bibitem[Antieau and Datta, 2020]{BD20}
Antieau, B. and Datta, R. (2020).
\newblock Valuation rings are derived splinters.
\newblock {\em arXiv preprint arXiv:2002.01067v1}.

\bibitem[Bhatt, 2017a]{Bha17}
Bhatt, B. (2017a).
\newblock The {H}odge-{T}ate decomposition via perfectoid spaces {A}rizona
  {W}inter {S}chool 2017.

\bibitem[Bhatt, 2017b]{Bha17b}
Bhatt, B. (2017b).
\newblock Math 679: Perfectoid spaces.

\bibitem[Bhatt et~al., 2016]{BMS16}
Bhatt, B., Morrow, M., and Scholze, P. (2016).
\newblock Integral p-adic {H}odge theory.
\newblock {\em arXiv preprint arXiv:1602.03148}.

\bibitem[Bhatt et~al., 2018]{BMS18}
Bhatt, B., Morrow, M., and Scholze, P. (2018).
\newblock Topological {H}ochschild homology and integral $p$-adic {H}odge
  theory.
\newblock {\em arXiv preprint arXiv:1802.03261v1}.

\bibitem[B{\"o}kstedt, 1986]{Bok}
B{\"o}kstedt, M. (1986).
\newblock Topological {H}ochschild homology.
\newblock preprint.

\bibitem[B\"{o}kstedt et~al., 1993]{BokHsiaMad}
B\"{o}kstedt, M., Hsiang, W.~C., and Madsen, I. (1993).
\newblock The cyclotomic trace and algebraic {$K$}-theory of spaces.
\newblock {\em Invent. Math.}, 111(3):465--539.

\bibitem[Bourbaki, 2006]{Bou06}
Bourbaki, N. (2006).
\newblock {\em El{\'e}ments de math{\'e}matique: Algebre commutative}.
\newblock Springer.

\bibitem[Clausen et~al., 2018]{CMM}
Clausen, D., Mathew, A., and Matthew, M. (2018).
\newblock {$K$}-theory and topological cyclic homology of henselian pairs.
\newblock {\em arXiv preprint arXiv:1803.10897}.

\bibitem[Conrad, 2008]{ConradNotes}
Conrad, B. (2008).
\newblock Math 248a. completion of algebraic closure.
\newblock
  \url{http://math.stanford.edu/~conrad/248APage/handouts/algclosurecomp.pdf}.

\bibitem[Costeanu, 2008]{Cos08}
Costeanu, V. (2008).
\newblock On the 2-typical de {R}ham-{W}itt complex.
\newblock {\em Doc. Math.}, 13:413--452.

\bibitem[Davis, 2019]{D17}
Davis, C. (2019).
\newblock On the p-typical de rham--witt complex over {W}(k).
\newblock {\em Algebra \& Number Theory}, 13(7):1597--1631.

\bibitem[Davis and Kedlaya, 2014]{DK14}
Davis, C. and Kedlaya, K.~S. (2014).
\newblock On the {W}itt vector {F}robenius.
\newblock {\em Proc. Amer. Math. Soc.}, 142(7):2211--2226.

\bibitem[Davis and Kedlaya, 2015]{DK15}
Davis, C. and Kedlaya, K.~S. (2015).
\newblock Almost purity and overconvergent {W}itt vectors.
\newblock {\em J. Algebra}, 422:373--412.

\bibitem[de~Jong, 1996]{dJ96}
de~Jong, A.~J. (1996).
\newblock Smoothness, semi-stability and alterations.
\newblock {\em Inst. Hautes \'{E}tudes Sci. Publ. Math.}, (83):51--93.

\bibitem[Dotto et~al., 2019]{DMPSW}
Dotto, E., Malkiewich, C., Patchkoria, I., Sagave, S., and Woo, C. (2019).
\newblock Comparing cyclotomic structures on different models for topological
  {H}ochschild homology.
\newblock {\em J. Topol.}, 12(4):1146--1173.

\bibitem[Engler and Prestel, 2005]{EP05}
Engler, A.~J. and Prestel, A. (2005).
\newblock {\em Valued fields}.
\newblock Springer Science \& Business Media.

\bibitem[Fontaine, 1982]{Fon81}
Fontaine, J.-M. (1982).
\newblock Formes diff\'erentielles et modules de {T}ate des vari\'et\'es
  ab\'eliennes sur les corps locaux.
\newblock {\em Invent. Math.}, 65(3):379--409.

\bibitem[Gabber and Ramero, 2003]{GR03}
Gabber, O. and Ramero, L. (2003).
\newblock {\em Almost ring theory}.
\newblock Springer.

\bibitem[Geisser and Hesselholt, 1999]{GeisHes}
Geisser, T. and Hesselholt, L. (1999).
\newblock Topological cyclic homology of schemes.
\newblock In {\em Algebraic {$K$}-theory ({S}eattle, {WA}, 1997)}, volume~67 of
  {\em Proc. Sympos. Pure Math.}, pages 41--87. Amer. Math. Soc., Providence,
  RI.

\bibitem[Greenlees and May, 1995]{Green-May}
Greenlees, J. P.~C. and May, J.~P. (1995).
\newblock Generalized {T}ate cohomology.
\newblock {\em Mem. Amer. Math. Soc.}, 113(543):viii+178.

\bibitem[Hesselholt, 2004]{Hes04}
Hesselholt, L. (2004).
\newblock Topological {H}ochschild homology and the de {R}ham-{W}itt complex
  for {$\Bbb Z_{(p)}$}-algebras.
\newblock In {\em Homotopy theory: relations with algebraic geometry, group
  cohomology, and algebraic {$K$}-theory}, volume 346 of {\em Contemp. Math.},
  pages 253--259. Amer. Math. Soc., Providence, RI.

\bibitem[Hesselholt, 2006]{Hes06}
Hesselholt, L. (2006).
\newblock On the topological cyclic homology of the algebraic closure of a
  local field.
\newblock In {\em An alpine anthology of homotopy theory}, volume 399 of {\em
  Contemp. Math.}, pages 133--162. Amer. Math. Soc., Providence, RI.

\bibitem[Hesselholt, 2015]{Hes15}
Hesselholt, L. (2015).
\newblock The big de {R}ham-{W}itt complex.
\newblock {\em Acta Math.}, 214(1):135--207.

\bibitem[Hesselholt and Madsen, 1997]{HM95}
Hesselholt, L. and Madsen, I. (1997).
\newblock On the {$K$}-theory of finite algebras over {W}itt vectors of perfect
  fields.
\newblock {\em Topology}, 36(1):29--101.

\bibitem[Hesselholt and Madsen, 2003]{HM03}
Hesselholt, L. and Madsen, I. (2003).
\newblock On the {$K$}-theory of local fields.
\newblock {\em Ann. of Math. (2)}, 158(1):1--113.

\bibitem[Hesselholt and Madsen, 2004]{HM04}
Hesselholt, L. and Madsen, I. (2004).
\newblock On the {D}e {R}ham-{W}itt complex in mixed characteristic.
\newblock {\em Ann. Sci. \'Ecole Norm. Sup. (4)}, 37(1):1--43.

\bibitem[Kato, 1989]{Kat89}
Kato, K. (1989).
\newblock Logarithmic structures of fontaine-illusie, algebraic analysis,
  geometry, and number theory (baltimore, md, 1988).

\bibitem[Matsumura, 1989]{Mat89}
Matsumura, H. (1989).
\newblock {\em Commutative ring theory}, volume~8 of {\em Cambridge Studies in
  Advanced Mathematics}.
\newblock Cambridge University Press, Cambridge, second edition.
\newblock Translated from the Japanese by M. Reid.

\bibitem[Neukirch, 1999]{Neu99}
Neukirch, J. (1999).
\newblock {\em Algebraic number theory}, volume 322 of {\em Grundlehren der
  Mathematischen Wissenschaften [Fundamental Principles of Mathematical
  Sciences]}.
\newblock Springer-Verlag, Berlin.
\newblock Translated from the 1992 German original and with a note by Norbert
  Schappacher, With a foreword by G. Harder.

\bibitem[Nikolaus and Scholze, 2018]{NS}
Nikolaus, T. and Scholze, P. (2018).
\newblock On topological cyclic homology.
\newblock {\em Acta Math.}, 221(2):203--409.

\bibitem[Rotman, 2009]{Rot}
Rotman, J.~J. (2009).
\newblock {\em An introduction to homological algebra}.
\newblock Universitext. Springer, New York, second edition.

\bibitem[Scholze, 2012]{Sch12}
Scholze, P. (2012).
\newblock Perfectoid spaces.
\newblock {\em Publ. Math. Inst. Hautes \'Etudes Sci.}, 116:245--313.

\bibitem[Shipley, 2000]{Shi}
Shipley, B. (2000).
\newblock Symmetric spectra and topological {H}ochschild homology.
\newblock {\em $K$-Theory}, 19(2):155--183.

\bibitem[{Stacks Project Authors}, 2017]{stacks-project}
{Stacks Project Authors}, T. (2017).
\newblock {S}tacks {P}roject.
\newblock \url{http://stacks.math.columbia.edu}.

\bibitem[Suslin, 1983]{Suslin1}
Suslin, A. (1983).
\newblock On the {$K$}-theory of algebraically closed fields.
\newblock {\em Invent. Math.}, 73(2):241--245.

\bibitem[Suslin, 1984]{Suslin2}
Suslin, A.~A. (1984).
\newblock On the {$K$}-theory of local fields.
\newblock {\em J. Pure Appl. Algebra}, 34(2-3):301--318.

\bibitem[Weibel, 1994]{Wei94}
Weibel, C.~A. (1994).
\newblock {\em An introduction to homological algebra}, volume~38 of {\em
  Cambridge Studies in Advanced Mathematics}.
\newblock Cambridge University Press, Cambridge.

\bibitem[Zariski and Samuel, 1960]{ZS60}
Zariski, O. and Samuel, P. (1960).
\newblock {\em Commutative algebra. {V}ol. {II}}.
\newblock The University Series in Higher Mathematics. D. Van Nostrand Co.,
  Inc., Princeton, N. J.-Toronto-London-New York.

\end{thebibliography}
\bibliographystyle{apalike}

\end{document}